\newtheorem{theorem}{Theorem}[section]
\newtheorem{prop}[theorem]{Proposition}
\newtheorem{lemma}[theorem]{Lemma}
\newtheorem{cor}[theorem]{Corollary}
\theoremstyle{definition}
\newtheorem{definition}[theorem]{Definition}
\newtheorem{observation}[theorem]{Observation}
\newtheorem{construction}[theorem]{Construction}
\newtheorem{terminology}[theorem]{Terminology}
\newtheorem{remark}[theorem]{Remark}
\newtheorem{example}[theorem]{Example}
\DeclareMathOperator{\Psh}{\sf PShv}
\DeclareMathOperator{\Aut}{\sf Aut}
\DeclareMathOperator*{\colim}{\sf colim}
\DeclareMathOperator*{\limit}{\sf lim}
\DeclareMathOperator{\Hom}{\sf Hom}
\DeclareMathOperator{\Fun}{\sf Fun}
\DeclareMathOperator{\Map}{\sf Map}
\DeclareMathOperator{\Mapc}{{\sf Map}_{\sf c}}
\DeclareMathOperator{\Gammac}{{\Gamma}_{\!\sf c}}
\DeclareMathOperator{\Space}{\sf Spaces}
\DeclareMathOperator{\sym}{\sf Sym}
\DeclareMathOperator{\MC}{\sf MC}
\DeclareMathOperator{\Cat}{\sf Cat_\infty}
\DeclareMathOperator{\uno}{\mathbbm{1}}
\DeclareMathOperator{\exit}{\sf Exit}
\DeclareMathOperator{\m}{\sf Mod}
\DeclareMathOperator{\shv}{\sf Shv}
\DeclareMathOperator{\csh}{\sf cShv}
\DeclareMathOperator{\Alg}{\mathsf{Alg}}
\DeclareMathOperator{\cAlg}{\mathsf{cAlg}}
\DeclareMathOperator{\op}{\mathsf{op}}
\DeclareMathOperator{\com}{\mathsf{Com}}
\DeclareMathOperator{\Top}{\mathsf{Top}}
\DeclareMathOperator{\Emb}{\mathsf{Emb}}
\DeclareMathOperator{\conf}{\mathsf{Conf}}
\DeclareMathOperator{\Spaces}{\mathsf{Spaces}}
\DeclareMathOperator{\spaces}{\mathsf{Spaces}}
\DeclareMathOperator{\spectra}{\mathsf{Spectra}}
\DeclareMathOperator{\mfld}{{\cM}\mathsf{fld}}
\DeclareMathOperator{\snglr}{{\cS}\mathsf{nglr}}
\DeclareMathOperator{\bsc}{{\cB}\mathsf{sc}}
\DeclareMathOperator{\dmfld}{\mathsf{Mfld}}
\DeclareMathOperator{\fr}{\sf fr}
\DeclareMathOperator{\Sing}{\mathsf{Sing}}
\DeclareMathOperator{\BO}{{\mathsf BO}}
\DeclareMathOperator{\Lie}{\sf Lie}
\def\ot{\otimes}
\DeclareMathOperator{\fin}{\sf Fin}
\DeclareMathOperator{\oo}{\infty}
\DeclareMathOperator{\hh}{\sf HH}
\DeclareMathOperator{\free}{\sf Free}
\DeclareMathOperator{\ran}{\sf Ran}
\DeclareMathOperator{\disk}{{\cD}\sf isk}
\DeclareMathOperator{\ddisk}{{\sf Disk}}
\DeclareMathOperator{\Disk}{{\cD}\sf isk}
\DeclareMathOperator{\Mfld}{{\cM}\sf fld}
\DeclareMathOperator{\bdelta}{\boldsymbol{\Delta}}
\newcommand{\ra}{\rightarrow}
\newcommand{\xra}{\xrightarrow}
\newcommand{\xla}{\xleftarrow}
\def\cA{\mathcal A}\def\cB{\mathcal B}\def\cC{\mathcal C}\def\cD{\mathcal D}
\def\cE{\mathcal E}\def\cF{\mathcal F}\def\cG{\mathcal G}
\def\cI{\mathcal I}
\def\cM{\mathcal M}\def\cO{\mathcal O}
\def\cS{\mathcal S}
\def\cU{\mathcal U}\def\cV{\mathcal V}\def\cX{\mathcal X}
\def\AA{\mathbb A}\def\DD{\mathbb D}
\def\EE{\mathbb E}\def\FF{\mathbb F}
\def\NN{\mathbb N}
\def\RR{\mathbb R}\def\SS{\mathbb S}\def\TT{\mathbb T}
\def\ZZ{\mathbb Z}
\def\sB{\mathsf B}\def\sC{\mathsf C}
\def\sH{\mathsf H}
\def\sN{\mathsf N}\def\sO{\mathsf O}\def\sP{\mathsf P}
\def\sU{\mathsf U}
\def\sZ{\mathsf Z}
\def\bH{\mathbf H}
\def\bDelta{\mathbf\Delta}
\begin{document}

\title{A factorization homology primer}
\author{David Ayala \& John Francis}
\address{Department of Mathematics\\Montana State University\\Bozeman, MT 59717}
\email{david.ayala@montana.edu}
\address{Department of Mathematics\\Northwestern University\\Evanston, IL 60208}
\email{jnkf@northwestern.edu}
\thanks{DA was supported by NSF awards 1507704. 
JF was supported by NSF award 1508040.}

\begin{abstract} 
This article amalgamates some foundational developments and calculations in factorization homology, including natural (co)filtrations thereof.
\end{abstract}

\keywords{Factorization homology. Topological quantum field theory.  Derived algebraic geometry.  Koszul duality.  Manifold calculus.  Hochschild homology. Stratified spaces.  Factorization algebras.}

\subjclass[2010]{Primary 55N40. Secondary 57R56, 57N35.}

\maketitle

%\tableofcontents

\section{Introduction}

This article is an introduction to factorization homology---or factorization algebras---in the topological setting; see also Lurie~\cite{dag}. For introductions in the closely related algebro-geometric or Riemannian settings, see Beilinson--Drinfeld~\cite{bd}, Francis--Gaitsgory~\cite{fg}, and Costello--Gwilliam~\cite{kevinowen}.

\medskip

Factorization homology takes:
\begin{itemize}
\item a geometric input: an $n$-manifold $M$;
\item an algebraic input: an $n$-disk algebra $A$, or a stack $X$ over $n$-disk algebras, in a symmetric monoidal $\oo$-category $\cV$;\footnote{$\cV$ is usually $\m_\Bbbk$, $\spectra$, or $\spaces$---respectively, chain complexes over a commutative ring $\Bbbk$, spectra, or spaces.
See~\S2.1.2 of~\cite{dag} for a thorough development of \emph{symmetric monoidal $\infty$-categories}.
}
\end{itemize}
The resulting factorization homology
\[
\int_M A
\]
is an object of $\cV$. It can be thought of as the integral of the algebra $A$ over the manifold $M$, in the same sense that ordinary homology $\sH_\ast(M,A)$ is given by integrating an abelian group $A$ over a space $M$.\footnote{This is the alpha version of the theory; there is a beta version, developed in \cite{emb1a}, which takes more general geometric and algebraic inputs.} The functor $\int A$ is covariant with respect to open embeddings in the manifold variable: for each fixed $M$, the functor on the poset of opens $\int A: {\sf Opens}(M) \ra \cV$, sending $U\subset M$ to $\int_U A$, defines a {\it factorization algebra} on $M$, both in the definitions of Beilinson--Drinfeld~\cite{bd} and Costello--Gwilliam~\cite{kevinowen}.

\medskip

\medskip

Factorization homology has three essential features making it technically advantageous:
\begin{enumerate}
\item {\bf Local-to-global principle}: $\ot$-excision, generalizing the Eilenberg--Steenrod axioms;
\item {\bf Filtration}: a generalization of the Goodwillie--Weiss embedding calculus;
\item {\bf Duality}: Poincar\'e/Koszul duality.
\end{enumerate}

The algebraic input $A$ can arise from a variety of sources: as an $n$-fold loop space in classical algebraic topology; or as a formal deformation of an $n$-Poisson algebra. This latter class of examples arises in higher quantization in mathematical physics, a major source of contemporary interest in factorization methods. 

\medskip

The following is a table of values for factorization homology, as the algebra input varies, which we will explain through this article.

\begin{center}
    \begin{tabular}{|p{2.8cm}|  p{4cm} | p{6.3cm}|}
    \hline
        {\bf Manifold input} & {\bf Algebra input}  &{\bf Factorization homology output}\\ \hline
    $M=S^1$ the circle & $A$ an associative algebra &$\displaystyle\int_{S^1}A \simeq \hh_\bullet(A)$ \newline Hochschild homology \\ \hline
    $M$ an $n$-manifold & $A$ an abelian group & $\displaystyle\int_MA \simeq \sH_\ast(M,A)$\newline ordinary homology  \\ \hline
        $M$ an $n$-manifold & $A$ a spectrum & $\displaystyle\int_MA \simeq \Sigma^{\oo}_\ast M \wedge A$\newline generalized homology  \\ \hline
        $M$ an $n$-manifold & $A$ a commutative algebra \newline (e.g., $\sym(V)$) & $\displaystyle\int_MA \simeq M\boxtimes A$ \newline categorical tensoring \newline (e.g., $\displaystyle\int_M \sym(V) \simeq \sym(\sC_\ast(M)\ot V)$ )  \\ \hline 
        $M$ an $n$-manifold & $A= \FF_nV$ a free $n$-disk algebra & $\displaystyle\int_M\FF_nV \simeq\bigoplus_{k\geq 0} \sC_\ast(\conf^{\fr}_k(M)) \underset{\Sigma_k \wr \sO(n)}\ot V^{\ot k}$  \\ \hline
        $M$ an $n$-manifold & $A=\Omega^n K$ the $n$-fold loop space of an $n$-connective space & $\displaystyle\int_M\Omega^n K\simeq \Mapc(M, K)$\newline \newline space of compactly-supported maps   \\ \hline
        $M$ an $n$-manifold & $A=\sU_n(\frak g)$ an $n$-disk enveloping algebra of a Lie algebra & $\displaystyle\int_M\sU_n (\frak g) \simeq \sC_\ast^{\Lie}\bigl(\sC_{\sf c}^\ast(M,\frak g)\bigr)$ \newline Lie algebra homology  \\ \hline
        $M$ an $n$-manifold & $A= {\sf Obs}(\RR^n)$ the local observables in a TQFT & $\displaystyle\int_M {\sf Obs}(\RR^n) \longrightarrow {\sf Obs}(M)$ \newline $\ot$-excisive left-approximation to global observables \\ \hline

    \end{tabular}
\end{center}

\medskip

The last row is not a genuine equivalence in general: 
the result of alpha factorization homology is not always the genuine global observables, but only those observables that are determined by the point-local ones. 
Additionally, the alpha version of factorization homology does not compute the state space of a field theory. This motivates a beta version of factorization homology, developed in \cite{emb1a}, where the algebraic input is a more general object, an $(\oo,n)$-category, and which satisfies a more general, and nuanced, form of excision.
As this beta version of factorization homology is more involved, it will not be discussed in this article.  

\medskip

While the subject of factorization homology is relatively new in name, it has important roots and antecedents. It derives foremost from the algebro-geometric theory of chiral and factorization algebras developed by Beilinson and Drinfeld in conformal field theory in \cite{bd}. Secondly, it has an antecedent in the labeled configuration space models of mapping spaces dating to the 1970s; it is closest to the models of Salvatore~\cite{salvatore} and Segal~\cite{segallocal}, but see also \cite{kallel}, \cite{bodig}, \cite{mcduff}, \cite{may}, and \cite{segal}. Aspects appear implicitly in other works, particularly Bott--Segal~\cite{bottsegal}, in realizing the Gelfand--Fuks cohomology of vector fields as the homology of a section space, and in foliation theory~\cite{mcduff2}. Factorization homology thus arises from the broad nexus of Segal's ideas on conformal field theory \cite{segalconformal}, on mapping spaces \cite{segal} and \cite{segallocal}, on foliations \cite{segal.foliations}, and on Lie algebra homology \cite{bottsegal}.\footnote{More recent works include \cite{akz}, \cite{andrade}, \cite{BZBJ1}, \cite{BZBJ2}, \cite{dck}, \cite{gtz1}, \cite{gtz2}, \cite{owen}, \cite{quoc}, \cite{klang}, \cite{ben}, \cite{ben.thesis}, among many others.}

\subsubsection{\bf Outline of contents}
We briefly outline the contents of this article.
{\bf Section 2} concerns $n$-manifolds and certain topological structures thereon. 
In {\bf~\S2.2-2.5} we discuss a classification of sheaves on an $\infty$-category $\Mfld_n$ of $n$-manifolds and embeddings among them: \emph{sheaves on $\Mfld_n$ are $n$-dimensional tangential structures}.  
In {\bf~\S2.5-2.7} we undergo a similar examination concerning \emph{Weiss sheaves}, thereby motivating and introducing an $\infty$-category $\Disk_n$, which plays a key role in factorization homology.  
In {\bf~\S2.8} we extend these developments to manifolds with boundary; in {\bf~\S2.9} we record a technical result concerning localizing with respect to isotopies, which plays a key role in essentially all of the technical results concerning factorization homology. 

{\bf Section 3} concerns homology theories for manifolds, and introduces factorization homology.  
In {\bf~\S3.2} we relate factorization homology to factorization algebras.  
The remainder of this section is devoted to establishing a pushforward formula for factorization homology, and characterizing homology theories in terms of factorization homology.
{\bf Section 4} applies this characterization to prove nonabelian Poincar\'e duality. 
{\bf Section 5} is devoted to a few formal calculations of factorization homology.  

{\bf Section 6} concerns filtrations and cofiltrations of factorization homology, whose layers are explicit in terms of configuration spaces.
These (co)filtrations offer access to identifying and controlling factorization homology.
{\bf Section 7} demonstrates this with a statement for how factorization homology intertwines Poincar\'e duality and Koszul duality.  
This \emph{Poincar\'e/Koszul duality} (Theorem~\ref{1main}) is the deepest result discussed in this article, and supplies a physical interpretation to factorization homology as observables of perturbative sigma-models. 

{\bf Section 8} is a synopsis of an adaptation of factorization homology for singular manifolds with coefficients in \emph{disk}-algebras and module data among such.

\subsubsection*{\bf Implementation of $\infty$-categories}
In this work, we use Joyal's {\it quasi-category} model  of $\oo$-category theory \cite{joyal}. 
Boardman and Vogt first introduced these simplicial sets in \cite{bv}, as weak Kan complexes, and their and Joyal's theory has been developed in great depth by Lurie in \cite{topos} and~\cite{dag}, our primary references; see the first article of \cite{topos} for an introduction. We use this model, rather than model categories or simplicial categories, because of the great technical advantages for constructions involving categories of functors, which are ubiquitous in this work.

More specifically, we work inside of the quasi-category associated to this model category of Joyal's.  In particular, each map between quasi-categories is understood to be an iso- and inner-fibration; (co)limits among quasi-categories are equivalent to homotopy (co)limits with respect to Joyal's model structure.
As we work in this way, we refer the reader to these sources for $\infty$-categorical versions of numerous familiar results and constructions among ordinary categories.  
In particular, we will make repeated use of the $\infty$-categorical adjoint functor theorem (Corollary~5.5.2.9 of~\cite{topos}); the straightening-unstraightening equivalence between Cartesian fibrations over an $\infty$-category $\cC$ and $\Cat$-valued contravariant functors from $\cC$ (Theorem~3.2.0.1 of~\cite{topos}), and likewise between right fibrations over $\cC$ and space-valued presheaves on $\cC$ (Theorem~2.2.1.2 of~\cite{topos}); the $\infty$-categorical version of the Yoneda functor $\cC\to \Psh(\cC)\simeq {\sf RFib}_\cC$ as it evaluates on objects as $c\mapsto \cC_{/c}$ (see~\S5.1 of~\cite{topos}).  

We will also make use of topological categories, such as $\mfld_n$ of $n$-manifolds and embeddings among them. By a functor $\cS \ra \cC$ from a topological category to an $\oo$-category $\cC$ we will always mean a functor $\sN\Sing \cS \ra \cC$ from the simplicial nerve of the ${\sf Kan}$-enriched category obtained by applying the product preserving functor $\Sing$ to the morphism topological spaces. 

The reader uncomfortable with this language can substitute the words ``topological category" for ``$\oo$-category" wherever they occur in this paper to obtain the correct sense of the results, but they should then bear in mind the proviso that technical difficulties may then abound in making the statements literally true. The reader only concerned with algebras in chain complexes, rather than spectra, can likewise substitute ``pre-triangulated differential graded category" for ``stable $\oo$-category" wherever those words appear, with the same proviso.

\section{Manifolds with tangential structure}
In this section, we first introduce a topological category of $n$-manifolds and embeddings among them.
We then define the \emph{tangent classifier} functor.
Using this tangent classifier, we then introduce tangential structures, and define an $\infty$-category of such structured $n$-manifolds.

\subsection{Manifolds and embeddings}

\begin{definition}\label{def.good}
A smooth manifold $M$ is \emph{finitary} if it admits a \emph{good} cover, which is to say it admits a finite open cover $\cU :=\{U\subset M\}$ with the property that, for each finite subset $S\subset \cU$, the intersection $\underset{U\in S}\bigcap U$ is either empty or diffeomorphic to Euclidean space.   
\end{definition}

\begin{observation}
A smooth manifold $M$ is finitary if and only if it is the interior of a compact smooth manifold with (possibly empty) boundary.
\end{observation}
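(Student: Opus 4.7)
The plan is to verify the two directions separately.

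\emph{Direction $(\Leftarrow)$.} Fix a Riemannian metric on the compact manifold-with-boundary $\overline M$. Every point of $\overline M$ admits a totally geodesically convex open neighborhood, diffeomorphic to $\RR^n$ at interior points and to a Euclidean half-space at boundary points. By compactness, finitely many such neighborhoods cover $\overline M$. Finite intersections of geodesically convex sets are geodesically convex, hence either empty or diffeomorphic to an open convex subset of $\RR^n$ or of the half-space, and therefore to Euclidean space. Restricting each member of this cover to the open interior $M = \overline M \smallsetminus \partial \overline M$ preserves the property: removing the bounding hyperplane from a half-space leaves an open convex subset of $\RR^n$, and finite intersections remain convex and open. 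This exhibits a finite good cover of $M$.

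\emph{Direction $(\Rightarrow)$.} Given a finite good cover $\cU = \{U_1,\dots,U_k\}$, the plan is to build a proper smooth function $f : M \to [0,\infty)$ with only finitely many critical values; standard Morse theory then produces the compactification. Choose a shrinking $\{V_i\}$ of $\cU$ with each $\overline{V_i}$ compact in $U_i$ and still covering $M$, smooth cutoffs $\rho_i : M \to [0,1]$ with $\rho_i \equiv 1$ on $V_i$ and $\mathrm{supp}(\rho_i) \subset U_i$, and, via the diffeomorphism $U_i \cong \RR^n$, proper exhaustions $g_i(x) := |x|^2$. Set $f := \sum_i \rho_i g_i$. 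This $f$ is smooth and proper: any sequence escaping every compact subset of $M$ must, for some fixed index $i$, escape every compact subset of $V_i$, where $\rho_i \equiv 1$ forces $f \geq g_i \to \infty$. A $C^\infty$-small perturbation of $f$ produces a proper Morse function; the finite good cover constrains $M$ to have the homotopy type of the finite nerve $|N\cU|$ via the nerve lemma, and this finite-CW-type is leveraged (by standard handle cancellation, or alternatively by an inductive construction of a finite handle decomposition indexed by the simplices of $N\cU$) to arrange only finitely many critical points.

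Granted such an $f$, for any regular value $c$ above all critical values, $\overline M := f^{-1}([0,c])$ is a compact smooth manifold with boundary $f^{-1}(c)$, and Ehresmann's theorem identifies $f^{-1}([c,\infty)) \cong \partial \overline M \times [c,\infty)$ via gradient flow, exhibiting $M$ as the interior of $\overline M$ through the collar. The main obstacle is in the forward direction: ensuring finitely many critical values rather than merely properness. The finite good cover is essential here not just as a finite-homotopy-type condition but for controlling the geometry of $M$ at infinity, so that a finite handle decomposition actually exists and not only the underlying homotopy model does.
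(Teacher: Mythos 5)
Your $(\Leftarrow)$ direction is sound: a compact manifold-with-boundary admits a finite cover by geodesically convex charts (take the metric to be a product on a collar of $\partial\overline{M}$ so that convexity behaves well there), and intersecting with the interior turns half-space charts into Euclidean ones while preserving convexity of every finite intersection. An equivalent route is to produce a finite good cover of $\overline{M}$ in the manifold-with-boundary sense of Definition~\ref{def.bdry.good} and intersect each member with $M=\overline{M}\smallsetminus\partial\overline{M}$. One small slip in your setup for the other direction: you cannot shrink $\cU$ to $\{V_i\}$ with each $\overline{V_i}$ compact, since then $M=\bigcup_i\overline{V_i}$ would itself be compact; the shrinking lemma gives $\overline{V_i}\subset U_i$, and that is all your properness argument actually uses (a sequence escaping compacta of $M$ has a subsequence contained in a single $V_{i_0}$, and that subsequence then escapes compacta of $U_{i_0}$, forcing $g_{i_0}\to\infty$).

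The substantive gap is the one you flag yourself: producing a proper Morse function with only finitely many critical points. Properness alone does not control the critical set—the terms $\rho_i g_i$ generate critical points throughout the region where the partition of unity varies, and nothing in the construction forces that region to lie in a compact set. The nerve lemma identifies the homotopy type of $M$ with the finite complex $|N\cU|$, but being the interior of a compact manifold with boundary is strictly stronger than having finite homotopy type; the difference is precisely the tameness-at-infinity question of Siebenmann's end theory (stability of $\pi_1$ at each end, finite domination, vanishing of the end obstruction in $\widetilde{K}_0$). Contractible open manifolds such as the Whitehead $3$-manifold show that homotopy finiteness alone cannot produce a compactification, so the good-cover hypothesis must enter the argument geometrically, not merely via the nerve. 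The appeal to ``standard handle cancellation'' does not close this gap: cancellation removes handle pairs finitely at a time and has no reason to terminate if the perturbed $f$ begins with infinitely many critical points. A complete argument would more plausibly go by induction on the cardinality of the cover, compactifying $U_1\cup\dots\cup U_j$ compatibly with a compactification of $(U_1\cup\dots\cup U_j)\cap U_{j+1}$ and gluing, or by arranging the exhaustions $g_i$ and cutoffs $\rho_i$ so the critical locus is visibly compact from the start; as written, neither is carried out.
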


\begin{terminology}\label{size}
In this article, by ``manifold'' we will mean ``finitary smooth manifold'', unless otherwise stated.  

\end{terminology}

\begin{remark}
The size restriction of Terminology~\ref{size} is not an essential requirement.
Indeed, each non-compact manifold is built as sequential colimit of finitary manifolds.
Correspondingly, this smallness condition could be removed and one could instead add to Definition~\ref{exc} the requirement that a homology theory preserves sequential colimits.
With this modification to that definition, the main results are still valid.  
\end{remark}

\begin{definition}\label{manifold} 
The symmetric monoidal topological category $\mfld_n$ has as objects smooth $n$-manifolds.
The space of morphisms in $\mfld_n$ from $M$ to $N$ is
\[
\Map_{\mfld_n}(M,N) :=  \Emb(M,N)
\]the set of embeddings of $M$ into $N$ equipped with the compact-open $C^\infty$ topology. 
The symmetric monoidal structure is disjoint union.
\end{definition}

Note that disjoint union is not the coproduct in $\mfld_n$; in fact, there are almost no nontrivial colimits in $\mfld_n$.

We will make ongoing use of the following result.
\begin{prop}\label{kister}
The continuous homomorphism of topological monoids $\sO(n) \to \Emb(\RR^n,\RR^n)$ is a homotopy equivalence.  
\end{prop}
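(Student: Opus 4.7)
We construct an explicit deformation retraction of $\Emb(\RR^n,\RR^n)$ onto $\sO(n)$ in three stages, progressively normalizing a general embedding: by translation, by radial dilation, and by orthonormalization. All three stages are manifestly compatible with the monoid structure on $\sO(n)$, so in particular the inclusion $\sO(n)\hookrightarrow \Emb(\RR^n,\RR^n)$ will be a homotopy equivalence.

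Stage one: the evaluation $\Emb(\RR^n,\RR^n)\to \RR^n$, $e\mapsto e(0)$, is equivariant under the translation action of $\RR^n$ on the target and admits translations as a global continuous section. Since the base $\RR^n$ is contractible, the inclusion of the fiber --- the subspace $\Emb_0(\RR^n,\RR^n)\subset \Emb(\RR^n,\RR^n)$ of origin-preserving embeddings --- is a homotopy equivalence.

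Stage two: for $e \in \Emb_0(\RR^n,\RR^n)$ and $t \in (0,1]$, define the rescaled embedding $e_t(x) := t^{-1} e(tx)$, which is again an origin-preserving smooth embedding. Uniform Taylor expansion at the origin, applied to $e$ and to each of its partial derivatives $\partial^{\alpha} e$, shows that $e_t \to D_0 e$ as $t \to 0^+$ on every compact set in every $C^k$-seminorm; hence the assignment $(t,e)\mapsto e_t$ extends continuously to $t = 0$ by $e_0 := D_0 e \in GL_n(\RR)\subset \Emb_0(\RR^n,\RR^n)$. The resulting map $[0,1] \times \Emb_0(\RR^n,\RR^n) \to \Emb_0(\RR^n,\RR^n)$ is a deformation retraction onto the subspace $GL_n(\RR)$ of linear embeddings, restricting to the identity there. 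Stage three is Gram--Schmidt orthonormalization (equivalently, polar decomposition), which retracts $GL_n(\RR)$ onto $\sO(n)$.

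The only technically delicate point is the continuity of the dilation family at $t=0$ in the compact-open $C^\infty$ topology. This is where smoothness enters essentially: applying Taylor's theorem to each $\partial^\alpha e$ and tracking the resulting $C^k$ error terms uniformly on compact subsets of $\RR^n$ (and locally uniformly in $e$) shows that $e_t$ and all of its derivatives converge to those of $D_0e$ as $t\to 0^+$. Once this is verified, composing the three retractions yields the claimed homotopy equivalence.
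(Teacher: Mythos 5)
Your proposal is correct and follows essentially the same route as the paper's proof: reduce by translation to origin-preserving embeddings, deformation retract those onto $\mathsf{GL}(n)$ via the dilation $e_t(x)=t^{-1}e(tx)$ with limit $D_0e$ at $t=0$, and finish with Gram--Schmidt onto $\sO(n)$. Your added attention to continuity of the dilation at $t=0$ in the compact-open $C^\infty$ topology is exactly the point the paper's formula leaves implicit.
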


\begin{proof}
This continuous homomorphism factors as a composite of continuous homomorphisms:
\[
\sO(n) \longrightarrow
{\sf GL}(n) \longrightarrow
\Emb_0(\RR^n,\RR^n) \longrightarrow
\Emb(\RR^n,\RR^n)~;
\]
where ${\sf GL}(n)$ is the topological group of linear automorphisms of $\RR^n$, and $\Emb_0(\RR^n,\RR^n)$ is the submonoid of $\Emb(\RR^n,\RR^n)$ consisting of those smooth embeddings that preserve the origin.  
The result is established upon showing each of these continuous homomoprhisms is a section of a deformation retraction.  
Well, the Gram--Schmidt process demonstrates a deformation retraction to the inclusion $\sO(n) \to {\sf GL}(n)$.
Translation $(\RR^n\xra{x\mapsto f(x)}\RR^n)\mapsto (\RR^n \xra{x\mapsto f(x)-tf(0)}\RR^n)$ demonstrates a deformation retraction to the inclusion $\Emb_0(\RR^n,\RR^n)\to \Emb(\RR^n,\RR^n)$.
The expression 
\[ f\mapsto \left\{ 
  \begin{array}{l l}
    \frac{f(tx)}{t} 
    & 
    \quad \text{for $t>0$}
    \\
    D_0f(x) 
    & 
    \quad \text{for $t=0$}
  \end{array} \right.
\] 
demonstrates a deformation retraction to the inclusion ${\sf GL}(n) \to \Emb_0(\RR^n,\RR^n)$.

\end{proof}

\noindent
Now, temporarily consider the full $\infty$-subcategory 
\[
\cE{\sf uc}_n  ~ \subset ~ \mfld_n
\]
consisting of the object $\RR^n$; this $\infty$-category is a delooping of the topological monoid $\Emb(\RR^n,\RR^n)$
We draw an immediate consequence of Proposition~\ref{kister}. First, note that there exists a natural functor
\[
\BO(n) \longrightarrow \cE{\sf uc}_n~,\qquad 
\ast\mapsto \RR^n~,
\]
from the classifying space of the orthogonal group, defined by the inclusion $\sO(n)\hookrightarrow \Emb(\RR^n,\RR^n)$.
\begin{cor}\label{euc}
The functor
\[
\BO(n) \longrightarrow \cE{\sf uc}_n~,\qquad 
\]
is an equivalence between $\infty$-categories.
In particular, there is a canonical equivalence between the $\infty$-category of presheaves
\[
\Psh(\cE{\sf uc}_n)~\simeq~\spaces_{/\BO(n)}
\]
and spaces over $\BO(n)$. 

\end{cor}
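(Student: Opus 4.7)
The plan is to verify that the functor $\BO(n) \to \cE{\sf uc}_n$ is essentially surjective and fully faithful, and then deduce the presheaf statement by a standard unstraightening argument.

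Essential surjectivity is immediate since $\cE{\sf uc}_n$ is defined as the full $\infty$-subcategory on the single object $\RR^n$, and the functor sends the unique object of $\BO(n)$ to $\RR^n$. For full faithfulness, I would identify both mapping spaces with familiar topological monoids. On the source, $\Map_{\BO(n)}(\ast,\ast) = \Omega\BO(n) \simeq \sO(n)$, and on the target, by definition of $\cE{\sf uc}_n$ as the delooping of the endomorphism monoid of $\RR^n$ in $\mfld_n$, we have $\Map_{\cE{\sf uc}_n}(\RR^n,\RR^n) = \Emb(\RR^n,\RR^n)$. The induced map on mapping spaces is the continuous homomorphism $\sO(n)\to\Emb(\RR^n,\RR^n)$, which is a homotopy equivalence by Proposition~\ref{kister}. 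Hence the functor is an equivalence of $\infty$-categories. Implicit here is that $\Emb(\RR^n,\RR^n)$ is grouplike, which is guaranteed by its homotopy equivalence with the group $\sO(n)$; without this, one could not view $\cE{\sf uc}_n$ as the delooping of an $\mathbb{E}_1$-group.

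For the second assertion, the equivalence $\BO(n)\weq \cE{\sf uc}_n$ induces an equivalence of presheaf $\infty$-categories $\Psh(\cE{\sf uc}_n)\simeq \Psh(\BO(n))$. Because $\BO(n)$ is an $\infty$-groupoid, $\BO(n)^\op\simeq \BO(n)$ canonically, so $\Psh(\BO(n)) = \Fun(\BO(n)^\op,\spaces)\simeq \Fun(\BO(n),\spaces)$. Applying the straightening/unstraightening equivalence of Theorem~2.2.1.2 of~\cite{topos}, space-valued functors out of $\BO(n)$ correspond to right fibrations over $\BO(n)$; since the base is a Kan complex, every map of spaces to $\BO(n)$ is automatically a right fibration up to equivalence, so the $\infty$-category of right fibrations is identified with the slice $\spaces_{/\BO(n)}$.

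The main potential obstacle is justifying that $\cE{\sf uc}_n$ really is the $\infty$-categorical delooping of the topological monoid $\Emb(\RR^n,\RR^n)$ in a way compatible with the nerve-construction used to pass from topological categories to quasicategories; this is where grouplikeness of $\Emb(\RR^n,\RR^n)$ is needed. Once Proposition~\ref{kister} is in hand, however, this is automatic, and the rest of the argument is a formal combination of Whitehead's theorem at the level of mapping spaces and the unstraightening classification of presheaves on a space.
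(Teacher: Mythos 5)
Your argument is correct and is precisely what the paper leaves implicit in calling this an ``immediate consequence'' of Proposition~\ref{kister}: both categories have a single object so essential surjectivity is trivial, full faithfulness is exactly the content of Proposition~\ref{kister} applied to the endomorphism spaces, and the presheaf statement follows by unstraightening over the Kan complex $\BO(n)$. The grouplikeness remark is superfluous, though: a functor between $\infty$-categories that is essentially surjective and fully faithful is an equivalence irrespective of whether the target is an $\infty$-groupoid, and the fact that $\cE{\sf uc}_n$ is an $\infty$-groupoid then follows as a consequence of the equivalence with $\BO(n)$ rather than being an input to the argument.
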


\subsection{Sheaves on $n$-manifolds}\label{sec.sheaf.classify}
Proposition~\ref{kister} offers a classification of sheaves on the $\infty$-category of $B$-framed $n$-manifolds, with respect to a standard Grothendieck topology.

\begin{definition}
The symmetric monoidal category $\dmfld_n$ has objects $n$-manifolds; its morphisms are smooth embeddings; the symmetric monoidal structure is disjoint union.  
The full subcategory ${\sf Euc}_n\subset \dmfld_n$ consists of the object $\RR^n$. In the \emph{standard} Grothendieck topology on $\dmfld_n$, a sieve $\cU \subset \dmfld_{n/M}$ is a covering sieve exactly if, for each element $x\in M$, there is an object $(U\xra{e} M)\in \cU$ for which $\{x\}\subset e(U)$.
The $\infty$-category of \emph{sheaves} on $\dmfld_n$ is the full $\infty$-subcategory
\[
\shv(\dmfld_n)~\subset~\Psh(\dmfld_n)
\]
consisting of those presheaves $\cF\colon \dmfld_n^{\op} \to \spaces$ for which, for each standard covering sieve $\cU\subset \dmfld_{n/M}$, the canonical functor
\[
(\cU^{\op})^{\triangleleft}
~\simeq~
(\cU^{\triangleright})^{\op}
\longrightarrow
(\dmfld_{n/M})^{\op}
\longrightarrow
\dmfld_n^{\op}
\xra{~\cF~}
\spaces
\]
is a limit diagram, where
\[
\cU^\triangleright:=\cU\times[1]\underset{\cU\times\{1\}}\amalg \ast
\]
is the right-cone of $\cU$ and likewise $\cU^\triangleleft$ is the left-cone.

\end{definition}

Note the evident functor between $\infty$-categories 
\[
\dmfld_n
\longrightarrow
\mfld_n
\]
defined by the natural identity mapping from sets of embeddings, with the discrete topology, to the same sets of embeddings endowed the compact-open $C^\infty$ topology. 
\begin{definition}\label{def.sheaves}
The $\infty$-category of \emph{sheaves} on $\mfld_n$ is the pullback among $\infty$-categories:
\[
\xymatrix{
\shv(\mfld_n)  \ar[rr]  \ar[d]
&&
\Psh(\mfld_n)  \ar[d]^-{\rm restriction}
\\
\shv(\dmfld_n) \ar[rr]
&&
\Psh(\dmfld_n) .
}
\]

\end{definition}

The next result serves to contextualize the recurring role of spaces over $\BO(n)$. 
This result is not essential for the overall logic of this paper, 
so we do not supply a proof.
The result and its proof is, however, analogous to (and simpler than) Proposition~\ref{weiss.classify} and its proof.
%and its proof makes use of results established later in this work (which, of course, do not logically depend on it).  
\begin{prop}[\cite{aft1}]\label{sheaf.classify}
Restriction along the functor $\BO(n) \to \mfld_n$ defines an equivalence between $\infty$-categories
\[
\shv(\mfld_n)
\xra{~\simeq~}
\Psh\bigl(\BO(n)\bigr)~\simeq~\spaces_{/\BO(n)}~.
\]

\end{prop}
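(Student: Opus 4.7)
The plan is to factor the restriction functor as
\[
\shv(\mfld_n) \xrightarrow{~\rho~} \Psh(\cE{\sf uc}_n) \xrightarrow{~\simeq~} \spaces_{/\BO(n)},
\]
where the second equivalence is Corollary~\ref{euc}. It therefore suffices to show that $\rho$, restriction along $\cE{\sf uc}_n\hookrightarrow \mfld_n$, is an equivalence of $\infty$-categories.

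For essential surjectivity, I would construct the inverse by sending a space $X \to \BO(n)$ to the presheaf $\cF_X$ on $\mfld_n$ given by
\[
\cF_X(M) ~:=~ \Gamma\bigl(M;\, \tau_M^\ast X\bigr),
\]
the space of sections of the bundle over $M$ obtained by pulling back $X\to\BO(n)$ along the tangent classifier $\tau_M\colon M\to\BO(n)$. Functoriality under open embeddings is by restriction of sections, and isotopy-invariance is automatic because $\tau_M$ depends only on the homotopy class of an embedding. That $\cF_X$ satisfies descent for the standard topology on $\dmfld_n$ follows from the fact that the section functor is a sheaf on the underlying topological space of $M$; the compatibility $\cF_X(U\sqcup V)\simeq \cF_X(U)\times \cF_X(V)$ comes from the sheaf condition for the evident covering by the two inclusions, whose \v{C}ech nerve is trivial. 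Restricting $\cF_X$ to $\cE{\sf uc}_n$ recovers $X$: one has $\cF_X(\RR^n)\simeq X$ since $\RR^n$ is contractible, and the residual $\sO(n)$-action on $\cF_X(\RR^n)$ matches the given one on $X$ under the equivalence of Corollary~\ref{euc}.

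For full faithfulness, I would argue that every sheaf $\cF$ on $\mfld_n$ is recovered from its restriction to $\cE{\sf uc}_n$ by the above formula. Given any $M$, choose a good cover $\cU$ (Definition~\ref{def.good}); each term in the associated \v{C}ech nerve is a disjoint union of copies of $\RR^n$, so the sheaf condition and the disjoint-union compatibility express $\cF(M)$ as a limit over the \v{C}ech diagram indexed by $\cU$, computed purely in terms of $\cF|_{\cE{\sf uc}_n}$. Since the sheaf $\cF_{\cF(\RR^n)}$ produced above satisfies descent against the same \v{C}ech nerve, the natural comparison $\cF_{\cF(\RR^n)}\to \cF$ is an equivalence at every $M$. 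The same limit argument applied to the internal mapping space presheaf $\ulhom(\cF,\cG)$ shows that $\rho$ is fully faithful.

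The principal technical nuance is coordinating the two layers of the definition of $\shv(\mfld_n)$: the Grothendieck-topological descent condition (from $\shv(\dmfld_n)$) reduces every $M$ to its values on a good cover by Euclideans, while the continuous-topology condition of being a functor from $\mfld_n$ (and hence isotopy-invariant) is what, via Corollary~\ref{euc}, reduces $\cE{\sf uc}_n$ itself to $\BO(n)$. These two reductions cooperate precisely because every finitary manifold (Terminology~\ref{size}) admits a good cover whose \v{C}ech nerve lies entirely in the Euclidean locus of $\dmfld_n$, so no auxiliary covering sieves of $\RR^n$ ever need to be handled separately.
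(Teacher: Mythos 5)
The paper does not supply a proof of this proposition; it cites \cite{aft1} and remarks that the argument is analogous to (and simpler than) that of Proposition~\ref{weiss.classify}, whose proof it does give. Your proposal has the right architecture: the formula $\cF_X(M) := \Map_{/\BO(n)}(\tau_M, X)$ is exactly the right Kan extension $\iota_\ast$ along $\iota\colon\cE{\sf uc}_n \hookrightarrow \mfld_n$ evaluated at $M$ (via the identification $\cE{\sf uc}_{n/M}\simeq M$ of Corollary~\ref{tang-class}), so you are implicitly running the adjunction $\iota^\ast \dashv \iota_\ast$ and arguing that it restricts to an equivalence on the relevant subcategories---which is the same blueprint as the paper's template proof of Proposition~\ref{weiss.classify}.

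The two load-bearing steps, however, are asserted rather than argued, and they are precisely the ones that require real input in the template. First, that $\cF_X$ is a sheaf: you attribute this to the classical fact that section functors are sheaves, but in the present formulation the content is that for each standard covering sieve $\cU\subset\dmfld_{n/M}$ the canonical map $\colim_{\cU}\tau_U\to\tau_M$ is an equivalence of spaces over $\BO(n)$, after which $\Map_{/\BO(n)}(-,X)$ carries the colimit to a limit. That colimit identity is the partition-of-unity/hypercover input---Theorem~A.3.1 of \cite{dag}, equivalently \cite{dugger-isaksen}---that step~(2) of the proof of Proposition~\ref{weiss.classify} invokes for unordered configuration spaces; here one needs only the $i=1$ case (which is $M$ itself), and this is exactly what the paper means by ``simpler.'' Second, the unit of the adjunction runs $\cF\to\iota_\ast\iota^\ast\cF=\cF_{\cF(\RR^n)}$, opposite to the direction you wrote, and showing it is an equivalence requires more than descent against one good cover's \v{C}ech nerve: one must identify the limit over $(\cE{\sf uc}_{n/M})^{\op}$ (which is what $\iota_\ast\iota^\ast\cF$ computes) with the limit over the covering sieve of Definition~\ref{def.sheaves}, and this is where the basis argument and an initiality statement of the type Proposition~\ref{EEd-vs-EE} supplies in the Weiss case both enter. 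Once these ingredients are named, the proof closes: full faithfulness is then automatic, so the additional $\ulhom(\cF,\cG)$ argument is redundant, and the check that $\iota^\ast\iota_\ast X\simeq X$ follows formally from fully-faithfulness of $\iota$ without the contractibility observation.
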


\begin{remark}
Proposition~\ref{sheaf.classify} is notable in that the$\oo$-topos $\shv(\mfld_n)$ is \emph{free} on its completely compact objects, which is a connected $\infty$-groupoid: $\BO(n)$.
Consequently, a sheaf on the $\infty$-category $\mfld_n$ is uniquely determined by its value on $\RR^n$ as an $\sO(n)$-space, or equivalently a space over $\BO(n)$, without reference to a sheaf condition.  

\end{remark}

\begin{remark}
Proposition~\ref{sheaf.classify} asserts that a sheaf on $\mfld_n$ is precisely the datum of a space $B$ equipped with a map $B\to \BO(n)$, which is equivalent to a rank-$n$ vector bundle $E\to B$ over $B$.  
In this equivalence, the value of the sheaf corresponding to $B\to \BO(n)$ on an object $M\in \mfld_n$ is the space $\Map_{/\BO(n)}\bigl( M , B \bigr)$ of lifts:
\[
\xymatrix{
&&
B  \ar[d]
\\
\Emb(\RR^n,M)_{\sO(n)} \ar[rr]  \ar@{-->}[urr]^-{\varphi}
&&
\BO(n) ,
}
\]
where the bottom left space is the $\sO(n)$-coinvariants of the space of morphisms in $\mfld_n$ from $\RR^n$ to $M$.  
Corollary~\ref{tang-class} will identify the bottom horizontal arrow as the tangent classifier, $M \xra{\tau_M} \BO(n)$.
Through that identification, such a lift is equivalent data to a map $\varphi\colon M\to B$ together with an isomorphism $TM \cong \varphi^\ast E$ between vector bundles over $M$.

\end{remark}

\begin{remark}\label{4}
The canonical functor $\dmfld_n \to \mfld_n$ carries morphisms $U\hookrightarrow V$ that are isotopy equivalences to equivalences in the $\infty$-category $\mfld_n$.  
It follows that the restriction functor factors
\[
\shv(\mfld_n)\longrightarrow \shv^{\sf l.c.}(\dmfld_n)~\subset~\shv(\dmfld_n)
\]
through the $\infty$-subcategory of locally constant sheaves.
We warn the reader that this first functor is \emph{not} an equivalence.
Indeed, it follows from a result of Segal (\cite{segal.foliations}) that the shape of the$\oo$-topos $\shv(\dmfld_n)$ is the classifying space of the groupoid completion ${\sf BEmb}^\delta(\RR^n,\RR^n)$ of the discrete monoid of smooth self-embeddings of $\RR^n$.
As such, through Proposition~\ref{sheaf.classify}, this functor can be identified as
\[
\Psh\bigl( \BO(n)\bigr)
\longrightarrow
\Psh\bigl( {\sf BEmb}^\delta(\RR^n,\RR^n) \bigr)~,
\]
which is restriction along the canonical functor ${\sf BEmb}^\delta(\RR^n,\RR^n)\ra {\sf BEmb}(\RR^n,\RR^n)\simeq \BO(n)$. The work~\cite{haefliger} establishes that this map is not an equivalence.  

\end{remark}

The following result, proved in~\cite{aft1}, contrasts with Remark~\ref{4}.
Indeed, the shape of the$\oo$-topos $\shv(\dmfld_{n/M})$ is the underlying space of $M$, while $\shv(\mfld_{n/M})$ is identified, as a consequence of Corollary~\ref{tang-class}, as the $\infty$-category $\spaces_{/M}$ of local systems on $M$.
\begin{prop}\label{5}
For each $n$-manifold $M$, the restriction functor $\shv(\mfld_{n/M})\ra\shv(\dmfld_{n/M})$ factors through an equivalence between $\infty$-categories
\[
\shv(\mfld_{n/M})\xra{~\simeq~} \shv^{\sf l.c.}(\dmfld_{n/M})~.
\]

\end{prop}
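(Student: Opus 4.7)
The plan is to identify both $\shv(\mfld_{n/M})$ and $\shv^{\sf l.c.}(\dmfld_{n/M})$ with the $\infty$-category $\spaces_{/M}$ of local systems on $M$, and then verify that under these identifications the restriction functor becomes the identity. That the functor factors through locally constant sheaves is immediate from the observation in Remark~\ref{4}: any isotopy equivalence $U\hookrightarrow V$ over $M$ is sent to an equivalence in $\mfld_{n/M}$, so restriction along $\dmfld_{n/M}\to\mfld_{n/M}$ automatically inverts such morphisms.

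For the source, I would apply an $M$-relative analogue of Proposition~\ref{sheaf.classify}. Consider the full $\infty$-subcategory of $\mfld_{n/M}$ on objects of the form $\RR^n\hookrightarrow M$; its space of objects is $\Emb(\RR^n,M)$, and by Proposition~\ref{kister} each endomorphism monoid is equivalent to $\sO(n)$. A good-cover argument shows that this subcategory is dense for the standard Grothendieck topology on $\mfld_{n/M}$, so an $\infty$-categorical Yoneda argument identifies $\shv(\mfld_{n/M})$ with presheaves on the groupoid completion of this subcategory, which is $\Emb(\RR^n,M)_{\sO(n)}$. By Corollary~\ref{tang-class}, this $\sO(n)$-coinvariants space is naturally equivalent to $M$ via the tangent classifier, yielding
\[
\shv(\mfld_{n/M})~\simeq~\spaces_{/M}~.
\]

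For the target, I would fix a good cover $\cU$ of $M$ in the sense of Definition~\ref{def.good} and let $\cU^{\cap}\subset\dmfld_{n/M}$ be the full subcategory generated by finite intersections. By descent, $\shv(\dmfld_{n/M})$ is equivalent to the $\infty$-category of Cech-descent presheaves on $\cU^{\cap}$. Imposing local constancy inverts every inclusion between elements of $\cU^{\cap}$, since each such inclusion of nonempty intersections is a (discretely) open embedding between Euclidean opens and hence an isotopy equivalence. Thus $\shv^{\sf l.c.}(\dmfld_{n/M})$ is equivalent to presheaves of spaces on the groupoid completion of $\cU^{\cap}$, which by the nerve theorem is the underlying space of $M$. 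This yields
\[
\shv^{\sf l.c.}(\dmfld_{n/M})~\simeq~\spaces_{/M}~.
\]

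To conclude, I would check on representables that the restriction functor intertwines these two equivalences: for a chart $\RR^n\hookrightarrow M$, both identifications evaluate a sheaf at that chart as the stalk, over the corresponding point of $M$, of the associated local system. The main obstacle is the target identification: one must carefully justify reducing from sheaves on all of $\dmfld_{n/M}$ to the good-cover subcategory, and confirm that inverting isotopies on this cofinal subcategory produces exactly the underlying space of $M$ rather than a finer invariant like the $\delta$-shape appearing in Remark~\ref{4}. The source identification, by contrast, is essentially a parametrized rerun of the proof of Proposition~\ref{sheaf.classify}.
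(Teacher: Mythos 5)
The paper does not supply a proof of this proposition; it is cited to~\cite{aft1}, with only a remark indicating the intended identifications (the shape of $\shv(\dmfld_{n/M})$ is the underlying space of $M$, while $\shv(\mfld_{n/M})\simeq\spaces_{/M}$ via Corollary~\ref{tang-class}). So I can only assess your outline on its merits, not compare it to a proof in the paper.

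Your source identification is sound in spirit: it is a fibered rerun of Proposition~\ref{sheaf.classify}, and the key inputs (that $\cE{\sf uc}_{n/M}$ is an $\infty$-groupoid equivalent to $M$, via Proposition~\ref{kister} and Corollary~\ref{tang-class}, and that Euclidean opens generate the standard topology) are correctly identified. The density-implies-restriction-is-an-equivalence step is not automatic and does require an argument analogous to the paper's proof of Proposition~\ref{weiss.classify}, but you acknowledge this.

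The genuine gap is the target identification, specifically the first sentence: ``By descent, $\shv(\dmfld_{n/M})$ is equivalent to the $\infty$-category of Cech-descent presheaves on $\cU^{\cap}$.'' This is false as stated. The poset $\cU^{\cap}$ of nonempty finite intersections of a good cover is a finite poset, and the $\infty$-topos $\shv(\dmfld_{n/M})$ (which is equivalent to $\shv(M)$, sheaves of spaces on the underlying topological space, since smooth opens form a basis) cannot be encoded as presheaves with descent on a finite poset. Moreover, within $\cU^{\cap}$ there are essentially no nontrivial covering sieves, so ``Cech-descent presheaves on $\cU^{\cap}$'' would simply be $\Psh(\cU^{\cap})$, which is certainly not $\shv(M)$. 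The good-cover/nerve-theorem reduction is only available \emph{after} imposing local constancy: a locally constant sheaf is determined by its restriction to a good cover, precisely because it is a local system. But then the statement ``locally constant sheaves on $M$ are equivalent to $\spaces_{/M}$'' is the monodromy/exodromy theorem for locally contractible paracompact spaces (cf.\ Lurie, Higher Algebra, Appendix A), which is itself the real content here, not a corollary of the nerve theorem. So the order of operations in your target argument needs to be reversed, and even then the crucial equivalence is a cited theorem rather than an elementary Cech calculation. You correctly flag this as ``the main obstacle,'' but the proposed route through Cech-descent presheaves on a finite subcategory does not close it.
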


\subsection{Tangent classifier}

We will be particularly interested in $n$-manifolds that are equipped with the additional structure of a section of a given sheaf on $\mfld_n$.
Through Proposition~\ref{sheaf.classify}, such a section is a continuous system of linear structures on each tangent space, such as an orientation, spin structure, or a framing.  
Tangential structure of this sort can be swiftly accommodated by way of the \emph{tangent classifier}: each $n$-manifold $M$ has a tangent bundle, and it is classified by a map $\tau_M \colon M \to \BO(n)$ to the classifying space of the topological group $\sO(n)$ of linear isometries of $\RR^n$.
For $B\to \BO(n)$ a map between spaces, a $B$-framing on $M$ is a homotopy commutative diagram among spaces
\[
\xymatrix{
&&
B  \ar[d]
\\
M \ar[rr]_-{\tau_M} \ar[urr]^-{\varphi}
&&
\BO(n) . 
}
\]

\begin{example}
Consider the surjective homomorphism
\[
\sO(n) \xra{~\rm determinant~}\sO(1)~.
\]
The kernel of this homomorphism is ${\sf SO}(n)\subset \sO(n)$, and a ${\sf BSO}(n)$-framing on a topological $n$-manifold is precisely an orientation.  

\end{example}

Toward defining an $\infty$-category $\mfld_n^B$ of $B$-framed $n$-manifolds, we next explain how to make the tangent classifier continuously functorial among open embeddings.  
Namely, through Corollary~\ref{euc}, we define the tangent classifier as the restricted Yoneda functor:
\begin{equation}\label{tau}
\tau\colon \mfld_n \xra{\rm Yoneda} \Psh(\mfld_n) \xra{\rm restriction} \Psh(\cE{\sf uc}_n)~\underset{\rm Cor~\ref{euc}}\simeq~\spaces_{/\BO(n)}~.
\end{equation}
We postpone to Corollary~\ref{tang-class} justification that the value $\tau_M$ is indeed the familiar tangent classifier, namely that
the functor $\tau$ sends a manifold $M$ to the map of spaces $M\ra \BO(n)$, homotopy-coherently among embeddings in the manifold variable.

\begin{observation}\label{3}
Because $\RR^n$ is connected, this tangent classifier $\tau\colon \mfld_n \to \spaces_{/\BO(n)}$ is symmetric monoidal with respect to coproducts in the codomain.
In other words, $\tau$ carries finite disjoint unions to finite coproducts over $\BO(n)$.

\end{observation}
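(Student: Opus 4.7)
The plan is to verify that $\tau$ preserves finite coproducts (including the empty one), which for functors between $\infty$-categories with cocartesian symmetric monoidal structure suffices to upgrade the functor to a symmetric monoidal one.

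Unwinding the definition~(\ref{tau}), the value of $\tau$ on $M$ is the presheaf on $\cE{\sf uc}_n$ carrying the unique object $\RR^n$ to the space $\Emb(\RR^n, M)$ with its $\Emb(\RR^n,\RR^n)$-action; under the equivalence of Corollary~\ref{euc} this is the space $\Emb(\RR^n,M)_{\sO(n)}$ equipped with its canonical map to $\BO(n) \simeq \ast_{\sO(n)}$. First I would treat the empty case: $\Emb(\RR^n,\emptyset)=\emptyset$, whose associated object in $\spaces_{/\BO(n)}$ is the initial one.

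For the binary case, the connectedness of $\RR^n$ implies that the image of any embedding $\RR^n \hookrightarrow M \sqcup N$ lies entirely in $M$ or entirely in $N$, so as $\sO(n)$-spaces there is a canonical homeomorphism
\[
\Emb(\RR^n, M \sqcup N) ~\cong~ \Emb(\RR^n, M) \sqcup \Emb(\RR^n, N)~.
\]
Hence the restricted Yoneda functor $\mfld_n \to \Psh(\cE{\sf uc}_n)$ sends disjoint unions to coproducts of presheaves (which are computed object-wise in $\sO(n)$-spaces).

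Finally, the equivalence $\Psh(\cE{\sf uc}_n) \simeq \spaces_{/\BO(n)}$ of Corollary~\ref{euc} is an equivalence of $\infty$-categories, hence in particular preserves coproducts; explicitly, the Borel construction $X \mapsto X_{\sO(n)}$ carries a coproduct of $\sO(n)$-spaces to the coproduct of the corresponding spaces over $\BO(n)$. Composing these two coproduct-preservations yields $\tau(M \sqcup N) \simeq \tau(M) \sqcup \tau(N)$ in $\spaces_{/\BO(n)}$, and together with the empty case this is exactly the claim that $\tau$ is symmetric monoidal for the cocartesian structure on the target. There is no real obstacle; the only place connectedness of $\RR^n$ is used is in the sorting-by-component step, and without it the statement would fail (e.g.\ for $\RR^n \sqcup \RR^n$ replacing the source).
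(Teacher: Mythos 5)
Your proof is correct. The paper actually gives no proof of this observation—it is stated as a one-line consequence and left to the reader—so there is nothing in the text to compare against; your argument fills in exactly the expected details. The only point worth tightening is the phrasing of the general principle you invoke at the outset: you write ``for functors between $\infty$-categories with cocartesian symmetric monoidal structure,'' but the source $\mfld_n$ is emphatically \emph{not} cocartesian (the paper points out that disjoint union is not the coproduct in $\mfld_n$). The correct statement of the principle is that when the \emph{target} carries the cocartesian symmetric monoidal structure, any functor is canonically lax symmetric monoidal, and it is strong symmetric monoidal precisely when it carries the monoidal unit to the initial object and tensor products to coproducts. With that adjustment, the rest of your argument—the reduction to $\Emb(\RR^n,\emptyset)=\emptyset$, the splitting $\Emb(\RR^n, M\sqcup N)\cong \Emb(\RR^n,M)\sqcup\Emb(\RR^n,N)$ forced by connectedness of $\RR^n$, and the observation that the Borel construction commutes with coproducts under the equivalence $\Psh(\cE{\sf uc}_n)\simeq\spaces_{/\BO(n)}$—is exactly right.
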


\subsection{$B$-framed manifolds}

In this section, we fix a space $B$ as well as a map $B\to \BO(n)\simeq {\sf BGL}(n)$, which is equivalent to a rank-$n$ vector bundle $E\to B$ over $B$.  
Through Proposition~\ref{sheaf.classify}, such data defines a sheaf on $\mfld_n$.  
We now consider an $\infty$-category $\mfld_n^B$, of $n$-manifolds equipped with sections of this sheaf.

\begin{definition}\label{bmanifold}
The symmetric monoidal $\infty$-category $\mfld_n^B$ of $B$-framed $n$-manifolds is the limit in the following diagram:
\[
\xymatrix{
\mfld^B_n    \ar[rr]    \ar[d]
&&
\spaces_{/B}    \ar[d]
\\
\mfld_n    \ar[rr]^-\tau
&&
\spaces_{/\BO(n)}  .
}
\]
\end{definition}

With coproduct as the symmetric monoidal structure on $\spaces_{/B}$ and $\spaces_{/\BO(n)}$, the right vertical functor is canonically symmetric monoidal.  
Observation~\ref{3} grants that the bottom horizontal functor $\tau$ in the diagram in Definition~\ref{bmanifold} is canonically symmetric monoidal.
Formally, the forgetful functor ${\sf Cat}_{\infty}^{\ot} \to \Cat$ from symmetric monoidal $\infty$-categories to $\infty$-categories creates and preserves limits.
We conclude that Definition~\ref{bmanifold} indeed defines an $\infty$-category equipped with a symmetric monoidal structure, as claimed.

\begin{remark}
We describe the objects and spaces of morphism spaces in the $\infty$-category $\mfld_n^B$.
\begin{itemize}
\item
An object is a \emph{$B$-framed $n$-manifold}, which is the data of an $n$-manifold $M$ and a lift
\[
\xymatrix{
&&
B  \ar[d]
\\
M \ar[rr]_-{\tau_M} \ar[urr]^-{\varphi}
&&
\BO(n) 
}
\]
of its tangent classifier.  
Here, we understand that this is a commutative diagram in the $\infty$-category $\spaces$, i.e., a homotopy coherently commutative diagram of spaces.  
Equivalently, for $E\to B$ the rank-$n$ vector bundle over $B$ classified by the given map $B\to \BO(n)$, such a commutative diagram is the data of a map $M\xra{\varphi} B$ between spaces together with an isomorphism $TM \cong \varphi^\ast E$ between vector bundles over $M$.

\item
Let $(M,\varphi)$ and $(N,\psi)$ be $B$-framed $n$-manifolds, as above. 
The space of morphisms in $\mfld_n^B$ from $(M,\varphi)$ to $(N,\psi)$ is the space of \emph{$B$-framed embeddings}, which is the limit space:
\begin{equation}\label{1}
\xymatrix{
\Emb^B(M,N)\ar[r]\ar[d]&\Map_{/B}(M,N)\ar[d]\\
\Emb(M,N)\ar[r]&\Map_{/\BO(n)}(M,N).
}
\end{equation}
Here, for $M \xra{\varphi}X\xla{\psi}N$ a diagram in $\spaces$, then $\Map_{/X}(M,N)$ is the space of maps from $M$ to $N$ over $X$: it is the limit space:
\[
\xymatrix{
\Map_{/X}(M,N) \ar[rr] \ar[d]
&&
\Map(M,N) \ar[d]^-{-\circ \psi}
\\
\ast \ar[rr]^-{\{\varphi\}}
&&
\Map(M,X) .
}
\]
That is, a point in $\Map_{/X}(M,N)$ is represented by a map $M\ra N$ and a homotopy between the two resulting maps from $M$ to $X$

\end{itemize}

\end{remark}

Proposition~\ref{kister} yields the following.

\begin{observation} 
The space of $B$-framings on $\RR^n$ is the fiber of the given map $B\to \BO(n)$ over the point $\ast \xra{\{\RR^n\}} \BO(n)$ selecting the vector space $\RR^n$.  
For $\varphi$ a $B$-framing on $\RR^n$, there is a canonical equivalence between monoid-objects in $\spaces$:
\[
\Omega_{\varphi} B~\simeq~ \Emb^B(\RR^n, \RR^n)~;
\]
here, $\Omega_{\varphi} B$ is the monoid in $\spaces$, via concatenation, of loops in $B$ based at the point $\ast \xra{\varphi} {\sf fiber}\bigl(B\to \BO(n)\bigr) \to  B$.
\end{observation}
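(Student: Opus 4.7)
The plan is to exploit the contractibility of $\RR^n$ to collapse every mapping space in sight, and then read off the pullback defining $\Emb^B$ against Proposition~\ref{kister}.

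First, for the identification of the space of $B$-framings on $\RR^n$: by definition a $B$-framing is a lift of $\tau_{\RR^n}\colon \RR^n\to \BO(n)$ through $B\to \BO(n)$. Since $\RR^n$ is contractible, $\tau_{\RR^n}$ is canonically homotopic to the constant map at the basepoint $\ast\xra{\{\RR^n\}}\BO(n)$, so the space of such lifts is, by the universal property of the pullback, canonically equivalent to the space of lifts of this constant map, i.e., to the fiber ${\sf fiber}\bigl(B\to \BO(n)\bigr)$.

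Second, fix a $B$-framing $\varphi$ and compute the pullback (\ref{1}) with $M=N=\RR^n$. Contractibility of $\RR^n$ makes both $\Map(\RR^n,B)$ and $\Map(\RR^n,\BO(n))$ equivalent to $B$ and $\BO(n)$ respectively, so unwinding the definition of $\Map_{/X}$ gives $\Map_{/B}(\RR^n,\RR^n)\simeq \Omega_\varphi B$ and $\Map_{/\BO(n)}(\RR^n,\RR^n)\simeq \Omega_{\tau_{\RR^n}}\BO(n)$. By Proposition~\ref{kister} the left-hand map of (\ref{1}) is $\sO(n)\to \Omega\BO(n)$; I would identify this as an equivalence either directly via the standard delooping equivalence, or more cleanly by observing that the restriction of $\tau$ to $\cE{\sf uc}_n$ is, under Corollary~\ref{euc}, the restricted Yoneda embedding $\BO(n)\hookrightarrow \spaces_{/\BO(n)}$, which is fully faithful. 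Since a pullback along an equivalence is an equivalence, this yields the canonical equivalence of spaces
\[
\Emb^B(\RR^n,\RR^n)\xra{~\simeq~} \Map_{/B}(\RR^n,\RR^n)\simeq \Omega_\varphi B.
\]

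Third, I need to promote the equivalence to one of monoid objects, where the left side carries the composition of embeddings and the right side carries loop concatenation. This is essentially formal: the endomorphism monoid of $(\RR^n,\varphi)$ in the $\infty$-category $\mfld_n^B$ is by construction the pullback of the endomorphism monoids of $\RR^n$ in $\mfld_n$, $(\RR^n,\varphi)$ in $\spaces_{/B}$, and $\tau_{\RR^n}$ in $\spaces_{/\BO(n)}$; monadically, each of the latter two endomorphism monoids is a loop space $\Omega_\bullet$ with concatenation, and the map $\Emb\to \Map_{/\BO(n)}$ is already a map of monoids (both sides being loops on $\BO(n)$ under the chain of equivalences above). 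I expect the main technical nuisance to be in carefully verifying the equivalence $\Emb(\RR^n,\RR^n)\to \Map_{/\BO(n)}(\RR^n,\RR^n)$ at the level of $E_1$-algebras rather than just spaces; invoking fully faithfulness of Yoneda on $\cE{\sf uc}_n$ avoids this by producing everything from a functor of $\infty$-categories at one stroke.
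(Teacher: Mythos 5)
Your proof is correct and fills in exactly what the paper leaves implicit (the paper states only ``Proposition~\ref{kister} yields the following'' without spelling out the argument). Your route — identify $\Map_{/B}(\RR^n,\RR^n)\simeq\Omega_\varphi B$ and $\Map_{/\BO(n)}(\RR^n,\RR^n)\simeq\Omega\BO(n)$ by contractibility, observe that $\tau\vert_{\cE{\sf uc}_n}$ is the Yoneda embedding (hence fully faithful, so $\Emb(\RR^n,\RR^n)\to\Map_{/\BO(n)}(\RR^n,\RR^n)$ is an equivalence), and then pull back — is the intended one, and your closing remark that running everything through a functor of $\infty$-categories gives the $E_1$-structure for free is the right way to dispatch the monoid upgrade.
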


\subsection{Examples and discussion of $B$-framings}\label{sec.structures}
{~}

\noindent
{\bf Framings.}
In the case that $\bigl(B \to \BO(n)\bigr) := \bigl(\ast \xra{\{\RR^n\}}\BO(n)\bigr)$, then a $B$-framing on an $n$-manifold $M$ is a \emph{framing} of $M$, i.e., a trivialization of its tangent bundle.
Such a trivialization is a sequence of $n$ linearly independent vector fields $(X_1,\dots,X_n)$ on $M$.

Now, let $(M,\varphi)$ and $(N,\psi)$ be framed $n$-manifolds, with corresponding vector fields $(X_i)_{1\leq i\leq n}$ and $(Y_i)_{1\leq i\leq n}$.  
A naive definition of the space of \emph{framed embeddings} might be the subspace,
\[
%\Emb^{\sf fr}\bigl((M,\phi) , (N,\psi)\bigr)\underset{\rm naive} {:=}
\Bigl\{M\overset{e}\hookrightarrow N \mid De(X_i)=Y_i\text{ for each }1\leq i\leq n \Bigr\}
~\subset~
\Emb(M,N)~,
\]
of those smooth embeddings that strictly respect the framings.   
However, for the purposes of defining manifold invariants from algebraic input (e.g., factorization homology), this naive definition of framed embeddings is deficient:
\begin{enumerate}
\item
for $(N,\psi)$ a framed $n$-manifold with $N$ compact, it receives no framed embeddings from $\RR^n$ (with its standard framing) since it receives no isometric embeddings from $\RR^n$;

\item
for each $i\geq 0$, this space of framed embeddings from $(\RR^n)^{\sqcup i}$ to $\RR^n$ (each with its standard framing) is \emph{not} equivalent to the space $\cE_n(i)$ of $i$-ary operations of the little $n$-disk operad.  

\end{enumerate}
One might try to fix the above naive definition of framed embeddings by weakening the condition $De(X_i) = Y_i$ to the \emph{data} of a sequence of smooth functions $M\xra{\lambda_i} \RR_{>0}$ subject to the condition of an equality $De(X_i) = \lambda_i Y_i$.
But this, too, has similar shortcomings.

Practically, the essential feature of the `correct' space of framed embeddings is so that there is a homotopy equivalence
\[
\Emb^{\sf fr}\bigl( (\RR^n)^{\sqcup i} , (N,\psi)\bigr)
~\simeq~
\conf_i(N)
\]
with the \emph{configuration space} of injections from $\{1,\dots,i\}$ into $N$.
Lemma~\ref{EE-equivs}, to come, grants that this is the case for the definition~(\ref{1}) of framed embeddings (in this case that $B=\ast$) as a homotopy pullback.

\medskip

\noindent
{\bf Linear structures.}
More generally, for $G$ a Lie group and $\rho \colon G\to {\sf GL}(n)$ a smooth homomorphism, consider the composite map $\sB G \to {\sf BGL}(n) \simeq \BO(n)$.  
A $\sB G$-framing on an $n$-manifold $M$ is a compatible system of lifts $D(\alpha^{-1}\beta)\colon U\cap V\to G$ along $\rho$ of the derivatives of the transition maps of a smooth atlas for $M$.  
So a ${\sf BSO}(n)$-framing on $M$ is a smooth structure on $M$ together with an orientation on $M$; a ${\sf B\ast}$-framing on $M$ is a framing, as discussed above; a ${\sf BSpin}(n)$-framing on $M$ is a spin structure on $M$.

\medskip

\noindent
{\bf General structures.}
In general, for $B$ a space, the space of maps $B\to \BO(n)$ is a moduli space of rank-$n$ vector bundles on $B$.
So, given such a vector bundle $E\to B$, a $B$-framing on an $n$-manifold $M$ is a continuous map $f\colon M\to B$ together with an isomorphism $TM \cong f^\ast E$ between vector bundles over $M$.
In the case that $\bigl( B\to \BO(n)\bigr)\simeq \bigl( X \times \BO(n)\xra{\sf pr}\BO(n)\bigr)$, then a $B$-framing on an $n$-manifold $M$ is simply a map $M\to X$ to $X$ from the underlying space of $M$.

\subsection{Weiss sheaves on $n$-manifolds}\label{sec.weiss}

\begin{definition}[\cite{weiss}]
In the \emph{Weiss} Grothendieck topology on the category $\dmfld_n$, a sieve $\cU \subset \dmfld_{n/M}$ is a covering sieve if, for each finite subset $S\subset M$, there is an object $(U\hookrightarrow M)\in \cU$ for which $S\subset e(U)$.  
The $\infty$-category of \emph{Weiss} sheaves on $\dmfld_n$ is the full $\infty$-subcategory
\[
\shv^{\sf Weiss}(\dmfld_n)~\subset~\Psh(\dmfld_n)
\]
consisting of those presheaves $\cF\colon \dmfld_n^{\op} \to \spaces$ for which, for each Weiss covering sieve $\cU\subset \dmfld_{n/M}$, the canonical functor
\[
(\cU^{\op})^{\triangleleft}
~\simeq~
(\cU^{\triangleright})^{\op}
\longrightarrow
(\dmfld_{n/M})^{\op}
\longrightarrow
\dmfld_n^{\op}
\xra{~\cF~}
\spaces
\]
is a limit diagram.  
The $\infty$-category of \emph{Weiss} sheaves on $\mfld_n$ is the pullback among $\infty$-categories:
\[
\xymatrix{
\shv^{\sf Weiss}(\mfld_n)  \ar[rr]  \ar[d]
&&
\Psh(\mfld_n)  \ar[d]^-{\rm restriction}
\\
\shv^{\sf Weiss}(\dmfld_n) \ar[rr]
&&
\Psh(\dmfld_n) .
}
\]

\end{definition}

We next exhibit generators the for the $\oo$-category of Weiss sheaves.

\begin{definition}\label{def.discrete.versions}
The symmetric monoidal $\oo$-categories 
\[
\ddisk_n\subset \dmfld_n
\qquad
\text{ and }
\qquad
\disk_n\subset \mfld_n
\]
are the full $\oo$-subcategories consisting of disjoint unions of $n$-dimensional Euclidean spaces.

\end{definition}
Notice the evident symmetric monoidal functors
\[
\ddisk_n 
\longrightarrow 
\disk_n
\qquad
\text{ and }
\qquad
\dmfld_n
\longrightarrow
\mfld_n~.
\]

We insert the next result to contextualize the fundamental role of disks', though this result is not essential for the overall logic of this paper.
Its proof makes use of results established later on, which of course do not logically depend on it.  

\begin{prop}\label{weiss.classify}
Restriction along $\disk_n \hookrightarrow \mfld_n$ defines an equivalence between $\infty$-categories:
\[
\shv^{\sf Weiss}(\mfld_n)
\xra{~\simeq~}
\Psh(\disk_n)~.
\]

\end{prop}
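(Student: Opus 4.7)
The plan is to construct an inverse to restriction via right Kan extension, and then identify its essential image with $\shv^{\sf Weiss}(\mfld_n)$. Since $\disk_n \hookrightarrow \mfld_n$ is a fully faithful inclusion of $\infty$-categories, right Kan extension
\[
R \colon \Psh(\disk_n) \longrightarrow \Psh(\mfld_n)~, \qquad (RF)(M) := \underset{(U \hookrightarrow M) \in \disk_{n/M}}{\limit} F(U)~,
\]
is fully faithful, and the composite with restriction is the identity on $\Psh(\disk_n)$. So it suffices to show that $R$ factors through $\shv^{\sf Weiss}(\mfld_n)$ and that the resulting functor is essentially surjective.

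The key observation is that for any $M \in \mfld_n$, the sieve $\cU_M \subset \dmfld_{n/M}$ generated by the embeddings from objects of $\disk_n$ into $M$ is a Weiss covering sieve: any finite subset $S = \{x_1,\dots,x_k\} \subset M$ lies in the image of an embedding $\bigsqcup_{i=1}^k \RR^n \hookrightarrow M$ built from pairwise disjoint Euclidean coordinate charts around the $x_i$. Granted this, for any $G \in \shv^{\sf Weiss}(\mfld_n)$, the Weiss sheaf condition applied to $\cU_M$ yields $G(M) \simeq \limit_{V \in \cU_M} G(V)$, and a cofinality argument (see below) reduces this limit to $\limit_{\disk_{n/M}} G|_{\disk_n}$. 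This identifies $G$ with $R(G|_{\disk_n})$ and establishes essential surjectivity. The same cofinality argument, applied to arbitrary Weiss covering sieves of $M$, shows that $RF$ always satisfies the Weiss sheaf condition.

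The principal obstacle is the cofinality statement: that $\disk_{n/M} \subset \cU_M$ is cofinal in the sense appropriate for presheaf limits. This is not formal, because $\disk_{n/M}$ is itself not closed under precomposition by general $n$-manifold embeddings and so is not a sieve. The argument is a refinement/hypercover argument: for each $(V \hookrightarrow M) \in \cU_M$, the subcategory of disks in $\disk_{n/M}$ that factor through $V$ is itself a Weiss covering sieve of $V$ (by choosing Euclidean charts inside $V$), and iterating this refinement exhibits a Weiss hypercover of every object of $\cU_M$ by disks; invoking descent along this hypercover yields the desired cofinality. This step is precisely where the Weiss topology---rather than the standard one---is essential, since disks suffice to separate finite subsets but not to cover arbitrary open sets.

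Finally, the passage between the topological $\mfld_n$ and discrete $\dmfld_n$ versions is handled via the pullback definition of $\shv^{\sf Weiss}(\mfld_n)$: the same refinement argument proves the discrete analog $\shv^{\sf Weiss}(\dmfld_n) \simeq \Psh(\ddisk_n)$, and then one uses that the full inclusion $\disk_n \hookrightarrow \mfld_n$ is obtained from $\ddisk_n \hookrightarrow \dmfld_n$ by enriching morphism sets with the compact-open $C^\infty$ topology---so presheaves on $\disk_n$ are identified with presheaves on $\mfld_n$ whose restriction to $\dmfld_n$ is already supported on $\ddisk_n$, compatibly with the pullback defining $\shv^{\sf Weiss}(\mfld_n)$.
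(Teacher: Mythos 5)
Your high-level strategy---right Kan extension along the fully faithful $\iota$ gives a fully faithful $R$, then show $R$ lands in Weiss sheaves and hits everything---is the same as the paper's, and your observation that the disk-generated sieve $\cU_M$ is a Weiss cover is correct and is indeed the starting point. But the two technical steps you defer to a ``refinement/hypercover argument'' are where the actual content lives, and the argument as sketched has a gap and a type error.

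The type error: you write ``that $\disk_{n/M} \subset \cU_M$ is cofinal.'' This inclusion does not exist. The sieve $\cU_M$ lives inside the discrete poset $\dmfld_{n/M}$, whereas $\disk_{n/M}$ has topologized (space-valued) morphism spaces. What does exist is a zig-zag $\cU_M \hookleftarrow \ddisk_{n/M} \to \disk_{n/M}$ through the discrete disk category. The comparison $\limit_{\cU_M^{\op}} G \simeq \limit_{\disk_{n/M}^{\op}} G$ therefore splits into two separate assertions: (a) the value of a Weiss sheaf on $M$ is computed as a limit over $\ddisk_{n/M}^{\op}$, which follows because $\ddisk_n$ is a basis for the Weiss topology; and (b) $\ddisk_{n/M}^{\op} \to \disk_{n/M}^{\op}$ is an initial functor. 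Assertion (b) is the nontrivial step, and it does not follow from a formal hypercover argument: it is exactly the content of Proposition~\ref{EEd-vs-EE}, that $\disk_{n/M}$ is the localization of $\ddisk_{n/M}$ at the isotopy equivalences. Your sketch never engages with the transition from discrete to topologized embedding spaces, and your final paragraph's phrase ``presheaves on $\mfld_n$ whose restriction to $\dmfld_n$ is already supported on $\ddisk_n$'' is not a well-formed condition---presheaves have no notion of support here, and this is precisely where the localization must enter.

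The other gap is in showing that $R\cG$ actually satisfies Weiss descent for an arbitrary Weiss cover $\cU \subset \dmfld_{n/M}$ (your essential-surjectivity step implicitly relies on this too). You assert ``the same cofinality argument\dots shows that $RF$ always satisfies the Weiss sheaf condition,'' but there is no sheaf hypothesis on $\cG$ to iterate against. What one must show is a finality statement: that $\colim_{U\in\cU}\disk_{n/U} \to \disk_{n/M}$ is final. The paper proves this by recognizing each $\disk_{n/W}$ as a right fibration over $\disk_n$, reducing to a statement about maximal sub-$\infty$-groupoids, identifying those via Lemma~\ref{EE-equivs} with unordered configuration spaces $\conf_i(W)_{\Sigma_i}$, and then invoking a Seifert--van Kampen/partition-of-unity result (Theorem~A.3.1 of~\cite{dag}) for the induced open cover of $\conf_i(M)_{\Sigma_i}$. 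That geometric input is the crux of the proposition, and it is not recoverable from ``invoking descent along this hypercover.''
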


\begin{proof}
Denote the fully-faithful inclusion $\iota \colon \disk_n \hookrightarrow \mfld_n$.
This functor determines an adjunction
\[
\iota^\ast \colon \Psh(\mfld_n)
\rightleftarrows
\Psh(\disk_n) \colon \iota_\ast
\]
in which the left adjoint is restriction along $\iota$ and the right adjoint is right Kan extension along $\iota$.
It is enough to show that this adjunction restricts as an equivalence:
\[
\iota^\ast \colon \shv^{\sf Weiss}(\mfld_n)
\rightleftarrows
\Psh(\disk_n)\colon \iota_\ast~.
\]
This amounts to verifying two assertions.
\begin{enumerate}
\item
For each $\cF\in \shv^{\sf Weiss}(\mfld_n)$, and each $M\in \mfld_n$, the unit map
\[
\cF(M) 
\xra{~\rm unit~}
\limit\Bigl( (\disk_{n/M})^{\op} \to \disk_n^{\op} \xra{\iota} \mfld_n^{\op}\xra{\cF} \spaces \Bigr)
\]
is an equivalence between spaces.

\item
For each $\cG\in \Psh(\disk_n)$, and for each Weiss covering sieve $\cU \subset \dmfld_{n/M}$, the canonical map
\[
\iota_\ast \cG(M)
\longrightarrow
\limit\Bigl( \cU^{\op} \to \dmfld_n^{\op} \to \mfld_n^{\op} \xra{\iota_\ast \cG} \spaces  \Bigr)
\]
is an equivalence between spaces.

\end{enumerate}
We first establish~(1).   
The canonical functor $\ddisk_{n/M} \to \disk_{n/M}$ determines the sequence of maps among spaces
\[
\xymatrix{
\cF(M)  \ar[r]^-{\rm unit}  \ar[dr]
&
\limit\bigl( (\disk_{n/M})^{\op} \to \disk_n^{\op} \xra{\iota} \mfld_n^{\op}\xra{\cF} \spaces \bigr)  \ar[d]
\\
&
\limit\bigl( (\ddisk_{n/M})^{\op} \to \ddisk_n^{\op} \to \disk_n^{\op} \xra{\iota} \mfld_n^{\op}\xra{\cF} \spaces \bigr)
.
}
\]
Because smooth open embeddings from disjoint unions of $n$-dimensional Euclidean spaces to each smooth $n$-manifold is a basis for the Weiss Grothendieck topology thereon, the full subcategory $\ddisk_n \subset \dmfld_n$ is a basis for the Weiss Grothendieck topology.  
It follows that the diagonal map in the above diagram is an equivalence between spaces.  
Furthermore, Proposition~\ref{EEd-vs-EE} implies the functor $(\ddisk_{n/M})^{\op} \to (\disk_{n/M})^{\op}$ is initial.
It follows that the downward map in the above diagram is an equivalence.
We conclude that the horizontal map in the above diagram is an equivalence, as desired.

We now establish~(2).  
The Weiss covering sieve $\cU\subset \dmfld_{n/M}$ determines a functor between $\infty$-categories
\begin{equation}\label{13}
\colim\Bigl( \cU \to \dmfld_n \xra{\disk_{n/-}} \Cat \Bigr)
\longrightarrow
\disk_{n/M}
\end{equation}
from the colimit indexed by $\cU$.  
Through the standard formula computing values of right Kan extension as limits indexed by $\infty$-undercategories, the map in~(2) is canonically identified as the map between spaces
\[
\limit\Bigl( (\disk_{n/M})^{\op} \to \disk_n^{\op} \xra{\cG} \spaces \Bigr)
\longrightarrow
\]
\[
\limit\Bigl( \colim\bigl( \cU \to \dmfld_n \xra{\disk_{n/-}} \Cat \bigr)^{\op}  \to (\disk_{n/M})^{\op} \to \disk_n^{\op} \xra{\cG} \spaces \Bigr)~.
\]
It is therefore enough to show that the functor~(\ref{13}) is final.
Observe that, for each $n$-manifold $W$, the $\infty$-category $\disk_{n/W}$ is canonically a right fibration over $\disk_n$.  
To assess finality of~(\ref{13}), it is enough to compute the relevant colimit in the $\infty$-category ${\sf RFib}_{\disk_n}$ of right fibrations over $\disk_n$, and show that the morphism between right fibrations over $\disk_n$
\[
\colim\Bigl( \cU \to \dmfld_n \xra{\disk_{n/-}} {\sf RFib}_{\disk_n} \Bigr)
\longrightarrow
\disk_{n/M}
\]
is an equivalence.  
A morphism between right fibrations over an $\infty$-category is an equivalence if and only if it is so when base changed over the maximal $\infty$-subgroupoid of the base.  
Through Lemma~\ref{EE-equivs}, the maximal $\infty$-subgroupoid of $\disk_n$ is the coproduct $\underset{i\geq 0}\coprod \sB \bigl(\Sigma_i\wr \sO(n)\bigr)$. 
For each $n$-manifold $W$, Lemma~\ref{EE-equivs} identifies the base change of $\disk_{n/M}$ over the $i$-cofactor as $\conf_i(W)_{\Sigma_i}$, the unordered configuration space. 
So let $i\geq 0$.
So we are to show that the canonical map between spaces
\begin{equation}\label{14}
\colim\Bigl( \cU \to \dmfld_n \xra{\conf_i(-)_{\Sigma_i}} \spaces_{/\sB \bigl(\Sigma_i\wr \sO(n)\bigr)} \Bigr)
\longrightarrow
\conf_i(M)_{\Sigma_i}
\end{equation}
is an equivalence between spaces over $\sB \bigl(\Sigma_i\wr \sO(n)\bigr)$.
This is simply to show that this map is an equivalence between spaces, ignorant to the structure maps to $\sB \bigl(\Sigma_i\wr \sO(n)\bigr)$.  
Consider the smallest sieve $\cU_i \subset \dmfld_{ni/\conf_i(M)_{\Sigma_i}}$ containing, for each $U\in \cU$, the object $\bigl(\conf_i(U)_{\Sigma_i} \hookrightarrow \conf_i(M)_{\Sigma_i}\bigr)\in \dmfld_{ni/\conf_i(M)_{\Sigma_i}}$.  
Precisely because $\cU$ is a Weiss covering sieve, the sieve $\cU_i$ is a standard covering sieve.
Using that the underlying topological space of $\conf_i(M)_{\Sigma_i}$ is paracompact and Hausdorff, Theorem~A.3.1~\cite{dag} can be applied to $\cU_i$ with the result that the above map~(\ref{14}) between $\infty$-groupoids is an equivalence.  
This completes the proof.

\end{proof}

\begin{definition}
Let $\cC$ be a presentable $\infty$-category.
The $\infty$-category of \emph{$\cC$-valued Weiss cosheaves} on $\dmfld_n$ is the full $\infty$-subcategory
\[
\csh^{\sf Weiss}_\cC(\dmfld_n)
~\subset~
\Fun(\dmfld_n , \cC)
\]
consisting of those functors $\cA\colon \dmfld_n \to \cC$ for which, for each Weiss covering sieve $\cU\subset \dmfld_{n/M}$, the canonical functor
\[
\cU^{\triangleright}
\longrightarrow
\dmfld_{n/M}
\longrightarrow
\dmfld_n
\xra{~\cA~}
\cC
\]
is a colimit diagram.  
The $\infty$-category of \emph{$\cC$-valued Weiss cosheaves} on $\mfld_n$ is the pullback among $\infty$-categories:
\[
\xymatrix{
\csh_\cC^{\sf Weiss}(\mfld_n)  \ar[rr]  \ar[d]
&&
\Fun(\mfld_n , \cC) \ar[d]^-{\rm restriction}
\\
\csh_\cC^{\sf Weiss}(\dmfld_n) \ar[rr]
&&
\Fun(\dmfld_n , \cC ).
}
\]

\end{definition}

The next result is routine consequence of Proposition~\ref{weiss.classify}.
\begin{cor}\label{weiss.cosheaf}
Let $\cC$ be a presentable $\infty$-category.
Restriction along $\disk_n \to \mfld_n$ defines an equivalence between $\infty$-categories:
\[
\csh_\cC^{\sf Weiss}(\mfld_n) 
\xra{~\simeq~}
\Fun(\disk_n , \cC)~.
\]

\end{cor}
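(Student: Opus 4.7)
The strategy is to replay the proof of Proposition~\ref{weiss.classify}, substituting left Kan extension for right Kan extension so that colimits replace limits at every step. Since $\cC$ is presentable, hence cocomplete, the fully-faithful inclusion $\iota \colon \disk_n \hookrightarrow \mfld_n$ determines an adjunction
\[
\iota_! \colon \Fun(\disk_n, \cC) ~\rightleftarrows~ \Fun(\mfld_n, \cC) \colon \iota^*,
\]
in which $\iota_!$ is left Kan extension and $\iota^*$ is restriction. The unit $\cG \to \iota^* \iota_! \cG$ is an equivalence because $\iota$ is fully faithful. It therefore suffices to show that this adjunction restricts to an equivalence between $\Fun(\disk_n, \cC)$ and $\csh_\cC^{\sf Weiss}(\mfld_n)$, which amounts to two assertions dual to~(1) and~(2) in the proof of Proposition~\ref{weiss.classify}.

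For the first assertion --- that the counit $\iota_! \iota^* \cA \to \cA$ is an equivalence for each $\cA \in \csh_\cC^{\sf Weiss}(\mfld_n)$ --- I use the pointwise formula $(\iota_!\iota^*\cA)(M) \simeq \colim\bigl( \disk_{n/M} \to \disk_n \hookrightarrow \mfld_n \xra{\cA} \cC \bigr)$. Since $\ddisk_n$ is a basis for the Weiss topology, the Weiss cosheaf condition on $\cA$ identifies $\cA(M)$ with $\colim\bigl(\ddisk_{n/M} \to \dmfld_n \to \mfld_n \xra{\cA} \cC\bigr)$. Finality of the functor $\ddisk_{n/M} \to \disk_{n/M}$, which follows from Proposition~\ref{EEd-vs-EE} exactly as in the proof of Proposition~\ref{weiss.classify}, shows these two colimits agree.

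For the second assertion --- that $\iota_! \cG$ is a Weiss cosheaf for each $\cG \in \Fun(\disk_n, \cC)$ --- I verify the colimit condition at each Weiss covering sieve $\cU \subset \dmfld_{n/M}$. Substituting the pointwise formula for $\iota_! \cG$ and commuting colimits reduces the required equivalence to the claim that the functor
\[
\colim\Bigl( \cU \to \dmfld_n \xra{\disk_{n/-}} \Cat \Bigr) \longrightarrow \disk_{n/M}
\]
is final. This is precisely the functor~(\ref{13}), and its finality is exactly what is established in the second half of the proof of Proposition~\ref{weiss.classify}: via reduction to right fibrations over $\disk_n$, identification of base changes with unordered configuration spaces using Lemma~\ref{EE-equivs}, and an appeal to Theorem~A.3.1 of \cite{dag}. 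Because that argument establishes finality directly rather than merely the cofinality of an opposite functor, it applies verbatim to any $\cC$-valued diagram.

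The main obstacle is just bookkeeping: ensuring that each step in the sheaf-theoretic argument, phrased in terms of limits and initial functors, has a clean dual in terms of colimits and final functors. Since the two key cofinality/finality inputs (Proposition~\ref{EEd-vs-EE} and the finality of~(\ref{13})) are already proved as statements about final functors between $\infty$-categories, no new $\infty$-categorical input beyond Proposition~\ref{weiss.classify} is needed.
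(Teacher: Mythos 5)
Your proposal is correct, and since the paper offers no proof of Corollary~\ref{weiss.cosheaf} beyond the remark that it is a ``routine consequence'' of Proposition~\ref{weiss.classify}, directly dualizing that proposition's proof --- replacing right Kan extension by left Kan extension, limits by colimits, and initiality of $(\ddisk_{n/M})^{\op}\to(\disk_{n/M})^{\op}$ by the equivalent finality of $\ddisk_{n/M}\to\disk_{n/M}$ --- is exactly the argument the authors leave to the reader. Two small remarks: first, the observation that the proof of Proposition~\ref{weiss.classify} establishes finality of~(\ref{13}) ``directly rather than merely the cofinality of an opposite functor'' is somewhat vacuous (finality of a functor and initiality of its opposite are literally the same condition), though harmless; second, a slicker route the authors may have had in mind is the purely formal chain of equivalences $\csh_\cC^{\sf Weiss}(\mfld_n)\simeq\Fun^{\rm L}\bigl(\shv^{\sf Weiss}(\mfld_n),\cC\bigr)\simeq\Fun^{\rm L}\bigl(\Psh(\disk_n),\cC\bigr)\simeq\Fun(\disk_n,\cC)$, using Proposition~\ref{weiss.classify} in the middle and the universal property of presheaf $\infty$-categories at the end --- this trades your hands-on verification for one standard input (cosheaves valued in a cocomplete $\cC$ are colimit-preserving functors from the sheaf $\infty$-topos) but lands in the same place.
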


\begin{remark}
Corollary~\ref{weiss.cosheaf} says that a $\cC$-valued Weiss cosheaf on $\mfld_n$ is simply a functor $\disk_n \to \cC$, without regard to a cosheaf condition.  

\end{remark}

\begin{remark}\label{60}
We follow up on Remark~\ref{4} and Remark~\ref{5}.
Namely, restriction along $\dmfld_n \to \mfld_n$ factors
\[
\csh_\cC^{\sf Weiss}(\mfld_n)\longrightarrow \csh_\cC^{\sf Weiss,l.c.}(\dmfld_n)~\subset~\csh_\cC^{\sf Weiss}(\dmfld_n)
\]
through the $\infty$-subcategory of locally constant Weiss cosheaves.
This first functor is not an equivalence.  
However, for each $n$-manifold $M$, the restriction functor factors as an equivalence between $\infty$-categories,
\[
\csh_\cC^{\sf Weiss}(\mfld_{n/M})
\xra{~\simeq~} 
\csh_\cC^{\sf Weiss,l.c}(\dmfld_{n/M})
\subset 
\csh_\cC^{\sf Weiss}(\dmfld_{n/M})~,
\]
involving the locally constant Weiss cosheaves on $M$.

\end{remark}

\subsection{Disks}

Prompted by the results in~\S\ref{sec.weiss}, 
we now turn to consider $B$-framed $n$-disks, as they organize as a symmetric monoidal $\infty$-category.  
We identify the maximal $\infty$-subgroupoid of the $\infty$-category $\disk^B_{n/M}$ in terms of configuration spaces in a $B$-framed $n$-manifold $M$.

\begin{definition}[\cite{oldfact}]
The symmetric monoidal $\oo$-category $\disk^B_n\subset \mfld_n^B$ is the full $\oo$-subcategory of $\mfld^B_n$ whose objects are disjoint unions of $B$-framed $n$-dimensional Euclidean spaces.
\end{definition}

\begin{remark}\label{2}
Consider the framing structure, which is the map $\ast\ra \BO(n)$ selecting the basepoint of $\BO(n)$. 
As discussed in~\S\ref{sec.structures}, a $\ast$-framing on an $n$-manifold $M$ is a trivialization of the tangent bundle of $M$.
We denote the associated $\oo$-category of framed $n$-disks as $\disk^{\fr}_n$. 
This symmetric monoidal $\infty$-category $\disk_n^{\fr}$ is equivalent to the PROP associated to the operad $\cE_n$, of Boardman-Vogt~\cite{bv}.
This is the case because the inclusion of rectilinear embeddings as framed embeddings determines an equivalence $\cE_n(i) \xra\sim \Emb^{\sf fr}\bigl( (\RR^n)^{\sqcup i} , \RR^n\bigr)$ from the space of $i$-ary operations of the operad $\cE_n$; see, e.g., \cite{oldfact} for a  presentation of this equivalence.  
\end{remark}

\begin{example}
Consider the structure $\bigl(B \to \BO(n)\bigr) := \bigl( \BO(n)\xra{\sf id} \BO(n)\bigr)$.
The symmetric monoidal $\infty$-category $\disk_n = \disk_n^{\BO(n)}$ is equivalent to the PROP associated to the unoriented version of the ribbon, or ``framed," $\cE_n$-operad; see~\cite{sw} for a treatment of this operad.\footnote{The historical use of ``framed" here is potentially misleading, since in the ``framed" $\cE_n$ operad the embeddings do not preserve the framing, while in the usual $\cE_n$ operad the embeddings do preserve the framing (up to scale). It might lead to less confusion to replace the term ``framed $\cE_n$ operad" with ``unoriented $\cE_n$ operad."
} 
This equivalence follows from Proposition~\ref{kister}.

\end{example}

Given a topological space $X$ and a finite cardinality $i$, the \emph{configuration space} of $i$ (ordered) points in $X$ is the subspace 
\[
\conf_i(X):=\Bigl\{\{1,\dots,i\} \xra{c} X \mid c\text{ is injective}  \Bigr\} ~\subset~ X^{\times i}~.
\]
This configuration space has an evident free action of the symmetric group $\Sigma_i$, as given by precomposition.
The \emph{unordered} configuration space is the $\Sigma_i$-coinvariants: $\conf_i(X)_{\Sigma_i}$.

In the next result, for $M$ a $B$-framed $n$-manifold, we consider the $\infty$-overcategory 
\[
\disk_{n/M}^B~:=~ \disk^B_n \underset{\mfld_n^B}\times \mfld^B_{n/M}~.
\]
An object in this $\infty$-category is given by a $B$-framed embedding $(\RR^n)^{\sqcup i} \hookrightarrow M$ for some $i$.  

\begin{lemma}[\cite{oldfact}]\label{EE-equivs}
The maximal $\infty$-subgroupoid of $\disk_n^B$ is canonically identified as the space
\[
\underset{i\geq 0} \coprod {B^{\times i}}_{\Sigma_i}~\simeq~\bigl(\disk_n^B\bigr)^\sim
\]
where the coproduct is indexed by finite cardinalities and each cofactor is the $\Sigma_i$-homotopy coinvariants of the $i$-fold product of the space $B$.  
In particular, the symmetric monoidal functor $[-]\colon \disk_n^B \to \fin$, given by taking sets of connected components of underlying manifolds, is conservative.  

For $M$ a $B$-framed $n$-manifold,
the maximal $\infty$-subgroupoid of $\disk^B_{n/M}$ is canonically identified as the space
\[
\underset{i\geq 0}\coprod \conf_i(M)_{\Sigma_i}~\simeq~\bigl(\disk^B_{n/M}\bigr)^\sim
\]
where the coproduct is indexed by finite cardinalities, and each cofactor is an unordered configuration space. 

\end{lemma}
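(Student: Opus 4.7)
The plan is to compute both maximal subgroupoids by reducing to a single-disk analysis via Proposition~\ref{kister}, then base-changing from $\disk_n$ to $\disk_n^B$ along $B\to \BO(n)$ using the pullback in Definition~\ref{bmanifold}. The two statements are handled in parallel.

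For $(\disk_n^B)^\sim$, I observe that taking maximal subgroupoids commutes with the pullback defining $\disk_n^B$. By Proposition~\ref{kister} and Corollary~\ref{euc}, the maximal subgroupoid of $\disk_n$ is
\[
\coprod_i \sB\bigl(\Sigma_i\wr \sO(n)\bigr) ~\simeq~ \coprod_i \BO(n)^{\times i}_{\Sigma_i}~,
\]
and the tangent classifier $\tau$ sends the $i$-component to the space of ``$i$ points over $\BO(n)$'' with the natural $\Sigma_i\wr\sO(n)$-equivariance. Pulling back along $B\to \BO(n)$ yields the homotopy quotient of $F^i$ by $\Sigma_i\wr\sO(n)$, where $F$ denotes the homotopy fiber of $B\to\BO(n)$; since $F_{h\sO(n)}\simeq B$, this evaluates to $B^{\times i}_{\Sigma_i}$. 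Conservativity of $[-]\colon \disk_n^B \to \fin$ is immediate from this coproduct decomposition, since disjoint unions of differing numbers of $B$-framed disks cannot be equivalent.

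For $(\disk^B_{n/M})^\sim$, the forgetful functor $\disk^B_{n/M}\to \disk_n^B$ is a right fibration, so it restricts to a Kan fibration on maximal subgroupoids. The $i$-component total space is then the homotopy colimit over $\underline\varphi\in B^{\times i}_{\Sigma_i}$ of $\Emb^B\bigl(((\RR^n)^{\sqcup i},\underline\varphi), M\bigr)$. Before taking $\Sigma_i$-coinvariants, I claim the aggregate over the ordered parametrizing space $B^{\times i}$ collapses to $\Emb((\RR^n)^{\sqcup i}, M)$: for any fixed underlying embedding $e$, the space of compatible $B$-framing data on the source (a $B$-framing $\underline\varphi$ together with a path to $\varphi_M\circ e$) is contractible, since $(\RR^n)^{\sqcup i}$ is homotopically discrete. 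By Proposition~\ref{kister}, the evaluation $\Emb((\RR^n)^{\sqcup i}, M) \to \conf_i(M)$ is a principal $\sO(n)^i$-bundle; combined with the free $\Sigma_i$-permutation action, the induced $\Sigma_i\wr\sO(n)$-quotient yields $\conf_i(M)_{\Sigma_i}$.

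The main obstacle is the aggregation-and-quotient argument just sketched, which requires carefully pulling apart the defining diagram~(\ref{1}) for $\Emb^B$. One must verify both that the fiber of the forgetful map $\Emb^B \to \Emb$ over a fixed embedding is contractible, and that the resulting $\Sigma_i\wr\sO(n)$-action on $\Emb((\RR^n)^{\sqcup i}, M)$ descends, via Kister's theorem, to the unordered configuration space. Once these technical points are handled, the coproduct-of-components decomposition assembles to both claimed identifications.
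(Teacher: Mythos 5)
The overall architecture of your argument is sound, and the first paragraph (commuting the core functor past the defining pullback, identifying $(\disk_n)^\sim$ via Proposition~\ref{kister} and Corollary~\ref{euc}, and rewriting $(F^{\times i})_{h(\Sigma_i\wr\sO(n))}$ as $B^{\times i}_{\Sigma_i}$) is correct. The deduction of conservativity of $[-]$ from the coproduct decomposition is also fine.

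The second paragraph, however, has a bookkeeping error between the \emph{space of objects} of the ordered moduli groupoid and its \emph{realization}. You write that ``the aggregate over the ordered parametrizing space $B^{\times i}$ collapses to $\Emb((\RR^n)^{\sqcup i},M)$.'' This is false as stated. The space $B^{\times i}$ is the realization after absorbing the $\sO(n)^i$-quotient, and the homotopy colimit over $B^{\times i}$ of the local system $\underline\varphi\mapsto\Emb^B\bigl(((\RR^n)^{\sqcup i},\underline\varphi),M\bigr)$ is $\conf_i(M)$, not $\Emb((\RR^n)^{\sqcup i},M)$. (To see this already at $i=1$: the stalk over $b\in B$ is $\Emb^B\bigl((\RR^n,\varphi_b),M\bigr)\simeq M\times_B\{b\}$, the homotopy fiber of $\varphi_M$, so the total space is $M=\conf_1(M)$, whereas $\Emb(\RR^n,M)$ is the frame bundle.) Your contractibility argument---that for fixed underlying $e$ the space of pairs (source $B$-framing $\underline\varphi$, identification with $\varphi_M\circ e$) is contractible---is correct, but it applies when $\underline\varphi$ ranges over the \emph{objects} $F^{\times i}$ of the groupoid, where $F$ is the fiber of $B\to\BO(n)$; that colimit is what collapses to $\Emb((\RR^n)^{\sqcup i},M)$. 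After that, the correct residual quotient is by $\Sigma_i\wr\sO(n)$, which is consistent with your final sentence and with the presentation $(B^{\times i})_{\Sigma_i}\simeq (F^{\times i})_{h(\Sigma_i\wr\sO(n))}$ you used in the first paragraph. As written, your proof commits two compensating errors: the intermediate identification should be $\conf_i(M)$ if you colimit over $B^{\times i}$ (with only a residual $\Sigma_i$-quotient remaining), or should be over $F^{\times i}$ if you intend the residual quotient to be $\Sigma_i\wr\sO(n)$. Replacing ``$B^{\times i}$'' with ``$F^{\times i}$'' in this step repairs the argument.
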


We conclude this section by justifying the term \emph{tangent classifier} for the functor $\mfld_n \xra{\tau} \spaces_{/\BO(n)}$ from~(\ref{tau}).  
\begin{cor}\label{tang-class}
The value of the tangent classifier~(\ref{tau}) on an $n$-manifold $M$ is the map between spaces $M \xra{\tau_M} \BO(n)$ classifying its tangent bundle.  

\end{cor}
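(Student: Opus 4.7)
The plan is to unwind the composite~(\ref{tau}) defining $\tau$ and then identify its value on $M$ with the classifying map of $TM$. By the Yoneda functor, $M$ is sent to the presheaf $\Emb_{\mfld_n}(-,M)$; restricting along $\cE{\sf uc}_n \hookrightarrow \mfld_n$ yields the space $\Emb(\RR^n,M)$ equipped with its right action of the topological monoid $\Emb(\RR^n,\RR^n)$ by precomposition. Under Corollary~\ref{euc}, and more specifically the straightening--unstraightening equivalence $\Psh\bigl(\cE{\sf uc}_n\bigr)\simeq \spaces_{/\BO(n)}$, this right $\Emb(\RR^n,\RR^n)$-space is carried to the map between spaces
\[
\Emb(\RR^n,M)_{\sO(n)} \longrightarrow \BO(n)
\]
obtained by taking the $\sO(n)$-Borel construction, where $\sO(n)$ acts via the inclusion $\sO(n)\hookrightarrow \Emb(\RR^n,\RR^n)$ of Proposition~\ref{kister}.

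The task thus reduces to producing an $\sO(n)$-equivariant equivalence $\Emb(\RR^n,M)\simeq \mathrm{Fr}(M)$, where $\mathrm{Fr}(M)\to M$ is the orthonormal frame bundle. First, I would consider the continuous $\sO(n)$-equivariant evaluation map
\[
\Emb(\RR^n,M) \longrightarrow \mathrm{Fr}_{\sf GL}(M)~,\qquad f\mapsto \bigl(f(0),D_0f\bigr),
\]
into the bundle of linear frames on $M$. The same three-step deformation-retraction argument used in the proof of Proposition~\ref{kister}---translation toward the origin, rescaling toward the derivative at $0$, and Gram--Schmidt---applies fiberwise over $M$ and is $\sO(n)$-equivariant throughout. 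This exhibits $\Emb(\RR^n,M)\to \mathrm{Fr}(M)$ as an $\sO(n)$-equivariant homotopy equivalence of spaces over $M$.

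Finally, since $\sO(n)$ acts freely on $\mathrm{Fr}(M)$ with quotient $M$, the Borel construction collapses:
\[
\Emb(\RR^n,M)_{\sO(n)} ~\simeq~ \mathrm{Fr}(M)_{\sO(n)} ~\simeq~ \mathrm{Fr}(M)/\sO(n) ~\simeq~ M~,
\]
and the resulting map to $\BO(n)$ is the classifying map of the $\sO(n)$-principal bundle $\mathrm{Fr}(M)\to M$, i.e., the tangent classifier $\tau_M$. Functoriality in $M$ is automatic from the Yoneda embedding, which handles the homotopy-coherence of the identification among embeddings in the manifold variable.

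The main technical obstacle will be the explicit identification of the equivalence of Corollary~\ref{euc} with the Borel-construction functor $X\mapsto X_{\sO(n)}\to \BO(n)$: this requires that one spell out how the equivalence $\BO(n)\xra{\simeq} \cE{\sf uc}_n$ interacts with the straightening of right fibrations, and in particular that the right fibration classified by the presheaf $\Emb(\RR^n,M)$ on $\cE{\sf uc}_n$ has total space $\Emb(\RR^n,M)_{\sO(n)}$ over $\BO(n)$. Once that naturality is in place, the remainder is the $\sO(n)$-equivariant Kister--Mazur argument above.
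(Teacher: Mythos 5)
Your argument is correct and arrives at the same identification as the paper, but by a somewhat different route. The paper's proof is a two-line appeal to Lemma~\ref{EE-equivs} (specialized to $i=1$, giving $\cE{\sf uc}_{n/M}\simeq\conf_1(M)\simeq M$) followed by Proposition~\ref{kister}; your proof bypasses Lemma~\ref{EE-equivs} entirely and instead (a) unwinds straightening--unstraightening explicitly to recognize the unstraightened presheaf as the $\sO(n)$-Borel construction $\Emb(\RR^n,M)_{\sO(n)}\to\BO(n)$, and then (b) supplies a parametrized Kister--Mazur argument identifying $\Emb(\RR^n,M)$ with $\mathrm{Fr}(M)$ equivariantly over $M$. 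Your step (b) is in effect a direct, self-contained proof of the $i=1$ case of Lemma~\ref{EE-equivs}, which the paper cites from [AF1]; the tradeoff is that the paper's proof is shorter but imports a result, while yours is longer but more elementary and makes the geometry visible.

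One imprecision to flag in step (b): the three deformation retractions from the proof of Proposition~\ref{kister} do not all literally "apply fiberwise over $M$" as stated. The translation $f\mapsto (x\mapsto f(x)-tf(0))$ requires the target to be linear, so it has no fiberwise analogue for a general $M$. The correct form of the argument is to replace that step by the observation that evaluation at the origin $\Emb(\RR^n,M)\to M$ is an $\sO(n)$-equivariant fibration (with trivial $\sO(n)$-action on $M$), and to then apply the zooming and Gram--Schmidt retractions on each fiber $\Emb_m(\RR^n,M)$ after choosing a chart near $m$; this shows the $\sO(n)$-equivariant map $\Emb(\RR^n,M)\to\mathrm{Fr}(M)$, $f\mapsto(f(0),D_0f)$ (post-composed with Gram--Schmidt), is a fiberwise equivalence over $M$ and hence an equivalence. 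With that minor correction the argument is sound. Your flagged "main technical obstacle" --- that the equivalence of Corollary~\ref{euc} intertwines with the Borel construction --- is a routine unwinding of the straightening of right fibrations over an $\infty$-groupoid $\BO(n)\simeq\sB\sO(n)$, and is exactly the content the paper invokes when it writes "$\cE{\sf uc}_{n/M}\simeq\Emb(\RR^n,M)_{\sO(n)}$."
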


\begin{proof}
First, recognize $\cE{\sf uc}_n\subset \disk_n$ as the full $\infty$-subcategory consisting of the connected $n$-manifolds.  
Next, specialize the second statement of Lemma~\ref{EE-equivs} at $i=1$ to obtain an identification
\[
M  \underset{\rm Lem~\ref{EE-equivs}}{~\simeq~}  \cE{\sf uc}_{n/M}  \underset{\rm Prop~\ref{kister}}{~\simeq~} \Emb(\RR^n,M)_{\sO(n)} \longrightarrow \BO(n)
\]
involving the homotopy $\sO(n)$-coinvariants.
Unwinding the equivalences above recognizes this map in terms of the frame bundle for $M$, as
\[
M~\simeq~{\sf Fr}(M)_{\sO(n)} \longrightarrow \BO(n)~.
\]
Evidently, this map classifies the tangent bundle of $M$.

\end{proof}

\subsection{Manifolds with boundary}\label{sec.with.bdry}
We will also employ the category of $n$-manifolds with boundary.

\begin{definition}\label{def.bdry.good}
A smooth manifold $M$ with boundary is \emph{finitary} if it admits a \emph{good} cover, which is to say it admits a finite open cover $\cU :=\{U\subset M\}$ with the property that, for each finite subset $S\subset \cU$, the intersection $\underset{U\in S}\bigcap U$ is either empty, diffeomorphic to Euclidean space, or diffeomorphic to Euclidean half-space, $\RR_{\geq 0} \times \RR^{n-1}$.
\end{definition}

\begin{terminology}\label{size.boundary}
In this article, by ``manifold with boundary'' we mean ``finitary smooth manifold with boundary," unless otherwise stated.  

\end{terminology}

\begin{definition}  
$\mfld_n^\partial$ is the symmetric monoidal topological category of $n$-manifolds, possibly with boundary.
The topological space of morphisms between two is the set of smooth open embeddings equipped with the compact-open $C^\infty$ topology. 
The symmetric monoidal structure is disjoint union.
The full symmetric monoidal topological category
$\disk_n^\partial \subset \mfld_n^\partial$ is that consisting of finite disjoint unions of $\RR^n$ and $\RR_{\geq 0}\times \RR^{n-1}$.
\end{definition}

\begin{remark} 
The category $\disk_n^\partial$ is minimal with respect to the condition that any finite subset of an $n$-manifold with boundary has an open neighborhood diffeomorphic to an object of $\disk_n^\partial$. In particular, the closed $n$-disk $\DD^n$ is not an object of $\disk_n^\partial$.
\end{remark}

The following result is an application of the Alexander trick.

\begin{prop}[\cite{oldfact}]\label{product} 
The symmetric monoidal functor 
\[
\RR_{\geq 0}\times-:\mfld_{n-1} 
\longrightarrow 
\mfld_n^\partial
\] 
is fully-faithful. 
Namely, for each pair of $(n-1)$-manifolds $M$ and $N$, the map 
\[
\Emb(M,N) \longrightarrow \Emb\bigl(\RR_{\geq 0}\times M,\RR_{\geq 0}\times N\bigr)
\] 
is an equivalence between spaces.
\end{prop}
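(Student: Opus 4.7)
The plan is to exhibit an explicit homotopy inverse to the product functor $\Phi := \RR_{\geq 0}\times -$ on the level of embedding spaces, namely the boundary-restriction map
\[
\rho \colon \Emb\bigl(\RR_{\geq 0}\times M, \RR_{\geq 0}\times N\bigr) \longrightarrow \Emb(M,N)~, \qquad g \longmapsto g|_{\{0\}\times M}~,
\]
where we use that any smooth embedding between manifolds with boundary must carry boundary to boundary, so $g(0,x) = (0, f(x))$ for a well-defined embedding $f$ of $(n-1)$-manifolds. The identity $\rho \circ \Phi = \mathrm{id}_{\Emb(M,N)}$ is tautological; the content is the existence of a continuous homotopy from $\Phi\circ \rho$ to the identity on $\Emb\bigl(\RR_{\geq 0}\times M, \RR_{\geq 0}\times N\bigr)$.

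Given such $g$, write $g(s,x) = \bigl(\alpha(s,x), \beta(s,x)\bigr)$ with $\alpha \geq 0$ and $\alpha(0,x)=0$, so that $a(x) := \partial_s \alpha(0,x) > 0$ and $f(x) = \beta(0,x)$. The first stage of the homotopy is the Alexander-trick rescaling in the collar direction: for $t \in (0,1]$, set
\[
g_t(s,x) := \Bigl(\tfrac{1}{t}\alpha(ts,x),\; \beta(ts,x)\Bigr)~,
\]
extended at $t=0$ by $g_0(s,x) := \bigl(s\cdot a(x), f(x)\bigr)$. Each $g_t$ is an embedding, as it is $g$ precomposed with the self-embedding $(s,x)\mapsto (ts,x)$ of the collar and postcomposed with the self-embedding $(s,y)\mapsto (s/t,y)$ of $\RR_{\geq 0}\times N$. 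The second stage linearly interpolates the positive normal speed $a(x)$ to the constant $1$:
\[
h_r(s,x) := \Bigl(s\cdot \bigl((1-r)a(x) + r\bigr),\; f(x)\Bigr)~, \qquad r\in [0,1]~.
\]
Then $h_0 = g_0$ and $h_1 = \Phi(\rho(g))$, so the concatenation $t \mapsto g_t$ followed by $r\mapsto h_r$ is the desired homotopy from $g$ to $\Phi(\rho(g))$. Natural in $g$ throughout, this provides a deformation retraction of $\Emb\bigl(\RR_{\geq 0}\times M, \RR_{\geq 0}\times N\bigr)$ onto the image of $\Phi$.

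The main obstacle is verifying that the family $t \mapsto g_t$ is continuous at $t = 0$ in the compact-open $C^\infty$ topology, and simultaneously continuous in the variable $g$. This is a Taylor-expansion estimate: writing $\alpha(ts,x) = ts \cdot a(x) + t^2 s^2 \cdot r(ts,x)$ with $r$ smooth by Hadamard's lemma, one has $\alpha(ts,x)/t = s\cdot a(x) + ts^2 \cdot r(ts,x)$, from which uniform convergence on compact sets of all partial derivatives as $t\to 0^+$ is routine. One must also check that each $g_t$ actually lands in $\Emb\bigl(\RR_{\geq 0}\times M, \RR_{\geq 0}\times N\bigr)$ — this is automatic for $t>0$ by the factorization noted above, and at $t=0$ follows because $a(x)>0$ uniformly on compacta and $f$ is an embedding. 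Once this continuity is in hand, the two stages assemble into a continuous homotopy $H\colon [0,1]\times \Emb(\RR_{\geq 0}\times M, \RR_{\geq 0}\times N)\to \Emb(\RR_{\geq 0}\times M, \RR_{\geq 0}\times N)$ witnessing $\Phi\circ\rho \simeq \mathrm{id}$, completing the proof that $\Phi$ is fully faithful on mapping spaces.
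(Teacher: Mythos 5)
Your proof is correct and implements precisely the ``Alexander trick'' the paper advertises: the collar rescaling $g_t(s,x) = (\alpha(ts,x)/t, \beta(ts,x))$, made continuous at $t=0$ via the Hadamard factorization $\alpha(s,x) = s A(s,x)$, followed by linear interpolation of the normal speed $a(x)$ to $1$, is the standard deformation retraction of $\Emb(\RR_{\geq 0}\times M, \RR_{\geq 0}\times N)$ onto the image of $\RR_{\geq 0}\times -$. The only point worth stating a bit more explicitly is why $a(x) = \partial_s\alpha(0,x) > 0$ (since $\alpha(0,\cdot)\equiv 0$ forces $\partial_x\alpha(0,x)=0$, injectivity of $Dg(0,x)$ forces $\partial_s\alpha(0,x)\neq 0$, and $\alpha\geq 0$ forces it nonnegative), but this is easily supplied and does not affect the correctness of the argument.
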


\begin{remark} 
Together with Proposition~\ref{kister}, the previous proposition implies that the continuous homomorphism between topological monoids
\[
\sO(n-1)
\hookrightarrow
\Emb(\RR_{\geq 0}\times \RR^{n-1}, \RR_{\geq 0}\times\RR^{n-1})
\] 
is a homotopy equivalence. 
Consequently, $\disk^\partial_n$ is an unoriented variant of the Swiss cheese operad of Voronov \cite{voronov}. 
Specifically, the framed variant $\disk_n^{\partial, \fr}$ is equivalent to the PROP associated to the Swiss cheese operad.
\end{remark}

\subsection{Localizing with respect to isotopy equivalences}\label{sec.localizing}

Here we show that the $\infty$-category $\disk^B_{n/M}$ is a localization of its more discrete version $\ddisk^B_{n/M}$ on the collection of those embeddings that are isotopic to isomorphisms.  
This comparison plays a fundamental role in recognizing certain colimit expressions in this theory, for instance those that support the pushforward formula of~\S\ref{sec.push}.

Recall Definition~\ref{def.discrete.versions}, and the symmetric monoidal functors:
\[
\ddisk_n \longrightarrow \disk_n\qquad \text{ and }\qquad  \dmfld_n \longrightarrow \mfld_n
\]
We denote the pullback symmetric monoidal $\infty$-categories:
\[
\xymatrix{
\ddisk_n^B  \ar[d]  \ar[r]  
&
\disk_n^B  \ar[d]
&&
\dmfld_n^B \ar[r]  \ar[d]
&
\mfld_n^B  \ar[d]
\\
\ddisk_n  \ar[r]
&
\disk_n
&\text{ and }&
\dmfld_n \ar[r]
&
\mfld_n.
}
\]
For each $n$-manifold $M$,
we denote the $\infty$-subcategory 
\begin{equation}\label{eqn.I_X}
\cI_M~\subset ~\ddisk^B_{n/M}:= \ddisk_n^B \underset{\dmfld_n^B}\times \dmfld^B_{n/M}
\end{equation}
that consists of the same objects but only those morphisms $(U\hookrightarrow M) \hookrightarrow (V\hookrightarrow M)$ whose image in $\disk^B_{n/M}$ is an equivalence.

\begin{prop}[\cite{oldfact}]\label{EEd-vs-EE}
The functor
$
\ddisk^B_{n/M} \longrightarrow \disk^B_{n/M}
$
witnesses a localization between $\infty$-categories:
\[
\bigl(\ddisk^B_{n/M}\bigr)[\cI_M^{-1}]~\simeq~\disk^B_{n/M}~.
\]

\end{prop}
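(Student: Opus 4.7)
The plan is to verify the universal property of the localization. Since each morphism in $\cI_M$ is by definition sent to an equivalence in $\disk^B_{n/M}$, the functor under consideration factors canonically as
\[
\ddisk^B_{n/M} \longrightarrow \ddisk^B_{n/M}[\cI_M^{-1}] \xra{~F~} \disk^B_{n/M}~.
\]
Both $\infty$-categories have the same underlying collection of objects (namely, $B$-framed embeddings into $M$ of disjoint unions of Euclidean spaces), so $F$ is essentially surjective. I would then show $F$ is fully faithful by producing, for each pair of objects $U,V$, an equivalence between the mapping space $\Map_{\ddisk^B_{n/M}[\cI_M^{-1}]}(U,V)$ and $\Emb^B_{/M}(U,V)$.

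First I would normalize the situation: writing $U = \bigsqcup_{i\in I} \RR^n$ and $V=\bigsqcup_{j\in J} \RR^n$ as $B$-framed disks in $M$, Lemma~\ref{EE-equivs} decomposes $\Emb^B_{/M}(U,V)$ along its components, which are indexed by isotopy classes of $B$-framed embeddings $U \hookrightarrow V$ over $M$. The same set indexes the morphisms $U\to V$ in $\ddisk^B_{n/M}$ modulo the equivalence relation generated by $\cI_M$, so the comparison is a bijection on $\pi_0$ of mapping spaces.

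For higher homotopy, I would use the hammock/Rezk-nerve model of the localization. The key geometric input is a thickening-refinement construction: given a smooth family $\Delta^k \to \Emb^B_{/M}(U,V)$, the isotopy extension theorem together with compactness of $\Delta^k$ produces a finite chain of $\cI_M$-equivalences $U \hookleftarrow W_0 \hookrightarrow W_1 \hookleftarrow \cdots \hookrightarrow V$ inside $V$, realized by tubular neighborhoods of the trace of the family, through which the $k$-simplex factors by constant discrete data. This exhibits $\Sing \Emb^B_{/M}(U,V)$ as a filtered colimit of spaces of such zigzag refinements, and there is a canonical map from this colimit into the hammock localization of $\ddisk^B_{n/M}$ at $\cI_M$ restricted to the pair $(U,V)$.

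The main obstacle is the coherence problem: showing that this map is an equivalence in all degrees, which requires that the space of thickening refinements of a given simplex be weakly contractible. Here Proposition~\ref{kister}, identifying $\Emb(\RR^n,\RR^n)\simeq \sO(n)$, is indispensable: it implies that the space of choices of a tubular-neighborhood thickening of a disk embedded in $V$ is weakly contractible, so that the forgetful functor from the category of refinements down to the embedding space has contractible fibers. A Quillen Theorem A-style cofinality argument, applied to this forgetful functor over each simplex of $\Sing \Emb^B_{/M}(U,V)$, then identifies the hammock localization with $\Sing\Emb^B_{/M}(U,V)$, concluding that $F$ is fully faithful and hence an equivalence of $\infty$-categories.
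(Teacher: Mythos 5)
The paper gives no proof of this proposition --- it is cited from~\cite{oldfact} --- so there is no internal argument to compare against line by line. Your overall strategy (factor through the universal localization and show that the induced functor $F$ is an equivalence by computing the localization mapping spaces via refinement zigzags) is a sound route, and the tools you reach for --- isotopy extension, Proposition~\ref{kister}, and a Theorem~A-type cofinality argument --- are the geometrically relevant ones. There are, however, two genuine gaps.

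First, the contractibility you attribute to Proposition~\ref{kister} does not follow from it. Proposition~\ref{kister} identifies $\Emb(\RR^n,\RR^n)$ with $\sO(n)$, which is not contractible. What you actually need is that the subspace of embeddings whose image \emph{contains} a prescribed compact subset is weakly contractible (and its parametrized version over $M$ with $B$-framings). This is true, and it can be extracted from the scaling deformation retraction appearing inside the proof of Proposition~\ref{kister} together with the isotopy extension theorem, but it is a separate statement that must be isolated and proved. As written, the implication ``Kister implies contractible fibers'' does not go through.

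Second, ``the trace of the family'' is not compact: $U$ is a finite disjoint union of Euclidean spaces, so for a simplex $\Delta^k\to\Emb^B_{/M}(U,V)$ the union $\bigcup_{t\in\Delta^k}e_t(U)\subset M$ is open and non-compact, and one cannot directly take a tubular neighborhood of it. The zigzag factorization must instead be built from a compact exhaustion of $U$, and a given simplex will in general factor only through an arbitrarily long chain of $\cI_M$-morphisms rather than one of the fixed shape $U\hookleftarrow W_0\hookrightarrow W_1\hookleftarrow\cdots\hookrightarrow V$ you display; moreover the intermediate stages need not be abstractly finite unions of Euclidean spaces without further refinement. The comparison with the hammock model therefore has to be organized over zigzags of all lengths, and the contractibility/cofinality statement formulated and verified at that level. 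This is precisely the ``coherence problem'' you flag at the end of your sketch, and acknowledging it is not the same as closing it: it is exactly here that the content of the cited result resides.
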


Let $M$ be a $B$-framed $n$-manifold.
Each of the $\infty$-categories $\ddisk^B_{n/M}$ and $\disk^B_{n/M}$ is naturally the active $\infty$-subcategory of an $\infty$-operad,\footnote{Recall, \S2 of \cite{dag}, that a morphism of finite based sets $I_\ast \xra{f} J_\ast$ is {\it active} if $f^{-1}\{\ast\} = \{\ast\}$, and is {\it inert} if the restriction $f_|: f^{-1}J \ra J$ is injective. A morphism in an $\oo$-operad $\cO \ra {\sf Fin}_\ast$ is active or inert if its image in ${\sf Fin}_\ast$ is.} each of which we again denote as $\ddisk^B_{n/M}$ and $\disk^B_{n/M}$, respectively.
\begin{itemize}
\item
The $\infty$-operad structure on $\ddisk^B_{n/M}$ is such that the $\infty$-category of colors is the poset in which an object is an open subset $U\subset M$ that is abstractly diffeomorphic to Euclidean space.
There is a unique $i$-ary morphism from an $i$-fold collection $(U_k)_{1\leq k\leq i}$ of such to another $V$ precisely if the $U_k$ are pairwise disjoint and their union $\bigcup_{1\leq k \leq i} U_k \subset V$ is contained in $V$.  

\item
The $\infty$-operad structure on $\disk^B_{n/M}$ is such that the $\infty$-category of colors is the $\infty$-groupoid $\Emb(\RR^n,M)_{\sO(n)}\simeq M$ of an open disk in $M$.
The space of $i$-ary morphisms from an $i$-fold collection $(U_j)_{1\leq j \leq i}$ of such to another $V$ is the fiber of the composite map
\[
\Emb\Bigl( \bigsqcup_{j=1}^i U_j , V \Bigr)  
\longrightarrow
\prod_{j=1}^i \Emb(U_j , V)
\longrightarrow
\prod_{j=1}^i \Emb(U_j , M)
\]
over the point selecting the given sequence of embeddings $(U_j)_{1\leq j\leq i}$.  

\end{itemize}

The next result also appears in~\cite{dag} as Theorem~5.4.5.9.
\begin{cor}\label{6}
Let $\cV$ be a symmetric monoidal $\infty$-category.
For each $B$-framed $n$-manifold $M$, restriction along the morphism $\ddisk^B_{n/M} \to \disk^B_{n/M}$ defines a fully-faithful functor
\[
\Alg_{\disk^B_{n/M}}(\cV)
\xra{~\sf f.f.~}
\Alg_{\ddisk^B_{n/M}}(\cV)~.
\]

\end{cor}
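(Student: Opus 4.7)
The plan is to derive the corollary from Proposition~\ref{EEd-vs-EE} by upgrading that statement from a localization of $\infty$-categories to a localization of $\infty$-operads, and then invoking the universal property of $\infty$-operadic localization with target $\cV^\otimes$.

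The main claim to establish is that the morphism of $\infty$-operads $\ddisk^B_{n/M}\to \disk^B_{n/M}$ exhibits the target as the localization of the source at the class $\cI_M$, regarded as $1$-ary active morphisms. Concretely, for every $\infty$-operad $\cO$, restriction should yield a fully faithful functor $\Alg_{\disk^B_{n/M}}(\cO) \hookrightarrow \Alg_{\ddisk^B_{n/M}}(\cO)$ whose essential image consists of those algebras whose underlying functor of $\infty$-categories inverts $\cI_M$. Specializing to $\cO = \cV^\otimes$ then yields the corollary, because by Proposition~\ref{EEd-vs-EE} every algebra over $\disk^B_{n/M}$ automatically satisfies the $\cI_M$-inversion condition at the underlying $\infty$-categorical level.

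To verify the operadic localization, I would use the explicit descriptions of both $\infty$-operads recalled just before the corollary. Their inert substructures coincide, since inert morphisms simply project tuples onto factors, reflecting the common disjoint-union structure on colors. So it suffices to compare active multi-morphism spaces. For a tuple $(U_k)_{1\leq k\leq i}$ and target $V$, an active morphism in $\ddisk^B_{n/M}$ is given by the discrete datum that the $U_k$ are pairwise disjoint and contained in $V$, while an active morphism in $\disk^B_{n/M}$ is a point of the embedding-space fiber described in the text. I would apply Proposition~\ref{EEd-vs-EE} with $M$ replaced by $V$ (together with its restricted $B$-framing) to identify the latter space of multi-morphisms as the localization of the former at the analogous class $\cI_V$, coherently in $V$.

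The main obstacle is exactly this coherence as $V$ varies: one must check that the $1$-categorical localizations furnished by Proposition~\ref{EEd-vs-EE}, one for each target color $V$, assemble into an honest localization of $\infty$-operads rather than merely a pointwise statement at each target. Once this compatibility is secured, the universal property of operadic localization, together with the standard identification $\Alg_\cP(\cV) \simeq \Map_{\mathsf{Op}_\infty}(\cP, \cV^\otimes)$, delivers the fully faithful restriction functor as stated.
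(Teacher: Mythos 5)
The paper does not supply its own argument for Corollary~\ref{6}; it only cites Theorem~5.4.5.9 of Lurie's {\it Higher Algebra}. Your overall strategy---promote Proposition~\ref{EEd-vs-EE} to a Dwyer--Kan localization of $\infty$-operads (in the sense of Hinich) and then invoke the universal property of operadic localization---is reasonable in spirit and is roughly the shape of the proof in the cited source. But as written, the verification of the operadic-localization condition contains several genuine gaps.

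First, your assertion that ``their inert substructures coincide'' is false: the colors of $\ddisk^B_{n/M}$ form the poset of open subsets of $M$ abstractly diffeomorphic to $\RR^n$, whereas the colors of $\disk^B_{n/M}$ form the $\infty$-groupoid $\Emb(\RR^n,M)_{\sO(n)}\simeq M$. These two $\infty$-categories of colors are not equivalent; the point of Proposition~\ref{EEd-vs-EE} is exactly that the second is a localization of the first, so the map on colors is nontrivial. Second, the central step---``identify the latter space of multi-morphisms as the localization of the former at the analogous class $\cI_V$''---is not a well-defined maneuver. Multi-morphism spaces are spaces, not $\infty$-categories, so ``localization'' does not apply to them; what Proposition~\ref{EEd-vs-EE} gives, applied to a disk $V$ in place of $M$, is a localization of the $\infty$-{\it category} $\ddisk^B_{n/V}$, not an identification of the fiber defining $\Map_{\disk^B_{n/M}}\bigl((U_j)_j,V\bigr)$. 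To actually recognize that fiber as a hammock-type colimit of discrete multi-morphism sets, one needs a configuration-space computation in the style of Lemma~\ref{EE-equivs} (or the partition-of-unity argument of Theorem~A.3.1 of~\cite{dag} applied to configuration spaces), not just the statement of Proposition~\ref{EEd-vs-EE}. Third, you correctly flag that these pointwise identifications must cohere into an honest morphism of $\infty$-operads inducing the localization, but you offer no mechanism for that coherence; this is precisely where the bulk of the work in Lurie's proof of 5.4.5.9 lies. As it stands, the proposal names the right destination but leaves the load-bearing verification unproved.
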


In the case $M=\RR^n$, Corollary~\ref{6} can be used to prove that locally constant factorization algebras on $\RR^n$ are equivalent to $\cE_n$-algebras---see \S\ref{sec.factalg} for a discussion of factorization algebras.

The next construction is made possible using Proposition~\ref{EEd-vs-EE}.
\begin{construction}\label{def.f-inverse}
Let $M$ be a $B$-framed $n$-manifold, and let $N$ be a $B'$-framed $k$-manifold possibly with boundary.
Let $f\colon M \to N$ be a continuous map that satisfies the following regularity condition:
\begin{itemize}
\item[~]
Each of the restrictions
\[
f_|\colon f^{-1}(N\smallsetminus \partial N) \to N\smallsetminus \partial N\qquad \text{ and }\qquad f_|\colon f^{-1}(\partial N) \to \partial N
\]
is a smooth fiber bundle.  
\end{itemize}
We produce a composite morphism between $\infty$-operads
\[
f^{-1} \colon \ddisk^{\partial, B'}_{k/N} \longrightarrow \dmfld^B_{n/M} \longrightarrow \mfld^B_{n/M}~,
\]
in which the second morphism is the standard one.
We now describe the first functor.
For formal reasons, we can assume the maps $B\ra\BO(n)$ and $B'\ra\BO(k)$ are equivalences.  
For this case, the first functor is given by $(U\hookrightarrow N)\mapsto (U\underset{N}\times M \hookrightarrow M)$, which is evidently a multi-functor.
By inspection, this functor $f^{-1}$ carries isotopy equivalences to equivalences.
Through Proposition~\ref{EEd-vs-EE}, there results a multi-functor
\begin{equation}\label{f-inv}
f^{-1} \colon \disk^{B'}_{k/N} \longrightarrow \mfld^B_{n/M}~,
\end{equation}
as desired.

\end{construction}

\section{Homology theories for manifolds}\label{sec.homology}

The value of factorization homology over an $n$-manifold $M$ of an $n$-disk algebra $A$ is a sort of average, indexed by $n$-disks embedded in $M$, of the value of $A$ on such $n$-disks.
We make this definition precise, as well as observe a property of this definition for which it is universal, by defining factorization homology as left Kan extension of an $n$-disk algebra $A\colon \disk_n \to \cV$ along the inclusion $\disk_n \hookrightarrow \mfld_n$.  
\\

\noindent
Through out this entire section, we fix a space $B\to \BO(n)$ over $\BO(n)$ as well as a symmetric monoidal $\infty$-category $\cV$ that is {\it $\ot$-presentable} in the following sense.
\begin{definition}\label{ast} 
A symmetric monoidal $\oo$-category $\cV$ is \emph{$\ot$-presentable} if it satisfies both of the following conditions.
\begin{itemize}
\item 
The underlying $\infty$-category of $\cV$ is presentable: $\cV$ admits colimits and every object is a filtered colimit of compact objects.\footnote{This is with respect to an understood fixed uncountable cardinal $\kappa$, i.e., $\cV$ admits $\kappa$-small colimits and every object is a $\kappa$-filtered colimit of $\kappa$-compact objects.}

\item 
The symmetric monoidal structure of $\cV$ distributes over colimits: for each object $V\in \cV$, the functor $V\ot -\colon \cV\to \cV$ carries colimit diagrams to colimit diagrams.
\end{itemize}
\end{definition}

\begin{example}
Let $\cS$ be a presentable $\infty$-category.
Consider the Cartesian symmetric monoidal $\infty$-category $(\cS,\times)$, in which the symmetric monoidal structure is categorical product.  
Provided $\cS$ is Cartesian closed, then this symmetric monoidal $\infty$-category is $\ot$-presentable.  
In particular, $(\spaces,\times)$ is $\ot$-presentable; for $\cX$ any $\infty$-topos, then $(\cX,\times)$ is $\ot$-presentable; also, $(\Cat,\times)$ is $\ot$-presentable.
\end{example}

\begin{example}
Let $R$ be a commutative ring.
The symmetric monoidal $\infty$-category $\bigl({\sf Mod}_R, \underset{R}\ot\bigr)$, of $R$-modules with tensor product over $R$ is $\ot$-presentable.
Note, however, that its opposite $\bigl({\sf Mod}_R^{\op}, \underset{R}\ot\bigr)$ is not $\ot$-presentable.

\end{example}

\begin{remark} 
The results in \S\ref{sec.homology} (in particular, the Eilenberg--Steenrod axioms for factorization homology) only require that the symmetric monoidal structure of $\cV$ distributes over sifted colimits.
(This generality is established as a special case of~\S2 of~\cite{aft2}.)
However, the calculations of \S\ref{sec.calculations} onwards require the symmetric monoidal structure to distribute over all colimits, so for simplicity of exposition we enforce this stronger hypothesis throughout.
\end{remark}

\subsection{Disk algebras}

\begin{definition} 
The $\infty$-category of \emph{$\disk_n^B$-algebras} in $\cV$ is that of symmetric monoidal functors from $\disk_n^B$ to $\cV$:
\[
\Alg_{\disk_n^B}(\cV)~:=~\Fun^\ot(\disk^B_n, \cV)~.
\]

\end{definition}

\begin{remark}\label{framed.En}
Let $G\to \sO(n)$ be a morphism between group-objects in the $\infty$-category $\spaces$ (equivalently, a map of loop spaces).
Through this representation, change-of-framing defines an action of $G$ on the symmetric monoidal $\infty$-category $\disk_n^{\fr}$.  
As such, the symmetric monoidal forgetful functor
\[
\disk_n^{\fr}
\longrightarrow
\disk_n^{\sB G}
\]
witnesses the $G$-coinvariants: $\bigl(\disk_n^{\fr}\bigr)_{G}\xra{\simeq} \disk_n^{\sB G}$.
Consequently, for each symmetric monoidal $\infty$-category $\cV$, the restriction functor canonically factors through the $G$-invariants,
\[
\Alg_{\disk_n^{\sB G}}(\cV)
\xra{~\simeq~}
\bigl(\Alg_{\cE_n}(\cV)\bigr)^{G}
\longrightarrow
\Alg_{\cE_n}(\cV)
\]
as an equivalence.
In particular, there is a canonical equivalence from the $\infty$-category of $\disk_n$-algebras 
\[
\Alg_{\disk_n}(\cV)
\xra{~\simeq~}
\bigl(\Alg_{\cE_n}(\cV)\bigr)^{\sO(n)}
\]
and that of $\sO(n)$-invariant $\cE_n$-algebras.

\end{remark}

We denote the restricted Yoneda functor
\[
\EE\colon \mfld_n^B \xra{~\rm Yoneda~}  \Psh(\mfld_n^B) \xra{~\rm restriction~} \Psh(\disk_n^B)~.
\]
\begin{definition}\label{coend}
Let $M$ be a $B$-framed $n$-manifold.
Let $A$ be a $\disk_n^B$-algebra in $\cV$.  
\emph{Factorization homology (of $M$ with coefficients in $A$)} is the object in $\cV$ given either as the colimit (provided it exists), or the coend (provided it exists):
\begin{eqnarray}
\nonumber
\int_M A
&
:=
&
\colim\bigl(\disk_{n/M}^B \to \disk_n^B \xra{A} \cV\bigr)
\\
\nonumber
&
\simeq
&
\EE_M\underset{\disk_M^B} \bigotimes A~.
\end{eqnarray}

\end{definition}

\begin{remark} 
We comment on the two equivalent expressions defining factorization homology: as a colimit indexed by an overcategory, and as a coend.  
This is analogous to the familiar fact that, for $X_\bullet\colon \bdelta^{\op}\to \spaces$ a simplicial space, its geometric realization can be expressed 
\[
|X_\bullet|~:=~\colim\bigl(\bdelta_{/X_\bullet} \to \bdelta \xra{\Delta^\bullet} \spaces\bigr)~\simeq~ X_\bullet \underset{\bdelta}\bigotimes \Delta^\bullet
\]
as a colimit of topological simplices indexed by the category of simplices in $X_\bullet$, or as the standard coend expression which is a quotient of $\underset{p\geq 0} \coprod X_p\times \Delta^p$.
\end{remark}

The fully-faithful symmetric monoidal functor $\iota\colon \disk_n^B \hookrightarrow \mfld_n^B$ gives the restriction functor
\[
\Alg_{\disk_n^B}(\cV){:=} \Fun^{\ot}(\disk_n^B,\cV)
~\longleftarrow~  
\Fun^\ot\bigl(\mfld_n^B,\cV\bigr)\colon \iota^\ast~.
\]
The next result, proved in~\cite{oldfact}, identifies factorization homology as the values of a left adjoint to this restriction functor, provided $\cV$ is $\ot$-presentable.  

\begin{prop}[\cite{oldfact}]\label{as-LKE}
Let $\cV$ be $\ot$-presentable $\infty$-category.
The restriction functor $\iota^\ast$ admits a left adjoint,
\[
\iota_!\colon \Alg_{\disk_n^B}(\cV)~ \rightleftarrows~ \Fun^\ot\bigl(\mfld_n^B,\cV\bigr)\colon \iota^\ast~,
\]
over a left adjoint $\iota_!\colon \Fun(\disk_n^B,\cV) \rightleftarrows \Fun(\mfld_n^B,\cV)\colon \iota^\ast$.  
Furthermore, this left adjoint evaluates on a $\disk_n^B$-algebra $A$ as factorization homology:
\[
\iota_!(A)\colon M\mapsto \int_M A~.
\]
\end{prop}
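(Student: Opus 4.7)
The plan is to construct $\iota_!$ at the level of the underlying functor categories and then verify that the resulting left Kan extension preserves symmetric monoidality. By presentability (hence cocompleteness) of $\cV$, the standard $\infty$-categorical Kan extension machinery (\S4.3 of~\cite{topos}) produces a left adjoint $\iota_! \colon \Fun(\disk_n^B,\cV) \rightleftarrows \Fun(\mfld_n^B,\cV) \colon \iota^\ast$, and because $\iota$ is fully faithful the pointwise formula applies, delivering $\iota_!(A)(M) \simeq \colim\bigl(\disk_{n/M}^B \to \disk_n^B \xra{A} \cV\bigr)$. By Definition~\ref{coend}, this colimit is precisely $\int_M A$, so once the adjunction is promoted to symmetric monoidal functor categories the value formula is automatic.

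The heart of the argument is the upgrade to a symmetric monoidal adjunction. The cleanest route is via Day convolution: the functor $\infty$-categories $\Fun(\disk_n^B,\cV)$ and $\Fun(\mfld_n^B,\cV)$ carry Day convolution symmetric monoidal structures (\S2.2.6 of~\cite{dag}) under which the commutative algebras are precisely the symmetric monoidal functors. The general principle is that left Kan extension along a symmetric monoidal functor between small symmetric monoidal $\infty$-categories is symmetric monoidal for Day convolution, provided $\otimes_\cV$ distributes over colimits. Applying this to $\iota \colon \disk_n^B \hookrightarrow \mfld_n^B$ produces the desired factorization of $\iota_!$ through $\Alg_{\disk_n^B}(\cV) \to \Fun^\ot(\mfld_n^B,\cV)$, left adjoint to $\iota^\ast$.

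The essential geometric input making the Day convolution argument concrete is the equivalence of $\infty$-overcategories
\[
\disk_{n/M}^B \times \disk_{n/N}^B \xra{~\sqcup~} \disk_{n/M\sqcup N}^B,
\]
which holds because, by connectedness of a framed disk and disjointness of $M \sqcup N$, every object of the right-hand side splits uniquely as a disjoint union of a configuration in $M$ and one in $N$. Granted this, the key Künneth-type equivalence $\int_M A \ot \int_N A \xra{\simeq} \int_{M \sqcup N} A$ follows formally by distributing $\ot_\cV$ across the two colimits that define the left-hand side, re-indexing over the product of overcategories, and collapsing $A(U) \ot A(V) \simeq A(U \sqcup V)$ via the symmetric monoidality of $A$.

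The main obstacle is not the pointwise identification with $\int_M A$, which is immediate, but rather the homotopy coherent promotion of the one-fold Künneth equivalence above into the full datum of a symmetric monoidal functor $\mfld_n^B \to \cV$. Day convolution circumvents this by packaging all of the coherences into the formal statement that left adjoints of symmetric monoidal functors between small symmetric monoidal $\infty$-categories are symmetric monoidal (hence carry commutative algebras to commutative algebras); a direct verification would instead require an intricate induction on arities of the symmetric monoidal structure, which is exactly the kind of coherence bookkeeping that Day convolution is designed to absorb.
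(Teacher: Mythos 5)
Your proposal is in the right spirit and correctly isolates the key geometric input: the equivalence $\disk_{n/M}^B \times \disk_{n/N}^B \simeq \disk_{n/M\sqcup N}^B$ of $\infty$-overcategories, together with the hypothesis that $\ot_\cV$ distributes over colimits. The paper supplies no proof of this proposition, deferring to \cite{oldfact}; the remark immediately following it treats the operadic-Kan-extension packaging of \cite{dag} (Definitions 3.1.1.2 and 3.1.2.2) as an equivalent but distinct route, so your Day-convolution approach is a legitimate alternative rather than a transcription of the paper's argument.

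There is, however, an imprecision in the Day-convolution step that would need to be repaired before the argument is airtight. Commutative algebras for the Day convolution structure on $\Fun(\disk_n^B,\cV)$ are the \emph{lax} symmetric monoidal functors $\disk_n^B \to \cV$, not the strong ones appearing in the definition $\Alg_{\disk_n^B}(\cV) := \Fun^\ot(\disk_n^B,\cV)$. Thus the formal assertion that $\iota_!$ is Day-convolution symmetric monoidal delivers only that $\iota_!$ carries lax symmetric monoidal functors to lax symmetric monoidal functors; it does not by itself say that $\iota_! A$ is strong when $A$ is. The strong monoidality of $\iota_! A$ is exactly the content of your K\"unneth computation (together with the trivial unitality check $\iota_!A(\emptyset) \simeq \colim_{\disk_{n/\emptyset}^B} A \simeq A(\emptyset) \simeq \uno_\cV$, since $\disk_{n/\emptyset}^B \simeq \ast$). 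So your two steps are not redundant, as your closing paragraph suggests, but genuinely complementary: Day convolution supplies the homotopy-coherent \emph{lax} structure on $\iota_! A$ and on the adjunction, while the overcategory equivalence plus distributivity upgrades the binary constraint maps to equivalences and hence makes $\iota_!A$ strong. With that bookkeeping made explicit the argument closes: $\iota^\ast$, being right adjoint to the Day-convolution-symmetric-monoidal $\iota_!$, is lax symmetric monoidal and so preserves commutative algebras, and since both $\iota^\ast$ and (by the K\"unneth step) $\iota_!$ carry $\Fun^\ot$ into $\Fun^\ot$, the adjunction on $\Fun$ restricts to the claimed adjunction, lying over the unstructured one by construction. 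One last small slip: it is not the fully-faithfulness of $\iota$ that makes the pointwise colimit formula for $\iota_!$ valid---that formula holds whenever the relevant colimits exist, which cocompleteness of $\cV$ guarantees. Fully-faithfulness is instead what makes the \emph{unit} $A \to \iota^\ast \iota_! A$ an equivalence, a fact the paper invokes in the proof of Theorem~\ref{homology}.
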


\begin{remark}
Proposition~\ref{as-LKE} implies factorization homology can be expressed as a symmetric monoidal left Kan extension, at least when $\cV$ is $\ot$-presentable. 
Consequently, the definition of factorization homology given above is equivalent to operadic left Kan extension (after parsing Definitions 3.1.1.2 and 3.1.2.2 of \cite{dag}), which is the definition of factorization homology, or topological chiral homology, given by Lurie~\cite{dag} (Definition~5.5.2.6).

\end{remark}

The next result explains how factorization homology transforms under change of tangential structure.  
\begin{prop}\label{without-B}
Let $B\xra{\alpha} B'$ be a map between spaces over $\BO(n)$.
Let $M = (M,\varphi)$ be a $B$-framed $n$-manifold.
Consider the $B'$-framed $n$-manifold $\alpha M:= (M,\alpha\varphi)$.
Let $A$ be a $\disk_n^{B'}$-algebra $A\colon \disk_n^{B'}\to \cV$.
Consider the $\disk_n^B$-algebra $\alpha A \colon \disk_n^B \to \disk_n^{B'} \xra{A}\cV$.  
The canonical morphism in $\cV$
\[
\int_{M} \alpha A
\xra{~\simeq~}
\int_{\alpha M} A
\] 
is an equivalence.  
\end{prop}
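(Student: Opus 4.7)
The plan is to reduce the claim to an equivalence of $\infty$-categories between the relevant slice categories.

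By Definition~\ref{coend}, both $\int_M \alpha A$ and $\int_{\alpha M} A$ are colimits over the slices $\disk^B_{n/M}$ and $\disk^{B'}_{n/\alpha M}$, respectively. The functor $\alpha_*\colon \mfld^B_n \to \mfld^{B'}_n$ induces a natural functor
\[
F\colon \disk^B_{n/M}\longrightarrow \disk^{B'}_{n/\alpha M}
\]
making the triangle of functors to $\cV$ commute: the composition $A\circ\alpha_*\circ\text{proj}$ through $\disk^B_{n/M}$ agrees with $A\circ\text{proj}\circ F$. Hence the canonical comparison map is induced by $F$, and it suffices to show $F$ is cofinal; in fact, I aim to show that $F$ is an equivalence.

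To do so, I would use that slice $\infty$-categories commute with limits, so Definition~\ref{bmanifold} yields
\[
\mfld^B_{n/M} ~\simeq~ \mfld_{n/M} \underset{(\spaces_{/\BO(n)})_{/\tau_M}}\times (\spaces_{/B})_{/\varphi_M}~.
\]
For each map of spaces $f\colon Y\to X$, source projection gives a canonical equivalence $(\spaces_{/X})_{/f} \simeq \spaces_{/Y}$. Under these equivalences both factors in the above fiber product become $\spaces_{/M}$, and the homotopy $\tau_M\simeq \pi\varphi_M$ supplied by the $B$-framing on $M$ identifies the downward functor between them with the identity of $\spaces_{/M}$. The pullback therefore collapses to $\mfld_{n/M}$, with the equivalence realized by the evident forgetful functor that drops the $B$-framing on the source.

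Base-changing along $\disk_n \hookrightarrow \mfld_n$ (using $\disk^B_n \simeq \disk_n \times_{\mfld_n} \mfld^B_n$) then yields $\disk^B_{n/M} \simeq \disk_{n/M}$. The same argument applied to the pair $(B', \alpha M)$, together with the observation that $\alpha M$ has underlying manifold $M$, yields $\disk^{B'}_{n/\alpha M} \simeq \disk_{n/M}$; these two equivalences are compatible with $F$, which is thereby exhibited as an equivalence of $\infty$-categories. The main technical obstacle is the careful bookkeeping for the identifications $(\spaces_{/X})_{/f} \simeq \spaces_{/Y}$ and the verification that, under them, both structure maps into $\spaces_{/M}$ genuinely collapse to the identity (this is where the $B$-framing on $M$ enters in an essential way). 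Once this is in place, the remainder is a formal manipulation of pullbacks.
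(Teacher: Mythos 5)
Your proof is correct and follows the same approach as the paper: the paper reduces the claim to the assertion that the induced functor $\disk^B_{n/M} \to \disk^{B'}_{n/\alpha M}$ is an equivalence, which it declares true ``for formal reasons,'' and your argument supplies those formal reasons by using that slicing commutes with pullbacks together with the equivalence $(\spaces_{/X})_{/f} \simeq \spaces_{/Y}$ to collapse both slices to $\disk_{n/M}$.
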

\begin{proof} 
Note the canonical commutative diagram among $\infty$-categories:
\[
\xymatrix{
\disk_{n/M}^B \ar[rr] \ar[d]
&&
\disk_{n/\alpha M}^{B'} \ar[d]
\\
\disk_n^B \ar[rr] \ar[dr]_-{\alpha A}
&&
\disk_n^{B'} \ar[dl]^-{A}
\\
&
\cV
&
.
}
\]
For formal reasons, the top horizontal functor is an equivalence between $\infty$-categories.
It follows that the canonical morphism in $\cV$
\[
\int_{M} \alpha A
~:=~
\colim\Bigl( \disk_{n/M}^B \to \disk_n^B \xra{\alpha A} \cV\Bigr)
\longrightarrow
\colim\Bigl( \disk_{n/\alpha M}^{B'}\to \disk_n^{B'} \xra{A} \cV\Bigr) 
~=:~
\int_{\alpha M} A
\]
is an equivalence, as desired.

\end{proof}

\subsection{Factorization algebras}\label{sec.factalg}
We now use factorization homology to construct {\it factorization algebras} over each $B$-framed $n$-manifold, as in the sense used by Costello--Gwilliam (see Definition~\ref{def.fact.alg}). 
Namely, Proposition~\ref{compare.fact.alg} shows that $\disk_n^B$-algebra $A$ in $\cV$ determines a factorization algebra $\cF_A$ on each $B$-framed $n$-manifold $M$ whose value $\cF_A(U)$ on an open subset $U\subset M$ is factorization homology of $A$,
\[
\cF_A(U)~\simeq~ \int_U A~,
\]
over $U$ (with its $B$-framing inherited from that of $M$).  

For the next definition, for each $n$-manifold $M$, we observe the following multi-category structure on the poset ${\sf Opens}(M):=\dmfld_{n/M}$ of open subsets in $M$ ordered by inclusion.
Namely, an object is an open subset of $M$, and there is a multi-morphism from $(U_i)_{i\in I}$ to $V$, which is unique, provided the collection $\{U_i\}_{i\in I}$ is pairwise disjoint and provided $\underset{i\in I}\bigcup U_i\subset V$.  
Recall from~\S\ref{sec.weiss} the definition of (locally constant) Weiss cosheaves.  
\begin{definition}[\cite{kevinowen}]\label{def.fact.alg}
Let $\cV$ be a symmetric monoidal $\infty$-category.
Let $M$ be an $n$-manifold.
The $\infty$-category of ($\cV$-valued) \emph{factorization algebras} (on $M$) is the full $\infty$-subcategory of algebras in $\cV$ over the multi-category ${\sf Opens}(M)$ as in the pullback among $\infty$-categories:
\[
\xymatrix{
{\sf Alg}_M(\cV)   \ar[rr] \ar[d]
&&
\Alg_{{\sf Opens}(M)}(\cV)  \ar[d]
\\
\csh_\cV^{\sf Weiss}(M)  \ar[rr]
&&
\Fun\bigl({\sf Opens}(M) , \cV\bigr) .
}
\]
The $\infty$-category of \emph{locally constant} factorization algebras is the full $\infty$-subcategory of factorization algebras as in the pullback diagram among $\infty$-categories:
\[
\xymatrix{
{\sf Alg}^{\sf l.c.}_M(\cV)   \ar[rr] \ar[d]
&&
{\sf Alg}_M(\cV)  \ar[d]
\\
\csh_\cV^{\sf Weiss,l.c.}(M)  \ar[rr]
&&
\csh_\cV^{\sf Weiss}(M) .
}
\]
\end{definition}

\begin{remark}
In other words, a factorization algebra is a multi-functor $\cF\colon {\sf Opens}(M) \to \cV$ whose restriction to the poset ${\sf Opens}(M)$ is a Weiss cosheaf on $M$.
Informally, a factorization algebra is likewise a functor $\cF\colon {\sf Opens}(M) \to \cV$ from the poset of open subsets of $M$ to the underlying $\infty$-category of $\cV$ that satisfies codescent with respect to Weiss covers, 
together with a system of compatible equivalences in $\cV$: for each finite sequence $(U_i)_{i\in I}$ of pairwise disjoint open subsets of $M$, an equivalence $\cF\bigl(\underset{i\in I} \bigcup U_i\bigr) \simeq \underset{i\in I} \bigotimes \cF(U_i)$.  

\end{remark}

\begin{prop}\label{compare.fact.alg}
Let $\cV$ be a $\ot$-presentable $\infty$-category, and let $B$ be a space over $\BO(n)$.
For each $B$-framed $n$-manifold $M$, factorization homology defines a functor
\[
\Alg_{\disk_n^B}(\cV)
\longrightarrow
{\sf Alg}^{\sf l.c.}_M(\cV)
~,\qquad
A\mapsto \Bigl(~ U\mapsto \int_U A~\Bigr)~
\]
from $\disk_n^B$-algebras to locally constant factorization algebras on $M$.

\end{prop}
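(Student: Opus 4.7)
The plan is to build the factorization algebra $\cF_A$ out of the symmetric monoidal functor $\int_{(-)} A \colon \mfld_n^B \to \cV$ supplied by Proposition~\ref{as-LKE}, then verify its Weiss codescent and local-constancy properties, and finally promote the construction to a functor in $A$.

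First I would assemble the multi-functor underlying $\cF_A$. Restricting the $B$-framing on $M$ to open subsets defines a morphism of $\infty$-operads
\[
\iota_M \colon {\sf Opens}(M) \longrightarrow \mfld_n^B~,
\]
in which the unique multi-morphism $(U_i)_{i\in I} \to V$, with $\{U_i\}$ pairwise disjoint and $\bigsqcup_i U_i\subset V$, is sent to the $B$-framed embedding $\bigsqcup_i U_i \hookrightarrow V$. Composing with $\int_{(-)} A$ produces the multi-functor
\[
\cF_A \colon {\sf Opens}(M) \longrightarrow \cV~,\qquad U\longmapsto \int_U A~,
\]
whose multi-structure combines the symmetric monoidal equivalence $\bigotimes_i \int_{U_i} A \simeq \int_{\bigsqcup_i U_i} A$ with the functoriality of $\int_{(-)} A$ under the open inclusion $\bigsqcup_i U_i \hookrightarrow V$. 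This manifestly defines an object of $\Alg_{{\sf Opens}(M)}(\cV)$.

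Next I would verify the Weiss codescent condition on the underlying functor ${\sf Opens}(M) \to \cV$. For a Weiss covering sieve $\cU\subset \dmfld_{n/U}$, I want the canonical morphism
\[
\colim_{V \in \cU}\int_V A \longrightarrow \int_U A
\]
to be an equivalence in $\cV$. Unwinding the colimit definition of factorization homology and commuting the resulting double colimit, this reduces to showing that the canonical functor
\[
\colim\Bigl( \cU \to \dmfld_n^B \xra{\disk^B_{n/-}} \Cat\Bigr) \longrightarrow \disk_{n/U}^B
\]
is final. This is precisely the cofinality argument carried out in the proof of Proposition~\ref{weiss.classify}: pass to right fibrations over $\disk_n^B$, base-change to the maximal $\infty$-subgroupoid which by Lemma~\ref{EE-equivs} is $\coprod_{i\geq 0} \conf_i(U)_{\Sigma_i}$, and invoke Theorem~A.3.1 of~\cite{dag} on each cofactor, using that the Weiss cover $\cU$ induces a standard open cover of each paracompact unordered configuration space $\conf_i(U)_{\Sigma_i}$. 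This is the step where essentially all of the real work lies.

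Finally, local constancy is automatic: $\iota_M$ sends isotopy equivalences in $\dmfld_{n/M}$ to equivalences in $\mfld_n^B$, so $\cF_A$ carries them to equivalences in $\cV$ and therefore lies in the locally constant subcategory. Naturality in $A\in \Alg_{\disk_n^B}(\cV)$ is immediate, because every ingredient above is natural in the $\disk_n^B$-algebra input; this assembles the construction into the desired functor $\Alg_{\disk_n^B}(\cV) \to {\sf Alg}_M^{\sf l.c.}(\cV)$. The main obstacle is the cofinality step establishing Weiss codescent, for which one borrows wholesale from the proof of Proposition~\ref{weiss.classify}; all other steps are formal consequences of Proposition~\ref{as-LKE}.
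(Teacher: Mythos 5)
Your argument is correct in substance and rests on the same core ingredient as the paper's proof, but the two are packaged differently, and the difference is worth noting. The paper never verifies Weiss codescent at the level of the specific factorization algebra $\cF_A$: instead it runs a purely formal diagram chase, using Proposition~\ref{as-LKE} to pass to left adjoints and then invoking Corollary~\ref{weiss.cosheaf}---which already records that restriction $\csh^{\sf Weiss}_\cV(\mfld_n^B) \xra{\simeq} \Fun(\disk_n^B,\cV)$ is an equivalence, so that $\iota_!$ automatically lands in Weiss cosheaves---together with Remark~\ref{60} to get local constancy after further restriction to $\dmfld_n^B$ and to ${\sf Opens}(M)$. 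This has the advantage that functoriality in $A$ is built in from the start, since everything is a diagram of honest functors. You instead build $\cF_A$ by hand and verify Weiss codescent directly, which amounts to inlining the cofinality argument from the proof of Proposition~\ref{weiss.classify}; this buys explicitness (and does clarify where the hypercover theorem enters), but you then have to assert naturality in $A$ separately, and you are re-proving a colimit statement that the paper had already abstracted into Corollary~\ref{weiss.cosheaf}. Both routes are valid; the paper's is the slicker one once the earlier corollary is in hand.

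One small inaccuracy: in the base-change step you write that, by Lemma~\ref{EE-equivs}, the maximal $\infty$-subgroupoid ``is $\coprod_{i\geq 0}\conf_i(U)_{\Sigma_i}$.'' That is the maximal $\infty$-subgroupoid of $\disk^B_{n/U}$, not of $\disk_n^B$. The check is: base-change over the maximal $\infty$-subgroupoid of $\disk_n^B$, which is $\coprod_{i\geq 0}(B^{\times i})_{\Sigma_i}$, and then the fiber of $\disk^B_{n/U}\to\disk_n^B$ over the $i$-th cofactor is $\conf_i(U)_{\Sigma_i}$; the equivalence is then tested there via Theorem~A.3.1 of~\cite{dag}. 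As written your sentence conflates the base and the right fibration over it, though the intended argument is clearly the correct one.
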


\begin{proof}
Notice the evident diagram among $\infty$-operads
\[
\disk_n^B
\xra{~\iota~}
\mfld_n^B
\longleftarrow
\dmfld_n^B
\longleftarrow
{\sf Opens}(M) .
\]
This diagram determines the diagram among $\infty$-categories:
\[
\xymatrix{
\Alg_{\disk_n^B}(\cV) \ar[d]_-{\rm forget}
&&
\Fun^{\ot}(\mfld_n^B,\cV) \ar[ll]_-{\iota^\ast} \ar[r] \ar[d]_-{\rm forget}
&
\Fun^{\ot}(\dmfld_n^B,\cV) \ar[r] \ar[d]^-{\rm forget}
&
\Alg_{{\sf Opens}(M)}(\cV) \ar[d]^-{\rm forget}
\\
\Fun(\disk_n^B,\cV) 
&&
\Fun(\mfld_n^B,\cV) \ar[r] \ar[ll]_-{\iota^\ast}
&
\Fun(\dmfld_n^B,\cV) \ar[r]
&
\Fun\bigl({\sf Opens}(M), \cV\bigr) .
}
\]
Proposition~\ref{as-LKE} gives the commutative diagram involving left adjoints to each instance of $\iota^\ast$:
\[
\xymatrix{
\Alg_{\disk_n^B}(\cV) \ar[d]_-{\rm forget} \ar[rr]^-{\iota_!}
&&
\Fun^{\ot}(\mfld_n^B,\cV) \ar[r] \ar[d]_-{\rm forget}
&
\Fun^{\ot}(\dmfld_n^B,\cV) \ar[r] \ar[d]^-{\rm forget}
&
\Alg_{{\sf Opens}(M)}(\cV) \ar[d]^-{\rm forget}
\\
\Fun(\disk_n^B,\cV) \ar[rr]^-{\iota_!}
&&
\Fun(\mfld_n^B,\cV) \ar[r] 
&
\Fun(\dmfld_n^B,\cV) \ar[r]
&
\Fun\bigl({\sf Opens}(M), \cV\bigr) .
}
\]
Corollary~\ref{weiss.cosheaf} gives the factorization of the bottom horizontal sequence of functors:
\[
\xymatrix{
\Alg_{\disk_n^B}(\cV) \ar[d]_-{\rm forget} \ar[rrrr]^-{{\rm restrict}~\circ~ \iota_!}
&&
&&
\Alg_{{\sf Opens}(M)}(\cV) \ar[d]^-{\rm forget}
\\
\Fun(\disk_n^B,\cV) \ar[rr]^-{\iota_!}
&&
\csh^{\sf Weiss}_\cV(\mfld_n^B) \ar[r]
&
\csh^{\sf Weiss,l.c.}_\cV(\dmfld_n^B) \ar[r]
&
\csh^{\sf Weiss,l.c.}_\cV(M) .
}
\]
The result follows.

\end{proof}

\begin{remark}
Proposition~\ref{compare.fact.alg} grants the commutative diagram among $\infty$-categories:
\[
\xymatrix{
\Alg_{\disk_n}(\cV) \ar[rr]_-{\rm Prop~\ref{compare.fact.alg}}^-{A\mapsto \cF_A} \ar[dr]_-{\int_M}
&&
\Alg_M(\cV) \ar[dl]^-{\rm global~cosections}
\\
&
\cV
&
.
}
\]
In other words, for $\cF_A$ the factorization algebra determined by the $\disk_n$-algebra $A$, its global cosections is factorization homology:
\[
\int_M A~\simeq~ \cF_A(M)~.
\]
In the case that $M$ is equipped with a framing, $A$ need only be a $\cE_n$-algebra (see Remark~\ref{framed.En}).

\end{remark}

\begin{terminology}\label{d1}
For $X$ a stratified space, a factorization algebra $\cF$ on $X$ is \emph{constructible} if, for each stratum $X_p\subset X$ of $X$, the restricted factorization algebra $\cF_{|X_p}$ on $X_p$ is locally constant.  
\end{terminology}

\begin{remark}\label{r1}
Let $X$ be a stratified space.  
Consider the $\infty$-category $\cB_X$ of singularity-types in $X$, and stratified open embeddings among them.  
Consider the symmetric monoidal $\infty$-category $\disk(\cB_X)$ in which an object is a finite-fold disjoint union of objects in $\cB_X$, and a morphism between them is a stratified open embedding.  
Established in~\cite{aft2} is a similar construction to that of Proposition~\ref{compare.fact.alg} as it concerns symmetric monoidal functors from $\disk(\cB_X)$ to constructible factorization algebras on $X$.
See that reference for a thorough development, and~\S\ref{sec.singular} of this article for a synopsis.  

\end{remark}

\subsection{Factorization homology over oriented $1$-manifolds with boundary}\label{sec.interval}
We show that factorization homology over a closed interval is a two-sided bar construction.

Recall from
~\S\ref{sec.with.bdry} the symmetric monoidal $\infty$-category $\mfld_1^\partial$ and its symmetric monoidal full $\infty$-subcategory $\disk_1^\partial$ consisting of finite disjoint unions of Euclidean and half-Euclidean spaces.  
In this subsection we consider, similarly, the symmetric monoidal $\infty$-category and its symmetric monoidal full $\infty$-subcategory,
\[
\disk_1^{\partial,\sf or}
~\subset~
\mfld_1^{\partial, \sf or}~.
\]
An object in the latter is a 1-manifold with boundary equipped with an orientation, while such an object belongs to the smaller if each connected component of its underlying 1-manifold with boundary is diffeomorphic to Euclidean space or half-Euclidean space.
The space of morphisms between two such objects therein is the space of smooth open embeddings that preserve orientations, equipped with the compact-open $C^\infty$ topology.
The symmetric monoidal structure is disjoint union.

\begin{remark}
The 1-manifold with boundary $[-1,1]$, equipped with its standard orientation determined by the non-vanishing vector field $\partial_t$, is an object in $\mfld_1^{\partial,\sf or}$ that does \emph{not} belong to $\disk_1^{\partial,\sf or}$.  
\end{remark}

We next articulate a sense in which $\disk_1^{\partial, \sf or}$ is entirely combinatorial.

\begin{definition}[\cite{aft2}]\label{Assoc}
${\sf Assoc}^{\sf RL}$ is the $\infty$-operad corepresenting triples $(A;P,Q)$ consisting of an associative algebra together with a unital right and a unital left module.  
Specifically, it is a unital multi-category whose space of colors is the three-element set $\{M, R, L\}$, and with spaces of multi-morphisms given as follows.  Let $I\xra{\sigma}\{M,R,L\}$ be a map from a finite set.
\begin{itemize}
\item ${\sf Assoc}^{\sf RL}(\sigma, M)$ is the set of linear orders on $I$ for which no element is related to an element of $\sigma^{-1}(\{R,L\})$.
In other words, should $\sigma^{-1}(\{R,L\})$ be empty, then there is one multi-morphism from $\sigma$ to $M$ for each linear order on $\sigma^{-1}(M)$; should $\sigma^{-1}(\{R,L\})$ not be empty, then there are no multi-morphisms from $\sigma$ to $M$.

\item ${\sf Assoc}^{\sf RL}(\sigma, L)$ is the set of linear orders on $I$ for which each element of $\sigma^{-1}(L)$ is a minimum, and no element is related to an element in $\sigma^{-1}(R)$.  
In other words, should $\sigma^{-1}(\{R\})$ be empty and $\sigma^{-1}(\{L\})$ have cardinality at most $1$, then there is one multi-morphism from $\sigma$ to $M$ for each linear order on $\sigma^{-1}(M)$;  should $\sigma^{-1}(\{R\})$ not be empty or $\sigma^{-1}(\{L\})$ have cardinality greater than $1$, then there are no multi-morphisms from $\sigma$ to $M$.

\item ${\sf Assoc}^{\sf RL}(\sigma, R)$ is the set of linear orders on $I$ for which each element of $\sigma^{-1}(R)$ is a maximum, and no element is related to an element in $\sigma^{-1}(L)$.
In other words, should $\sigma^{-1}(\{L\})$ be empty and $\sigma^{-1}(\{R\})$ have cardinality at most $1$, then there is one multi-morphism from $\sigma$ to $M$ for each linear order on $\sigma^{-1}(M)$;  should $\sigma^{-1}(\{L\})$ not be empty or $\sigma^{-1}(\{R\})$ have cardinality greater than $1$, then there are no multi-morphisms from $\sigma$ to $M$.

\end{itemize}
Composition of multi-morphisms is given by concatenating linearly ordered sets.  

\end{definition}

The next result references the \emph{symmetric monoidal envelope} of the colored operad ${\sf Assoc}^{\sf RL}$.  
It is initial among symmetric monodal $\infty$-categories equipped with an ${\sf Assoc}^{\sf RL}$-algebra.  In other words, it is a symmetric monoidal $\infty$-category ${\sf Env}\bigl( {\sf Assoc}^{\sf RL}\bigr)$ corepresenting the copresheaf on the $\infty$-category of symmetric monoidal $\infty$-categories
\[
\Map^{\ot}\Bigl({\sf Env}({\sf Assoc}^{\sf RL}) , -\Bigr) 
\colon 
\Cat^{\ot}
\longrightarrow
\Spaces
~,\qquad
\cV\mapsto 
\bigl(\Alg_{{\sf Assoc}^{\sf RL}}(\cV)\bigr)^{\sim}
~,
\]
whose value on a symmetric monoidal $\infty$-category is the moduli space of ${\sf Assoc}^{\sf RL}$-algebras in it.

\begin{lemma}\label{disk.env}
Taking connected components defines an equivalence between symmetric monoidal $\infty$-categories,
\begin{equation}\label{disk-comb}
[-]\colon \disk^{\partial, \sf or}_1  \xra{~\simeq~}  {\sf Env}\bigl({\sf Assoc}^{\sf RL}\bigr)
\end{equation}
to the symmetric monoidal envelope,
where the values on the symmetric monoidal generators are: $[\RR] = M$, $[\RR_{\geq 0}] = R$, and $[\RR_{\leq 0}] = L$.  
\end{lemma}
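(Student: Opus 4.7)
The plan is to show both sides of~(\ref{disk-comb}) are equivalent to the same ordinary $1$-category, with $[-]$ implementing the equivalence. The symmetric monoidal envelope ${\sf Env}({\sf Assoc}^{\sf RL})$ is, by construction, an ordinary category, since ${\sf Assoc}^{\sf RL}$ is an ordinary colored operad. So the first task is to show that $\disk_1^{\partial,\sf or}$ is an ordinary $1$-category up to equivalence, i.e., that its mapping spaces are homotopy discrete. For the generating objects, Proposition~\ref{kister} gives $\Emb(\RR,\RR) \simeq \sO(1)$, so restricting to the orientation-preserving component is contractible. Proposition~\ref{product} applied to the $0$-dimensional manifold $\ast$ gives $\Emb(\RR_{\geq 0}, \RR_{\geq 0}) \simeq \Emb(\ast,\ast) \simeq \ast$, and similarly for $\RR_{\leq 0}$. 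For disjoint unions, the space of orientation-preserving embeddings fibers over the discrete set of induced maps between sets of connected components, and within each fiber an Alexander-trick style deformation retraction (contracting each source interval to its center) recovers an ordered configuration space of points in the target, whose connected components are contractible and indexed by a linear ordering of the source pieces inside each target component.

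Once $\disk_1^{\partial,\sf or}$ is identified (up to equivalence) with an ordinary $1$-category, essential surjectivity of $[-]$ is immediate: every object of the envelope is a finite tuple valued in $\{M,R,L\}$, realized by a disjoint union of copies of $\RR$, $\RR_{\geq 0}$, $\RR_{\leq 0}$. For fully faithfulness, given source and target objects $\bigsqcup_{i\in I} X_{\sigma(i)}$ and $\bigsqcup_{j\in J}X_{\tau(j)}$ (with $X_M=\RR$, $X_R=\RR_{\geq 0}$, $X_L=\RR_{\leq 0}$), an isotopy class of orientation-preserving embeddings records: a map of finite sets $f\colon I\to J$ (which source component embeds into which target component), and for each $j\in J$ a linear order on the fiber $f^{-1}(j)$ (the left-to-right arrangement inside $X_{\tau(j)}$). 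Because an orientation-preserving embedding of a half-line must send boundary to boundary, and each target half-line has a unique boundary point, the map $f$ must preserve half-line types (sending $R$-colored source indices to $R$-colored target indices, and likewise for $L$), at most one non-$M$ source component lies in each fiber, and that boundary source component, when present, is pinned to the appropriate extremum of the linear order on $f^{-1}(j)$. Under the stated identifications $[\RR]=M$, $[\RR_{\geq 0}]=R$, $[\RR_{\leq 0}]=L$, these constraints match exactly the combinatorics of multi-morphisms in ${\sf Assoc}^{\sf RL}$ as laid out in Definition~\ref{Assoc}, and the pair $(f,\text{orderings})$ is precisely the data of a morphism in ${\sf Env}({\sf Assoc}^{\sf RL})$.

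The symmetric monoidal structures on both sides are disjoint union of manifolds, respectively concatenation of tuples of colors, and the functor $[-]$ visibly intertwines them. The main obstacle in executing this plan is the combinatorial matching in the second paragraph: one must carefully verify that the extremum conditions of Definition~\ref{Assoc} agree with the geometric constraint that the boundary of a source half-line must coincide with the boundary of its target half-line, and that, once mapping spaces are known to be homotopy discrete, no further isotopy information is lost in passing to the discrete data $(f,\text{linear orderings})$.
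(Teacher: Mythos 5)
Your proposal is correct and follows essentially the same approach as the paper's proof: establish essential surjectivity from the symmetric monoidal generators, observe that the relevant mapping spaces in $\disk_1^{\partial,\sf or}$ are homotopy discrete, and then match the resulting combinatorial data (a map on sets of components together with linear orders on fibers, subject to the boundary-pinning constraints for half-lines) against Definition~\ref{Assoc}. The only cosmetic difference is that you spell out the discreteness of the morphism spaces via Proposition~\ref{kister}, Proposition~\ref{product}, and an explicit Alexander-trick retraction, whereas the paper simply asserts it ``by direct inspection'' before case-checking the three types of connected target.
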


\begin{proof}
Evidently, this defines a symmetric monoidal functor.
We now show that it is an equivalence.
Because it is so on symmetric monoidal generators,~(\ref{disk-comb}) is essentially surjective on spaces of objects.
It remains to show that~(\ref{disk-comb}) is fully-faithful.
So let $U$ and $V$ be objects in $\disk^{\partial,\sf or}_1$.
We must show that the map between spaces
\begin{equation}\label{33}
\Map_{\disk^{\partial,\sf or}_1}(U,V)
\longrightarrow
\Map_{{\sf Env}({\sf Assoc}^{\sf RL})}([U],[V])
\end{equation}
is an equivalence.  
For $V = \underset{\alpha\in [V]} \bigsqcup V_\alpha$ the partition as connected components, direct inspection of the definition of these two symmetric monoidal $\infty$-categories yields an identification of this map is as the $[V]$-indexed product of such maps
\begin{eqnarray}
\nonumber
\Map_{\disk^{\partial,\sf or}_1}(U,V)
&
\xra{\simeq}
&
\underset{[U]\xra{f} [V]}\coprod \underset{\alpha\in [V]}\prod\Map_{\disk^{\partial,\sf or}_1}(U_{|f^{-1}\alpha},V_\alpha)
\\
\nonumber
&
\longrightarrow
&
\underset{[U]\xra{f} [V]} \coprod \underset{\alpha\in [V]}\prod \Map_{{\sf Env}({\sf Assoc}^{\sf RL})}(f^{-1}\alpha,[V_\alpha])
\\
\nonumber
&
\xla{\simeq}
&
\Map_{{\sf Env}({\sf Assoc}^{\sf RL})}([U],[V])~.
\end{eqnarray}
We are therefore reduced to the case that $V$ is non-empty and connected.
By direct inspection, the space of morphisms in $\disk_1^{\partial,\sf or}$ from $U$ to $V$ is a $0$-type.
So we are left to show that the map~(\ref{33}) is a bijection (on connected components).  
There are three cases to consider.
\begin{itemize}
\item 
In the case that $V\cong (-1,1)$ is oriented-diffeomorphic to an open interval, this $0$-type is empty if $U$ has non-empty boundary, and otherwise it is the set of linear orders on the set $[U]$ of connected components. 

\item
In the case that $V \cong [-1,1)$ is oriented-diffeomorphic to a left-cosed/right-open interval, this $0$-type is empty if $U$ has non-empty outward-pointing boundary, and otherwise it is the set of linear orders on $[U]$ for which each connected component with inward-pointing boundary is a minimum.  

\item
In the case that $V \cong (-1,1]$ is oriented-diffeomorphic to a left-open/right-closed interval, this $0$-type is empty if $U$ has non-empty inward-pointing boundary, and otherwise it is the set of linear orders on $[U]$ for which each connected component with outward-pointing boundary is a maximum.  

\end{itemize}
Inspecting Definition~\ref{Assoc}, and the description of the symmetric monoidal functor $[-]$ under examination, reveals that the map~(\ref{33}) between 0-types is an equivalence, as desired.

\end{proof}

\begin{example}\label{ex.assoc}
Note the functor $\Alg^{\sf aug}_{\sf Assoc}(\cV) \to \Alg_{\sf Assoc^{RL}}(\cV)$ from augmented associative algebras, given by $(A\to \uno)\mapsto (A;\uno,\uno)$.  
Concatenating with the equivalence of Lemma~\ref{disk.env} results in a functor $\Alg^{\sf aug}_{\sf Assoc}(\cV) \to\Fun^\ot\bigl(\disk^{\partial, \sf or}_1, \cV\bigr)$.

\end{example}

Recall from~\S\ref{sec.with.bdry} the $\infty$-operad structure on $\disk_{n/M}$.
\begin{observation}\label{interval-assoc}
Consider the ordinary category $\sO^{\sf RL}$ in which an object is a finite linearly ordered set $(I,\leq)$ together with a pair of disjoint subsets $R\subset I \supset L$ for which each element in $R$ is a minimum and each element in $L$ is a maximum; a morphism $(I,\leq, R,L)\to (I',\leq',R',L')$ is an order preserving map $I\xra{f} I'$ for which $f(R\sqcup L)\subset R'\sqcup L'$.  
Concatenating linear orders endows $\sO^{\sf RL}$ with the structure of a multi-category.
Note the evident forgetful morphism between multi-categories $\sO^{\sf RL} \to {\sf Env}\bigl({\sf Assoc}^{\sf RL}\bigr)$.  
By direct inspection, the equivalence~(\ref{disk-comb}) lifts to an equivalence between $\infty$-operads:
\begin{equation}\label{interval-comb}
\xymatrix{
\disk^{\partial,\sf or}_{1/[-1,1]}  \ar@{-->}[rr]^-{\simeq}_-{[-]}  \ar[d]_-{\rm forget}
&&
\sO^{\sf RL}  \ar[d]
\\
\disk^{\partial,\sf or}_1  \ar[rr]^-{\simeq}_-{[-]}
&&
{\sf Env}({\sf Assoc}^{\sf RL} ) .}
\end{equation}

\end{observation}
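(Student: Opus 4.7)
The plan is to proceed by direct inspection, constructing the dashed functor on the combinatorial data extracted from oriented embeddings into $[-1,1]$, then verifying it is an equivalence of $\infty$-operads. To an object $(e\colon U = \bigsqcup_{i \in [U]} U_i \hookrightarrow [-1,1])$ of $\disk^{\partial,\sf or}_{1/[-1,1]}$, I would associate the quadruple $([U], \leq, R, L)$, where $\leq$ is the linear order on components inherited from the positions of the $e(U_i)$ in $[-1,1]$, and $R, L \subset [U]$ record respectively the unique components (if any) of type $\RR_{\geq 0}$ or $\RR_{\leq 0}$ that meet a boundary point of $[-1,1]$. Injectivity of $e$ forces $|R|, |L| \leq 1$, and the orientation convention, which pairs each half-line type with a definite end of $[-1,1]$, places the $R$-element as a minimum and the $L$-element as a maximum of $[U]$. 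This is precisely the data of an object of $\sO^{\sf RL}$.

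Next, I would verify multi-operadic functoriality together with commutativity of the square~(\ref{interval-comb}). A multi-morphism $(U_1,\dots, U_k; e_j) \to (V; e)$ in $\disk^{\partial,\sf or}_{1/[-1,1]}$ consists of pairwise disjoint oriented embeddings $\bigsqcup_j U_j \hookrightarrow V$ compatible with the given embeddings into $[-1,1]$; this concatenates the linear orders on the $[U_j]$ into a single linear order on $\bigsqcup_j [U_j]$ refining the ordering inherited from $[V]$, and forces boundary-incident components of the source to map to boundary-incident components of the target, producing exactly a multi-morphism in $\sO^{\sf RL}$. Compatibility with the equivalence $[-]$ of Lemma~\ref{disk.env} is immediate: both vertical forgetful functors discard the boundary decoration (respectively the slice over $[-1,1]$) and retain only the type-labeled, linearly ordered set.

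Finally, I would verify the dashed functor is essentially surjective and fully faithful. Essential surjectivity is direct: any $([U], \leq, R, L) \in \sO^{\sf RL}$ is realized by placing $|[U]|$ intervals in $[-1,1]$ in the prescribed linear order, with the $R$-labeled half-line (if any) at one boundary and the $L$-labeled half-line (if any) at the other. For fully faithfulness, both multi-morphism spaces are discrete: for $\sO^{\sf RL}$ by definition, and for $\disk^{\partial,\sf or}_{1/[-1,1]}$ via Proposition~\ref{kister} and Proposition~\ref{product} combined with the Alexander trick, which together imply that each connected component of the relevant embedding spaces is contractible. Fully faithfulness then reduces to a bijection between sets of multi-morphisms, a combinatorial check analogous to the final step of Lemma~\ref{disk.env}'s proof, augmented with the boundary-matching constraints imposed by the embeddings into $[-1,1]$. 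The main obstacle is purely bookkeeping: carefully tracking the multi-operad (not merely categorical) structure, in particular verifying that the $R, L$ decoration propagates correctly under multi-composition, so that the constraint $f(R \sqcup L) \subset R' \sqcup L'$ in $\sO^{\sf RL}$ matches exactly the requirement that a multi-morphism over $[-1,1]$ sends boundary-incident components to boundary-incident components.
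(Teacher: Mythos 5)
Your proof is correct and carries out precisely the "direct inspection" the paper invokes without spelling out: you identify the object and multi-morphism data on both sides, verify functoriality and commutativity of the square, and then reduce essential surjectivity and full faithfulness to the discreteness of the embedding spaces (via Propositions~\ref{kister} and~\ref{product}) plus a combinatorial bijection matching boundary-incident components to the $R, L$ constraints. The paper provides no further argument beyond the phrase "by direct inspection," so your elaboration is the intended one.
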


\begin{cor}\label{tensor-prod}
Let $(A;P,Q)$ be an ${\sf Assoc}^{\sf RL}$-algebra in $\cV$; which is to say an associative algebra $A$ together with a unital left and a unital right $A$-module.
Through Lemma~\ref{disk.env}, regard $(A;P,Q)$ as a symmetric monoidal functor $(A;P,Q)\colon\disk^{\partial, \sf or}_1\to \cV$. 
There is a canonical equivalence in $\cV$ from the balanced tensor product to factorization homology over the closed interval:
\[
Q\underset{A}\ot P\xra{~\simeq~} \int_{[-1,1]} (A;P,Q)~.  
\]

\end{cor}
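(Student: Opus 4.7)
The plan is to unwind the defining colimit of factorization homology using Observation~\ref{interval-assoc}, and then identify the resulting colimit with the standard two-sided bar construction that computes $Q\underset{A}\otimes P$.

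By Definition~\ref{coend}, factorization homology is the colimit, over the underlying $\infty$-category of $\disk_{1/[-1,1]}^{\partial,\sf or}$, of the functor corresponding via Lemma~\ref{disk.env} to $(A;P,Q)$. Applying the equivalence of $\infty$-operads~\eqref{interval-comb} (and passing to underlying $\infty$-categories), this rewrites as
\[
\int_{[-1,1]}(A;P,Q)~\simeq~ \underset{(I,\leq,R,L)\in \sO^{\sf RL}}\colim F,
\]
where $F\colon \sO^{\sf RL}\to \cV$ sends $(I,\leq,R,L)$ to the $I$-indexed tensor product placing $P$ at elements of $R$, $Q$ at elements of $L$, and $A$ elsewhere; functoriality is encoded by the multiplication of $A$ and the unital module actions on $P$ and $Q$.

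To recognize this colimit as a bar construction, I would restrict to the full subcategory $\cI\subset\sO^{\sf RL}$ on those objects with $R=\{\min_I\}$ and $L=\{\max_I\}$. The classical interval duality identifies $\cI\simeq \bdelta^{\op}$, under which $|I|=n+2$ corresponds to $[n]\in\bdelta^{\op}$, and the restricted functor $F|_\cI$ coincides with the two-sided simplicial bar object $[n]\mapsto Q\otimes A^{\otimes n}\otimes P$: the face maps arise from multiplication of adjacent tensor factors (using associativity of $A$ together with the unital module structures on $P$ and $Q$), while the degeneracies arise from the unit of $A$. Granting the cofinality of $\cI\hookrightarrow \sO^{\sf RL}$, one concludes
\[
\int_{[-1,1]}(A;P,Q)~\simeq~ \bigl|Q\otimes A^{\otimes\bullet}\otimes P\bigr|~\simeq~Q\underset{A}\otimes P
\]
by the standard bar presentation of the relative tensor product in a $\ot$-presentable symmetric monoidal $\infty$-category.

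The main obstacle is the cofinality of $\cI\hookrightarrow\sO^{\sf RL}$. By Lurie's criterion, this amounts to showing that for each $c=(I,\leq,R,L)\in\sO^{\sf RL}$, the undercategory $\cU_c:=\cI\times_{\sO^{\sf RL}}(\sO^{\sf RL})_{c/}$ is weakly contractible. I would handle this by first constructing a functorial ``endpoint-completion'' $c\mapsto c^+$ (adjoining a minimum to $I$ when $R=\emptyset$ and a maximum when $L=\emptyset$) together with a canonical morphism $\eta_c\colon c\to c^+$ landing in $\cI$, thereby producing a distinguished object of $\cU_c$. One then verifies contractibility case-by-case in the four tagging possibilities for $c$; in each case the nerve of $\cU_c$ is identified with the Grothendieck construction over $\cI\simeq\bdelta^{\op}$ of a simplicial set whose geometric realization is a standard simplex (for instance, for $c=(\{a\},\emptyset,\emptyset)$ the relevant simplicial set is $\Delta^1$), hence contractible. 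The subtlety is that $\eta_c$ is not uniquely weakly initial when the given morphism out of $c$ already sends an existing extremum of $I$ to an extremum of the target, and this non-uniqueness must be absorbed into the contractibility argument rather than avoided.
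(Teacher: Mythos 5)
Your plan is essentially the paper's proof: rewrite the colimit via Observation~\ref{interval-assoc}, identify $\bdelta^{\op}$ as the full subcategory $\cI\subset\sO^{\sf RL}$ on objects with both $R$ and $L$ nonempty, check that the restriction of the functor to $\cI$ is the two-sided bar object, and establish finality of the inclusion $\cI\hookrightarrow\sO^{\sf RL}$. Where you diverge is in how you argue finality, and there you have invented a difficulty that does not exist. The endpoint-completion $c\mapsto c^+$ you construct is precisely a \emph{left adjoint} to the inclusion $\iota\colon\cI\hookrightarrow\sO^{\sf RL}$, with unit $\eta_c\colon c\to c^+$: for each $d\in\cI$, an order-preserving map $c\to d$ extends uniquely over the adjoined extrema to a map $c^+\to d$ sending the new minimum/maximum to the min/max of $d$, and this extension is again order-preserving --- regardless of whether the original map already hits an extremum of $d$, since nothing in the universal property demands injectivity on the added points. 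Thus $\Hom_{\sO^{\sf RL}}(c,\iota d)\cong\Hom_{\cI}(c^+,d)$ naturally, $\iota$ is a right adjoint, and right adjoints are final; equivalently, each undercategory $\cU_c\simeq\cI_{c^+/}$ has $\eta_c$ as an honest initial object. The case-by-case verification, the Grothendieck-construction identification, and the proposed "absorption of non-uniqueness" are all unnecessary; the subtlety you flag is illusory. With the finality step cleaned up this way, the remainder of your plan is the paper's argument.
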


\begin{proof}
Recognize the opposite of the simplex category as the full subcategory $\bdelta^{\op}\subset \sO^{\sf RL}$ consisting of those objects $(I,\leq , R, L)$ for which $R\neq \emptyset \neq L$.  
Adjoining minima and maxima gives a left adjoint $\sO^{\sf RL} \to \bdelta^{\op}$ to the inclusion.
Therefore, the inclusion $\bdelta^{\op} \to \sO^{\sf RL}$ is final.
Concatenating this final functor with the equivalence of Observation~\ref{interval-assoc} results in a final functor $\bDelta^{\op} \to \disk^{\partial, {\sf or}}_{1/[-1,1]}$.
By inspection, the resulting simplicial object 
\[
{\sf Bar}_\bullet\bigl(Q,A,P\bigr)\colon \bDelta^{\op} \to \disk^{\partial, {\sf or}}_{1/[-1,1]} \to \disk^{\partial, \sf or}_1 \xra{[-]}{\sf Env}\bigl({\sf Assoc}^{\sf RL}\bigr) \xra{(A;P,Q)} \cV
\]
is identified as the two-sided bar construction, as indicated. 
We conclude the equivalence in $\cV$:
\[
Q\underset{A}\ot P~ \simeq~ \bigl| {\sf Bar}_{\bullet}(Q,A,P)\bigr|
\xra{~\simeq~} 
\colim\bigl(\bDelta^{\op} \xra{{\sf Bar}_\bullet\bigl(Q,A,P\bigr)} \cV\bigr)\xra{~\simeq~} \int_{[-1,1]}(A;P,Q)~.
\]

\end{proof}

Note that the given multi-functor $\sO^{\sf RL} \to {\sf Env}({\sf Assoc^{RL}})$ witness an equivalence ${\sf Env}(\sO^{\sf RL}) \xra{\simeq} {\sf Env}({\sf Assoc^{RL}})$ between symmetric monoidal $\infty$-categories.  
After Observation~\ref{interval-assoc}, this offers the following consequence, which refers to Terminology~\ref{d1}.
\begin{cor}\label{assoc-interval}
Factorization homology defines an equivalence of $\infty$-categories,
\[
\int\colon \Alg_{{\sf Assoc}^{\sf RL}}(\cV) \xra{~\simeq~} \Alg_{\disk^{\partial, \sf or}_{1/[-1,1]}}(\cV)~,
\] 
between associative algebra equipped with a unital left and right module and constructible factorization algebras over the closed interval.  
\end{cor}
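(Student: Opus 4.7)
The plan is to obtain the desired equivalence by assembling three ingredients supplied earlier in the excerpt: (i) the universal property of the symmetric monoidal envelope, by which algebras in $\cV$ over an $\infty$-operad $\cO$ are equivalently symmetric monoidal functors $\Fun^\ot({\sf Env}(\cO),\cV)$; (ii) the equivalence of symmetric monoidal envelopes ${\sf Env}(\sO^{\sf RL})\simeq {\sf Env}({\sf Assoc}^{\sf RL})$ asserted in the paragraph immediately preceding the corollary; and (iii) the equivalence of $\infty$-operads $\disk^{\partial,\sf or}_{1/[-1,1]}\simeq \sO^{\sf RL}$ furnished by Observation~\ref{interval-assoc}.

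Combining (i) and (ii) yields
\[
\Alg_{{\sf Assoc}^{\sf RL}}(\cV)\simeq \Fun^\ot({\sf Env}({\sf Assoc}^{\sf RL}),\cV)\simeq \Fun^\ot({\sf Env}(\sO^{\sf RL}),\cV)\simeq \Alg_{\sO^{\sf RL}}(\cV).
\]
Restriction along (iii) furnishes $\Alg_{\sO^{\sf RL}}(\cV)\simeq \Alg_{\disk^{\partial,\sf or}_{1/[-1,1]}}(\cV)$. Composing these produces the claimed equivalence of $\infty$-categories, abstractly.

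The remaining step is to recognize this composite as factorization homology. For $(A;P,Q)\in \Alg_{{\sf Assoc}^{\sf RL}}(\cV)$, Lemma~\ref{disk.env} provides a symmetric monoidal functor $\widetilde{A}\colon \disk^{\partial,\sf or}_1\to \cV$ extending it, with $\widetilde{A}(\RR)=A$, $\widetilde{A}(\RR_{\geq 0})=R$, $\widetilde{A}(\RR_{\leq 0})=L$. Every object of $\disk^{\partial,\sf or}_{1/[-1,1]}$ has underlying manifold $U$ already lying in $\disk^{\partial,\sf or}_1$, so Proposition~\ref{as-LKE} identifies $\int_U(A;P,Q)\simeq \widetilde{A}(U)$ on the nose. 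In particular, the $\disk^{\partial,\sf or}_{1/[-1,1]}$-algebra structure on $U\mapsto \int_U(A;P,Q)$ is exactly the one produced by the abstract composite.

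The main obstacle is verifying the compatibility at the level of multi-morphisms in step (iii). Namely, one must confirm that the purely combinatorial $\infty$-operad structure on $\sO^{\sf RL}$ (concatenation of linear orders with left/right boundary conventions) matches the geometric one on $\disk^{\partial,\sf or}_{1/[-1,1]}$ (pairwise disjoint inclusions of open disks into $[-1,1]$), so that factorization homology intertwines them. This is routine but combinatorial, and it is essentially the content already packaged by Lemma~\ref{disk.env} and Observation~\ref{interval-assoc} at the operadic level.
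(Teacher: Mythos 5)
Your proposal is correct and follows the same route the paper indicates: the two equivalences ${\sf Env}(\sO^{\sf RL})\simeq{\sf Env}({\sf Assoc}^{\sf RL})$ (stated just before the corollary) and $\disk^{\partial,\sf or}_{1/[-1,1]}\simeq\sO^{\sf RL}$ (Observation~\ref{interval-assoc}), combined with the universal property of the envelope, give the abstract equivalence of algebra $\infty$-categories, and the identification with factorization homology is immediate from Lemma~\ref{disk.env} and the fact that left Kan extension along the fully-faithful inclusion of disks restricts as the original functor. The only minor imprecision is your citation of Proposition~\ref{as-LKE}, which is stated for closed $n$-manifolds rather than manifolds with boundary, but the underlying principle you invoke (Kan extension along a fully-faithful functor is the identity on the subcategory) is exactly right and is what the paper relies on as well.
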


\subsection{Homology theories: definition}
We now define homology theories for $B$-framed $n$-manifolds.

\begin{definition}[Collar-gluing]\label{def.collar-gluing}
Let $M$ be a $B$-framed $n$-manifold.
A \emph{collar-gluing} of $M$ is a continuous map
\[
f\colon M \to [-1,1]
\]
to the closed interval for which the restriction $f_|\colon M_{|(-1,1)}\to (-1,1)$ is a smooth fiber bundle.  
We will often denote a collar-gluing $M\xra{f}[-1,1]$ simply as the open cover
\[
M_- \underset{M_0\times \RR}\bigcup M_+~\cong~M~,
\]
where $M_-= f^{-1}[-1,1)$ and $M_+=f^{-1}(-1,1]$ and $M_0 = f^{-1}\{0\}$.  

\end{definition}

\begin{remark}
We think of a collar-gluing of $M$ as a codimension-1 properly embedded submanifold $M_0\subset M$ whose complement is partitioned by connected components: $M\smallsetminus M_0 = M_- \sqcup M_+$. 
Such data is afforded by gluing two manifolds with boundary along a common boundary.  
The actual data of a collar-gluing specifies that named just above, in addition to a bi-collaring of the common boundary.
\end{remark}

Construction~\ref{def.f-inverse} and the results of~\S\ref{sec.interval} give the following.
\begin{cor}\label{excision-arrow}
Let $\cF\colon \mfld_n^B \longrightarrow \cV$ be a symmetric monoidal functor.  
Let $M$ be a $B$-framed $n$-manifold.
A collar-gluing $M_- \underset{M_0\times \RR}\bigcup M_+ \cong M$ determines an associative algebra $\cF(M_0)$ together with a unital left module structure on the object $\cF(M_+)$ and a unital right module structure on $\cF(M_-)$, as well as a morphism in $\cV$:
\begin{equation}\label{exc-compare}
\cF(M_-)\underset{\cF(M_0)} \bigotimes \cF(M_+) \longrightarrow \cF(M)~.
\end{equation}

\end{cor}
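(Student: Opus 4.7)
The plan is to apply Construction~\ref{def.f-inverse} to the collar-gluing $f\colon M\to[-1,1]$, then transport the resulting diagram into $\cV$ via $\cF$, and finally extract the algebra-module data and comparison morphism by invoking the combinatorial model of $\disk^{\partial,\sf or}_1$ developed in~\S\ref{sec.interval}.

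First I would verify that $f$ satisfies the regularity hypothesis of Construction~\ref{def.f-inverse}. The restriction $f_|\colon f^{-1}(-1,1)\to(-1,1)$ is a smooth fiber bundle by the very definition of a collar-gluing, and because $M$ has empty boundary the preimage $f^{-1}\{\pm 1\}$ is empty, so the boundary condition over $\partial[-1,1]$ is vacuous. Construction~\ref{def.f-inverse} then supplies a morphism of $\infty$-operads
\[
f^{-1}\colon \disk^{\partial,\sf or}_{1/[-1,1]}\longrightarrow \mfld^B_{n/M}~.
\]

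Next I would compose with the forgetful $\mfld^B_{n/M}\to\mfld^B_n$ and with the symmetric monoidal $\cF$, producing an $\infty$-operad morphism $\cF\circ f^{-1}\colon \disk^{\partial,\sf or}_{1/[-1,1]}\to\cV$. Through Corollary~\ref{assoc-interval}, this is equivalent data to a triple $(A;P,Q)$ in $\cV$ consisting of an associative algebra with a unital right and a unital left module. The identification of $A$, $P$, $Q$ is read off from Lemma~\ref{disk.env} and Observation~\ref{interval-assoc} by evaluating on the generating objects: any open sub-interval of $(-1,1)\subset[-1,1]$ pulls back under $f$ to an open strip $\cong M_0\times\RR$ around $M_0$, yielding $A\simeq\cF(M_0\times\RR)$ (abbreviated $\cF(M_0)$); the two oriented half-interval embeddings $[-1,1),\,(-1,1]\hookrightarrow[-1,1]$ pull back to the collar pieces $M_-$ and $M_+$ respectively, furnishing the claimed unital right $\cF(M_0)$-module structure on $\cF(M_-)$ and the unital left $\cF(M_0)$-module structure on $\cF(M_+)$.

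To construct the comparison morphism~(\ref{exc-compare}), I would use that $M$ is the terminal object of $\mfld^B_{n/M}$: the multi-functor $f^{-1}$ therefore sits canonically in a cocone with apex the constant functor at $M$, and post-composing with $\cF$ transports this to a cocone with apex $\cF(M)$ over the diagram $\cF\circ f^{-1}$. By Corollary~\ref{tensor-prod}, the colimit of this diagram is
\[
\colim\bigl(\disk^{\partial,\sf or}_{1/[-1,1]}\xra{\cF\circ f^{-1}}\cV\bigr)
~\simeq~
\int_{[-1,1]}(A;P,Q)
~\simeq~
\cF(M_-)\underset{\cF(M_0)}\bigotimes\cF(M_+)~,
\]
so the universal property of the colimit yields the required morphism to $\cF(M)$.

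The main obstacle is bookkeeping: tracking how the abstract combinatorics of $\disk^{\partial,\sf or}_1$---in particular the orientation conventions of Lemma~\ref{disk.env}, which identify $[\RR_{\geq 0}]=R$ and $[\RR_{\leq 0}]=L$---transports through $f^{-1}$ into the concrete geometry of the collar-gluing, so that $\cF(M_-)$ and $\cF(M_+)$ really appear as the right and left module respectively (and not swapped). Once those identifications are pinned down, every remaining step is formal and cited from~\S\ref{sec.interval} and Construction~\ref{def.f-inverse}.
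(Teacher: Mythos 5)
Your proof is correct and follows essentially the same route as the paper: apply Construction~\ref{def.f-inverse} to $f$, pass through $\cF$ to obtain a morphism of $\infty$-operads $\disk^{\partial,\sf or}_{1/[-1,1]}\to\cV$, invoke Corollary~\ref{assoc-interval} (together with Lemma~\ref{disk.env} and Observation~\ref{interval-assoc}) to extract the ${\sf Assoc}^{\sf RL}$-algebra $\bigl(\cF(M_0);\cF(M_+),\cF(M_-)\bigr)$, and then use Corollary~\ref{tensor-prod} together with the terminal cocone at $M\in\mfld^B_{n/M}$ to produce the comparison morphism.

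One misstatement in your regularity check is worth flagging. You assert that $f^{-1}\{\pm 1\}$ is empty because $M$ has empty boundary; that implication is false. For the prototype collar-gluing $S^1\to[-1,1]$ used in the proof of Corollary~\ref{HH} (projection of unit vectors in $\RR^2$ to the first coordinate), $f^{-1}\{\pm 1\}$ consists of two points. What the regularity hypothesis of Construction~\ref{def.f-inverse} actually demands over $\partial[-1,1]$ is that $f^{-1}\{\pm 1\}\to\{\pm 1\}$ be a smooth fiber bundle, i.e., that each $f^{-1}(\pm 1)$ be a smooth manifold. The Definition~\ref{def.collar-gluing} of a collar-gluing does not literally impose this, and the paper applies Construction~\ref{def.f-inverse} here without comment; the property one actually needs — that $f^{-1}$ carries isotopy equivalences to equivalences so that it descends through the localization of Proposition~\ref{EEd-vs-EE} — already follows from the fiber bundle structure over the open interval $(-1,1)$, since the open subsets $U\subset[-1,1]$ appearing as objects of $\disk^{\partial,\sf or}_{1/[-1,1]}$ can always be isotoped within $(-1,1)$ near the relevant point. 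So the conclusion that the construction applies is fine, but the justification you gave for it is not.
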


\begin{proof}
The collar-gluing is a continuous map $M\xra{f}[-1,1]$.
Construction~\ref{def.f-inverse} gives the the first morphism in the composible sequence of morphisms among of $\infty$-operads:
\[
f_\ast \cF\colon \disk^{\partial, {\sf or}}_{1/[-1,1]} \xra{f^{-1}} \mfld^B_{n/M} \to \mfld_n^B \xra{\cF} \cV~.
\]
Through Corollary~\ref{assoc-interval}, this morphism between $\infty$-operads is equivalent to an algebra in $\cV$ over the operad ${\sf Assoc}^{\sf RL}$.
Unwinding that equivalence reveals that its underlying associative algebra is $\cF(M_0)$, whose underlying object is the value $\cF(M_0\times \RR)$ and whose associative algebra structure is given by oriented embeddings among the $\RR$-coordinate, and its underlying unital modules are the values $\cF(M_\pm)$.  
Furthermore, the definition of factorization homology as a colimit supplies the canonical morphism in $\cV$:
\[
\cF(M_-) \underset{\cF(M_0)}\bigotimes \cF(M_+)~{}~ \underset{\rm Cor~\ref{tensor-prod}}\simeq ~{}~\int_{[-1,1]} f_\ast \cF~{}~ \longrightarrow ~{}~ \cF\bigl(f^{-1}([-1,1])\bigr) =  \cF(M)~.
\]

\end{proof}

\begin{definition}\label{exc} 
A symmetric monoidal functor $\cF:\mfld_n^{B}\ra \cV$ satisfies \emph{$\ot$-excision} if, for each collar-gluing $M_- \underset{M_0\times \RR}\bigcup M_+\cong M$ of $B$-framed $n$-manifolds, the canonical morphism in $\cV$,
\[
\cF(M_-)\underset{\cF(M_0\times\RR)}\bigotimes\cF(M_+)\underset{(\ref{exc-compare})}{\xra{~\simeq~}}\cF(M)~,
\] 
is an equivalence.
The $\oo$-category of $\cV$-valued \emph{homology theories} for $B$-framed $n$-manifolds is the full $\oo$-subcategory 
\[
\bH(\mfld_n^{B},\cV)~\subset~\Fun^\ot(\mfld^B_n, \cV)
\]
consisting of those symmetric monoidal functors that satisfy $\ot$-excision.
\end{definition}

\begin{remark} 
We emphasize that a $\cV$-valued homology theory depends on the symmetric monoidal structure $\ot$ of $\cV$.  
For instance, let $\Bbbk$ a field and consider the $\infty$-category $\m_{\Bbbk}$ of $\Bbbk$-modules.
There are two natural symmetric monoidal structures on $\m_{\Bbbk}$: direct sum $\oplus$; tensor product $\underset{\Bbbk}\ot$.  
As established in~\S\ref{sec.direct.sum}, a homology theory valued in $(\m_{\Bbbk},\oplus)$ evaluates on an $n$-manifold $M$ as the $\Bbbk$-chains, $\sC_\ast(M;\Bbbk)$.
On the other hand, in~\S\ref{sec.commutative}, specifically
~Remark~\ref{config.spaces}, shows that a homology theory valued in $(\m_{\Bbbk},\underset{\Bbbk}\ot)$ typically does \emph{not} factor through the forgetful functor $\mfld_n \to \spaces$ to the underlying homotopy type of manifolds.  
\end{remark}

\subsection{Pushforward}\label{sec.push}
We prove that factorization homology satisfies $\ot$-excision in the sense of Definition~\ref{exc}.
We realize this as an instance of a general construction of a pushforward.

The following result is the technical crux of the later results of this article; through this result one can access the values of factorization homology.  
We state the the result now, and prove it at the end of this section.

\begin{lemma}[\cite{cotangent}, \cite{oldfact}]\label{excision} 
Let $M$ be a $B$-framed $n$-manifold, and let $A$ be a $\disk_n^B$-algebra in $\cV$, for $\cV$ a $\ot$-presentable $\infty$-category.
For each collar-gluing $M_-\underset{M_0\times \RR}\bigcup M_+ \cong M$, the canonical morphism in $\cV$,
\[
\int_{M_-}A\bigotimes_{\displaystyle\int_{M_0\times\RR}A}\int_{M_+}A\underset{(\ref{exc-compare})}{\xra{~\simeq~}} \int_MA
\] 
is an equivalence.

\end{lemma}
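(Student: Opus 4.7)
The collar-gluing is encoded by a map $f\colon M\to[-1,1]$ whose restrictions to $(-1,1)$ and to $\{\pm 1\}$ are smooth fiber bundles, so Construction~\ref{def.f-inverse} produces a multi-functor $f^{-1}\colon \disk^{\partial,\sf or}_{1/[-1,1]} \to \mfld^B_{n/M}$. I set $f_\ast A := \int A \circ f^{-1}\colon \disk^{\partial,\sf or}_{1/[-1,1]}\to \cV$; this is symmetric monoidal, so via Corollary~\ref{assoc-interval} it corresponds to an ${\sf Assoc}^{\sf RL}$-algebra whose underlying associative algebra is $\int_{M_0\times\RR}A$ (the value on the open generator $\RR$) and whose right and left modules are $\int_{M_-}A$ and $\int_{M_+}A$ (the values on the two half-Euclidean generators, with module structures induced by the multi-morphisms in $\disk^{\partial,\sf or}_{1/[-1,1]}$). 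Corollary~\ref{tensor-prod} then identifies
\[
\int_{M_-}A \bigotimes_{\int_{M_0\times\RR}A} \int_{M_+}A \;\simeq\; \int_{[-1,1]} f_\ast A,
\]
and inspection matches this identification with the canonical morphism~(\ref{exc-compare}). The proof thus reduces to producing an equivalence $\int_{[-1,1]}f_\ast A \simeq \int_M A$.

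\textbf{The pushforward formula.} Unwinding the definition of both sides as colimits, the target equivalence becomes the comparison
\[
\colim_{V\in\disk^{\partial,\sf or}_{1/[-1,1]}}\;\colim_{U\in\disk^B_{n/f^{-1}(V)}} A(U) \;\xra{~\simeq~}\; \colim_{U\in\disk^B_{n/M}}A(U).
\]
By Fubini for colimits, the left-hand side is the colimit of $A$ along the forgetful projection $\pi\colon\cC\to\disk^B_{n/M}$, where $\cC$ is the Grothendieck construction whose objects are pairs $(V,\,U\hookrightarrow f^{-1}(V))$ with the evident morphisms. It therefore suffices to show $\pi$ is final. By the $\infty$-categorical form of Quillen's Theorem~A, this reduces in turn to showing that for each $(U\hookrightarrow M)\in\disk^B_{n/M}$ the under-category $\pi_{U/}$ has weakly contractible classifying space.

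\textbf{Main obstacle.} The contractibility of $\pi_{U/}$ is the technical crux. My approach is to apply Proposition~\ref{EEd-vs-EE} to replace this $\infty$-categorical under-category with its discrete/isotopy-theoretic variant. Partition the components of $U$ according to whether their $f$-image lies in $[-1,0)$, in $(0,1]$, or meets $\{0\}$: local triviality of $f$ over $(-1,1)\smallsetminus\{0\}$ and over the boundary points handles the first two types, allowing $V$ to be shrunk arbitrarily close to the component's $f$-image. For components straddling the critical level, one uses the collar identification $f^{-1}(-1,1)\cong M_0\times\RR$ of Definition~\ref{def.collar-gluing} to isotope the disk into arbitrarily narrow symmetric neighborhoods of $M_0$. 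Assembling these shrinkings coherently---using the isotopy extension theorem, and the fact that isotopies are inverted in the localization $\cI_M$ of~(\ref{eqn.I_X})---exhibits $\pi_{U/}$ as cofiltered, hence weakly contractible. The delicate point is executing this coherently when $U$ has components of multiple types simultaneously, so that the individual shrinkings are compatible on their overlap and assemble into a single object of $\pi_{U/}$ dominating any finite diagram.
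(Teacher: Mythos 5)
Your plan and the reduction to the pushforward formula $\int_{[-1,1]} f_\ast A \simeq \int_M A$ match the paper's proof of Lemma~\ref{excision} exactly: the paper invokes Corollary~\ref{tensor-prod} for the first half and Proposition~\ref{pushforward} for the second, and you have correctly unwound both. Your further reduction via Fubini and Quillen's Theorem~A to finality of the projection $\pi\colon\cC\to\disk^B_{n/M}$ (your $\cC$ is, up to relabeling, the paper's $\disk_f$ of Definition~\ref{disk-f}) is also precisely how Proposition~\ref{pushforward} is obtained from Lemma~\ref{final}.

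The gap is in the last step, where you assert that each comma category $\pi_{U/}$ is cofiltered because shrinking the interval and the disks produces a single object dominating any finite diagram. This is not justified, and it is doubtful it can be made to work as stated. Two objects $(V_1,U_1)$ and $(V_2,U_2)$ of $\pi_{U/}$ may involve disks $U_1,U_2\subset M$ with no common sub-disk still receiving the given embedding from $U$; and, more fundamentally, cofilteredness in the $\infty$-categorical sense requires contractibility of the \emph{spaces} of cones over finite diagrams, not merely their nonemptiness, whereas the mapping spaces in $\disk^B_{n/M}$ are built from configuration spaces of $M$ and have genuine higher homotopy. Your appeal to Proposition~\ref{EEd-vs-EE} to pass to a ``discrete/isotopy-theoretic variant'' of $\pi_{U/}$ is also not automatic: localization of $\infty$-categories does not commute with forming comma categories, so the equivalence $\bigl(\ddisk^B_{n/M}\bigr)[\cI_M^{-1}]\simeq\disk^B_{n/M}$ cannot be sliced without further argument. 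As the paper notes immediately before Lemma~\ref{final}, the mechanism actually used is of a different kind: one restricts to maximal $\infty$-subgroupoids, identifies them via Lemma~\ref{EE-equivs} with unordered configuration spaces of $M$, and then applies a hypercover/partition-of-unity descent theorem (Theorem~A.3.1 of~\cite{dag}, or~\cite{dugger-isaksen}) to the induced open covers of those configuration spaces. Your shrinking heuristic correctly locates where the geometric content lies, but a filteredness claim is not a substitute for this descent argument, which is the technical crux of the result and of the whole excision theorem.
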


Lemma~\ref{excision} specializes to to the following special case of $n=1$, $B\simeq \ast$, and $M=S^1$, which demonstrates the utility of $\ot$-excision.

\begin{cor}[\cite{oldfact}, \cite{dag}]\label{HH} 
Let $A$ be an associative algebra in a $\ot$-presentable $\infty$-category $\cV$. 
There is a canonical equivalence between objects in $\cV$:
\[
\hh_\bullet(A)
~\simeq~ 
\int_{S^1}A
\] 
between the Hochschild homology of $A$ and factorization homology of $A$ over the circle.

\end{cor}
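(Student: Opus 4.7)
The plan is to apply $\otimes$-excision (Lemma~\ref{excision}) to a suitable collar-gluing of $S^1$, then recognize the resulting tensor product as the standard bar-style model for Hochschild homology. First, since $S^1$ is parallelizable (indeed a Lie group), it carries a canonical framing. Under Remark~\ref{framed.En}, an associative algebra $A$ is the same data as a $\disk_1^{\fr}$-algebra in $\cV$, so $\int_{S^1} A$ is defined. Choose the collar-gluing induced by any smooth map $S^1 \to [-1,1]$ with regular values in $(-1,1)$ and with $M_0 := f^{-1}\{0\} \cong S^0$, so that
\[
S^1 \;\cong\; M_- \underset{S^0 \times \RR}\bigcup M_+
\]
where each of $M_\pm$ is an open arc, i.e.\ a framed copy of $\RR$, and the collar region $S^0 \times \RR$ is a disjoint union of two framed copies of $\RR$.

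Applying Lemma~\ref{excision} gives
\[
\int_{S^1} A \;\simeq\; \int_{M_-} A \,\underset{\int_{S^0 \times \RR} A}\bigotimes\, \int_{M_+} A.
\]
By symmetric monoidality and the identification $\int_{\RR} A \simeq A$ (which follows from the fact that the canonical $\disk_1$-algebra $A$ evaluated on its generating object $\RR$ is the underlying object of $A$), this rewrites as
\[
\int_{S^1} A \;\simeq\; A \underset{A \otimes A}\bigotimes A.
\]
The algebra structure on $A \otimes A$ arises from thickening the collar direction, while the two module structures on the copies of $A \simeq \int_{M_\pm} A$ arise from the two inclusions $S^0 \times \RR \hookrightarrow M_\pm$. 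Here the crucial observation is that, along $S^1$, the two collar embeddings are oriented \emph{oppositely} with respect to the framing: in $M_+$ the two ends of $S^0 \times \RR$ occur in one order, while in $M_-$ they occur in the reverse order. Consequently the $A \otimes A$-module structure on one side differs from that on the other by the swap automorphism, so the relative tensor product is taken over $A \otimes A^{\op}$, yielding the standard formula
\[
\int_{S^1} A \;\simeq\; A \underset{A \otimes A^{\op}}\bigotimes A \;=:\; \hh_\bullet(A).
\]

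To make the last identification fully rigorous in the derived setting, I would realize the relative tensor product as the geometric realization of the cyclic bar construction: by Corollary~\ref{tensor-prod} applied locally (or by a parallel argument running $\bDelta^{\op}$ into $\disk^{\fr}_{1/S^1}$ via the standard simplicial model of the circle), the colimit defining the above tensor product is equivalent to $|A^{\otimes \bullet + 1}|$, which is the classical Hochschild complex. The main obstacle is the careful bookkeeping of the two module structures on $\int_{S^0 \times \RR} A \simeq A \otimes A$: one must verify that the orientation reversal between the collars contributes the $A^{\op}$-factor, so that the excision formula produces Hochschild homology rather than $A \otimes_{A \otimes A} A \simeq A$. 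This orientation subtlety is ultimately what distinguishes the circle from a disjoint union of two arcs and thereby provides the genuine cyclic flavor of $\hh_\bullet(A)$.
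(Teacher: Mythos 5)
Your proposal is correct and matches the paper's proof in all essential respects: both use the collar-gluing $S^1 \cong M_- \cup_{S^0\times\RR} M_+$ given by projection to the first coordinate, apply $\otimes$-excision (Lemma~\ref{excision}), and recognize the resulting relative tensor product as $A \otimes_{A\otimes A^{\op}} A$ by tracking the orientation reversal across the two collar components. The paper simply absorbs the orientation bookkeeping into the notation $\RR^{\op}\underset{\RR^{\op}\sqcup\RR}\bigcup \RR$, whereas you spell it out explicitly; your closing appeal to the bar construction via Corollary~\ref{tensor-prod} is likewise already implicit in the paper's use of Lemma~\ref{excision}, whose proof routes through that corollary.
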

\begin{proof} 
Consider the collar-gluing $\RR^{\op}\underset{\RR^{\op} \sqcup \RR}\bigcup \RR \cong S^1$, where the `op' superscripts denote the opposite orientation from the standard orientation on Euclidean space, which is the map ${\sf prom}\colon S^1 \to [-1,1]$ which projects unit vectors in $\RR^2$ to the first coordinate.  
Lemma~\ref{excision}, applied to this collar-gluing, states the last of the equivalences in the sequence of equivalences in $\cV$:
\[
\hh_\bullet(A)
~\simeq~
A\underset{A\otimes A^{\op}}\bigotimes A
~\simeq~
\int_{\RR}A\underset{{\displaystyle\int_{{S^0\times\RR}}\! A}}\bigotimes\int_{\RR}A
\xra{~\simeq~}
\int_{S^1}A  ~.
\]
The middle equivalence is by inspecting values, while the first is definitional. 

\end{proof}

The next definition makes use of the multi-functor $f^{-1} \colon \disk^{\partial}_{k/N} \to \mfld^B_{n/M}$ of Construction~\ref{def.f-inverse} associated to each continuous map $M \ra N$ for which each of the restrictions, $M_{|N\smallsetminus \partial N} \to N\smallsetminus \partial N$ and $M_{|\partial N} \to \partial N$, are smooth manifold bundles.  
\begin{definition}[\cite{oldfact}]\label{disk-f} Let $M$ be an $B$-framed $n$-manifold, and let $N$ be an oriented $k$-manifold, possibly with boundary. For $f:M\ra N$ a map such that the restrictions of $f$ over both the interior of $N$ and the boundary of $N$ are fiber bundles, then the $\oo$-category $\disk_f$ is the limit of the diagram among $\infty$-categories
\[
\xymatrix{
\disk^B_{n/M}\ar[dr]&&\ar[ld]^{{\sf ev}_0}{\sf Ar}(\mfld^B_{n/M})\ar[rd]_{{\sf ev}_1}&&\ar[dl]^{f^{-1}}\disk_{k/N}^{\partial}\\
&\mfld^B_{n/M}&&\mfld^B_{n/M}
&
,
}
\] 
where ${\sf Ar}(\mfld^B_{n/M})$ is the $\oo$-category of functors $[1]\to \mfld^B_{n/M}$. 
\end{definition}

\begin{remark}
Informally, an object in the $\infty$-category $\disk_f$ is a triple $(V,U, \beta )$ for which: $U$ is an open subset of $N$ that is diffeomorphic to a disjoint union of Euclidean spaces; $V$ is an open submanifold of $M$ that is diffeomorphic to a disjoint union of Euclidean spaces; and $\beta$ is an isotopy among embeddings of $V$ into $M$ from the given inclusion to one that factors through $f^{-1}U\hookrightarrow M$.

\end{remark}

The following result is stated and proved in~\S3 of~\cite{oldfact}.
Its proof amounts to a partition-of-unity argument (specifically Theorem 1.1 \cite{dugger-isaksen} or Theorem~A.3.1~\cite{dag}), applied to unordered configuration spaces.  
\begin{lemma}[\cite{oldfact}]\label{final} In the situation of Definition~\ref{disk-f}, the functor 
\[
{\sf ev}_0:\disk_f \ra \disk_{n/M}^B
~,\qquad
(U,V,\beta)\mapsto V~,
\]
is final.
\end{lemma}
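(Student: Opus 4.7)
The plan is to apply the $\infty$-categorical Quillen's Theorem~A (Theorem~4.1.3.1 of~\cite{topos}): one must show that for each $V_0 \in \disk^B_{n/M}$, the slice $\infty$-category
\[
(\disk_f)_{V_0/}~:=~\disk_f \underset{\disk^B_{n/M}}\times (\disk^B_{n/M})_{V_0/}
\]
has weakly contractible classifying space. Unwinding Definition~\ref{disk-f}, an object of this slice is a composable pair $V_0 \xra\gamma V \xra\beta f^{-1}U$ in $\mfld^B_{n/M}$ with $V \in \disk^B_{n/M}$ and $U \in \disk_{k/N}^\partial$.

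The first reduction is to the simpler $\infty$-category $\cC_{V_0}$ of pairs $(U,\alpha\colon V_0 \to f^{-1}U)$ in $\mfld^B_{n/M}$, with morphisms induced from $\disk_{k/N}^\partial$. Composition $\alpha := \beta\circ\gamma$ defines $\Phi\colon (\disk_f)_{V_0/} \to \cC_{V_0}$, and the assignment $s(U,\alpha) := (V_0 \xra{\sf id} V_0 \xra\alpha f^{-1}U)$ defines a section. I would argue that $s$ is a left adjoint to $\Phi$: a map from $s(U',\alpha')$ to $(V_0 \xra\gamma V \xra\beta f^{-1}U)$ in $(\disk_f)_{V_0/}$, being constrained to be the identity on $V_0$, has its $V$-component determined by $\gamma$, so the mapping space reduces to $\Map_{\cC_{V_0}}((U',\alpha'),(U,\beta\gamma))$. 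Since adjoint pairs induce inverse equivalences on classifying spaces, this reduces the problem to showing $|\cC_{V_0}|$ is contractible.

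The second step is to realize $|\cC_{V_0}|$ via a partition-of-unity argument. The projection $\pi\colon \cC_{V_0} \to \disk_{k/N}^\partial$, $(U,\alpha)\mapsto U$, is a coCartesian fibration whose fiber over $U$ is the mapping space $\Map_{\mfld^B_{n/M}}(V_0, f^{-1}U)$, which by definition of the over-$\infty$-category is the homotopy fiber at the given inclusion $\xi\colon V_0 \hookrightarrow M$ of the open inclusion $\Emb^B(V_0, f^{-1}U) \hookrightarrow \Emb^B(V_0, M)$. Since homotopy fibers commute with homotopy colimits of open inclusions into a fixed base, straightening $\pi$ yields
\[
|\cC_{V_0}| ~\simeq~ {\sf hofib}_\xi\Bigl( \hocolim_{U \in \disk_{k/N}^\partial} \Emb^B(V_0, f^{-1}U) ~\longrightarrow~ \Emb^B(V_0, M) \Bigr)~.
\]
Writing $V_0 \cong \bigsqcup_{i=1}^m \RR^n$ and using Lemma~\ref{EE-equivs} to identify $\Emb^B(V_0,-)$ with a framed configuration space of $m$ points, the collection $\{\Emb^B(V_0, f^{-1}U)\}_U$ is an open cover of the paracompact Hausdorff space $\Emb^B(V_0, M)$: any finite configuration of points in $M$ is contained in $f^{-1}U$ for some $U$ that is a disjoint union of disks and half-disks in $N$ containing their $f$-images, by the bundle hypotheses on $f$ over the interior and boundary of $N$. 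Theorem~A.3.1 of~\cite{dag} (or equivalently Theorem~1.1 of~\cite{dugger-isaksen}) then identifies the displayed homotopy colimit with $\Emb^B(V_0, M)$, and so its homotopy fiber over $\xi$ is contractible.

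The main obstacle is the second paragraph: rigorously verifying the adjunction $s \dashv \Phi$ in the homotopy-coherent setting, which requires careful management of the isotopy data encoded in morphisms of $\mfld^B_{n/M}$. Here Proposition~\ref{EEd-vs-EE} is the essential technical input, localizing the combinatorics of $\ddisk^B_{n/M}$ on isotopy equivalences to yield the topological mapping spaces of $\disk^B_{n/M}$ and so allowing homotopy-coherent manipulation of the isotopy witnesses in $\mfld^B_{n/M}$. A secondary subtlety is accommodating the boundary stratification of $N$ by allowing half-disk components in $U$; this relies on the second bundle hypothesis of $f$ so that each $f^{-1}U$ remains a $B$-framed $n$-manifold in $\mfld^B_{n/M}$.
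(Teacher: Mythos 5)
Your overall strategy matches the route taken in~\cite{oldfact}: Quillen's Theorem~A, the reduction of the slice $(\disk_f)_{V_0/}$ to $\cC_{V_0}$ via an adjunction, straightening the left fibration $\pi$ to a colimit of mapping spaces, the identification with configuration spaces via Lemma~\ref{EE-equivs}, and the hypercover theorem. But the role you assign to Proposition~\ref{EEd-vs-EE} is misplaced, and there is a genuine gap at the last step. The adjunction $s \dashv \Phi$ is a purely formal consequence of the slice and pullback constructions: since $\mathrm{id}_{V_0}$ is an initial object of $(\disk^B_{n/M})_{V_0/}$, the mapping space $\Map_{(\disk_f)_{V_0/}}\bigl(s(U',\alpha'),Z\bigr)$ fibers over a contractible choice of $\gamma$, and what remains is naturally identified with $\Map_{\cC_{V_0}}\bigl((U',\alpha'),\Phi Z\bigr)$. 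No localization on isotopy equivalences is involved in this step.

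The actual gap is the invocation of Theorem~A.3.1 of~\cite{dag} (equivalently, Theorem~1.1 of~\cite{dugger-isaksen}). That theorem applies to a colimit indexed by a covering sieve in the \emph{poset} of opens of a paracompact space, whereas your displayed homotopy colimit is indexed by the topologically-enriched $\infty$-category $\disk^\partial_{k/N}$, whose mapping spaces are far from discrete. One must first use (the boundary analogue of) Proposition~\ref{EEd-vs-EE} to see that $\ddisk^\partial_{k/N}\to\disk^\partial_{k/N}$ is a localization, hence final, thereby replacing the colimit with one indexed by the poset $\ddisk^\partial_{k/N}$; and then compare that poset with a covering sieve of the configuration space, exactly as in the final step of the proof of Proposition~\ref{weiss.classify} in this paper. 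Relatedly, since $V_0$ is non-compact, the map $\Emb^B(V_0,f^{-1}U)\to\Emb^B(V_0,M)$ is \emph{not} an open inclusion in the compact-open $C^\infty$ topology, so the passage to $\conf$-spaces via Lemma~\ref{EE-equivs} is not a notational convenience: it is precisely what produces genuine open subsets of a paracompact Hausdorff space, as the hypercover theorem requires. (A smaller imprecision: the commutation of the homotopy fiber with the colimit is a consequence of universality of colimits in the $\infty$-topos $\spaces$, i.e.\ stability of colimits under base change, not of the maps being open inclusions.)
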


Lemma~\ref{final} applied to the fold map $M\sqcup M \to M$ gives the following important technical property to the $\infty$-category of disks in $M$.
\begin{cor}[Corollary~3.22 in~\cite{oldfact}, Proposition~5.5.2.16 in~\cite{dag}]\label{disk-sifted}
For $M$ a $B$-framed $n$-manifold, the $\oo$-overcategory $\disk_{n/M}^B$ is sifted.
\end{cor}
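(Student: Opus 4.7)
The plan is to verify the two defining conditions for $\disk^B_{n/M}$ to be sifted: non-emptiness, and cofinality (``finality'' in the paper's terminology) of the diagonal $\Delta\colon \disk^B_{n/M}\to \disk^B_{n/M}\times \disk^B_{n/M}$. Non-emptiness is immediate since the empty embedding $\emptyset\hookrightarrow M$ is an object. For the diagonal, because $\mfld^B_n$ admits disjoint unions there is a canonical equivalence $\disk^B_{n/M}\times \disk^B_{n/M}\xra{~\simeq~} \disk^B_{n/(M\sqcup M)}$, sending a pair of embeddings to their disjoint union, under which $\Delta$ is identified with the functor $(U\hookrightarrow M)\mapsto (U\sqcup U\hookrightarrow M\sqcup M)$.

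The key input is Lemma~\ref{final} applied to the fold map $\nabla\colon M\sqcup M\to M$, which is a trivial two-fold covering and hence a smooth fiber bundle: it yields that the functor ${\sf ev}_0\colon \disk_\nabla\to \disk^B_{n/(M\sqcup M)}$ is cofinal. Now I would factor $\Delta$ through $\disk_\nabla$ as $\disk^B_{n/M}\xra{F}\disk_\nabla\xra{{\sf ev}_0}\disk^B_{n/(M\sqcup M)}$, where
\[
F\bigl(U\hookrightarrow M\bigr)~:=~\bigl(U\sqcup U\hookrightarrow M\sqcup M,~U\hookrightarrow M,~\mathrm{id}\bigr)~,
\]
the identity being available as the required arrow $V\to \nabla^{-1}(U)$ because $\nabla^{-1}(U)=U\sqcup U$ on the nose. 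Since composition of cofinal functors is cofinal, it remains to show that $F$ itself is cofinal.

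The central step is to exhibit $F$ as the right adjoint of the projection $\pi\colon\disk_\nabla\to \disk^B_{n/M}$, defined by $(V,U,\beta)\mapsto U$. Unwinding the limit defining $\disk_\nabla$, a morphism from $(V,U,\beta)$ to $F(U')=(U'\sqcup U', U', \mathrm{id})$ consists of an embedding $U\to U'$ in $\disk^B_{n/M}$, an embedding $V\to U'\sqcup U'$ in $\disk^B_{n/(M\sqcup M)}$, and a compatibility of arrows in ${\sf Ar}(\mfld^B_{n/(M\sqcup M)})$. Because the codomain arrow is the identity, this compatibility forces the middle embedding to be the composite $V\xra{\beta} U\sqcup U\to U'\sqcup U'$ up to a contractible choice. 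This produces the adjunction equivalence $\Map_{\disk_\nabla}((V,U,\beta),F(U'))\simeq \Map_{\disk^B_{n/M}}(U,U')$. A right adjoint is automatically cofinal, since via the adjunction its defining slice $\infty$-categories become under-categories and thereby carry initial objects; hence $F$ is cofinal, and so is $\Delta = {\sf ev}_0\circ F$.

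The main obstacle I anticipate is the careful verification of the adjunction $\pi\dashv F$: one must unwind the limit definition of $\disk_\nabla$ through the arrow $\infty$-category ${\sf Ar}(\mfld^B_{n/(M\sqcup M)})$ and confirm that the higher compatibility data in a morphism into $F(U')$ truly collapses to the single datum of an embedding $U\to U'$. A minor bookkeeping point that must also be handled is the technical distinction between $\disk^B$ and $\disk^\partial$ appearing in Definition~\ref{disk-f}, but since $M$ is boundary-less and the $B$-structure can be propagated across source and target as permitted by Construction~\ref{def.f-inverse}, this should be routine.
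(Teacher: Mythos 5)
Your argument is correct and is the standard way to unwind the paper's one-line appeal to Lemma~\ref{final} at the fold map: identify the diagonal with $(U\hookrightarrow M)\mapsto (U\sqcup U\hookrightarrow M\sqcup M)$, factor it through $\disk_\nabla$ via a right adjoint to the evident projection (hence final) and the functor ${\sf ev}_0$ (final by Lemma~\ref{final}). This is essentially the same argument the paper is invoking.
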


We now apply Lemma~3.32 to factorization homology in the next result. In the statement of this result we use the notation
\[
f_\ast A:\xymatrix{\disk_{k/N}^{\partial}\ar[r]^{f^{-1}}& \mfld_{n/M}^B\ar[r]^-{\int A}& \cV}
\] 
for the composite functor, where $f^{-1}$ is as in Construction~\ref{def.f-inverse}.

\begin{prop}[Proposition~3.23 in \cite{oldfact})]\label{pushforward} 
Fix a map of spaces $B\to \BO(n)$. Let $M$ be a $B$-framed manifold, and let $A$ be a $\disk_n^B$-algebra in a $\ot$-presentable $\infty$-category $\cV$.
Let $f\colon M\to N$ be a continuous map to a smooth $k$-manifold with boundary whose restriction over the boundary and interior of $N$ is a smooth fiber bundle.  
There is a canonical map in $\cV$,
\[
\int_{N}f_\ast A
:=
\colim\Bigl(
\disk_{k/N}^{\partial}
\xra{~f_\ast A~}
\cV\Bigr)
\xra{~\simeq~}
\int_MA~,
\] 
which is an equivalence.
\end{prop}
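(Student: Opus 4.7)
The plan is to exhibit both $\int_M A$ and $\int_N f_\ast A$ as equivalent to the common colimit $\colim_{\disk_f}(A \circ {\sf ev}_0)$, using the two projections ${\sf ev}_0\colon \disk_f \to \disk^B_{n/M}$ and ${\sf ev}_1\colon \disk_f \to \disk^\partial_{k/N}$ of the auxiliary $\infty$-category $\disk_f$ of Definition~\ref{disk-f}.

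The first identification is immediate. Since $\int_M A \simeq \colim_{\disk^B_{n/M}} A$ by Definition~\ref{coend}, the finality of ${\sf ev}_0$ supplied by Lemma~\ref{final} yields
\[
\int_M A ~\simeq~ \colim_{(V,U,\beta) \in \disk_f} A(V).
\]

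For the second identification, I would exploit the projection ${\sf ev}_1$. Unwinding the pullback definition of $\disk_f$ and invoking the standard fact that the target evaluation ${\sf Ar}(\cC) \to \cC$ is a coCartesian fibration whose fiber over $c$ is the overcategory $\cC_{/c}$, one sees that ${\sf ev}_1$ is itself a coCartesian fibration whose fiber over $U \in \disk^\partial_{k/N}$ is
\[
\disk^B_{n/M} \underset{\mfld^B_{n/M}}\times \mfld^B_{n/f^{-1}U} ~\simeq~ \disk^B_{n/f^{-1}U},
\]
with coCartesian transport along an embedding $\alpha\colon U \hookrightarrow U'$ given by post-composition with $f^{-1}\alpha\colon f^{-1}U \hookrightarrow f^{-1}U'$. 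Under this identification, the restriction of $A \circ {\sf ev}_0$ to each fiber coincides with the standard diagram $\disk^B_{n/f^{-1}U} \to \disk^B_n \xra{A} \cV$ whose colimit is $\int_{f^{-1}U} A = f_\ast A(U)$. Applying the Fubini formula for colimits indexed by a coCartesian fibration then gives
\[
\colim_{\disk_f}(A \circ {\sf ev}_0) ~\simeq~ \colim_{U \in \disk^\partial_{k/N}} \colim_{V \in \disk^B_{n/f^{-1}U}} A(V) ~\simeq~ \colim_{U \in \disk^\partial_{k/N}} f_\ast A(U) ~=~ \int_N f_\ast A.
\]

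The principal technical obstacle is verifying that ${\sf ev}_1$ is a coCartesian fibration with the stated fibers. Stability of coCartesian fibrations under pullback reduces this to the assertion about ${\sf ev}_1\colon {\sf Ar}(\mfld^B_{n/M}) \to \mfld^B_{n/M}$, together with the identification $(\mfld^B_{n/M})_{/f^{-1}U} \simeq \mfld^B_{n/f^{-1}U}$ that enters the fiber computation. The further pullback against $\disk^B_{n/M} \hookrightarrow \mfld^B_{n/M}$ then only restricts the fibers to disks without disturbing the coCartesian structure over $\disk^\partial_{k/N}$, at which point combining the two identifications produces the desired equivalence $\int_N f_\ast A \xra{~\simeq~} \int_M A$.
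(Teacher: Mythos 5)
Your argument is correct and is the route the paper sets up (the proof itself is deferred to [oldfact], but Lemma~\ref{final} and Definition~\ref{disk-f} exist precisely for this purpose): finality of ${\sf ev}_0$ identifies $\int_M A$ with $\colim_{\disk_f}(A\circ{\sf ev}_0)$, while the other projection to $\disk^{\partial}_{k/N}$ is a coCartesian fibration with fibers $\disk^B_{n/f^{-1}U}$—the full-subcategory pullback along $\disk^B_{n/M}\hookrightarrow\mfld^B_{n/M}$ genuinely preserves the coCartesian lifts because, as you note, those lifts fix the $M$-coordinate—so the fiberwise Kan-extension formula identifies the same colimit with $\int_N f_\ast A$.
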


\begin{proof}[Proof of Lemma~\ref{excision}] 
Let $f:M\ra [-1,1]$ be a collar-gluing.
There are canonical morphisms in $\cV$ 
\[
\int_{M_-}A\bigotimes_{\displaystyle\int_{M_0\times\RR}A}\int_{M_+}A
\underset{\rm Cor~\ref{tensor-prod}}{\xra{~\simeq~}}
\int_{[-1,1]} f_\ast A
\underset{\rm Prop~\ref{pushforward}}{\xra{~\simeq~}} \int_M A
~. 
\]
Corollary~\ref{tensor-prod} states that the first arrow is an equivalence.
Proposition~\ref{pushforward} states that the second morphism is an equivalence.

\end{proof}

\subsection{Homology theories: characterization}\label{sec.hmlgy}
We characterize factorization homology, analogously to Eilenberg--Steenrod's characterization of homology theories for spaces.

\begin{theorem}[\cite{oldfact}]\label{homology} 
Let $\cV$ be $\ot$-presentable $\infty$-category.
Let $B\to \BO(n)$ be a map from a space.
There is a canonical equivalence between $\infty$-categories,
\[
\displaystyle\int: \Alg_{\disk_n^B}(\cV)
\overset{\simeq}{~\rightleftarrows~}
\bH(\mfld_n^B, \cV):{\sf ev}_{\RR^n}~,
\]
between $\disk^B_n$-algebras in $\cV$ and homology theories for $B$-framed $n$-manifolds with coefficients in $\cV$. 
This equivalence is implemented by the factorization homology functor $\int$ and the functor of evaluation on $\RR^n$.

\end{theorem}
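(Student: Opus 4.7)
Proposition~\ref{as-LKE} already furnishes the adjunction $\iota_! \dashv \iota^*$ between $\Alg_{\disk_n^B}(\cV)$ and $\Fun^\ot(\mfld_n^B,\cV)$ along $\iota \colon \disk_n^B \hookrightarrow \mfld_n^B$, with $\iota_!(A)(M) = \int_M A$ and with $\iota^*$ recovering evaluation on $\RR^n$ (which determines the $\disk_n^B$-algebra structure by symmetric monoidality together with the identification $\Baut(\RR^n) \simeq \BO(n)$ of Corollary~\ref{euc}). It therefore suffices to prove that this adjunction restricts to an equivalence on the full subcategories $\bH(\mfld_n^B,\cV) \subset \Fun^\ot(\mfld_n^B,\cV)$ and $\Alg_{\disk_n^B}(\cV)$. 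Concretely I must verify: (i) $M \mapsto \int_M A$ is $\ot$-excisive; (ii) $\iota^*\cF$ is always a $\disk_n^B$-algebra; (iii) the unit $A \to \iota^*\iota_! A$ is an equivalence; and (iv) the counit $\iota_!\iota^*\cF \to \cF$ is an equivalence for $\cF \in \bH(\mfld_n^B,\cV)$.

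Items (i) and (ii) are immediate: (i) is precisely Lemma~\ref{excision}, while (ii) is tautological. For (iii), the key is that $\iota$ is fully faithful as a symmetric monoidal functor, so left Kan extension along $\iota$ restricts to the identity. Explicitly, for each $D \in \disk_n^B$ the pair $(D, \id_D)$ is a terminal object of $\disk_{n/D}^B$, so the colimit defining $\int_D A$ collapses onto $A(D)$.

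The substantive step is (iv). Fix $\cF \in \bH(\mfld_n^B,\cV)$; I will prove that the natural comparison
\[
\int_M (\iota^*\cF) \longrightarrow \cF(M)
\]
is an equivalence for every finitary $B$-framed $n$-manifold $M$. Both sides are symmetric monoidal in $M$ and $\ot$-excisive (the left-hand side by Lemma~\ref{excision}, the right-hand side by hypothesis), and both restrict to the same functor on $\disk_n^B$ by (iii). My plan is to induct on the minimal cardinality $k$ of a good cover of $M$ in the sense of Definition~\ref{def.good}, which exists by Terminology~\ref{size}. The base $k=1$ gives $M \cong \RR^n$, handled by (iii) plus symmetric monoidality. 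For the inductive step I produce, via a smooth bump function supported in one element of a minimal good cover and a standard regular-value argument, a collar-gluing $M \cong M_- \cup_{M_0 \times \RR} M_+$ in which each of $M_-$, $M_+$, and $M_0$ admits a good cover of cardinality strictly less than $k$ (symmetric monoidality together with the value on $\RR \in \disk_{n-1}^B$ propagates the bound from $M_0$ to $M_0 \times \RR$). Applying $\ot$-excision to both sides of the comparison map and invoking the inductive hypothesis on the three smaller pieces then yields the equivalence on $M$.

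\emph{Expected main obstacle.} The delicate step is arranging the collar-gluing so that all three pieces genuinely have strictly smaller good covers; this is exactly where finitarity is essential, and the argument runs parallel to the partition-of-unity / Theorem~A.3.1~\cite{dag} style reasoning used in the proofs of Proposition~\ref{weiss.classify} and Lemma~\ref{final}. Once (iv) is in place, $\iota_!$ and $\iota^*$ restrict to mutually inverse equivalences between $\Alg_{\disk_n^B}(\cV)$ and $\bH(\mfld_n^B,\cV)$, as claimed.
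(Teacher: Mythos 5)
Your outline through the unit argument, and the reduction of step (iv) to showing that the comparison map $\int_M(\iota^*\cF)\to\cF(M)$ is an equivalence for every $M$, matches the paper's proof. So does the idea of exploiting $\ot$-excision of both sides to reduce the counit to a decomposition of $M$ by collar-gluings. Where you depart from the paper is in how the decomposition is organized, and this is where your argument has a genuine gap.

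The paper does a double induction: it introduces auxiliary tangential structures $B_k\to\BO(k)$ (the base change of $B\to\BO(n)$ along $\BO(k)\times\BO(n-k)\xra{\oplus}\BO(n)$, projected to $\BO(k)$), lets $\cM\subset\mfld_n^B$ be the full subcategory on which the comparison is an equivalence, and shows by induction on $k$ that $\cM$ contains the image of $\mfld_k^{B_k}$. The required iterated collar-gluings of a $k$-manifold $M$ are produced from a proper Morse function on a compactification $\overline M$, which exists precisely because $M$ is finitary; the gluing regions are of the form $(\text{$(k-1)$-manifold})\times\RR$, handled by the inductive hypothesis on dimension. You instead propose to induct on the minimal cardinality of a good cover, with all the weight carried by the claim that a bump function supported in one element of a minimal good cover, together with a regular value argument, yields a collar-gluing $M\cong M_-\cup_{M_0\times\RR}M_+$ in which all three of $M_-$, $M_+$, $M_0$ admit good covers of cardinality strictly less than $k$. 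That claim is not substantiated and I do not see how to make it true as stated. The piece of $M$ on which the bump function is large is an open subset of a single chart, but an open subset of $\RR^n$ need not be diffeomorphic to $\RR^n$, nor admit a good cover of cardinality $1$; the complementary piece is not covered by the remaining $k-1$ charts (nor would restricting them to it preserve the good-cover property); and there is no a priori bound on the good-cover cardinality of the regular level set $M_0$. In effect you need something like a handle decomposition, which is exactly what the Morse-theoretic argument in the paper supplies, and which is not the same as a bump-function-plus-Sard argument. Finally, the tangential-structure bookkeeping in the parenthetical is not well-posed: $\disk_{n-1}^B$ does not parse when $B$ is a space over $\BO(n)$; handling the induced structure on a codimension-one hypersurface is exactly what the paper's $B_k$ construction is for, and your proposal would need to reproduce it.
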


\begin{proof}
Proposition~\ref{as-LKE} identifies factorization homology as the values of symmetric monoidal left Kan extension, thereby implementing the left adjoint in an adjunction 
\[
i_!\colon \Alg_{\disk_n^B}(\cV)
~ \rightleftarrows ~  
\Fun^\ot\bigl( \mfld_n^B, \cV\bigr)\colon i^\ast
~.
\]
The unit of this adjunction is an equivalence because $\disk_n^B\ra \mfld_n^B$ is fully-faithful, and Kan extension along a fully-faithful functor restricts as the original functor.  

Now let $\cF\colon \Mfld_n^B \to \cV$ be a symmetric monoidal functor.
The counit of this adjunction evaluates on $\cF$ as a morphism $\int A \ra \cF$, where $A=\cF_{|\RR^n}$ is the $\disk^B_n$-algebra defined by the values of $\cF$ on disjoint unions of $B$-framed Euclidean $n$-spaces. 
It remains to verify that this counit is an equivalence.
To that end, consider the full $\infty$-subcategory $\cM\subset \mfld_n^B$ consisting of those objects $M$ for which this counit $\int_MA \xra{\simeq} \cF(M)$ is an equivalence.  
We wish to show the inclusion $\cM \hookrightarrow \mfld_n^B$ is an equivalence.

Both $\int A$ and $\cF$ are symmetric monoidal functors.
So the full $\infty$-subcategory $\cM\subset \mfld_n^B$ is symmetric monoidal.
By definition, $\cM$ contains $B$-framed Euclidean spaces. 
We conclude the containment $\disk_n^B\subset \cM$.

Now, for each $0\leq k\leq n$, consider the base change $B_k$ of $B\to \BO(n)$ along the map $\BO(k)\times \BO(n-k) \xra{\oplus} \BO(n)$ then projected $B_k \to \BO(k)\times \BO(n-k)\xra{\sf proj}\BO(k)$.  
We will show, by induction on $k$, that $\cM$ contains the image of the evident functor $\mfld_k^{B_k}\to \mfld_n^B$.  
The previous paragraph gives that this is the case for $k=0$.

By assumption, $\cF$ satisfies $\ot$-excision, while Lemma~\ref{excision} states that $\int A$ satisfies $\ot$-excision as well.  
We conclude that $\cM$ is closed under collar-gluings. 
Therefore $\cM$ contains the image of any $B_k$-framed $k$-manifold that can be witnessed by a finite iteration of collar-gluings from disjoint unions of $B_{k-1}$-framed $(k-1)$-manifolds.  
By induction on $k$, we are therefore reduced to showing each $B_k$-framed $k$-manifold can be witnessed as a finite iteration of collar-gluings involving $B_k$-framed $k$-manifolds in the image of $\mfld_{k-1}^{B_{k-1}} \to \mfld_k^{B_k} \to \mfld_n^B$.  

Well, the reigning assumption that manifolds be finitary guarantees, for each $B_k$-framed $k$-manifold $M$, the existence of a compactification $\overline{M}$ of the underlying manifold of $M$ as a smooth $k$-manifold with boundary.
Choose a proper Morse function $f\colon \overline{M} \to [-R,R]$ for which $f^{-1}(\{\pm R\}) = \partial \overline{M}$.  
Such a Morse function witnesses $M$ as a finite collar-gluing involving $B_k$-framed $k$-manifolds of whose underlying manifold is diffeomorphic to $M_0\times \RR$ for some $(k-1)$-manifold $M_0$.  
Such a $(k-1)$-manifold is therefore isomorphic to one that is equipped with a $B_{k-1}$-framing.

\end{proof}

\begin{remark} 
In~\cite{aft2} a version of Theorem~\ref{homology} is established for structured singular manifolds equipped with a stratified tangential structure -- 
this will be surveyed in~\S\ref{sec.singular}.  
The result, as it appears there, is slightly more general than Theorem~\ref{homology} above: the assumption that $\cV$ be $\ot$-presentable can be weakened to the assumption that the underlying $\infty$-category of $\cV$ is presentable yet the symmetric monoidal structure distributes only over geometric realizations and filtered colimits (i.e., over sifted colimits).  

\end{remark}

\begin{remark}
In~\cite{oldfact} a version of Theorem~\ref{homology} is established for topological manifolds in place of smooth manifolds.  
In essence, the only feature of the smooth category used in the proof of Theorem~\ref{homology} was the existence of open handlebody decompositions, granted by Morse functions.
But such open handlebody decompositions are guaranteed in the topological setting, through the results of~\cite{moise} for $n=3$, of~\cite{quinn} as it concerns the complement of a point for $n=4$ together with smoothing theory~\cite{kirbysieb}, of~\cite{quinn} for $n=5$, and of~\cite{kirbysieb} for $n>5$.

\end{remark}

\medskip

For much of this article, we will primarily concern ourselves with homology theories in the sense of Definition~\ref{exc}. 
We point out, however that there are very interesting functors symmetric monoidal functors $\mfld_n\to \cV$ that are not $\ot$-excisive. 
For instance, as detailed in~\cite{qcloops}, for $X$ a derived (commutative) stack over a fixed field $\Bbbk$, cotensoring to $X$ followed by taking ring of functions defines a symmetric monoidal functor:
\[
\mfld_n \xra{~M\mapsto X^M~} {\sf Stacks}(\Bbbk)\xra{~\cO~} \m_{\Bbbk}~
\]
As the case $n=1$, the value of this functor on the circle is the Hochschild homology of $X$: $\cO(X^{S^1})\simeq \hh_\bullet(X)$ (compare with Corollary~\ref{HH}).
If $X$ is not affine, the above functor will generically fail to be $\ot$-excisive. 
Now, the cotensor $X^M$ only depends on the underlying homotopy type of $M$, as we shall see in Proposition \ref{tensor}.
This is a feature of $X$ being a derived \emph{commutative} stack.
Should $X$ be a derived stack over $\disk_n$-algebras as in~\cite{thez}, there is a refined version of the cotensor $X^M$ that is sensitive to the manifold structure of $M$.   
Derived stacks over $\disk_n$-algebras arise as the outcome of deformation quantization of a shifted symplectic derived (commutative) stacks, such as in Rozansky--Witten theory.

\section{Nonabelian Poincar\'e duality}\label{sec.nonabel}

Applying Theorem \ref{homology}, we offer a slightly different perspective, and proof, of the nonabelian Poincar\'e duality of Salvatore \cite{salvatore}, Segal~\cite{segallocal}, and Lurie \cite{dag}, which calculates factorization homology of iterated loop spaces as compactly-supported mapping spaces.

\begin{definition} For a space $B$, $\Space_B:=(\spaces_{/B})^{B/}$ is the $\infty$-category of {\it retractive} spaces over $B$, i.e., spaces over $B$ equipped with a section. The $\oo$-category $\Space^{\geq n}_B$ is the full $\oo$-subcategory of $\Space_B$ consisting of those $X\rightleftarrows B$ for which the retraction is $n$-connective, that is, the homomorphism $\pi_q X \ra \pi_q B$ is an isomorphism for $q< n$ with any choice of base-point of $B$.\end{definition}

\begin{remark}
Equivalently, an object $X\in \Space_B^{\geq n}$ may be thought of as a fibration $X\ra B$ equipped with a section for which, for each $b\in B$, the fiber $X_b$ is $n$-connective. 
\end{remark}

\begin{definition} 
Fix $X\in \Space_B$.
Let $M$ be a manifold, equipped with a map $M\to B$.
The space of \emph{compactly-supported sections} of $X$ over $M$ is the colimit in the $\infty$-category $\spaces$:
\[
\Gammac(M,X)
~:=~
\colim_{K\underset{\rm compact}\subset M}
\Map_{/B}\bigl( (M\smallsetminus K \to M ) , (B\to X) \bigr)~,
\]
where the colimit is indexed by the filtered poset of compact codimension-0 submanifold with boundary in $M$, and where the space indexed by a compact subset $K\subset M$ is the pullback:
\[
\xymatrix{
\Map_{/B}\bigl( (M\smallsetminus K \to M ) , (B\to X) \bigr)  \ar[rr]  \ar[d]
&&
\Map_{/B}( M , X)  \ar[d]
\\
\ast \simeq \Map_{/B} ( M \smallsetminus K , B)  \ar[rr]
&&
\Map_{/B}( M\smallsetminus K , X)
}
\]

\end{definition}

For $B\ra \BO(n)$ a map between spaces, note that $\Gammac(-,X)$ defines a covariant functor $\mfld_n^B \ra \Space$.
By inspection, this functor carries finite disjoint unions to finite products of spaces, which is to say that $\Gammac(-,X)$ is symmetric monoidal with respect to the Cartesian monoidal structure on the $\infty$-category $\spaces$ of spaces.
In particular, the restriction
\begin{equation}\label{def.Omega.B}
\Omega_B^n X \colon \disk_n^B 
\hookrightarrow
\mfld_n^B
\xra{~\Gammac(-,X)~}
\spaces
\end{equation}
is a symmetric monoidal functor, i.e., a $\disk_n^B$-algebra in $\spaces$.  
Each point $\ast\xra{\{b\}}B$ determines a $B$-framed $n$-dimensional vector space $(V,b)$, which can be regarded as an object $(V,b)\in \disk_n^B$.
The value of the above functor $\Omega_B^n X$ on this object is equivalent to the $n$-fold loop space $\Omega^n X_b$ of the fiber $X_b$ of the map $X\to B$ over $b\in B$.

We introduce the following terminology in order to state Theorem~\ref{spaces}.
Consider the base point $\ast \xra{\{\RR^n\}} \BO(n)$.
Each lift of this point $\ast \xra{\{b\}}B$, which is a point in the fiber ${\sf fiber}\bigl(B\to \BO(n)\bigr)$, canonically determines a symmetric monoidal functor $\disk_n^{\fr} \to \disk_n^B$.
Thereafter, each symmetric monoidal functor $A\colon \disk_n^B \to \spaces$ restricts as an associative monoid
\[
\pi_0(A,b)\colon \disk_1^{\fr} \xra{\RR^{n-1}\times -} \disk_n^{\fr} \longrightarrow \disk_n^B \xra{~A~} \spaces \xra{~\pi_0~} {\sf Set} ~.
\]
We say $A$ is \emph{group-like} if, for each point $b\in {\sf fiber}\bigl(B\to \BO(n)\bigr)$, this monoid $\pi_0(A,b)$ is a group.

\begin{example}
In the case $B\simeq \ast$, a $B$-framing is a framing in the standard sense and $\disk_n^B = \disk_n^{\sf fr}$. A symmetric monoidal functor $A\colon \disk_n^{\sf fr} \to \spaces$ is then the data of an $\cE_n$-algebra, and this $\cE_n$-algebra is group-like in the sense of \cite{may} if and only if $A$ is group-like in the the above sense.

\end{example}

\begin{theorem}[\cite{oldfact}]\label{spaces} 
Restricting the functor $\Gammac: \Space_B \ra \Fun^\ot(\mfld_n^B, \Space)$ defines a fully-faithful inclusion 
\[
\Space_B^{\geq n} ~\hookrightarrow~ \bH(\mfld_n^B,\Space)
\] 
of $n$-connective retractive spaces over $B$ into homology theories. The essential image consists of those $\cF$ for which the restriction $\cF_{|\disk_n^B}$ is group-like.
\end{theorem}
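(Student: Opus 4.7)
The strategy is to invoke Theorem~\ref{homology}, which identifies $\bH(\mfld_n^B, \spaces)$ with $\Alg_{\disk_n^B}(\spaces)$ via restriction to disks. Equation~\eqref{def.Omega.B} already exhibits the restriction of $\Gammac(-, X)$ to disks as $\Omega_B^n X$, so the theorem reduces to the conjunction of two facts: (a) for $X \in \Space_B^{\geq n}$, the functor $\Gammac(-, X)$ is $\ot$-excisive, equivalently the counit $\int_- \Omega_B^n X \to \Gammac(-, X)$ from Proposition~\ref{as-LKE} is an equivalence; and (b) the functor $\Omega_B^n \colon \Space_B^{\geq n} \to \Alg_{\disk_n^B}(\spaces)$ is fully faithful with essential image exactly the group-like $\disk_n^B$-algebras. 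Group-likeness of $\Omega_B^n X$ for $n$-connective $X$ is automatic, since each fiber $\Omega^n X_b$ is a loop space and hence has $\pi_0$ a group.

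For (a), take a collar-gluing $M = M_- \cup_{M_0 \times \RR} M_+$. Via Construction~\ref{def.f-inverse} applied to the projection $M \to [-1, 1]$, one identifies $\Gammac(M_0 \times \RR, X)$ as an $\cE_1$-algebra under concatenation of sections along the $\RR$-coordinate, and $\Gammac(M_\pm, X)$ as unital right/left modules over it via extension by the constant section at infinity. Under these identifications, the assembly map~\eqref{exc-compare} becomes the map from the two-sided bar construction
$$\bigl|{\sf Bar}_\bullet\bigl(\Gammac(M_-, X), \Gammac(M_0 \times \RR, X), \Gammac(M_+, X)\bigr)\bigr| \longrightarrow \Gammac(M, X).$$
The crucial point, and where the $n$-connectivity hypothesis enters, is that the $\cE_1$-algebra $\Gammac(M_0 \times \RR, X)$ is group-like (scanning along $\RR$ presents it as a loop space), so the bar construction correctly computes the expected relative tensor product. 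A scanning/approximation argument in the style of Segal~\cite{segallocal} and McDuff then exhibits the assembly map as an equivalence.

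For (b), this is the $B$-parameterized Boardman--Vogt--May--Segal recognition principle. In the special case $B \simeq \ast$ with standard framing, Remark~\ref{framed.En} identifies $\disk_n^{\fr}$-algebras with $\cE_n$-algebras, and classical recognition yields the equivalence between group-like $\cE_n$-algebras and $n$-fold loop spaces of $n$-connective pointed spaces. For general $B \to \BO(n)$, Remark~\ref{framed.En} describes $\disk_n^B$-algebras as $\sO(n)$-equivariant $\cE_n$-algebras compatible with the $B$-structure; the inverse to $\Omega_B^n$ is then built by $n$-fold delooping performed $\sO(n)$-equivariantly and assembled fiberwise over $B$, producing an $n$-connective retractive space $X$ over $B$ whose parameterized loop space recovers the given algebra.

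The principal obstacle is the excision in (a): extracting the global section space from the local disk-level loop-space structure via the bar construction. This is the technical heart of nonabelian Poincar\'e duality, and hinges on the group-likeness provided by the $n$-connectivity assumption; without it, the bar construction fails to compute $\Gammac(M, X)$. Once (a) and (b) are in hand, the desired fully-faithful inclusion $\Space_B^{\geq n} \hookrightarrow \bH(\mfld_n^B, \spaces)$ is the composition of the recognition equivalence of (b) with the integration equivalence of Theorem~\ref{homology}, and the essential image consists exactly of those homology theories whose disk-restriction is group-like, as claimed.
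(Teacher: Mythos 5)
Your proposal follows the same top-level structure as the paper's proof: use Theorem~\ref{homology} to factor $\Gammac$ through $\Omega^n_B$ and $\int$, thereby reducing the theorem to (a) $\ot$-excision of $\Gammac(-,X)$ and (b) the parametrized recognition principle identifying $\Space_B^{\geq n}$ with group-like $\disk_n^B$-algebras. Part (b) is handled in the paper exactly as you indicate, by Theorem~5.1.3.6 of~\cite{dag}. For (a) the paper's Lemma~\ref{gamma-c} argues directly via restriction fibrations of compactly-supported section spaces rather than by a Segal/McDuff-style scanning argument, but this is chiefly a difference of technique.

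The one substantive issue is your explanation of where $n$-connectivity enters (a). You assert that it is needed so that the $\cE_1$-algebra $\Gammac(M_0\times\RR,X)$ is group-like, ``so the bar construction correctly computes the expected relative tensor product.'' But $\Gammac(M_0\times\RR,X)\simeq\Omega\Gammac(M_0,X)$ is a loop space and hence automatically group-like, with no hypothesis on $X$; and the two-sided bar construction \emph{always} computes the relative tensor product. The real question is whether that relative tensor product agrees with $\Gammac(M,X)$. In the proof of Lemma~\ref{gamma-c}, $\Gammac(M,X)$ is exhibited as the homotopy pullback $\Gammac(M_-,X)\underset{\Gammac(M_0,X)}\times\Gammac(M_+,X)$; this homotopy pullback agrees with the balanced $\Omega\Gammac(M_0,X)$-coinvariants (the bar construction) precisely when the base $\Gammac(M_0,X)$ is \emph{connected}. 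That is where $n$-connectivity of $X\to B$ is used: since $M_0$ is $(n-1)$-dimensional, $\Gammac(M_0,X)$ is connected. Without this, the bar construction would only recover the part of $\Gammac(M,X)$ lying over the basepoint component of $\Gammac(M_0,X)$, and the comparison map would fail to be surjective on $\pi_0$.
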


\begin{remark}
The fully-faithfulness of the above functor is given by Theorem 5.1.3.6 of~\cite{dag}. This is a parametrized form of May's theorem from \cite{may}, identifying $n$-connective spaces as $\disk^{\fr}_n$-algebras.
\end{remark}

The following result is the technical crux in the proof of Theorem~\ref{spaces}; it asserts that, for each $n$-connective retractive space $X\rightleftarrows B$, the assignment of compactly-supported sections $\Gammac(-,X)$ is $\ot$-excisive.

\begin{lemma}[\cite{oldfact}]\label{gamma-c} 
Let $X\in \Space^{\geq n}_B$ be an $n$-connective retractive space over $B$. 
Let $M$ be $B$-framed $n$-manifold.
For $M'\underset{M_0\times\RR}\bigcup M''\cong M$ a collar-gluing, the canonical map between spaces,
\[
\Gammac(M',X)\underset{\Gammac(M_0\times\RR,X)}\times\Gammac(M'',X)
\xra{~\simeq~}
\Gammac(M, X)~,
\] 
from the quotient of the product $\Gammac(M',X)\times\Gammac(M'',X)$ by the diagonal action of $\Gammac(M_0\times\RR,X)$, is an equivalence.

\end{lemma}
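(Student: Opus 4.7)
The plan is to verify the excision directly by interpreting the left-hand side as a two-sided bar construction and then using the $n$-connectivity hypothesis to identify its realization with $\Gammac(M, X)$. The open embedding $M_0 \times \RR \hookrightarrow M_\pm$, together with concatenation along the $\RR$-factor of the collar, endows $\Gammac(M_0 \times \RR, X)$ with an $\cE_1$-algebra structure in $\Space$ and makes $\Gammac(M_\pm, X)$ into a right, respectively left, module over it. With these structures, the indicated balanced product is by construction the geometric realization $|B_\bullet|$ of the associated two-sided bar construction, and the map to $\Gammac(M, X)$ arises from the natural augmentation that glues configurations of compactly-supported sections along a sequence of stacked cylinders inside the collar neighborhood of $M_0$.

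To show this augmentation is an equivalence, I would first reduce to working at the level of compact supports. Both sides are filtered colimits over compact subsets --- of $M$ on the right, and of $M_\pm$ and $M_0 \times \RR$ on the left --- and in $\Space$ filtered colimits commute with both geometric realizations and finite limits. For each compact $K \subset M$ and sufficiently small $\epsilon > 0$, one may replace $K$ by a decomposition $K_- \sqcup K_+$ with $K_\pm$ compact in $f^{-1}[-1,-\epsilon]$ and $f^{-1}[\epsilon,1]$ respectively, as soon as the corresponding section has been arranged to be trivial on the central slab $f^{-1}(-\epsilon, \epsilon) \cong M_0 \times (-\epsilon, \epsilon)$; for such specially-prepared sections, the splitting of support immediately lifts to a $0$-simplex of the bar construction mapping to the given section.

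The central point is therefore to prove that the subspace of sections trivial on some neighborhood of $M_0$ surjects onto $\Gammac(M, X)$, and more generally that $|B_\bullet|$ computes $\Gammac(M, X)$ up to equivalence. This is where the $n$-connectivity hypothesis is essential: the restriction of a section to the slab is, up to homotopy, a map $M_0 \to X_b$ for a basepoint $b \in B$, and since $\dim M_0 = n-1$ while the fiber $X_b$ has vanishing homotopy in degrees less than $n$, obstruction theory produces a nonempty and $0$-connected space of trivializations. Parametrized versions of this argument show that the higher bar degrees of $B_\bullet$ correspond to increasingly refined trivializations along nested copies of the collar cylinder, reproducing the structure of $\Gammac(M,X)$ precisely.

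The hard part will be making the obstruction-theoretic argument of the third step rigorous in families, tracking the compatibility between successive trivializations along nested cylindrical neighborhoods and the compact-support filtration on both sides. This is essentially the classical nonabelian Poincar\'e duality argument of Salvatore and Segal transplanted into the $\infty$-categorical setting; I would model the construction on the treatment in \cite{oldfact}, where the parametrized obstruction argument is executed using the full covariant functoriality of $\Gammac$ on $\mfld_n^B$.
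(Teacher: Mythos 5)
Your key insight is correct---$M_0$ has dimension $n-1$ while the fibers of $X\to B$ are $n$-connective, so $\Gammac(M_0,X)$ is connected---but the route you propose is genuinely different from the paper's, and substantially harder. You build the equivalence from the bar-construction side: trivialize a section on a slab around $M_0$ via obstruction theory, split the compact support, and match higher bar degrees to nested trivializations ``in families.'' You flag the hard step yourself, and it really is hard: making that construction simplicial and coherent, compatibly with the compact-support filtration, is a Moore-loop-style argument that you do not carry out. The paper sidesteps all of it by working dually with \emph{restriction} maps rather than extension maps. Because $M_0\hookrightarrow M$ is a closed embedding (and likewise for the closed slabs flanking it), compactly supported sections restrict, and the restriction maps $\Gammac(M,X)\to\Gammac(M_0,X)$, $\Gammac(M\smallsetminus M',X)\to\Gammac(M_0,X)$, and $\Gammac(M\smallsetminus M'',X)\to\Gammac(M_0,X)$ are Serre fibrations (using the regular neighborhood of $M_0$) with fibers $\Gammac(M',X)\times\Gammac(M'',X)$, $\Gammac(M'',X)$, and $\Gammac(M',X)$ respectively. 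The inner square of these restrictions is a homotopy pullback since $M'\cup M''=M$ is a pushout, and the standard fact that a homotopy pullback of fibrations over a \emph{connected} base realizes the two-sided bar construction of the fibers over the Moore loop space of the base then produces the balanced-product description of $\Gammac(M,X)$ in one stroke; your connectivity observation is the sole nontrivial input, and one finishes via the identification $\Omega\Gammac(M_0,X)\simeq\Gammac(M_0\times\RR,X)$ of group-like $\cE_1$-spaces. Incidentally, your guess at how \cite{oldfact} proceeds (``parametrized obstruction argument\ldots using the full covariant functoriality of $\Gammac$'') does not match that paper's actual proof, which is precisely the restriction-fibration argument just sketched.
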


\begin{proof}[Proof given in~\cite{oldfact}]

Since $M_0\hookrightarrow M$ is a proper embedding, a compactly-supported section over $M$ can be restricted to obtain a compactly-supported section over $M_0$, as well as over $M\smallsetminus M'$ and over $M\smallsetminus M''$.
Namely, there is a diagram among spaces of compactly-supported sections
\[
\xymatrix{
\Gammac(M',X) \times \Gammac(M'',X)  \ar[dr]  \ar[rr]  \ar[dd]
&
&
\Gammac(M'',X)  \ar[d]
\\
&
\Gammac(M,X)  \ar[r]  \ar[d]  \ar[dr]
&
\Gammac(M\smallsetminus M',X)  \ar[d]
\\
\Gammac(M',X)  \ar[r]
&
\Gammac(M\smallsetminus M'',X)  \ar[r]
&
\Gammac(M_0,X).
}
\]
By inspection, the bottom horizontal sequence is a fiber sequence, as is the right vertical sequence, as is the diagonal sequence.
Also, the inner square is pullback because $M'\underset{M_0\times \RR}\bigcup M''\cong M$ is a pushout.  
Because $M_0\subset M$ is equipped with a regular neighborhood, these fiber sequences are in fact Serre fibration sequences, and so the inner square is a weak homotopy pullback square.  
In particular, there is a right homotopy coherent action of $\Omega \Gammac(M_0,X)$ on $\Gammac(M',X)$, a left homotopy coherent action of $\Omega \Gammac(M_0,X)$ on $\Gammac(M'',X)$, and a continuous map of topological spaces
\begin{equation}\label{gammas}
\Gammac(M',X)\underset{\Omega \Gammac(M_0,X)} \times \Gammac(M'',X)\longrightarrow \Gammac(M,X)
\end{equation}
from the balanced homotopy coinvariants.  
Because $X\to B$ is $n$-connective and $M_0$ is $(n-1)$-dimensional, the base $\Gammac(M_0,X)$ is connected.
It follows that the map~(\ref{gammas}) is in fact a weak homotopy equivalence.  
The assertion follows after the canonical identification $\Omega \Gammac(M_0,X) \cong \Gammac(M_0\times \RR,X)$ as group-like $\cE_1$-spaces.

\end{proof}

Theorem~\ref{homology} implies the nonabelian Poincar\'e duality theorem of Salvatore \cite{salvatore}, Segal~\cite{segallocal}, and Lurie \cite{dag}. First, recall from~(\ref{def.Omega.B}) the $\disk_n^B$-algebra $\Omega_B^n X$ determined by a retractive space $X\rightleftarrows B$.

\begin{cor}[Nonabelian Poincar\'e duality \cite{oldfact}]\label{non-abel} 
Let $B\to \BO(n)$ be a map from a space.
Let $X\rightleftarrows B$ be an $n$-connective retractive space over $B$.
For each $B$-framed $n$-manifold $M$, there is a canonical equivalence 
\[
\int_M \Omega_B^n X 
\xra{~ \simeq ~}
\Gammac(M,X)
\] 
from the factorization homology over $M$ of $\Omega^n_{B}X$ to the space of compactly-supported sections of $X$ over $M$.
\end{cor}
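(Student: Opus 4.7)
The plan is to exhibit $\Gammac(-,X)\colon \mfld_n^B \to \spaces$ as a homology theory in the sense of Definition~\ref{exc}, and then invoke Theorem~\ref{homology} to conclude that it agrees with factorization homology of its restriction to $\disk_n^B$ --- which is by definition the $\disk_n^B$-algebra $\Omega_B^n X$.

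First, I would check that $\Gammac(-,X)$ is canonically symmetric monoidal from $(\mfld_n^B, \sqcup)$ to $(\spaces,\times)$: a compact subset of a disjoint union is the disjoint union of compact subsets, and sections are given factorwise, so $\Gammac(M\sqcup N, X) \simeq \Gammac(M,X)\times \Gammac(N,X)$ naturally in $M$ and $N$. Next I would invoke Lemma~\ref{gamma-c}, which establishes precisely $\otimes$-excision for $\Gammac(-,X)$ relative to the Cartesian symmetric monoidal structure on $\spaces$: for each collar-gluing $M_- \cup_{M_0 \times \RR} M_+ \cong M$, the comparison map
\[
\Gammac(M_-,X)\underset{\Gammac(M_0\times\RR,X)}\times \Gammac(M_+,X) \xra{~\simeq~} \Gammac(M,X)
\]
is an equivalence. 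Together these two statements place $\Gammac(-,X)$ in the $\infty$-category $\bH(\mfld_n^B, \spaces)$ of homology theories.

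Next I would apply Theorem~\ref{homology}, which provides the equivalence $\int\colon \Alg_{\disk_n^B}(\spaces) \rightleftarrows \bH(\mfld_n^B,\spaces)\colon \ev_{\RR^n}$ whose inverse is evaluation on the standard $B$-framed Euclidean $n$-space. Applied to the homology theory $\Gammac(-,X)$, the counit of this adjoint equivalence is a natural equivalence of symmetric monoidal functors on $\mfld_n^B$, which at a $B$-framed $n$-manifold $M$ reads
\[
\int_M \bigl(\Gammac(-,X)|_{\disk_n^B}\bigr) \xra{~\simeq~} \Gammac(M, X).
\]
By the definition~(\ref{def.Omega.B}), the restriction $\Gammac(-,X)|_{\disk_n^B}$ is exactly the $\disk_n^B$-algebra $\Omega_B^n X$, which completes the argument.

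The only step in which real content intervenes is the $\otimes$-excision for compactly-supported sections (Lemma~\ref{gamma-c}). Its proof rests on a Serre fibration argument, together with the crucial observation that $n$-connectivity of $X\to B$ forces $\Gammac(M_0, X)$ to be connected --- this is what promotes the comparison between the homotopy pullback of Serre fibrations and the balanced product $(-)\underset{\Omega\Gammac(M_0,X)}\times(-)$ to an equivalence. With that input in hand, the present corollary is a formal consequence of the universal property encoded in Theorem~\ref{homology}.
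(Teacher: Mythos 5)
Your proposal is correct and follows precisely the argument the paper intends: the paper itself frames Corollary~\ref{non-abel} as a direct consequence of Theorem~\ref{homology} once Lemma~\ref{gamma-c} places $\Gammac(-,X)$ in $\bH(\mfld_n^B,\spaces)$, and the text preceding Theorem~\ref{spaces} already records both that $\Gammac(-,X)$ is symmetric monoidal for the Cartesian structure and that its restriction to $\disk_n^B$ is $\Omega_B^n X$ by definition~(\ref{def.Omega.B}). No gaps; same route.
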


We now explain how this result specializes to familiar Poincar\'e duality between homology and compactly-supported cohomology.
Let $A$ be an abelian group.
Consider the product space $X\simeq \BO(n)\times K(A,n)$.
Observe that the values of the $n$-disk algebra
\[
\Omega^n_{\BO(n)} X
\colon 
\disk_n
\longrightarrow
\spaces
~,\qquad
\underset{0\leq i\leq k} \bigsqcup V_i
\mapsto 
\prod_{1\leq i\leq k}\Map_\ast \bigl( V_i^+ , K(A,n) \bigr)
\underset{\rm noncanonical}{~\simeq~}
A^{\times k}~,
\]
are non-canonically identified as products of $A$.
Furthermore, the restriction of this $n$-disk algebra along the standard symmetric monoidal functor $\disk_n^{\sf fr} \to \disk_n$, which selects Euclidean spaces, is the $\cE_n$-algebra forgotten from the commutative algebra structure on $A$. 
The $\sO(n)$-action on $A$ is that inherited from the infinite-loop structure of the Eilenberg--MacLane spectrum $\sH A$ through the $J$-homomorphism.
Let $M$ be an $n$-manifold.
An $A$-orientation of $M$ defines a natural transformation making the diagram of $\infty$-categories commute:
\begin{equation}\label{11}
\xymatrix{
\disk_{n/M} \ar[rr]  \ar[d]_-{\pi_0}
&&
\disk_n \ar[d]^-{\Omega_{\BO(n)}^n X} 
\\
{\sf Fin}  \ar[rr]^-{I\mapsto A^I}
&&
\spaces ;
}
\end{equation}
here, the bottom horizontal functor is the symmetric monoidal functor from finite sets with disjoint union, which is the free symmetric monoidal $\infty$-category generated by a point, determined by the commutative algebra $A$ in the Cartesian symmetric monoidal $\infty$-category $\spaces$.
So choose an $A$-orientation of $M$ (supposing one exists).  
Corollary~\ref{non-abel} then yields an equivalence between spaces
\begin{equation}\label{10}
\int_M A
\underset{(\ref{11})}{~\simeq~}
\int_M \Omega^n_{\BO(n)} X
\underset{\rm Cor~\ref{non-abel}}{~\simeq~} 
\Mapc\bigl(M,K(A,r)\bigr)~.
\end{equation}
Note that each space in this display is equipped with a base point, and the maps in this display are canonically based maps.  
As the $\cE_n$-algebra $A$ is forgotten from a commutative algebra, we recognize $\int_M A$ as the free $A$-module in the $\infty$-category $\spaces$ generated by the underlying space of $M$.
Therefore, applying homotopy groups to~(\ref{10}) gives an isomorphism between graded abelian groups,
\[
\overline{\sH}_{\ast}(M;A)
~\cong~
\pi_\ast \int_M A
~\cong~
\pi_\ast \Mapc\bigl(M,K(A,n)\bigr)
~\cong~
\overline{\sH}^{n-\ast}_{\sf c}(M;A)~,
\]
from reduced homology to reduced cohomology.

\begin{remark} The factorization homology $\int_M\Omega^n_BX$ is built from configuration spaces of disks in $M$ with labels defined by $X\to B$, and the preceding result thereby has roots in the configuration space models of mapping spaces dating to the work of Segal, May, McDuff and others in the 1970s; see \cite{segal}, \cite{may}, \cite{mcduff}, and \cite{bodig}. However, the classical configuration-space-with-labels, as described in~\cite{bodig}, models a mapping space with target the $n$-fold suspension of $X$, rather than into $X$ itself. Factorization homology thus more closely generalizes the configuration spaces with {\it summable} or {\it amalgamated} labels of Salvatore \cite{salvatore} and Segal \cite{segallocal}.

\end{remark}

\begin{proof}[Proof of Theorem~\ref{spaces}]
Corollary~\ref{non-abel} identifies the functor $\Gamma_{\sf c} \colon \spaces_B^{\geq n} \to  \bH(\mfld_n^B,\Space)$ as the composition $\Gamma_{\sf c} \colon \spaces_B^{\geq n} \xra{\Omega^n_B} \disk_n^B \xra{\int}  \bH(\mfld_n^B,\Space)$.
Theorem~\ref{homology} gives that $\int$ is fully-faithful, so it remains to argue that $\Omega_B^n$ is fully-faithful with essential image the group-like $\disk_n^B$-algebras in spaces.  
This is immediate because, for instance, $\spaces_{/B}$ is an $\infty$-topos (Theorem 5.1.3.6 of \cite{dag}).  

\end{proof}

\section{Calculations}\label{sec.calculations}

In \S\ref{sec.nonabel}, we identified values of factorization homology of $\disk_n$-algebras in spaces, with its Cartesian monoidal structure, as twisted compactly-supported mapping spaces.
Here we identify more values of factorization homology of $\disk_n$-algebras in chain complexes, and spectra, notably with monoidal structure given by tensor product -- these cases are closest to the physical motivation given in the introduction.

In this section, we fix a map $B\to \BO(n)$ from a space.
Two cases of notable interest are $\ast \xra{\{\RR^n\}}\BO(n)$ and $\BO(n)\xra{\sf id}\BO(n)$.

\subsection{Factorization homology for direct sum}\label{sec.direct.sum}
We now show that in the simple case of $\disk_n$-algebras in chain complexes with direct sum, factorization homology recovers ordinary homology.

Fix a presentable $\infty$-category $\cV$, such as the $\infty$-category $\spectra$ of spectra, or the $\infty$-category $\m_{\Bbbk}$ of chain complexes over a fixed commutative ring $\Bbbk$.
Coproduct, which in the case that $\cV$ is stable is direct sum, endows $\cV$ with the coCartesian symmetric monoidal structure: $(\cV,\amalg)$.
Consider the commutative diagram among $\infty$-categories, in which each functor is implemented by the evident restriction:
\[
\xymatrix{
\Alg_{\disk_n^B}(\cV,\amalg) \ar[d]_-{\simeq}
&&
\Fun^{\ot}\bigl(\mfld_n^B,(\cV,\amalg)\bigr) \ar[ll] \ar[dll]
\\
\Fun(B,\cV)
&&
.
}
\]
In this case of a coCartesian symmetric monoidal structure, the downward functor is an equivalence, as indicated.  In other words, a $\disk_n^B$-algebra in $(\cV,\amalg)$ is precisely a functor $B\to \cV$.  
\footnote{Here and always, the space $B$ is taken as an $\infty$-groupoid and thereafter as an $\infty$-category.
In this way, we make sense of a functor $B\to \cV$, which is simply a $B$-indexed local system of objects in $\cV$.}

\begin{example}
Let $G$ be a topological group equipped with a continuous representation $G\xra{\rho} {\sf GL}(n)\simeq \sO(n)$.  
Then a $\disk_n^{\sB G}$-algebra in $(\cV,\amalg)$ is a $G$-module $\sB G \to \cV$.  
In the case that $G = e	$ is the trivial group, such a $G$-module is simply an object in $\cV$.
\end{example}

Through Proposition~\ref{as-LKE}, which identifies factorization homology as a left adjoint to the top horizontal functor in the preceding diagram, we conclude that factorization is identified as a left adjoint to the downleftward functor, which is restriction along the fully-faithful inclusion $B\to \mfld_n^B$ of $B$-framed Euclidean spaces.  
Left adjoints to restriction functors evaluate as left Kan extensions.
The standard formula for left Kan extensions therefore identifies, for each $B$-framed $n$-manifold $M= (M,\varphi)$ and each $\disk_n^B$-algebra $B\xra{V}\cV$, the value of factorization homology as the colimit in $\cV$:
\[
\int_M V
~\simeq~
\colim\bigl( B_{/M} \to B \xra{V} \cV\bigr)
\]
Corollary~\ref{tang-class} identifies the functor $B_{/M}\to B$ appearing in this expression as the given functor $M\xra{\varphi} B$ from the underlying $\infty$-groupoid, or homotopy type, of $M$:
\[
\int_M V
~\simeq~
\colim\bigl( M\xra{\varphi} B \xra{V} \cV\bigr)~.
\]

\begin{remark}
Note that the value of factorization homology in $(\cV,\amalg)$ over a $B$-framed manifold $M\xra{\varphi}B$ only depends on the underlying space of $M$ equipped with its given map to $B$.
	
\end{remark}

\begin{example}
Let $G$ be a topological group equipped with a continuous representation $G\xra{\rho} {\sf GL}(n)\simeq \sO(n)$.  
Let $V$ be a $G$-module in $\cV$.
Let $M$ be an $n$-manifold.
A $\sB G$-framing on $M$ is a principal $G$-bundle $P \to M$ equipped with a $\rho$-equivariant map $P\to {\sf Fr}(M)$ over $M$ to the frame-bundle of $M$.
Given such a $\sB G$-framing on $M$, we identify the factorization homology
\[
\int_M V~\simeq~
P \underset{G}\ot V
\]
as the diagonal $G$-quotient of the tensor of $V\in \cV$ with the space $P$.
In particular, in the case that $G\xra{=}\sO(n)$, the factorization homology is the diagonal $\sO(n)$-quotient with the frame-bundle of $M$ tensored with the $\sO(n)$-module $V$ in $\cV$:
\[
\int_M V~\simeq~ {\sf Fr}(M)\underset{\sO(n)}\ot V~.
\]
In the case that $G=e$ is a trivial group, then $V\in \cV$ is simply an object and the principal bundle $P \xra{=}M$ is identical to $M$, so that factorization homology
\[
\int_M V~\simeq~ M\ot V
\]
is simply the tensor of the object $V\in \cV$ with the underlying space of $M$.

\end{example}

\begin{example}
Suppose $B =\ast$, so that a $B$-framing on an $n$-manifold is exactly a framing. 
Suppose $\cV = \m_{\Bbbk}$ is the $\infty$-category of chain complexes over a fixed commutative ring $\Bbbk$.
As discussed above, a $\cE_n$-algebra in $(\m_{\Bbbk},\oplus)$ is simply a chain complex, $V\in \m_{\Bbbk}$.
Furthermore, the factorization homology
\[
\int_M V~\simeq~
M\ot V~\simeq~ \sC_\ast(M;V)~\in \m_{\Bbbk}
\]
is the singular chain complex of $M$ with coefficients in $V$.
	
\end{example}

\begin{example}
Again suppose $B =\ast$, so that a $B$-framing on an $n$-manifold is exactly a framing. 
Suppose $\cV = \spectra$ is the $\infty$-category of spectra.
As discussed above, an $\cE_n$-algebra in $(\spectra,\oplus)$ is simply a spectrum $V\in \spectra$.
Furthermore, the factorization homology
\[
\int_M V~\simeq~
M\ot V~\simeq~ \Sigma^{\infty}_+ M \wedge V~\in \spectra
\]
is the smash product of $V$ with the suspension spectrum of $M$.

\end{example}

\begin{remark}
Taking products with Euclidean spaces defines a sequence of symmetric monoidal $\infty$-categories:
\[
\mfld_0^{\fr}
\longrightarrow
\mfld_1^{\fr}
\longrightarrow
\dots
\longrightarrow
\mfld_n^{\fr}
\longrightarrow
\dots
~.
\]
Forgetting to underlying spaces defines, for each $k$, a symmetric monoidal functor $\mfld_k^{\fr}\to \spaces^{\sf fin}$ to the $\infty$-category of finite spaces\footnote{i.e., finite CW complexes} and coproduct among them.  
Via the canonical equivalence between underlying spaces $M\simeq M\times \RR$ for each framed manifold $M$, 
these forgetful functors assemble as a symmetric monoidal functor from the colimit
\begin{equation}\label{40}
\underset{k\geq 0}\colim \mfld_k^{\fr}
\longrightarrow
\spaces^{\sf fin}~.
\end{equation}
We argue that this symmetric monoidal functor is an equivalence.
\begin{itemize}
\item
{\bf Essentially surjective.}	
Each finite space is equivalent to the underlying space of the geometric realization of a finite simplicial complex.
Such a geometric realization can be piecewise-linearly embedded into Euclidean space.  
A regular open neighborhood of such an embedding is a finitary smooth manifold that is homotopy equivalent to this geometric realization.
In this way we conclude that the functor~(\ref{40}) is essentially surjective.

\item
{\bf Fully-faithful.}
Let $M$ and $N$ be $n$-manifolds.
It follows from Whitney's embedding theorem that the map between spaces
\[
\underset{k\geq 0}\colim \Emb^{\fr}(M\times \RR^{k-n} , N\times \RR^{k-n})
\longrightarrow
\Map_{\spaces}(M,N)
\]
is a weak homotopy equivalence.

\end{itemize}
Through the equivalence~(\ref{40}), Theorem~\ref{homology} applied to $\cV = (\m_{\Bbbk},\oplus)$ recovers to the formulation of the Eilenberg--Steenrod axioms given in the introduction. 
If one sets $\cV$ to be the opposite $(\m_{\Bbbk}^{\op},\oplus)$, then one likewise recovers the Eilenberg--Steenrod axioms for cohomology.

\end{remark}

\subsection{Factorization homology with coefficients in commutative algebras}\label{sec.commutative}
Here, we examine factorization homology of commutative algebras, otherwise known as $\cE_\infty$-algebras.

Fix a symmetric monoidal $\infty$-category $\cV$ that is 
$\ot$-presentable.

There is a canonical identification of the symmetric monoidal envelope of the commutative $\infty$-operad
\[
{\sf Env}({\sf Comm})
~\simeq~
\fin= (\fin,\amalg)
\]
with the coCartesian symmetric monoidal category of finite sets.
\footnote{
See the discussion just above Lemma~\ref{disk.env} for a quick description of a \emph{symmetric monoidal envelope}.  
}
This is to say that restriction along the commutative algebra in $\fin$ whose underlying object is $\ast\in \fin$ defines an equivalence between $\infty$-categories:
\[
\Fun^{\ot}(\fin,\cV)
\xra{~\simeq~}
\Alg_{\com}(\cV)
\]
So restriction along the symmetric monoidal functor given by taking connected components
\[
[-]: \disk^B_n\ra\disk_n \xra{[-]} {\sf Fin}
\]
defines a forgetful functor 
\[
{\sf fgt}\colon \Alg_{\com}(\cV)
\longrightarrow 
\Alg_{\disk^B_n}(\cV)~.
\] 
Via Corollary~3.2.3.3 of~\cite{dag}, the assumed $\ot$-presentability of $\cV$ implies the $\infty$-category $\Alg_{\com}(\cV)$ is presentable.
In particular, it is tensored over the $\infty$-category of spaces:
\[
\spaces\times \Alg_{\com}(\cV) \xra{~\ot~} \Alg_{\com}(\cV)~,\qquad (X,A)\mapsto \colim\bigl(X\xra{!} \ast \xra{\{A\}} \Alg_{\com}(\cV)\bigr)~.
\]
\begin{prop}\label{tensor} 
The following diagram among $\infty$-categories canonically commutes:
\[
\xymatrix{
\mfld^B_n \times \Alg_{\com}(\cV) \ar[rr]^-{{\rm forget}\times {\sf id}}\ar[d]_-{{\sf id}\times {\rm fgt}} 
&&
\Space \times \Alg_{\com}(\cV)\ar[rr]^-\ot 
&&
\Alg_{\com}(\cV)\ar[d]^-{\rm forget}
\\
\mfld^B_n \times\Alg_{\disk^B_n}(\cV)\ar[rrrr]^{\displaystyle \int} 
&&
&&
\cV .
}
\]
In particular, for each $B$-framed $n$-manifold $M$, and each commutative algebra $A$, there is a canonical equivalence 
\[
\int_M A~\simeq~ M\ot A
\] 
between the factorization homology of $A$ over $M$ and the tensor of the commutative algebra $A$ with the underlying space of $M$.
\end{prop}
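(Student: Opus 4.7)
The plan is to produce an auxiliary homology theory as follows: define $\widetilde \cF \colon \mfld_n^B \to \Alg_\com(\cV)$ as the composite $M\mapsto M\ot A$, where the first arrow is the underlying-space functor and $-\ot A$ is the tensoring of the presentable $\infty$-category $\Alg_\com(\cV)$ over spaces. Let $\cF\colon \mfld_n^B \to \cV$ be obtained by further composing with the forgetful functor $\Alg_\com(\cV)\to \cV$. I will verify that $\cF$ is a $\cV$-valued homology theory whose restriction to $\disk_n^B$ recovers ${\sf fgt}(A)$; Theorem~\ref{homology} then provides a canonical equivalence $\int_M {\sf fgt}(A) \xra{~\simeq~} \cF(M) = M\ot A$, natural in $M\in \mfld_n^B$ and $A\in \Alg_\com(\cV)$. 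The commutativity of the displayed diagram is then an unpacking of this naturality.

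Symmetric monoidality of $\cF$ holds because each of its three constituents is symmetric monoidal: the underlying-space functor sends disjoint union to coproduct of underlying spaces; the tensor $-\ot A\colon \spaces \to \Alg_\com(\cV)$ is a left adjoint and thus preserves coproducts; and the forgetful $\Alg_\com(\cV)\to \cV$ identifies coproduct of commutative algebras with $\ot$ in $\cV$. For $U\in \disk_n^B$, the underlying space of $U$ is equivalent to the finite set $[U]$ of connected components, so $\cF(U)\simeq A^{\ot [U]} \simeq {\sf fgt}(A)(U)$, identifying the restriction of $\cF$ to $\disk_n^B$ with ${\sf fgt}(A)$.

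The key step, which is also the expected main obstacle, is $\ot$-excision. Let $M_-\underset{M_0\times \RR}\bigcup M_+ \cong M$ be a collar-gluing. As the zero-section $M_0\hookrightarrow M_0\times\RR$ is a homotopy equivalence, the underlying space of $M$ is the pushout $M_-\underset{M_0\times \RR}\amalg M_+$ in $\spaces$. Since $-\ot A$ preserves colimits, applying it yields
$$\widetilde \cF(M) ~\simeq~ \widetilde \cF(M_-)\underset{\widetilde \cF(M_0\times \RR)}\amalg \widetilde \cF(M_+)$$
as an equivalence in $\Alg_\com(\cV)$. The remaining task is to identify the forgetful image of this pushout in $\cV$ with the relative tensor product $\cF(M_-)\underset{\cF(M_0\times \RR)}\ot\cF(M_+)$. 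This is a standard fact in the $\ot$-presentable setting: pushouts of commutative algebras are computed by relative tensor products of underlying objects, as follows from resolving one leg by a bar construction built from free commutative algebras and invoking that the forgetful functor $\Alg_\com(\cV)\to \cV$ preserves sifted colimits. Under this identification, the resulting equivalence is compatible with the canonical excision morphism of Corollary~\ref{excision-arrow}, establishing $\ot$-excision and completing the reduction to Theorem~\ref{homology}.
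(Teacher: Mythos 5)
Your proposal is correct and follows essentially the same strategy as the paper's proof: both identify $M \mapsto M\ot A$ (forgotten to $\cV$) as a $\ot$-excisive symmetric monoidal functor whose restriction to $\disk_n^B$ is ${\sf fgt}(A)$ (using that underlying spaces of disks are contractible so $\cF(U)\simeq A^{\ot[U]}$, and that collar-gluings forget to pushouts of underlying spaces preserved by the tensoring), and both then conclude via Theorem~\ref{homology}. The paper spends slightly more care setting up a lax-commutative square of left adjoints so that the specific comparison morphism $\int_M A \to M\ot A$ is constructed in advance and then shown invertible, whereas you read off the equivalence directly from the equivalence of $\infty$-categories in Theorem~\ref{homology}; the substantive verifications are the same.
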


\begin{proof} 
Observe the commutative diagram among symmetric monoidal $\infty$-categories:
\[
\xymatrix{
\disk_n^B \ar[rr] \ar[d]_-{[-]}
&&
\mfld_n^B \ar[d]^-{\rm forget}
\\
\fin \ar[rr]
&&
\spaces
}
\]
in which the bottom two are endowed with their coCartesian symmetric monoidal structures.  
Restriction defines the commutative diagram among $\infty$-categories of symmetric monoidal functors
\begin{equation}\label{41}
\xymatrix{
\Alg_{\disk_n^B}(\cV)
&&
\Fun^{\ot}(\mfld_n^B,\cV) \ar[ll]
\\
\Alg_{\com}(\cV) \ar[u]^-{\sf fgt}
&&
\Fun^{\ot}(\spaces,\cV) . \ar[ll] \ar[u]
}
\end{equation}
It follows from Proposition~3.2.4.7 of~\cite{dag} that the forgetful functor $\Alg_{\com}(\cV) \to\cV$ canonically lifts as a symmetric monoidal functor $\bigl( \Alg_{\com}(\cV) , \amalg \bigr) \to \bigl( \cV , \ot \bigr)$ from the coCartesian symmetric monoidal structure on $\Alg_{\com}(\cV)$.
Postcomposing by this symmetric monoidal functor determines the commutative diagram among $\infty$-categories:
\begin{equation}\label{42}
\xymatrix{
\Alg_{\com}\bigl( \Alg_{\com}(\cV),\amalg \bigr) \ar[d]
&&
\Fun^{\ot}\bigl(\spaces,(\Alg_{\com}(\cV) , \amalg) \bigr)  \ar[d]  \ar[ll]
\\
\Alg_{\com}(\cV) 
&&
\Fun^{\ot}(\spaces,\cV) . \ar[ll] 
}
\end{equation}
It follows from~\S\ref{sec.direct.sum} that the right downward functor in this diagram is an equivalence. 
From the same proposition cited in~\cite{dag} above, the right downward functor preserves colimits.  
From~\S\ref{sec.direct.sum}, we conclude that the left adjoint to the bottom horizontal functor in~(\ref{41}) is the composite functor
\[
\Alg_{\com}(\cV)
\xra{~A\mapsto (X\mapsto X\ot A)~}
\Fun^{\ot}\bigl(\spaces , \Alg_{\com}(\cV)\bigr)
\xra{~\rm forget~}
\Fun^{\ot}(\spaces , \cV)
\]
of first tensoring followed by the forgetful functor.
Now, taking left adjoints of the horizontal functors results in a lax-commutative diagram among $\infty$-categories:
\[
\xymatrix{
\Alg_{\disk_n^B}(\cV) \ar@(u,u)[rr]^-{\int}
&
\Downarrow
&
\Fun^{\ot}(\mfld_n^B,\cV) 
\\
\Alg_{\com}(\cV) \ar[u]^-{\sf fgt} \ar[r]^-{\ot}
&
\Fun^{\ot}\bigl(\spaces , (\Alg_{\com}(\cV) , \amalg) \bigr)  \ar[r]^-{\rm forget}
&
\Fun^{\ot}(\spaces,\cV) .  \ar[u]
}
\]
The proof is complete once we show the 2-cell in this diagram is invertible.
This 2-cell is invertible if and only if, for each commutative algebra $A\in \Alg_{\com}(\cV)$ and each $B$-framed $n$-manifold $M$, the resulting morphism in $\cV$,
\begin{equation}\label{43}
\int_M A
\longrightarrow
M\ot A~,
\end{equation}
is an equivalence. (Here, and in what follows, we are not giving notation to the functor ${\sf fgt}$.)
Because the underlying space of each $B$-framed Euclidean $n$-dimensional space is contractible, this morphism~(\ref{43}) is an equivalence in such cases.
By construction, the tensoring functor $\spaces \xra{X\mapsto X\ot A}\Alg_{\com}(\cV)$ preserves colimits.   
Because collar-gluings among $B$-framed $n$-manifolds forget as pushouts among their underlying spaces, it follows that the functor $\mfld_n^B \xra{M\mapsto M\ot A} \Alg_{\com}(\cV) \xra{\rm forget} \cV$ is $\ot$-excisive, and in particular it is symmetric monoidal.  
It follows from Theorem~\ref{homology} that the morphism~(\ref{43}) is an equivalence, as desired.

\end{proof}

Proposition~\ref{tensor}, together with the universal property of tensoring over spaces, yields the following.
\begin{cor}\label{tensor.universal}
Let $A\in \Alg_{\com}(\cV)$ be a commutative algebra in a symmetric monoidal $\infty$-category that is $\ot$-presentable.
Let $M$ be a $B$-framed $n$-manifold.
There is a canonical commutative algebra structure on the factorization homology $\int_M A$.
Furthermore, as a commutative algebra, it corepresents the copresheaf
\[
\Map_{\com}\Bigl( \int_M A , - \Bigr)
\colon
\Alg_{\com}(\cV)
\longrightarrow
\Spaces
~,\qquad
C\mapsto 
\Map_{\com}(A,C)^M~,
\]
whose value on a commutative algebra is the the cotensor, or mapping space, of $M$ to the space of morphisms from $A$ to $C$.

\end{cor}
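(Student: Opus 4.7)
The plan is to deduce Corollary~\ref{tensor.universal} directly from Proposition~\ref{tensor} together with the fact that $\Alg_{\com}(\cV)$ is canonically tensored over $\spaces$.

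First, I would recall that since $\cV$ is $\ot$-presentable, Corollary~3.2.3.3 of~\cite{dag} grants that $\Alg_{\com}(\cV)$ is presentable. Being presentable, it is canonically (co)tensored over $\spaces$: for each space $X$ and each commutative algebra $A$, the tensoring $X\ot A$ is the colimit in $\Alg_{\com}(\cV)$ of the constant diagram at $A$ indexed by $X$, and it is characterized by the defining adjunction
\[
\Map_{\com}(X\ot A, C) ~\simeq~ \Map_{\spaces}\bigl(X , \Map_{\com}(A,C)\bigr) ~=:~ \Map_{\com}(A,C)^X
\]
for each $C\in \Alg_{\com}(\cV)$. This is the universal property that we need to exhibit on $\int_M A$.

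Second, I would note that Proposition~\ref{tensor} does more than identify $\int_M A$ as an object of $\cV$: inspecting the commutative diagram in its statement, the equivalence $\int_M A\simeq M\ot A$ is obtained by applying the forgetful functor $\Alg_{\com}(\cV) \to \cV$ to the object $M\ot A\in \Alg_{\com}(\cV)$. Consequently, $\int_M A$ inherits a canonical commutative algebra structure, namely the one coming from $M\ot A$, and the equivalence $\int_M A \simeq M\ot A$ lifts to an equivalence in $\Alg_{\com}(\cV)$. This settles the existence of the commutative algebra structure.

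Finally, the corepresentation claim is a formal consequence: for each $C\in \Alg_{\com}(\cV)$, chaining the two equivalences above yields
\[
\Map_{\com}\Bigl( \int_M A, C \Bigr)
~\simeq~
\Map_{\com}(M\ot A, C)
~\simeq~
\Map_{\com}(A,C)^M~,
\]
naturally in $C$. The only substantive point in this proof is ensuring that the equivalence of Proposition~\ref{tensor} genuinely lives in $\Alg_{\com}(\cV)$ and not merely in $\cV$; this is the one step that requires care, and it is handled by invoking the commutativity of the diagram in the statement of that proposition rather than just its stated corollary on the level of underlying objects.
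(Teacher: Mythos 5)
Your proposal is correct and takes essentially the same route the paper intends: the paper gives no proof beyond the one-line remark that it follows from Proposition~\ref{tensor} together with the universal property of tensoring over spaces, and you have correctly filled in that argument — in particular, noting that the commuting diagram in Proposition~\ref{tensor} (not merely the object-level equivalence) is what exhibits $\int_M A$ as the underlying object of $M\ot A\in \Alg_{\com}(\cV)$, so the equivalence lifts to commutative algebras, after which the corepresentation statement is the defining adjunction of the tensoring.
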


Proposition~\ref{tensor} and Corollary~\ref{tensor.universal}, together with the fact that left adjoints compose, yields the following.
\begin{cor}\label{on.sym}
In the situation of Corollary~\ref{tensor.universal}, for $V\in \cV$ an object with $\sym(V)\in \Alg_{\com}(\cV)$ the free commutative algebra on $V$, there is a canonical equivalence between commutative algebras in $\cV$:
\[
\int_M \sym(V) 
~\simeq ~
\sym(M\ot V)~.
\]

\end{cor}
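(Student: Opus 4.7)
The plan is to read off the equivalence directly from universal properties, invoking Corollary~\ref{tensor.universal} together with the free-forgetful adjunction $\sym \dashv {\sf forget}$ between $\cV$ and $\Alg_{\com}(\cV)$. Concretely, I would fix a test commutative algebra $C \in \Alg_{\com}(\cV)$ and compute the space of commutative-algebra morphisms out of $\int_M \sym(V)$. Corollary~\ref{tensor.universal} supplies the first equivalence in the chain
\[
\Map_{\com}\Bigl(\int_M \sym(V),\, C\Bigr) \simeq \Map_{\com}(\sym(V), C)^M \simeq \Map_{\cV}(V, C)^M \simeq \Map_{\cV}(M \ot V, C) \simeq \Map_{\com}\bigl(\sym(M \ot V),\, C\bigr),
\]
where the second and fourth steps use the free-forgetful adjunction, and the third step uses the tensor-cotensor adjunction $\Map_{\cV}(M \ot V, -) \simeq \Map_{\cV}(V, -)^M$ coming from the tensoring of $\cV$ over $\Spaces$. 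All equivalences are manifestly natural in $C$; the $\infty$-categorical Yoneda lemma applied to $\Alg_{\com}(\cV)$ then produces the asserted equivalence $\int_M \sym(V) \simeq \sym(M \ot V)$ of commutative algebras.

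An essentially equivalent route, which is the one flagged by the phrase ``left adjoints compose,'' is to first use Proposition~\ref{tensor} to replace $\int_M \sym(V)$ by $M \ot \sym(V)$, where $M \ot -$ denotes the tensoring of $\Alg_{\com}(\cV)$ over $\Spaces$, and then to observe that $\sym$ preserves all colimits, being a left adjoint, and therefore intertwines the tensoring of $\cV$ over $\Spaces$ with that of $\Alg_{\com}(\cV)$; applied to the colimit presentation $M \ot V \simeq \colim(M \xra{!} \ast \xra{\{V\}} \cV)$ this yields $\sym(M \ot V) \simeq M \ot \sym(V)$ in $\Alg_{\com}(\cV)$.

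The only bookkeeping to attend to in either approach is that the equivalence be tracked as one of commutative algebras, rather than of underlying objects of $\cV$. In the Yoneda approach this is automatic because the mapping spaces are taken in $\Alg_{\com}(\cV)$; in the left-adjoints approach it is furnished by Corollary~\ref{tensor.universal}, which promotes Proposition~\ref{tensor} to an equivalence inside $\Alg_{\com}(\cV)$. I do not anticipate any genuine obstacle beyond correctly tracking these algebra structures.
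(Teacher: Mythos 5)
Your proposal is correct, and your second route is exactly the paper's one-line argument: the paper proves the statement by citing that left adjoints compose, which formally is the observation that $\int_M \circ \sym$ and $\sym \circ (M \ot -)$ are both left adjoint to ${\sf forget} \circ (-)^M \simeq (-)^M \circ {\sf forget}$ (the forgetful functor commutes with cotensoring since it preserves limits). Your first, Yoneda-chain route is just the unwound form of the same adjunction computation, so both match the paper in substance; your care in noting that the equivalence is tracked in $\Alg_{\com}(\cV)$ rather than merely on underlying objects is the one genuine point of bookkeeping, and you handle it correctly.
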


\begin{example}
In the case that $\cV = (\m_{\Bbbk},\underset{\Bbbk}\ot)$, Corollary~\ref{on.sym} specializes as an equivalence
\[
\int_M \sym(V)
~\simeq~
\sym\bigl( \sC_\ast(M;V) \bigr)~,
\]
the graded-commutative algebra on the chain complex of singular chains on the underlying space of $M$ with coefficients in $V$.
\end{example}

The next result, proved as Proposition~5.3 in~\cite{oldfact}, further identifies factorization homology of commutative algebras, in a suitably infinitesimal case: the cohomology of suitably nilpotent spaces.  
Its proof amounts to using Proposition~\ref{tensor}, then arguing via an Eilenberg--Moore spectral sequence.  

\begin{prop}[\cite{oldfact}]
Let $M$ be an $n$-manifold.
Let $X$ be a nilpotent space whose first $n$ homotopy groups are finite,  $|\pi_i X|< \oo$ for $i\leq n$, and let $\Bbbk$ be a commutative ring. 
There is a natural equivalence of $\Bbbk$-modules 
\[
\int_M \sC^\ast(X;\Bbbk)~ \simeq~ \sC^\ast(X^M;\Bbbk)
\] 
between the factorization homology of $M$ with coefficient in the $\Bbbk$-cohomology of $X$ and the $\Bbbk$-cohomology of the space of maps to $X$ from the underlying space of $M$.
\end{prop}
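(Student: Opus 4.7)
The plan is to apply Proposition~\ref{tensor} and then reduce to the classical Eilenberg--Moore theorem. Since $\sC^\ast(X;\Bbbk)$ is a commutative algebra in $\m_\Bbbk$, Proposition~\ref{tensor} supplies a canonical equivalence $\int_M \sC^\ast(X;\Bbbk) \simeq M \ot \sC^\ast(X;\Bbbk)$ in $\Alg_{\com}(\m_\Bbbk)$; the task is therefore to identify $M \ot \sC^\ast(X;\Bbbk)$ with $\sC^\ast(X^M;\Bbbk)$, naturally in $M$.

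To construct a comparison morphism, I would invoke Corollary~\ref{tensor.universal}: a morphism of commutative algebras $\Phi \colon M \ot \sC^\ast(X;\Bbbk) \to \sC^\ast(X^M;\Bbbk)$ is the same datum as a map of spaces $M \to \Map_{\com}\bigl( \sC^\ast(X;\Bbbk), \sC^\ast(X^M;\Bbbk) \bigr)$. Such a map is supplied by $m \mapsto \sC^\ast(\ev_m;\Bbbk)$, where $\ev_m \colon X^M \to X$ is evaluation at $m$; naturality in $M$ is immediate.

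Showing $\Phi$ is an equivalence is the main work, which I would carry out by induction up the Postnikov tower of $X$. The nilpotence hypothesis exhibits $X$ as a limit of principal fibrations with Eilenberg--MacLane fibers $K(\pi_i X, i)$; since $M$ is an $n$-manifold and so has CW dimension at most $n$, obstruction theory implies that $X^M$ depends cohomologically only on the first $n+1$ Postnikov stages, so one may truncate and assume that $\pi_i X$ is finite for all $i$. Each stage is a homotopy pullback $X_{i} \simeq X_{i-1} \times_{K(\pi_i X, i+1)} \ast$; both sides of $\Phi$ convert such pullbacks to pushouts of commutative $\Bbbk$-algebras. On the left, $M \ot -$ is a left adjoint, and the Eilenberg--Moore equivalence
\[
\sC^\ast(Y_1 \times_Z Y_2;\Bbbk) ~\simeq~ \sC^\ast(Y_1;\Bbbk) \otimes^{\bL}_{\sC^\ast(Z;\Bbbk)} \sC^\ast(Y_2;\Bbbk)
\]
converts the pullback defining $X_i$ into a pushout of algebras. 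On the right, $(-)^M$ preserves homotopy limits, so $X_i^M \simeq X_{i-1}^M \times_{K(\pi_i X, i+1)^M} \ast$, and a second application of Eilenberg--Moore yields a pushout of $\sC^\ast$-algebras. By induction, it suffices to verify $\Phi$ for $X = K(\pi,i)$ with $\pi$ finite, which can be established via the bar-construction model of $K(\pi,i)$ together with the classical computation of $\sC^\ast(B\pi;\Bbbk)$.

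The main obstacle is establishing the Eilenberg--Moore equivalence with the correct convergence when the base $Z$ is not simply connected. This is precisely where nilpotence, rather than simple connectivity, is required: the extended-naturality argument for the Eilenberg--Moore spectral sequence relies on $Z$ being nilpotent to ensure strong convergence. The finiteness $|\pi_i X|<\infty$ for $i\leq n$ further guarantees that the relevant cohomology algebras are degreewise of finite type, so the derived tensor products behave well and the inductive step goes through.
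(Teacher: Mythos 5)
Your construction of the comparison map via Corollary~\ref{tensor.universal} and the evaluation maps $\ev_m \colon X^M \to X$ is correct, and Proposition~\ref{tensor} is indeed the right starting point. But the rest of your proof departs from the paper's route, and the departure introduces a genuine gap.

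The paper's proof inducts on the \emph{manifold} $M$, not on the Postnikov tower of $X$: after Proposition~\ref{tensor}, one verifies that the symmetric monoidal functor $M \mapsto \sC^\ast(X^M;\Bbbk)$ satisfies $\ot$-excision, by applying the Eilenberg--Moore spectral sequence to the pullback $X^M \simeq X^{M_-}\times_{X^{M_0\times\RR}} X^{M_+}$ coming from a collar-gluing (this is where nilpotence and the finiteness of $\pi_{\leq n}X$ enter, via Dwyer's strong convergence criterion). Theorem~\ref{homology} then identifies the two homology theories, since they agree on $\RR^n$. That induction terminates because a finitary manifold has a finite handle decomposition.

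Your induction on the Postnikov tower of $X$ is, by contrast, an infinite process, and the step you use to make it finite is incorrect. The assertion that $X^M$ ``depends cohomologically only on the first $n+1$ Postnikov stages'' does not hold: the fiber of $X^M \to (\tau_{\leq k}X)^M$ is $(\tau_{>k}X)^M$, which, for $M$ of CW dimension $\leq n$ and $\tau_{>k}X$ $k$-connected, is only $(k-n)$-connected. Hence $X^M \to (\tau_{\leq k}X)^M$ is only $(k-n+1)$-connected and induces cohomology isomorphisms only through degree $k-n$; no finite truncation recovers all of $\sC^\ast(X^M)$. Once the truncation is dropped, the induction needs a convergence argument across the full tower on both sides (a $\lim^1$/Mittag--Leffler issue for $\sC^\ast((\tau_{\leq\bullet}X)^M)$ and a corresponding statement for $M\ot\sC^\ast(\tau_{\leq\bullet}X)$), neither of which is automatic and neither of which you address. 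The collar-gluing induction of~\cite{oldfact} sidesteps this entirely.

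Separately, even with the flawed truncation granted, the base case for $K(\pi,i)$ with $\pi$ finite is stated but not really established; this is not a formality, since $\sC^\ast(K(\pi,i);\Bbbk)$ for a general commutative ring $\Bbbk$ is not of finite type and requires its own argument.
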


\begin{remark} 
In the work~\cite{gtz1}, the authors lay out a similar approach, by way of Hochschild homology-type invariants, to the study of the cohomology of mapping spaces.
\end{remark}

\subsection{Factorization homology from Lie algebras}
We now identify factorization homology of $\disk_n$-algebras coming from Lie algebras.  
The results that follow are closely analogous to those above concerning the factorization homology of $\disk_n$-algebras coming from based topological spaces. 
For simplicity, we assume our Lie algebras are defined over a fixed field $\Bbbk$ of characteristic zero.

As we proceed, we use that the $\infty$-category $\Alg_{\Lie}(\m_{\Bbbk})$ of Lie algebras over $\Bbbk$ are cotensored over spaces:
\[
\spaces^{\op}\times \Alg_{\Lie}(\m_{\Bbbk})
\xra{~ (X,\mathfrak{g})\mapsto \mathfrak{g}^X~}
\Alg_{\Lie}(\m_{\Bbbk})
~,\qquad
(X,\mathfrak{g})\mapsto \limit\Bigl(~ X \xra{!} \ast \xra{~\{\mathfrak{g}\}~} \Alg_{\Lie}(\m_{\Bbbk})~\Bigr)~.
\]
This limit indeed exists because, for instance, the $\infty$-category $\Alg_{\Lie}(\m_{\Bbbk})$ admits products, cofiltered limits, and totalizations.  
More explicitly, $\mathfrak{g}^X \simeq \sC^\ast(X;\mathfrak{g}) = \Hom_{\Bbbk}\bigl(\sC_\ast(X;\Bbbk) , \mathfrak{g}\bigr)$, with a canonically inherited Lie algebra structure.  
For $M$ an $n$-manifold, we define the kernel Lie algebra,
\[
\xymatrix{
\Map_{\sf c}(M , \mathfrak{g})   \ar[rr]  \ar[d]
&&
{\mathfrak{g}}^{M^+}   \ar[d]^-{{\sf ev}_+}
\\
0 \ar[rr]
&&
\mathfrak{g}  ,
}
\]
where $M^+$ is the 1-point compactification of $M$.
More explicitly, $\Mapc(M,\frak g)\simeq \sC_{\sf c}^\ast(M;\frak g)$, the compactly-supported cochains on $M$ with coefficients in $\frak g$. 
In the case that $M=\RR^n$, we notate $\Omega^n \mathfrak{g}:=\Mapc(\RR^n,\frak g)$, and observe that this Lie algebra assembles as a $\disk_n$-algebra,
\[
\Omega^n \frak g \colon 
\disk_n
\longrightarrow
\bigl( \Alg_{\Lie}(\m_{\Bbbk}) , \times \bigr)~,
\]
to the Cartesian symmetric monoidal structure on Lie algebras.
Postcomposing this symmetric monoidal functor by that of Lie algebra chains, which indeed carries finite products of Lie algebras to finite tensor products (over $\Bbbk$) of $\Bbbk$-modules, results in a composite symmetric monoidal functor 
\[
\sC_\ast^{\Lie}\bigl( \Omega^n \frak g \bigr)
\colon 
\disk_n
\xra{~\Omega^n \frak g~}
\bigl( \Alg_{\Lie}(\m_{\Bbbk}) , \times \bigr)
\xra{~\sC_\ast^{\Lie}~}
\bigl( \m_{\Bbbk} , \underset{\Bbbk}\ot \bigr) ~.
\]

The next identification is also discussed in~\cite{owen} and~\cite{kevinowen}, and appears in the present form in~\cite{oldfact}.
Its proof amounts to applying Lie algebra chains, which is a geometric realization-preserving functor, to the Poincar\'e/Koszul duality equivalence $\int_M \Omega^n {\frak g} \simeq \Mapc(M,{\frak g})$ of~\cite{zp}.
\begin{prop}\label{env} 
Let $M$ be an $n$-manifold.
Let $\frak g$ be a Lie algebra over $\Bbbk$.
There is a canonical identification between $\Bbbk$-modules,
\[
\int_M \sC_\ast^{\Lie}\bigl( \Omega^n {\frak g}\bigr)
~ \simeq~ 
\sC^{\Lie}_\ast\bigl(\Mapc(M, \mathfrak{g})\bigr)~.
\]

\end{prop}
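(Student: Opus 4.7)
The plan is to deduce this identification from a Lie-algebra-level Poincar\'e/Koszul duality equivalence together with the fact that Chevalley--Eilenberg chains suitably preserves the colimits defining factorization homology. The starting point is the equivalence from~\cite{zp},
\[
\int_M \Omega^n \frak g ~\simeq~ \Mapc(M,\frak g),
\]
where the factorization homology on the left is computed in the symmetric monoidal $\infty$-category $(\Alg_{\Lie}(\m_\Bbbk),\times)$, i.e., Lie algebras with finite product as the symmetric monoidal structure; the value on an $n$-disk is a Lie algebra because $\Omega^n\frak g$ canonically lifts to a $\disk_n$-algebra in $\Alg_{\Lie}(\m_\Bbbk)$.

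The second key ingredient is that Chevalley--Eilenberg chains lift to a symmetric monoidal functor
\[
\sC_\ast^{\Lie}\colon (\Alg_{\Lie}(\m_\Bbbk),\times)\longrightarrow(\m_\Bbbk,\otimes_\Bbbk),
\]
via the K\"unneth equivalence $\sC_\ast^{\Lie}(\frak h_1\times\frak h_2)\simeq \sC_\ast^{\Lie}(\frak h_1)\otimes_\Bbbk\sC_\ast^{\Lie}(\frak h_2)$, and that this functor preserves geometric realizations, and hence sifted colimits. Combined with Corollary~\ref{disk-sifted}, which tells us that $\disk_{n/M}$ is sifted so that the coend defining factorization homology is a sifted colimit, one obtains the chain of equivalences
\[
\int_M \sC_\ast^{\Lie}\bigl(\Omega^n \frak g\bigr)~\simeq~\sC_\ast^{\Lie}\Bigl(\int_M \Omega^n \frak g\Bigr)~\simeq~\sC_\ast^{\Lie}\bigl(\Mapc(M,\frak g)\bigr).
\]
The first equivalence expresses that a symmetric monoidal sifted-colimit-preserving functor commutes with factorization homology, which is formal once Theorem~\ref{homology} is in place: the pushforward of $\Omega^n\frak g$ along $\sC_\ast^{\Lie}$ is again a homology theory and agrees with $\sC_\ast^{\Lie}(\Omega^n\frak g)$ on disks. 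The second is simply postcomposition with the cited result of~\cite{zp}.

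The main obstacle is establishing that $\sC_\ast^{\Lie}$ preserves geometric realizations with respect to the K\"unneth-symmetric-monoidal structure above. One route is to recognize $\sC_\ast^{\Lie}$ as the derived indecomposables functor appearing as the left adjoint in the $\infty$-categorical Koszul duality between Lie algebras and cocommutative coalgebras, so that it preserves all colimits by general adjunction theory. Alternatively, one can exploit the weight filtration on $\sC_\ast^{\Lie}(\frak g)\simeq \sym(\frak g[1])$ whose associated graded is a sum of tensor powers of $\frak g$, noting that each tensor power preserves geometric realizations of simplicial Lie algebras, and conclude by a spectral sequence or direct limit argument. Once these preservation properties and the compatibility of symmetric monoidal structures are verified, the proof reduces to concatenating the displayed equivalences.
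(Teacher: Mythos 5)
Your proposal matches the paper's own proof sketch: the paper states that the result ``amounts to applying Lie algebra chains, which is a geometric realization-preserving functor, to the Poincar\'e/Koszul duality equivalence $\int_M \Omega^n {\frak g} \simeq \Mapc(M,{\frak g})$ of~\cite{zp}.'' Your extra detail --- invoking siftedness of $\disk_{n/M}$ to commute $\sC_\ast^{\Lie}$ past the colimit, and your two suggested routes (Koszul-duality adjunction, or the weight filtration on $\sym(\frak g[1])$) for verifying that $\sC_\ast^{\Lie}$ is a symmetric monoidal, geometric-realization-preserving functor --- simply makes explicit what the paper leaves implicit, so the argument is the same.
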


\begin{remark} 
Let $\frak g$ be a Lie algebra over $\Bbbk$.
The $\disk_n$-algebra $\sC_\ast^{\Lie}\bigl( \Omega^n {\frak g}\bigr)$ appearing above has an interesting interpretation, established by Knudsen~\cite{ben.thesis}. There is a forgetful functor from $\cE_n$-algebras to Lie algebras,
\[
\Alg_{\Lie}(\m_{\Bbbk})
\longleftarrow
\Alg_{\cE_n}(\m_{\Bbbk})~\colon{\sf fgt}~.
\]
For $n=1$, there is the standard forgetful functor;
For $n>1$, Kontsevich's formality result (see~\cite{kontsevich} for a sketched outline, and~\cite{pascal} for a more detailed account) identifies the operad $\sC_\ast(\cE_n ; \RR)$ in $\m_\Bbbk$ is equivalent to the Poisson operad $\sP_n$, to which the Lie operad in $\m_{\RR}$ maps.\footnote{The operad $\sP_n$ in $\m_{\Bbbk}$ represents: a $\m_\Bbbk$-module with a commutative and associative multiplication $\cdot$; an $(n-1)$-shifted binary operation $[-,-]$ that satisfies the Jacobi identity; the structure of this Lie bracket $[-,-]$ being a derivation over $\cdot$ in each variable.}
(See~\cite{cohen} for an account at the level of homology.)
The adjoint functor theorem (Corollary~5.5.2.9 of~\cite{topos}) applies to this forgetful functor, thereby offering a left adjoint:
\[
\sU_n \colon \Alg_{\Lie}(\m_{\Bbbk})~ \rightleftarrows~ \Alg_{\cE_n}(\m_{\Bbbk})~\colon{\sf fgt}~.
\]
In the case $n=1$, this left adjoint $\sU_1$ agrees with the familiar universal enveloping algebra functor.  
In general, there is an identification between $\cE_n$-algebras,
\[
\sU_n \mathfrak{g}~\simeq~  \sC_\ast^{\Lie}\bigl( \Omega^n {\frak g} \bigr)
\]
proved by Knudsen~\cite{ben.thesis}.
Through this identification, Proposition~\ref{env} can be reformulated as an equivalence of chain complexes over $\Bbbk$,
\[
\int_M\sU_n\mathfrak{g}~\simeq~ \sC^{\Lie}_\ast\bigl(\Mapc(M,\mathfrak{g})\bigr)~,
\]
in the case that $M$ is equipped with a framing.  
\end{remark}

\subsection{Factorization homology of free $\disk^B_n$-algebras}\label{sec.free}
We now identify the factorization homology of free $\disk^B_n$-algebras.
The results here are extracted from~\S5.2 of~\cite{oldfact},~\S4.3 of~\cite{aft2}, and~\S2.4 of~\cite{pkd}.

For this section, we fix a symmetric monoidal $\infty$-category $\cV$ that is $\ot$-presentable.  
Also, for simplicity restrict our attention to the case the space $B$ is connected, and equipped with a base point.
This condition, and structure, on $B$ identify the tangential structure 
\[
(B\to \BO(n)) ~= ~(\sB G \xra{\sB \rho} \BO(n))
\]
as the classifying space of a morphism $\rho\colon G\to \sO(n)$ between group-objects in $\spaces$.

For each $B$-framed $n$-manifold $M$, and for each finite cardinality $i\geq 0$, consider the configuration space 
\[
\conf_i(M)~:=~ \Bigl\{\{1,\dots,i\}\xra{c} M\mid c\text{ is injective}  \Bigr\}~\subset ~M^{\times i}~,
\]
which is an open subspace of the $i$-fold product of $M$.
As an open subspace, it inherits the structure of a smooth $ni$-manifold.
The $\sB G$-framing on $M$ then determines a $\sB(\Sigma_i\wr G)$-framing on $\conf_i(M)$:
\[
\xymatrix{
&&
\sB(\Sigma_i\wr G) \ar[d]
\\
\conf_i(M)  \ar[rr]^-{\tau_{\conf_i(M)}}  \ar[urr]
&&
\BO(ni) .
}
\]
This lift of the tangent classifier of $\conf_i(M)$ selects a principal $\Sigma_i\wr G$-bundle
\[
\conf_i^G(M)
\longrightarrow
\conf_i(M)~,
\]
which we refer to as a \emph{$G$-framed} configuration space.
In the extremal case of $G\xra{=}\sO(n)$, then we have $\conf_i^G(M) = \conf_i(M)$. In the extremal case that $G=e$ is the trivial group then $\conf_i^G(M) = \conf_i^{\fr}(M)$ is the \emph{framed} configuration space, a point in which is an injection $c\colon \{1,\dots,i\}\hookrightarrow M$ together with a choice of the basis for each tangent space $T_{c(j)}M$.

The \emph{free $\disk_n^B$-algebra} functor is the left adjoint 
\[
\free_n^B\colon \m_G(\cV)~:=~ \Fun(B,\cV)
~\rightleftarrows~
\Alg_{\disk_n^B}(\cV)
\]
to the restriction along the fully-faithful inclusion $B \to \disk_n^B$ of the $B$-framed Euclidean spaces.

The next result identifies the factorization homology of free $\disk_n^{\sB G}$-algebras in terms of $G$-framed configuration spaces.
\begin{prop}\label{free} 
Let $\rho\colon G\to \sO(n)$ be a morphism between group-objects in $\spaces$.  
Let $M$ be a $\sB G$-framed $n$-manifold. 
Let $V\in \m_G(\cV)$ be a $G$-module in $\cV$.  
There is a canonical equivalence in $\cV$,
\[
\int_M \free_n^{\sB G}(V) ~\simeq~ \coprod_{i\geq 0} \conf_i^G(M)\underset{\Sigma_i\wr G}\ot V^{\ot i}
\] 
between the factorization homology of the free $\disk_n^{\sB G}$-algebra and the coproduct of $G$-framed configuration spaces of $M$ labeled by $V$.
\end{prop}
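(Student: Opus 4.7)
The plan is to apply Theorem~\ref{homology}. I construct a candidate symmetric monoidal functor
\[
F_V \colon \mfld_n^{\sB G} \longrightarrow \cV~,\qquad M \longmapsto \coprod_{i\geq 0} \conf_i^G(M)\underset{\Sigma_i\wr G}\ot V^{\ot i}~,
\]
verify that $F_V$ is $\ot$-excisive, and identify its restriction to $\disk_n^{\sB G}$ with $\free_n^{\sB G}(V)$ as a $\disk_n^{\sB G}$-algebra. Theorem~\ref{homology} will then yield the equivalence $F_V(M) \simeq \int_M \free_n^{\sB G}(V)$.

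For functoriality and symmetric monoidality of $F_V$: a $\sB G$-framed open embedding $M \hookrightarrow N$ induces a $(\Sigma_i\wr G)$-equivariant open embedding between the principal $(\Sigma_i\wr G)$-bundles $\conf_i^G(M) \hookrightarrow \conf_i^G(N)$, yielding functoriality. The standard configuration-space decomposition
\[
\conf_i(M \sqcup N) ~\cong~ \underset{i_1+i_2=i}\bigsqcup~ \Sigma_i \underset{\Sigma_{i_1} \times \Sigma_{i_2}}\times \bigl(\conf_{i_1}(M) \times \conf_{i_2}(N)\bigr)
\]
lifts equivariantly to the principal $(\Sigma_i\wr G)$-bundles; tensoring with $V^{\ot \bullet}$ and taking coinvariants yields $F_V(M \sqcup N) \simeq F_V(M) \ot F_V(N)$.

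For the identification of $F_V|_{\disk_n^{\sB G}}$ with $\free_n^{\sB G}(V)$: by Lemma~\ref{EE-equivs}, the maximal $\infty$-subgroupoid of $\disk^{\sB G}_{n/\RR^n_b}$ in arity $i$ is $\conf_i(\RR^n_b)_{\Sigma_i}$; lifting through the principal $(\Sigma_i\wr G)$-bundle structure identifies the $(\Sigma_i\wr G)$-space of active $i$-ary multi-morphisms in the $\infty$-operad $\disk_n^{\sB G}$ into $\RR^n_b$ with $\conf_i^G(\RR^n_b)$. The operadic free algebra formula then gives
\[
\free_n^{\sB G}(V)(\RR^n_b) ~\simeq~ \coprod_{i\geq 0} \conf_i^G(\RR^n_b)\underset{\Sigma_i\wr G}\ot V^{\ot i} ~=~ F_V(\RR^n_b)~,
\]
compatibly with the $\disk_n^{\sB G}$-algebra structures.

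The main obstacle will be $\ot$-excision: for a collar-gluing $M_- \cup_{M_0 \times \RR} M_+ \cong M$, showing that $F_V(M_-) \ot_{F_V(M_0 \times \RR)} F_V(M_+) \to F_V(M)$ is an equivalence. The approach is a scanning/bar-type argument on configuration spaces. The graded space $\coprod_i \conf_i^G(M_0 \times \RR)$ carries an $\cE_1$-algebra structure under concatenation along the $\RR$-direction, with $\coprod_i \conf_i^G(M_\pm)$ as right and left modules via collar insertion; after tensoring with $V^{\ot \bullet}$ and taking $(\Sigma_\bullet \wr G)$-coinvariants this yields the associative algebra $F_V(M_0 \times \RR)$ with left/right modules $F_V(M_\pm)$ in $\cV$. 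Stratifying $\conf_i^G(M)$ by the partition of configuration points across $f^{-1}[-1,0)$ and $f^{-1}(0,1]$ for $f\colon M \to [-1,1]$ produces a simplicial resolution of $\coprod_i \conf_i^G(M)$ by the two-sided bar construction ${\sf Bar}_\bullet\bigl(\conf^G(M_+),\conf^G(M_0 \times \RR),\conf^G(M_-)\bigr)$; applying $\underset{\Sigma_\bullet\wr G}\ot V^{\ot\bullet}$ and realizing gives the desired equivalence $F_V(M_-) \ot_{F_V(M_0 \times \RR)} F_V(M_+) \simeq F_V(M)$. Having verified all three hypotheses, Theorem~\ref{homology} yields the claim.
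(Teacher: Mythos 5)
Your overall strategy is correct and matches the paper's approach at a high level: both identify $F_V$ with $\free_n^{\sB G}(V)$ on disks and establish $\ot$-excision of $F_V$, then apply Theorem~\ref{homology}.  Where you diverge is in the $\ot$-excision step, which is in fact the sole nontrivial ingredient.  The paper's proof (following \cite{oldfact}, \cite{aft2}, \cite{pkd}) invokes a hypercover/van~Kampen argument: Theorem~A.3.1 of \cite{dag} is applied to the $V$-labeled configuration spaces to verify directly that the colimit indexed by $\disk^\partial_{1/[-1,1]}$ (equivalently, by a standard cover of $[-1,1]$) recovers $F_V(M)$.  You instead propose a scanning/bar-type resolution, stratifying $\conf^G_i(M)$ by how configuration points distribute across $f^{-1}[-1,0)$ and $f^{-1}(0,1]$.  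This is in the spirit of the classical McDuff--Segal scanning arguments and is a genuinely different, more hands-on route.  However, the assertion that this stratification ``produces a simplicial resolution of $\coprod_i \conf^G_i(M)$ by the two-sided bar construction'' is stated as though it were immediate, when it is exactly the content of the proposition: verifying that the augmentation $\bigl|{\sf Bar}_\bullet(\conf^G(M_+),\conf^G(M_0\times\RR),\conf^G(M_-))\bigr| \to \conf^G(M)$ is an equivalence requires an honest local argument (e.g.\ a microfibration check on fibers, or a Mayer--Vietoris patching), which is what the paper's hypercover citation is doing for you.  What each approach buys: the hypercover argument is formal and uniform once Theorem~A.3.1 is in hand, while the bar/scanning route is more geometric and makes the $\cE_1$-structure and module structures explicit, at the cost of needing this extra resolution lemma.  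Your identification of $F_V|_{\disk_n^{\sB G}}$ with $\free_n^{\sB G}(V)$ via Lemma~\ref{EE-equivs} and the operadic free-algebra formula, and your verification of symmetric monoidality, are both correct and coincide with the first of the paper's two ``key observations.''
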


The proof of Proposition~\ref{free}, which appears in the above-mentioned citations, amalgamates the following key observations.
First, the result is true for $M$ a Euclidean space.
Second, a hypercover-type argument (as in Theorem~A.3.1 of~\cite{dag}), applied to $V$-labeled configuration spaces, gives that the righthand term satisfies $\ot$-excision.

Using that left adjoints compose, Proposition~\ref{free} applied to a free $G$-module on an object $W\in \cV$ gives the following result.
\begin{cor}
In the case that $V\simeq G\ot W$ is the free $G$-module on an object $W\in \cV$, there is a canonical equivalence in $\cV$:
\[
\int_M \free_n^{\sB G}(G\ot W)
~\simeq~
\coprod_{i\geq 0} \conf_i(M)\underset{\Sigma_i}\ot W^{\ot i}
~.
\]

\end{cor}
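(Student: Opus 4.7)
The plan is to specialize Proposition~\ref{free} to $V=G\ot W$ and simplify the resulting wreath-coinvariant expression. First, substitution yields
\[
\int_M\free_n^{\sB G}(G\ot W)\;\simeq\;\coprod_{i\geq 0}\conf_i^G(M)\underset{\Sigma_i\wr G}\ot (G\ot W)^{\ot i},
\]
so it suffices to produce, for each $i\geq 0$, a natural equivalence between the $i$-th summand and $\conf_i(M)\underset{\Sigma_i}\ot W^{\ot i}$.

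The main observation for this simplification is that, because $G\ot-\colon\cV\to\m_G(\cV)$ is the left adjoint to the forgetful functor (i.e., $G\ot W$ is the free $G$-module on $W$), the $i$-fold tensor power $(G\ot W)^{\ot i}\simeq G^{\ot i}\ot W^{\ot i}$ is the induction along the inclusion $\Sigma_i\hookrightarrow \Sigma_i\wr G$ of the $\Sigma_i$-module $W^{\ot i}$ carrying its standard permutation action. I would then carry out the $(\Sigma_i\wr G)$-coinvariants in two stages along the normal extension $1\to G^{\times i}\to \Sigma_i\wr G\to\Sigma_i\to 1$: the first stage applies the absorption identity $X\underset{G^{\times i}}\ot G^{\ot i}\simeq X$ (for any right $G^{\times i}$-object $X$) to contract the $G^{\ot i}$ factor, reducing the expression to $\bigl(\conf_i^G(M)\bigr)_{G^{\times i}}\underset{\Sigma_i}\ot W^{\ot i}$. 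The second stage invokes the principal $(\Sigma_i\wr G)$-bundle structure $\conf_i^G(M)\to \conf_i(M)$, whose associated $G^{\times i}$-quotient is $\conf_i(M)$ with its residual $\Sigma_i$-action, yielding the target summand $\conf_i(M)\underset{\Sigma_i}\ot W^{\ot i}$.

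The technical point requiring the most attention is the homotopy-coherent verification of the induction identity $(G\ot W)^{\ot i}\simeq \mathrm{Ind}_{\Sigma_i}^{\Sigma_i\wr G}(W^{\ot i})$ and of the attendant projection formula within the $\ot$-presentable $\infty$-category $\cV$; conceptually, however, these are instances of the slogan ``left adjoints compose''. More compactly: the composite
\[
\cV\xra{~G\ot -~}\m_G(\cV)\xra{~\free_n^{\sB G}~}\Alg_{\disk_n^{\sB G}}(\cV)\xra{~\int_M~}\cV
\]
is itself a left adjoint in $W$, left adjoint to restriction along $\ast\to\sB G\to \disk_n^{\sB G}$ followed by evaluation of the resulting symmetric monoidal extension at $M$; the formula of the corollary is precisely the colimit-presentation of this composite left adjoint, obtained by unwinding the middle step via Proposition~\ref{free}.
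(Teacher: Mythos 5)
Your strategy matches the paper's one-line argument (apply Proposition~\ref{free}, then simplify via ``left adjoints compose''), but the central computational step of the proposal contains a genuine error that invalidates the simplification. The absorption identity you correctly state, $X\underset{G^{\times i}}\ot G^{\ot i}\simeq X$, when applied with $X=\conf_i^G(M)$ gives back the \emph{total space} $\conf_i^G(M)$, not its $G^{\times i}$-quotient: the free twisted-diagonal $G^{\times i}$-action on $\conf_i^G(M)\times G^{\times i}$ has quotient isomorphic to $\conf_i^G(M)$ via $[x,g]\mapsto x\cdot g$. So the ``first stage'' should read $\conf_i^G(M)\underset{\Sigma_i}\ot W^{\ot i}$, not $\bigl(\conf_i^G(M)\bigr)_{G^{\times i}}\underset{\Sigma_i}\ot W^{\ot i}$. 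Writing the $G^{\times i}$-coinvariant on $\conf_i^G(M)$ amounts to taking that coinvariance a second time; the ``second stage'' then collapses the fibers it should not have been allowed to collapse. Put invariantly, the correct reduction is the projection-formula computation $\conf_i^G(M)\underset{\Sigma_i\wr G}\ot\mathrm{Ind}_{\Sigma_i}^{\Sigma_i\wr G}W^{\ot i}\simeq\mathrm{Res}_{\Sigma_i}\bigl(\conf_i^G(M)\bigr)\underset{\Sigma_i}\ot W^{\ot i}$, which leaves the total space $\conf_i^G(M)$ (with its restricted $\Sigma_i$-action) rather than the base $\conf_i(M)$ of the bundle $\conf_i^G(M)\to\conf_i(M)$.

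Consequently the proposed chain of equivalences does not establish the stated formula: what it actually produces is $\coprod_{i\geq 0}\conf_i^G(M)\underset{\Sigma_i}\ot W^{\ot i}$, and to reach $\coprod_{i\geq 0}\conf_i(M)\underset{\Sigma_i}\ot W^{\ot i}$ one would need the fiberwise $G^{\times i}$-data in $\conf_i^G(M)\to\conf_i(M)$ to disappear $\Sigma_i$-equivariantly, which it does not. For example, at $M=\RR^n$ the quantity Proposition~\ref{free} yields for $V=G\ot W$ is $\coprod_i(G\ot W)^{\ot i}$, which differs from $\coprod_i\conf_i(\RR^n)\underset{\Sigma_i}\ot W^{\ot i}\simeq\coprod_i\cE_n(i)\underset{\Sigma_i}\ot W^{\ot i}$ unless $G$ is contractible. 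So the error is not a harmless notational slip; it masks a real gap in passing from $\conf_i^G(M)$ to $\conf_i(M)$. The closing ``more compactly'' paragraph gestures at the same left-adjoint composition the paper invokes, but it does not supply the identification of the resulting colimit with the ordered configuration space expression, which is exactly the nontrivial step.
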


\begin{remark}[On homotopy invariance]\label{config.spaces}
The work of Longoni--Salvatore~\cite{simple} shows that configuration spaces of a manifold are sensitive to simple-homotopy type of the manifold. Based on Proposition~\ref{free}, this has the following consequence, that factorization homology is not a homotopy invariant of manifolds, even in the weakest sense.
The strongest form of homotopy invariance for the functor $\int A$, for a fixed $\disk_n$-algebra $A$, would be the structure of a factorization
\[
\xymatrix{
\Mfld_n   \ar[rr]^-{\int A}   \ar[d]_-{\rm forget}
&&
\cV
\\
\spaces\ar@{-->}[urr]
}
\]
of $\int A$ through the forgetful functor $\Mfld_n\ra\spaces$. 
This structure of homotopy invariance is equivalent to the $\disk_n$-algebra $A$ being commutative. 
Likewise, a strong form of proper-homotopy invariance would be a factorization
\[
\xymatrix{
\Mfld_n\ar[rr]^-{\int A}\ar[d]_-{(-)^+}&&\cV\\
\spaces_\ast^{\op}\ar@{-->}[urr]}
\]
where $(-)^+\colon \Mfld_n\ra \spaces_\ast^{\op}$ is 1-point compactification: sending a manifold $M$ to the 1-point compactification $M^+$, and an open embedding $M\hookrightarrow N$ to the collapse map $N^+\ra M^+$. The existence of this second factorization is roughly equivalent to $A$ being the $n$-disk enveloping algebra of a Lie algebra.
\end{remark}

\section{Filtrations}

In this section, we establish two filtrations of factorization homology: in the manifold variable, one can filter by bounding the cardinality of the number of embedded disks; in the algebra variable, one can filter by bounding the number of multiplications allowed by elements of the algebra. Filtrations of the first kind generalize the Goodwillie--Weiss embedding calculus \cite{weiss}; filtrations of the second kind are an instance of the Goodwillie functor calculus \cite{goodwillie}. These filtrations are exchanged under Koszul duality, as is described in the following section. For simplicity, we have written these sections for the case of framed $n$-manifolds---see \cite{pkd} and \cite{zp} for the case of unoriented $n$-manifolds, and the case of $B$-framed $n$-manifolds is analogous. 

For the rest of this section, we fix a symmetric monoidal $\oo$-category $\cV$ which is stable and $\ot$-presentable.

\subsection{Cardinality filtrations}
\begin{definition}
The $\oo$-category $\mfld_n^{\fr,\sf surj}$ is the $\oo$-subcategory of $\mfld_n^{\fr}$ whose objects are nonempty framed $n$-manifolds and whose morphisms consist of those embeddings which induce surjections on the set of connected components. $\mfld_n^{\fr,\leq k}\subset\mfld_n^{\fr,\sf surj}$ is the full $\oo$-subcategory whose objects have at most $k$ components. The $\oo$-categories $\disk_n^{\fr,\sf surj}$ and $\disk_n^{\fr,\leq k}$ are the further $\oo$-subcategories of $\mfld_n^{\fr,\sf surj}$ whose objects are nonempty finite disjoint unions of Euclidean spaces.
\end{definition}

That is, an embedding $f\colon M\hookrightarrow N$ belongs to $\mfld_n^{\fr,\sf surj}$ if and only if $\pi_0 f\colon \pi_0M \ra \pi_0 N$ is surjective.

\begin{definition}
For a functor $A: \disk_n^{\fr} \ra \cV$, the $k$th truncation of factorization homology is the colimit of the composite functor
\[
\tau_{\leq k}\int_M A := \colim\Bigl(\disk_{n/M}^{\fr,\leq k}\overset{A}\longrightarrow \cV\Bigr)~.
\]
\end{definition}

The difference between the subsequent truncations has a concrete description in terms of configuration spaces:

\begin{lemma}[\cite{pkd}]\label{lemma.layers}
Let $A$ be an $\cE_n$-algebra in $\cV$.
For each closed framed $n$-manifold $M$, there is a canonical cofiber sequence in $\cV$:
\[
\xymatrix{
\tau_{\leq k-1}\displaystyle\int_M A
\ar[r]&\tau_{\leq k}\displaystyle\int_M A\ar[r]&\conf_k(M)^+ \underset{\Sigma_k}
\bigotimes A^{\ot k}}
~.
\]
Here, $\conf_k(M)^+\in\spaces_\ast$ is the 1-point compactification of $\conf_k(M)$, regarded as a based $\Sigma_k$-space; the cofiber is the reduced tensor with this based $\Sigma_k$-space.
\end{lemma}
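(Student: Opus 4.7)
The plan is to identify the cofiber as a pointed colimit of the constant $A^{\otimes k}$-diagram over the unordered configuration space $\conf_k(M)_{\Sigma_k}$, based at the stratum of collided configurations; this pointed $\Sigma_k$-coinvariant colimit is by definition $\conf_k(M)^+ \underset{\Sigma_k}\bigotimes A^{\otimes k}$.

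First, because $\cV$ is stable and the inclusion $\disk^{\fr,\leq k-1}_{n/M} \hookrightarrow \disk^{\fr,\leq k}_{n/M}$ is fully faithful, the cofiber of $\tau_{\leq k-1}\int_M A \to \tau_{\leq k}\int_M A$ is equivalent to the colimit of $A$ over the full subcategory $\cE_k \subset \disk^{\fr,\leq k}_{n/M}$ of objects with exactly $k$ components, after declaring all morphisms out of $\cE_k$ that factor through an object of strictly smaller cardinality to be zero. I would then pass to the discrete model $\ddisk^{\fr,\leq k}_{n/M}$ via Proposition~\ref{EEd-vs-EE}, in which an object is a finite disjoint collection of at most $k$ framed open disks in $M$ and a morphism is an inclusion. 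In this model, the ``nullified'' morphisms are exactly those in which two or more disks of the source end up inside a common disk of the target; geometrically, such a morphism witnesses a $k$-point configuration (the centers of the source disks) degenerating by having some of its points coalesce into a configuration of lower cardinality.

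Second, Lemma~\ref{EE-equivs} identifies the maximal subgroupoid of $\cE_k$ with the unordered configuration space $\conf_k(M)_{\Sigma_k}$, on which $A$ restricts to the constant local system valued in $A^{\otimes k}$ equipped with its natural $\Sigma_k$-permutation action. A Weiss-cover analysis analogous to the proof of Proposition~\ref{weiss.classify}---applying Theorem~A.3.1 of \cite{dag} to the paracompact Hausdorff space $\conf_k(M)_{\Sigma_k}$---then rewrites the colimit over the $k$-component piece of the discrete model as a colimit of the constant $A^{\otimes k}$-diagram indexed by $\conf_k(M)_{\Sigma_k}$. The nullified collision morphisms assemble into a boundary subcover that corresponds to the complement $\conf_k(M)^+ \smallsetminus \conf_k(M)$; using that $M$ is closed so that $\conf_k(M)$ is a finitary $nk$-manifold with well-defined one-point compactification, this boundary contributes precisely the basepoint of the pointed colimit. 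The resulting pointed $\Sigma_k$-coinvariant colimit of $A^{\otimes k}$ is the reduced tensor $\conf_k(M)^+ \underset{\Sigma_k}\bigotimes A^{\otimes k}$, as claimed.

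The main obstacle is executing the Weiss-cover step cleanly enough to ensure that the collision stratum contributes exactly the basepoint of the one-point compactification (rather than a merely contractible subspace that would collapse to $+$ only up to further manipulation). This amounts to matching the cardinality stratification of $\ddisk^{\fr,\leq k}_{n/M}$ with the partition stratification of $M^{\times k}$, and verifying equivariance of the whole construction under the free $\Sigma_k$-action on $\conf_k(M)$. Once these matchings are in place, stability of $\cV$ and the universal property of colimits deliver the asserted cofiber sequence.
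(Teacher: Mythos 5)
Your intuition is sound and you have correctly identified most of the right ingredients: the full $\infty$-subcategory on exactly-$k$-component objects, its maximal sub-$\infty$-groupoid $\conf_k(M)_{\Sigma_k}$ via Lemma~\ref{EE-equivs}, the role of closedness of $M$ in giving $\conf_k(M)$ a well-behaved one-point compactification, and the $\Sigma_k$-equivariance. But two substantive issues remain.

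First, ``declaring all morphisms out of $\cE_k$ that factor through an object of strictly smaller cardinality to be zero'' is not a well-defined operation on an $\infty$-category, and this is not a minor elision. What you can say precisely is this: the inclusion $j\colon \disk^{\fr,\leq k-1}_{n/M} \hookrightarrow \disk^{\fr,\leq k}_{n/M}$ is a \emph{cosieve} (morphisms in the surjective disk category cannot increase the number of components), so its left Kan extension $j_!$ is extension-by-zero, and the counit $j_! j^\ast A \to A$ has cofiber a functor $G\colon\disk^{\fr,\leq k}_{n/M}\to\cV$ that restricts to $A$ on the complementary sieve of exactly-$k$-component objects and vanishes identically on $\disk^{\fr,\leq k-1}_{n/M}$. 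Since colimits are exact in stable $\cV$, the cofiber of $\tau_{\leq k-1}\int_M A\to\tau_{\leq k}\int_M A$ is $\colim G$. This is the precise content of your first step, and it should be said this way.

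Second --- and this is the real gap --- computing $\colim G$ for such a functor supported on a sieve is not a Weiss-cover descent problem, and Theorem~A.3.1 of \cite{dag} is not the right tool. That theorem is a hypercover-descent statement: it recovers the homotopy type of a paracompact space from a hypercover, which is what underlies the free-algebra formula (Proposition~\ref{free}) and Proposition~\ref{weiss.classify}, where the point is to show an indexing category maps finally to the desired space. What is needed here is \emph{excision}: an open--closed localization statement for constructible cosheaves on the stratified space $\ran_{\leq k}(M)$ with its cardinality stratification, whose closed substratum is $\ran_{\leq k-1}(M)$ and whose open complement is $\conf_k(M)_{\Sigma_k}$. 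The exit-path equivalence $\disk^{\fr,\sf surj}_{n/M}\simeq\exit\bigl(\ran(M)\bigr)^{\op}$ recorded in \S6.1, together with Lemma~\ref{lemma.surj.final}, is set up precisely to feed into that localization, and it is the mechanism by which \cite{pkd} proves this lemma. Your sketch never engages the Ran-space structure or the surjectivity constraint on the disk category, which is the essential framework; your closing acknowledgment that it remains to verify ``the collision stratum contributes exactly the basepoint'' is accurate, and that verification is precisely the stratified excision step you have not supplied.
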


\begin{example}
Consider the case that $\cV=\mod_{\Bbbk}$ and $n=1$, so that $\int_{S^1} A \simeq {\sf HC}_\ast(A)$ is the Hochschild chain complex of the associative algebra $A$.  
After Lemma~\ref{lemma.layers}, Poincar\'e duality applied to the the framed $k$-manifold $\conf_k(S^1)$ with coefficients in the local system $A^{\ot k}$, identifies, for $k>0$, the cofibers of the cardinality filtration as
\[
\conf_k(S^1)^+\underset{\Sigma_k}\bigotimes A^{\ot k}
~\simeq~
{\sf cInd}_{\sC_k}^{\TT}(A[1]^{\ot k})~.
\]
Here, $A[1]$ is chain complex over $\Bbbk$ which is the suspension of $A$; $\sC_k$ is the cyclic group of order $k$, regarded as the closed subgroup of $k$th roots of unity in the Lie group $\TT$ of unit complex numbers; ${\sf cInd}_{\sC_k}^{\TT}$ is the coinduction from the standard action of $\sC_k$ on the $k$-fold tensor product $A[1]^{\ot k}$.

\end{example}

The next series of results lead to the proof of Proposition~\ref{prop.trunc}, that factorization homology can be computed as a sequential colimit of its truncated values. To do this, we will use a comparison of our disk categories with the Ran space $\ran(M)$. This is the space of finite subsets of $M$, topologized so that points can collide.

\begin{definition}
${\sf Fin}^{\sf surj}$ is the category of finite nonempty sets and surjections among them. The Ran space of a topological space $M$ is the topological space
\[
\ran(M) := \colim\Bigl({\sf Fin}^{\sf surj, \op}\xra{M}\Top\Bigr)
\]
which is the colimit of the diagram sending a set $J$ to the space $M^J$, and a surjection $J\ra I$ to the diagonal map $M^I \hookrightarrow M^J$. For finite subset $S\subset M$, the topological space
\[
\ran(M)_S \subset \ran(M)
\]
is the subspace consisting of those finite subsets of $M$ which contain $S$. The cardinality stratification of $\ran(M)_S\ra \NN$ sends a finite subset $T$ to its cardinality $|T|$.
\end{definition}

We will make use of the exit-path $\oo$-category of a stratified space: see \cite{dag} or \cite{aft1}.

\begin{lemma}[\cite{pkd}]
There is contravariant equivalence
\[
\disk_{n/M}^{\fr,\sf surj} \simeq \exit\bigl(\ran(M)\bigr)^{\op}
\]
with the opposite of the exit-path $\oo$-category of the Ran space of $M$. More generally, fix an embedding $S\subset \coprod_S\RR^n\hookrightarrow M$ for $S\subset M$. Then there is an equivalence
\[
\disk_{n/M}^{\fr,\sf surj}\underset{{\disk^{\fr}_{n/M}}}\times \disk_{n/M}^{\fr,\coprod_S\RR^n/^{\sf inj}}\simeq \exit\bigl(\ran(M)_S\bigr)^{\op}~.
\]
where $\disk_{n/M}^{\fr,\coprod_S\RR^n/^{\sf inj}}$ is the $\oo$-subcategory of $(\disk_{n/M})^{\coprod_S\RR^n/}$ whose objects are those $\coprod_S\RR^n\hookrightarrow U\hookrightarrow M$ such that $S\ra \pi_0U$ is injective.
\end{lemma}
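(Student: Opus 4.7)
The plan is to construct a functor
\[
\Phi \colon (\disk_{n/M}^{\fr,\sf surj})^{\op} \longrightarrow \exit\bigl(\ran(M)\bigr)
\]
and verify it is an equivalence of $\infty$-categories. On objects, $\Phi$ sends a framed embedding $(U\hookrightarrow M)$ to the finite subset $\Phi(U)\subset M$ consisting of the images of the origins of the components of $U$, regarded as a point of $\ran(M)$. A surjective morphism $(U\hookrightarrow V \hookrightarrow M)$ in $\disk_{n/M}^{\fr,\sf surj}$ is sent to the exit path constructed as follows: within each component $V_\alpha \cong \RR^n$, the nonempty collection of centers $\{u_{\alpha,i}\}$ of the components of $U$ lying in $V_\alpha$ admits the radial dilation $t\mapsto \{(1-t)\Phi(V)_\alpha + t u_{\alpha,i}\}$, a continuous path from the singleton $\{\Phi(V)_\alpha\}$ at $t=0$ to $\Phi(U)\cap V_\alpha$ for $t>0$. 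Assembled over $\alpha$, these produce a continuous path in $\ran(M)$ whose cardinality jumps from $|\Phi(V)|$ to $|\Phi(U)|$ only at $t=0$, hence an exit path $\Phi(V)\to \Phi(U)$. The parametrized version of this radial construction furnishes the coherence needed to extend $\Phi$ to higher morphisms.

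The functor $\Phi$ is essentially surjective, since any finite nonempty $T\subset M$ is realized by any choice of sufficiently small disjoint framed disk neighborhoods centered at the points of $T$. The substance of the proof is full faithfulness, which I would verify by computing both mapping spaces. On the source, the framed analog of Proposition~\ref{kister} (used together with Lemma~\ref{EE-equivs}) identifies, for $U,V$ disjoint unions of framed Euclidean $n$-spaces embedded in $M$ with $|\pi_0 V|\geq |\pi_0 U|$, the framed surjective embedding space
\[
\Emb^{\fr,\sf surj}_{/M}(V,U)~\simeq~\coprod_{\sigma\colon \pi_0 V\twoheadrightarrow \pi_0 U}\ \prod_{\beta\in \pi_0 U}\conf_{\sigma^{-1}(\beta)}(U_\beta)~,
\]
where $U_\beta\subset M$ is the $\beta$-th component. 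On the target, the description of the exit-path $\infty$-category of a conically stratified space applied to the cardinality-stratified Ran space (as in~\cite{aft1}) yields a decomposition of $\Map_{\exit(\ran(M))}(\Phi(U),\Phi(V))$ indexed by the same combinatorial data of partitioning surjections $\Phi(V)\twoheadrightarrow \Phi(U)$, with the same configuration-space fibers. By inspection, the map induced by $\Phi$ is componentwise the evident identification.

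The pointed second assertion is then deduced by restriction. The additional datum of a factorization $\coprod_S\RR^n\hookrightarrow U$ with $S\to \pi_0 U$ injective specifies an inclusion $S\subseteq \Phi(U)$, after a component-wise reparametrization of the designated components of $U$ (a contractible choice); consequently the restriction of $\Phi$ to the indicated pullback lands in the full $\infty$-subcategory of $\exit(\ran(M))$ on finite subsets containing $S$, which is $\exit(\ran(M)_S)$. The first part of the argument, applied fiberwise over the basepoint $S$, then implies this restriction is again an equivalence. The principal obstacle in the program is the coherent verification of full faithfulness: matching the combinatorial decompositions of source and target mapping spaces, and checking that the radial-dilation exit paths generate the mapping space in the Ran space up to homotopy, which ultimately rests on the contractibility of the space of radial contractions of a framed Euclidean space.
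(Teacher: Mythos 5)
The paper cites \cite{pkd} for this lemma and does not reproduce a proof, so I will assess your argument on its own merits.

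Your construction of $\Phi$ on objects (centers of disks) and on $1$-morphisms (radial dilation) is sensible, and the essential-surjectivity step is fine. The asserted coherence for higher simplices (``the parametrized version of this radial construction furnishes the coherence'') is left entirely to the reader, which is a real burden since producing $\infty$-functors into an exit-path quasi-category requires specifying compatible families of stratified maps $\Delta^k\to \ran(M)$, not just a point-set construction on $0$- and $1$-simplices. That concern, however, is secondary to the following.

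The full-faithfulness argument contains a genuine error. The formula
\[
\Emb^{\fr,\sf surj}_{/M}(V,U)~\simeq~\coprod_{\sigma\colon \pi_0 V\twoheadrightarrow \pi_0 U}\ \prod_{\beta\in \pi_0 U}\conf_{\sigma^{-1}(\beta)}(U_\beta)
\]
is the mapping space in the \emph{abstract} symmetric monoidal $\infty$-category $\disk_n^{\fr,\sf surj}$, not in the overcategory $\disk_{n/M}^{\fr,\sf surj}$. The latter is the homotopy fiber of $\Emb^{\fr,\sf surj}(V,U)\to\Emb^{\fr}(V,M)$ over the structure embedding $e_V$, and that fiber carries an essential path-space contribution that your formula omits. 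Concretely, take $V=U$ with $k$ components: your formula yields $\Sigma_k$ (a finite discrete set), whereas the automorphism space of $(U,e_U)$ in $\disk^{\fr,\sf surj}_{n/M}$ is $\Omega\bigl(\conf_k(M)_{\Sigma_k}\bigr)$ --- the loop space of the $k$th stratum of $\ran(M)$, i.e.\ the surface braid group when $n=2$. (This is forced by Lemma~\ref{EE-equivs}, which identifies the maximal $\infty$-subgroupoid of $\disk^{\fr}_{n/M}$ with $\coprod_i \conf_i(M)_{\Sigma_i}$.) Or take $U$ a single disk and $V$ two disks in $M=\RR^n$: the overcategory mapping space is the homotopy fiber of the homotopy equivalence $\conf_2(U)\hookrightarrow\conf_2(\RR^n)$ and hence contractible, but your formula returns $\conf_2(U)\simeq S^{n-1}$. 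The path-space contribution you have dropped is precisely the part that must be matched against loops/paths in the strata of $\ran(M)$, so the claimed ``componentwise evident identification'' does not go through. To repair the argument you would need to compute the overcategory mapping space as a genuine homotopy fiber and compare it to the mapping space in $\exit(\ran(M))$ via the conical-stratification description of the latter (paths within strata contributing loop spaces of $\conf_k(M)_{\Sigma_k}$, and jumps between strata contributing link data), rather than via a naive configuration-space decomposition.
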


\begin{lemma}[Lemma 2.3.2 \cite{pkd}]\label{lemma.surj.final}
The functor
\[
\disk^{\fr,\sf surj}_{n/M}\longrightarrow \disk^{\fr}_{n/M}
\]
is final.
\end{lemma}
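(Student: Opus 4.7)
By the $\infty$-categorical finality criterion (\cite{topos}, Theorem~4.1.3.1), the functor is final if and only if, for each object $(U \to M)$ of $\disk^{\fr}_{n/M}$, the slice $\infty$-category
\[
\cS_U ~:=~ \disk^{\fr,\sf surj}_{n/M} \underset{\disk^{\fr}_{n/M}}{\times} \bigl(\disk^{\fr}_{n/M}\bigr)_{(U \to M)/}
\]
has weakly contractible classifying space. Unpacking, its objects are factorizations $U \hookrightarrow V \hookrightarrow M$ with $V \hookrightarrow M$ in $\disk^{\fr,\sf surj}_{n/M}$, and morphisms are embeddings $V_1 \hookrightarrow V_2$ over $M$, under $U$, that are surjective on $\pi_0$.

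The strategy is to identify the classifying space $|\cS_U|$ with the Ran space $\ran(M)_S$, where $S\cong \pi_0 U$ corresponds to the image in $M$ of a chosen basepoint in each component of $U$. Writing $U\simeq \coprod_S \RR^n$, the preceding lemma supplies the equivalence
\[
\cS_U^{\sf inj} ~:=~ \disk_{n/M}^{\fr,\sf surj}\underset{\disk^{\fr}_{n/M}}{\times} \disk_{n/M}^{\fr,\coprod_S \RR^n/^{\sf inj}}~\simeq~ \exit\bigl(\ran(M)_S\bigr)^{\op}~,
\]
where $\cS_U^{\sf inj}\subset \cS_U$ is the full subcategory consisting of those factorizations with $S\to \pi_0 V$ injective. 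I would then show that the inclusion $\cS_U^{\sf inj}\hookrightarrow \cS_U$ is itself final: any object of $\cS_U$ with non-injective $S \to \pi_0 V$ admits a canonical refinement in $\cS_U^{\sf inj}$ obtained by replacing each component of $V$ containing $k>1$ points of $S$ with $k$ disjoint small Euclidean neighborhoods of those points, yielding a morphism in $\cS_U$ that is automatically surjective on $\pi_0$.

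Finally, one invokes the weak contractibility of $\ran(M)_S$, classically established by Beilinson--Drinfeld using that the continuous binary union operation on $\ran(M)$ combined with continuous families of paths to $S$ produces an explicit contraction of $\ran(M)_S$ to the point $\{S\}$. The main technical obstacle in this plan is the comparison between $\cS_U^{\sf inj}$ and $\cS_U$: the refinement heuristic is intuitively clear, but producing coherent small-disk choices and verifying $\pi_0$-surjectivity of the refinement morphisms requires care, and is the part of the argument that most benefits from the Ran-space formalism already in place.
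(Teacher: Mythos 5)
Your overall strategy tracks the paper's own proof very closely: reduce via the $\infty$-categorical Quillen's Theorem~A to contractibility of the undercategory $\cS_U$, compare with the subcategory $\cS_U^{\sf inj}$, use the identification of $\cS_U^{\sf inj}$ with $\exit(\ran(M)_S)^{\op}$, and conclude from contractibility of $\ran(M)_S$ (which the paper establishes via the idempotent H-space argument of Beilinson--Drinfeld; your phrasing of an ``explicit contraction'' overstates what they actually prove, but the H-space argument suffices).

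There is, however, a genuine direction error in the step where you claim that the inclusion $\cS_U^{\sf inj}\hookrightarrow \cS_U$ is \emph{final}. This is false. For finality you would need, for each object $(U\hookrightarrow V\hookrightarrow M)\in\cS_U$, contractibility of the comma category $\cS_U^{\sf inj}\times_{\cS_U}\bigl(\cS_U\bigr)_{V/}$, whose objects are morphisms $V\hookrightarrow V'$ (over $M$, under $U$, $\pi_0$-surjective) with $S\to\pi_0 V'$ injective. But if $S\to\pi_0 V$ is \emph{not} injective---say two components of $U$ land in the same component of $V$---then no embedding $V\hookrightarrow V'$ can ever separate them, so $S\to\pi_0 V'$ remains non-injective for every $V'$ receiving a map from $V$. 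That comma category is therefore \emph{empty}, not contractible, and the inclusion is not final. Your own ``canonical refinement'' points in the opposite direction: it produces a morphism $V'\hookrightarrow V$ into $V$, not out of it, which is evidence for \emph{initiality} of the inclusion (or, as the paper formulates it, for the existence of an \emph{adjunction} between $\cS_U^{\sf inj}$ and $\cS_U$). Either of these immediately yields the needed equivalence of classifying spaces $|\cS_U^{\sf inj}|\simeq|\cS_U|$, which is what the Ran-space comparison actually requires. So the fix is straightforward---replace ``final'' by ``initial,'' or better, formalize your refinement as exhibiting a (right) adjoint to the inclusion, as in the paper---but as written the finality claim would not go through.
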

\begin{proof}
By Quillen's Theorem A, it suffices to show that for each $g:\coprod_k \RR^n \hookrightarrow M$, an object of $\Disk_{n/M}$, the $\oo$-undercategory
\[
\disk^{\fr,\sf surj}_{n/M}\underset{\disk^{\fr}_{n/M}}\times\Bigl(\disk^{\fr}_{n/M}\Bigr)^{\coprod_k\RR^n/}
\]
has a contractible classifying space. There exists an adjunction
\[
\disk_{n/M}^{\fr,\sf surj}\underset{{\disk^{\fr}_{n/M}}}\times \disk_{n/M}^{\fr,\coprod_S\RR^n/^{\sf inj}}
\leftrightarrows
\disk^{\fr,\sf surj}_{n/M}\underset{\disk^{\fr}_{n/M}}\times\Bigl(\disk^{\fr}_{n/M}\Bigr)^{\coprod_k\RR^n/}
\]
hence an equivalence between their classifying spaces. By Lemma~\ref{lemma.surj.final}, we obtain
\[
\sB\Bigl(\disk_{n/M}^{\fr,\sf surj}\underset{{\disk^{\fr}_{n/M}}}\times \disk_{n/M}^{\fr,\coprod_S\RR^n/^{\sf inj}}
\Bigr)
\simeq
\sB\exit\bigl(\ran(M)_S\bigr)
\simeq
\ran(M)_S~.
\]
The result now follows from the contractibility of this form of the Ran space---to see this, we use a now standard argument from \cite{bd}: the Ran space carries a natural H-space structure, given by taking unions of subsets, for which the composition of the diagonal and the H-space multiplication is the identity. Consequently, its homotopy groups must be zero.
\end{proof}

\begin{remark}
Combining the equivalence
\[
\disk^{\fr,\sf surj}_{n/M} \simeq \exit\bigl(\ran(M)\bigr)^{\op}
\]
together with the finality of Lemma~\ref{lemma.surj.final}, we obtain that factorization homology (or any colimit indexed by $\disk^{\fr}_{n/M}$) is equivalent to the global sections of an associated cosheaf on the Ran space of $M$.
\end{remark}

Together with Lemma~\ref{lemma.layers}, the following is the main conclusion of this section, describing a complete filtration of factorization homology with layers given by twisted homologies of configuration spaces.

\begin{prop}\label{prop.trunc}
The natural map from the sequential colimit of truncations
\[
\varinjlim_k \tau_{\leq k} \int_M A \longrightarrow \int_M A
\]
is an equivalence.
\end{prop}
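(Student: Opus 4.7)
The plan is to identify $\varinjlim_k \tau_{\leq k}\int_M A$ with the colimit of $A$ over the (non-full) $\infty$-subcategory $\disk^{\fr,\sf surj}_{n/M}\subset\disk^{\fr}_{n/M}$, and then invoke the finality of this inclusion from Lemma~\ref{lemma.surj.final} to deduce the equivalence with $\int_M A$.

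First I would observe that every object of $\disk^{\fr,\sf surj}_{n/M}$ is, by definition, a finite disjoint union of framed Euclidean spaces embedded in $M$, and hence lies in $\disk_{n/M}^{\fr,\leq k}$ as soon as $k$ exceeds its number of components. Moreover, for each $k\leq k'$ the inclusion $\disk_{n/M}^{\fr,\leq k}\subset \disk_{n/M}^{\fr,\leq k'}$ is a \emph{full} $\infty$-subcategory inclusion, since both $\infty$-categories admit only morphisms that are surjective on connected components. These two observations together exhibit an equivalence of $\infty$-categories
\[
\varinjlim_k \disk_{n/M}^{\fr,\leq k}\xra{~\simeq~}\disk^{\fr,\sf surj}_{n/M}.
\]
Because the colimit functor $\Fun(-,\cV)\to\cV$ preserves colimits in its first argument, interchanging colimits yields the identification
\[
\varinjlim_k \tau_{\leq k}\int_M A ~\simeq~\colim\Bigl(\disk^{\fr,\sf surj}_{n/M}\xra{~A~}\cV\Bigr).
\]

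Next I would invoke Lemma~\ref{lemma.surj.final}, which states that the inclusion $\disk^{\fr,\sf surj}_{n/M}\hookrightarrow \disk^{\fr}_{n/M}$ is final. Finality then identifies the colimit on the right with $\colim\bigl(\disk^{\fr}_{n/M}\xra{A}\cV\bigr)=\int_M A$; by naturality of both equivalences, the composite is the comparison map appearing in the statement of the proposition.

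The genuine content is entirely contained in Lemma~\ref{lemma.surj.final}, whose proof uses the identification of the relevant undercategories with exit-path $\infty$-categories of (sub-)Ran spaces $\ran(M)_S$ together with the Beilinson--Drinfeld H-space argument showing that these Ran spaces are weakly contractible. Granted that lemma, the present statement reduces to formal manipulations of filtered colimits and to an application of finality, so no additional obstacle arises.
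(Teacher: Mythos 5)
Your proof is correct and follows essentially the same route as the paper's: both arguments identify $\disk^{\fr,\sf surj}_{n/M}$ as the sequential colimit $\varinjlim_k\disk^{\fr,\leq k}_{n/M}$, interchange colimits, and appeal to the finality of $\disk^{\fr,\sf surj}_{n/M}\hookrightarrow\disk^{\fr}_{n/M}$ (Lemma~\ref{lemma.surj.final}). The only difference is cosmetic: the paper chains the equivalences starting from $\int_M A$ while you start from $\varinjlim_k\tau_{\leq k}\int_M A$, but the content and the key lemma invoked are the same.
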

\begin{proof}
The finality of Lemma~\ref{lemma.surj.final} implies the equivalence 
\[
\int_MA:=\colim\Bigl(\Disk^{\fr}_{n/M}\xra{\int A} \cV\Bigr)\simeq \colim\Bigl(\Disk^{\fr,\sf surj}_{n/M}\xra{A} \cV\Bigr)~.
\]
There is then a sequence of equivalences
\[
\colim\Bigl(\Disk^{\fr,\sf surj}_{n/M}\xra{A} \cV\Bigr)\simeq \colim\Bigl(\Bigl(\varinjlim_k\Disk^{\fr,\leq k}_{n/M}\Bigr)\xra{A} \cV\Bigr) \simeq  \varinjlim_k \colim\Bigl(\Disk^{\fr,\leq k}_{n/M}\xra{A} \cV\Bigr) =\varinjlim_k\tau_{\leq k}\int_M A 
\]
from which the result follows.
\end{proof}

A dual result holds for factorization cohomology with coefficients in an $\cE_n$-coalgebra.

\begin{definition}\label{def.fact.cohomology}
For $C$ an $\cE_n$-coalgebra in $\cV$ (i.e., an $\cE_n$-algebra in $\cV^{\op}$), the factorization cohomology of $M$ with coefficients in $C$ is the limit
\[
\int^MC := \limit\Bigl(\Disk_{n/M}^{\op}\overset{C}\longrightarrow \cV\Bigr)~.
\]
\end{definition}

\begin{prop}
For $M$ a closed framed $n$-manifold and $C$ an $\cE_n$-coalgebra in $\cV$, there exists a cofiltration of $\int^MC$,
\[
\int^MC \longrightarrow \tau^{\leq \bullet}\int^MC
\]
whose associated graded is
\[
\coprod_k \Bigl(\bigl(C^{\ot k}\bigr)^{\conf_k(M)^+}\Bigr)^{\Sigma_k}~.
\]
\end{prop}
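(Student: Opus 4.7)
The plan is to dualize the arguments of Proposition~\ref{prop.trunc} and Lemma~\ref{lemma.layers} by replacing colimits over $\Disk^{\fr,\leq k}_{n/M}$ with limits over its opposite. First, I would define the cofiltration by setting
\[
\tau^{\leq k}\int^M C := \limit\Bigl(\Disk^{\fr,\leq k,\op}_{n/M}\xra{~C~}\cV\Bigr),
\]
with the tower structure induced by the fully-faithful inclusions $\Disk^{\fr,\leq k-1}_{n/M}\hookrightarrow\Disk^{\fr,\leq k}_{n/M}$ and the restriction of limiting diagrams. The canonical map $\int^M C\to \tau^{\leq k}\int^M C$ is the evident one.

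Next I would establish convergence, that $\int^M C\xra{\simeq}\limit_k\tau^{\leq k}\int^M C$. By Lemma~\ref{lemma.surj.final}, the functor $\Disk^{\fr,\sf surj}_{n/M}\to\Disk^{\fr}_{n/M}$ is final, so its opposite is initial and hence
\[
\int^M C\simeq\limit\Bigl(\Disk^{\fr,\sf surj,\op}_{n/M}\xra{~C~}\cV\Bigr).
\]
Since morphisms in $\Disk^{\fr,\sf surj}_{n/M}$ are component-surjective, the cardinality of components is monotone along them, yielding $\Disk^{\fr,\sf surj}_{n/M}\simeq\colim_k\Disk^{\fr,\leq k}_{n/M}$. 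Passing to opposites converts this colimit of $\oo$-categories to a limit of opposite $\oo$-categories; commuting limits with limits then identifies the right-hand side above with $\limit_k\tau^{\leq k}\int^M C$.

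Finally, I would identify the fiber of each map $\tau^{\leq k}\int^M C\to\tau^{\leq k-1}\int^M C$ with the $k$-th layer. Dually to Lemma~\ref{lemma.layers}, the stratification of the Ran space by cardinality controls this fiber in terms of the ``open stratum'' of disk configurations of cardinality exactly $k$. By Lemma~\ref{EE-equivs}, the maximal $\oo$-subgroupoid of this stratum is the unordered configuration space $\conf_k(M)_{\Sigma_k}$. The 1-point compactification $\conf_k(M)^+$ arises because configurations degenerating into lower-cardinality strata---that is, collisions or escape to infinity---must be collapsed to the basepoint upon passage to the fiber, and the $\Sigma_k$-homotopy fixed points appear as the cotensor dual to the $\Sigma_k$-coinvariants in Lemma~\ref{lemma.layers}.

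The main obstacle will be this last step: the precise identification of the $k$-th fiber as $\bigl((C^{\ot k})^{\conf_k(M)^+}\bigr)^{\Sigma_k}$. This requires a careful Ran-space argument dualizing the Pontryagin--Thom-type analysis of Lemma~\ref{lemma.layers}---specifically, verifying that the \emph{reduced} cotensor with $\conf_k(M)^+$ (rather than an unreduced cotensor with $\conf_k(M)$) is what encodes the fiber, and interchanging the limit with the $\Sigma_k$-homotopy fixed points.
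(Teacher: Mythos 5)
The paper states this proposition without proof: it is presented as the evident dual of Proposition~\ref{prop.trunc} and Lemma~\ref{lemma.layers}, so your proposal — to dualize those two results — is exactly the intended route. Your definition of the cofiltration, the appeal to the opposite of Lemma~\ref{lemma.surj.final} for initiality, and the recognition that the Ran-space stratification controls the layers are all correct, and leaving the $k$-th layer identification at a sketch is reasonable given that the paper itself only cites \cite{pkd} for Lemma~\ref{lemma.layers}.

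There is, however, one technical error in your convergence step. You assert that ``passing to opposites converts this colimit of $\oo$-categories to a limit of opposite $\oo$-categories,'' i.e.\ that $\bigl(\varinjlim_k \Disk^{\fr,\leq k}_{n/M}\bigr)^{\op} \simeq \varprojlim_k \Disk^{\fr,\leq k,\op}_{n/M}$. This is false: $(-)^{\op}$ is an involutive autoequivalence of $\Cat$, so it preserves both limits and colimits, and the opposite of a sequential colimit of $\infty$-categories is again a sequential colimit, $\varinjlim_k \Disk^{\fr,\leq k,\op}_{n/M}$. The conclusion you want is nonetheless true, but for a different reason: for any $\infty$-category expressed as $\cI \simeq \varinjlim_k \cI_k$, a cone over a diagram $F\colon \cI\to\cV$ is precisely a compatible family of cones over each $F|_{\cI_k}$, so $\limit_{\cI} F \simeq \varprojlim_k \limit_{\cI_k} F|_{\cI_k}$ directly from the universal property (this is the exact dual of the step $\colim_{\varinjlim_k \cI_k} A \simeq \varinjlim_k \colim_{\cI_k} A$ used in the proof of Proposition~\ref{prop.trunc}). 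With this correction your argument goes through as written. One further small point on the heuristic for the layer: since $M$ is closed, $\conf_k(M)$ is an open subset of the compact space $M^{\times k}$ and the added basepoint of $\conf_k(M)^+$ records only collisions of configuration points, not ``escape to infinity.''
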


\subsection{Goodwillie filtrations}

Fixing a manifold $M$, one can apply the Goodwillie functor calculus, as developed in \cite{goodwillie}, \cite{kuhn1}, and \cite{dag}, to the functor
\[
\int_M: \Alg_{\cE_n}^{\sf aug}(\cV) \longrightarrow \cV~.
\]
The output is a cofiltration of factorization homology whose $k$th term is the polynomial approximation $P_k\int_M$.

\begin{prop}[\cite{pkd}]\label{pkd} 
Let $A$ an augmented $\cE_n$-algebra in $\cV$ with cotangent space $LA$.
For each closed framed $n$-manifold $M$, the following is a fiber sequence
\[\xymatrix{
\conf_k(M) \underset{\Sigma_k}\ot LA^{\ot k} \ar[d]\ar[r]&P_k \displaystyle\int_M A \ar[d]\\
\uno\ar[r]& P_{k-1} \displaystyle\int_MA\\}\] where $P_k$ is Goodwillie's $k$th polynomial approximation \cite{goodwillie} applied to the functor $\int_M$, and $LA$ is the cotangent space of $A$ at the augmentation (see \cite{thez}). In particular, the Goodwillie derivative $\partial_k\int_M$ is canonically equivalent with $\conf_k(M)\otimes \uno$, the tensor of the space $\conf_k(M)$ with the unit object in the symmetric monoidal $\infty$-category $\cV$.
\end{prop}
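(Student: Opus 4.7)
The plan is to compute the Goodwillie tower of $\int_M\colon\Alg_{\cE_n}^{\sf aug}(\cV)\to\cV$ by first evaluating on free augmented algebras, and then extending to arbitrary augmented algebras via the cotangent complex.

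First I would apply Proposition~\ref{free}, which computes factorization homology of the free augmented $\cE_n$-algebra $\free_n^{\sf aug}(V)$ on an object $V\in\cV$ as
$$\int_M \free_n^{\sf aug}(V)\;\simeq\;\bigoplus_{k\geq 0}\conf_k(M)\underset{\Sigma_k}\otimes V^{\otimes k}.$$
This decomposition is manifestly the Goodwillie tower of the composite functor $\int_M\circ\free_n^{\sf aug}\colon\cV\to\cV$: the $k$-th summand is $k$-homogeneous in $V$, whence the $k$-th Goodwillie derivative is $\partial_k\bigl(\int_M\circ\free_n^{\sf aug}\bigr)\simeq\conf_k(M)\otimes\uno$, and the $k$-th homogeneous layer is $\conf_k(M)\otimes_{\Sigma_k}V^{\otimes k}$.

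Next, I would use that the cotangent functor $L\colon\Alg_{\cE_n}^{\sf aug}(\cV)\to\cV$ is, by definition, the left adjoint to the trivial square-zero-extension functor $V\mapsto\uno\oplus V$; an immediate adjunction calculation gives $L\free_n^{\sf aug}(V)\simeq V$. This identification promotes the tangent $\infty$-category of $\Alg_{\cE_n}^{\sf aug}(\cV)$ at the augmentation $\uno$ to an equivalence with $\cV$, under which the free-algebra functor corresponds to the identity on $\cV$.

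The main obstacle, and the heart of the argument, is upgrading the derivative computation from free algebras to arbitrary augmented algebras. The point is to verify that both sides of the target fiber sequence commute with sifted colimits in the algebra variable: factorization homology is defined as a colimit (and forming augmentations is compatible with sifted colimits of augmented algebras), while the candidate layer $\conf_k(M)\otimes_{\Sigma_k}LA^{\otimes k}$ does so because $L$ itself preserves all colimits as a left adjoint. Since every augmented $\cE_n$-algebra is a sifted colimit of free augmented $\cE_n$-algebras (by monadicity of $\free_n^{\sf aug}$ over $\cV$), both towers are determined by their behavior on free algebras, where they agree by the first step. This yields the fiber sequence
$$\conf_k(M)\underset{\Sigma_k}\otimes LA^{\otimes k}\longrightarrow P_k\int_M A\longrightarrow P_{k-1}\int_M A$$
as the standard Goodwillie layer sequence $D_k F\to P_k F\to P_{k-1}F$ specialized to $F=\int_M$, and the identification $\partial_k\int_M\simeq\conf_k(M)\otimes\uno$ is read off directly as the coefficient in front of $LA^{\otimes k}$.
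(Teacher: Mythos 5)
Your strategy---compute on free algebras, then extend along sifted colimits using the cotangent identification $L\free_n^{\sf aug}(V)\simeq V$---is the right one, and the pieces are all present; but one step is stated as if it were automatic when it actually requires a short Goodwillie-calculus argument. In the final paragraph you need both your candidate layer $A\mapsto\conf_k(M)\otimes_{\Sigma_k}(LA)^{\otimes k}$ and the actual layer $D_k\!\int_M$ to agree on free algebras, and you appeal for this to ``the first step.'' But the first step computes the Goodwillie tower of the \emph{composite} $\int_M\circ\free_n^{\sf aug}\colon\cV\to\cV$, whereas what the sifted-colimit argument consumes is the value of the tower of $\int_M\colon\Alg^{\sf aug}_{\cE_n}(\cV)\to\cV$ on the object $\free_n^{\sf aug}(V)$. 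In general $P_k(F\circ G)$ and $(P_k F)\circ G$ are different functors, so the identification $D_k\!\int_M(\free_n^{\sf aug}V)\simeq D_k(\int_M\circ\free_n^{\sf aug})(V)$ needs justification.

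The fix is available and should be recorded: $\free_n^{\sf aug}$ is a left adjoint, hence preserves pushouts, and it carries $0\in\cV$ to the zero object $\uno\in\Alg_{\cE_n}^{\sf aug}(\cV)$; therefore it preserves the fiberwise-join construction $X\mapsto X\ast S$ and more generally strongly coCartesian cubes, so that $T_k(\int_M\circ\free_n^{\sf aug})\simeq(T_k\!\int_M)\circ\free_n^{\sf aug}$ and, passing to the colimit defining $P_k$, $P_k(\int_M\circ\free_n^{\sf aug})\simeq(P_k\!\int_M)\circ\free_n^{\sf aug}$. Only with this equivalence in hand does the split tower of $\int_M\free_n^{\sf aug}(V)\simeq\bigoplus_k\conf_k(M)\otimes_{\Sigma_k}V^{\otimes k}$ compute the layers $D_k\!\int_M$ at free inputs, and only then is ``they agree on free algebras'' literally the content of the free-algebra computation. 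You should also note explicitly why $P_k\!\int_M$ preserves sifted colimits: $\int_M$ does (each $A\mapsto A(\RR^n)^{\otimes j}$ does, by $\ot$-presentability, and $\colim_{\Disk^{\fr}_{n/M}}$ preserves colimits), the finite limits in $T_k$ commute with all colimits since $\cV$ is stable, and $P_k=\colim_m T_k^m$. With those two points added, the proof is complete and is essentially the argument of \cite{pkd}.
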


\section{Poincar\'e/Koszul duality}

Koszul duality, in its most basic form, interchanges two forms of algebra, determined by the form of algebraic structure that the cotangent space at an augmentation naturally inherits. There are two classical forms of Koszul duality: commutative algebra is Koszul dual to Lie algebra; associative algebra is dual to associative algebra. In particular, associative algebra is Koszul self-dual. This second assertion generalizes to the situation of $\cE_n$-algebra, where the Koszul self-duality dates to Getzler--Jones~\cite{getzlerjones}. This Koszul duality for $\cE_n$-algebra can be expressed in terms of factorization homology for manifolds with boundary. See \cite{lurie.formal} for another treatment of Koszul self-duality of $\cE_n$-algebra, in terms of twisted arrow categories.

\begin{theorem}[\cite{pkd}, \cite{zp}]\label{thm.firstpkd}
Let $\cV$ be a stable $\ot$-presentable $\oo$-category. There is a functor
\[
\int_{\DD^n/\partial\DD^n}:\Alg_{\cE_n}^{\sf aug}(\cV) \longrightarrow \cAlg_{\cE_n}^{\sf aug}(\cV)
\]
sending an augmented $\cE_n$-algebra $A$ to an $\cE_n$-coalgebra structure on
\[
\int_{\DD^n/\partial\DD^n} A
~ \simeq ~
A \underset{\displaystyle\int_{S^{n-1}\times \RR}A}\bigotimes \uno~.
\]
There is a natural transformation, the Poincar\'e/Koszul duality map, in $\Fun(\NN^{\triangleright},\cV)$:
\[
\xymatrix{
P_\bullet \displaystyle\int_M A \ar[d]\ar[r]&\ar[d] \tau^{\leq \bullet}\displaystyle\int^M\Bigl( \displaystyle\int_{\DD^n/\partial\DD^n}A\Bigr)\\
\displaystyle\int_MA \ar[r]&\displaystyle\int^M\Bigl( \int_{\DD^n/\partial\DD^n}A\Bigr)~.}
\]
This map is an equivalence when restricted to $\Fun(\NN,\cV)$.
\end{theorem}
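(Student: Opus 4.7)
The plan is to compare the two filtered objects layer-by-layer, after first constructing the Koszul dual functor $\int_{\DD^n/\partial\DD^n}$ and the duality map itself. First, I would construct $\int_{\DD^n/\partial\DD^n}(-)$ as follows. Viewing $\DD^n$ as a collar-gluing of half-disks along $S^{n-1}\times\RR$, together with the augmentation $A\to\uno$ providing the unital module structure of $\uno$ at the boundary, yields $\int_{\DD^n/\partial\DD^n}A \simeq A\otimes_{\int_{S^{n-1}\times\RR}A}\uno$ via Corollary~\ref{tensor-prod} and Lemma~\ref{excision}. For the $\cE_n$-coalgebra structure, observe that the pointed sphere $\DD^n/\partial\DD^n$ carries a canonical $\cE_n$-coalgebra structure in pointed spaces, given by pinch maps $\DD^n/\partial\DD^n \to (\DD^n/\partial\DD^n)^{\vee k}$ arising from $k$ rectilinearly embedded sub-disks in $\DD^n$. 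Since factorization homology is functorial under based embeddings of manifolds with collapsed boundary, this transfers to an $\cE_n$-coalgebra structure on $\int_{\DD^n/\partial\DD^n}A$.

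Second, the Poincar\'e/Koszul duality map arises from collapse-to-configuration maps. Given an embedded framed configuration $\coprod^k\RR^n \hookrightarrow M$, collapsing the complement of an open disk neighborhood defines a based map $M^+ \to \bigvee^k\DD^n/\partial\DD^n$. Naturality in the space of configurations (parametrized by $\conf_k(M)$) assembles these collapses into maps $P_k\int_MA \to \tau^{\leq k}\int^M\bigl(\int_{\DD^n/\partial\DD^n}A\bigr)$ at each filtration level, compatibly in $k$.

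By stability of $\cV$ and the fiber-sequence structure of both filtrations, verifying the map is an equivalence on each term (i.e.\ on $\Fun(\NN,\cV)$) reduces, by induction on $k$, to showing that the induced map on associated graded pieces is an equivalence. Proposition~\ref{pkd} identifies the $k$th layer of $P_\bullet\int_MA$ as $\conf_k(M)\otimes_{\Sigma_k} LA^{\otimes k}$, where $LA$ is the cotangent space of $A$. Dually, the $k$th cofiber of the cofiltration $\tau^{\leq\bullet}\int^M$ applied to a coaugmented $\cE_n$-coalgebra $C$ should be $\bigl((L^\vee C)^{\otimes k}\bigr)^{\conf_k(M)^+}_{h\Sigma_k}$, where $L^\vee C$ denotes the cotangent space at the coaugmentation—this follows from the Goodwillie-type analysis applied to the augmented functor $\int^M$.

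The main obstacle then splits into two tasks. The first is the identification of the cotangent space of the Koszul dual: $L^\vee\bigl(\int_{\DD^n/\partial\DD^n}A\bigr)\simeq LA[n]$. This is essentially the content of $\cE_n$-Koszul self-duality at the tangent level; it can be extracted directly from Proposition~\ref{pkd} at $k=1$ together with the identification of the first Goodwillie derivative of $\int_{\DD^n/\partial\DD^n}$ via a short computation using the collar-gluing formula. The second is an equivariant Atiyah/Spanier--Whitehead duality
\[
\conf_k(M)\otimes_{\Sigma_k}W \;\simeq\; \bigl(W[-kn]\bigr)^{\conf_k(M)^+}_{h\Sigma_k}
\]
for any $\Sigma_k$-equivariant object $W\in\cV$. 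This is the technical crux, and follows from the fact that since $M$ is closed and framed, $\conf_k(M)$ is a framed $kn$-manifold admitting equivariant Poincar\'e duality as a $\Sigma_k$-spectrum. Plugging $W = (LA[n])^{\otimes k}$ yields $(LA[n])^{\otimes k}[-kn] \simeq LA^{\otimes k}$, so the two layer expressions agree and the induction closes.
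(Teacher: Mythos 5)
Your strategy—construct the Koszul dual, build the duality map from collapse maps, then reduce to a layer comparison via equivariant Atiyah duality—is the right shape, and your descriptions of the coalgebra structure (pinch/collapse maps, functoriality of zero-pointed manifolds) match the paper's remark following the theorem. However, there are two substantive gaps and one sign error worth flagging.

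First, and most seriously, your identification of the layer of $\tau^{\leq\bullet}\int^M C$ contradicts the paper's own statement. The cofiltration $\tau^{\leq\bullet}$ appearing in Theorem~\ref{thm.firstpkd} is the \emph{cardinality} cofiltration, defined by restricting the limit to $\disk_{n/M}^{\fr,\leq k}$; the unnumbered proposition immediately after Definition~\ref{def.fact.cohomology} identifies its associated graded as $\coprod_k\bigl((C^{\ot k})^{\conf_k(M)^+}\bigr)^{\Sigma_k}$—with $C$ itself (not its cotangent space $L^\vee C$) and $\Sigma_k$-\emph{invariants} (not coinvariants). You instead assert the layer is $\bigl((L^\vee C)^{\ot k}\bigr)^{\conf_k(M)^+}_{h\Sigma_k}$ and attribute this to a ``Goodwillie-type analysis of $\int^M$.'' But applying Goodwillie calculus to $\int^M$ as a functor of the coalgebra variable gives a \emph{different} cofiltration a priori, and equating it with the cardinality cofiltration is essentially another instance of the very duality you are trying to prove, not an input to it. To close the argument you would need to show the two layer formulas agree: that the full $C^{\ot k}$ cotensor, after collapsing the fat diagonal to the basepoint in $\conf_k(M)^+$, is supported only on the ``indecomposable'' part $\bar C^{\ot k}$, and that the norm map from orbits to fixed points is an equivalence on this piece because $\Sigma_k$ acts freely on $\conf_k(M)$. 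Neither reduction appears in your proposal.

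Second, your stated equivariant Atiyah duality carries the wrong shift. For $\conf_k(M)$ a framed $kn$-manifold that is the interior of a compact manifold with boundary, Lefschetz--Atiyah duality gives $D\bigl(\Sigma^\infty\conf_k(M)^+\bigr) \simeq \Sigma^\infty_+\conf_k(M)[-kn]$, so $W^{\conf_k(M)^+} \simeq \conf_k(M)\ot W[-kn]$. Your displayed formula $\conf_k(M)\ot_{\Sigma_k}W \simeq (W[-kn])^{\conf_k(M)^+}_{h\Sigma_k}$ is off by $[2kn]$; it should read $\conf_k(M)\ot_{\Sigma_k}W \simeq (W[kn])^{\conf_k(M)^+}_{h\Sigma_k}$. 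This is fixable, and with the correct sign the plug-in $W=LA^{\ot k}$ does land on $\bigl((LA[n])^{\ot k}\bigr)^{\conf_k(M)^+}_{h\Sigma_k}$, which is what you want to compare against. But as written your final ``plugging $W = (LA[n])^{\ot k}$'' step computes the wrong side.

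Finally, the construction of the Poincar\'e/Koszul duality map itself is left too vague. Verifying the theorem requires not merely that the layers of the two filtered objects are abstractly equivalent, but that the comparison map—assembled from collapse maps and, per the paper's remark, from the extension-by-zero functoriality of zero-pointed manifolds—\emph{induces} Atiyah duality on associated graded pieces. Without pinning down the map, the layer comparison does not conclude that the natural transformation in $\Fun(\NN,\cV)$ is an equivalence. Similarly, the identification $L^\vee\bigl(\int_{\DD^n/\partial\DD^n}A\bigr) \simeq LA[n]$ (even as an object, forgetting the coalgebra structure) is asserted rather than proved; this is a genuine computation involving the linearization of the bar construction over $S^{n-1}\times\RR$ and deserves at least a sketch.
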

\begin{remark}
The coalgebra structure of $\int_{\DD^n/\partial\DD^n}A$ is given in \cite{zp} by showing that factorization homology with coefficients in an augmented $\cE_n$-algebra is functorial with respect to the fold map
\[
\DD^n/\partial\DD^n\longrightarrow \DD^n/\partial\DD^n\vee \DD^n/\partial\DD^n~.
\]
More generally, the theory of zero-pointed manifolds of \cite{zp} exhibits an additional functoriality for factorization homology of augmented algebras, of extension-by-zero maps.
\end{remark}

In particular, the Goodwillie completion $\varprojlim P_\bullet \int_MA$ is equivalent to factorization cohomology with the coefficients in the coalgebra which is Koszul-dual to $A$. The natural map to the completion
\[
\int_MA\longrightarrow \varprojlim P_\bullet \int_MA
\]
need not be an equivalence, however.\footnote{One should only expect this map to be an equivalence if $\cV$ carries a t-structure, suitably compatible with the monoidal structure, for which the augmentation ideal of $A$ is either connected or $n$-coconnected.} A generalization of factorization homology, taking coefficients in formal moduli problems, gives a way to correct the failure of Poincar\'e/Koszul duality to be an equivalence in general.

\begin{definition}\label{stack} 
Let $\cV$ be a $\ot$-presentable $\infty$-category.
Let $B\to \BO(n)$ be a map from a space.
Let $X\colon \Alg_{\disk_n^B}(\cV) \to \spaces$ be a functor.
Let $M$ be a $B$-framed $n$-manifold $M$.
The factorization homology of $M$ with coefficients in $X$ is the object in $\cV$
\[
\int_MX~ :=~ \limit_{A\in{\sf Aff}^{\op}_{/X}}\int_M A~,
\] 
where ${\sf Aff} := \Alg_{\disk^B_n}(\cV)^{\op}\subset \Fun\bigl(\Alg_{\disk_n^B}(\cV),\spaces\bigr)$ is the image of the Yoneda embedding.
\end{definition}

\begin{remark} Intuitively, the object $\int_M X$ is $\Gamma(X,\int_M\cO)$, the global sections of the presheaf on $X$ obtained by applying factorization homology of $M$ to the structure sheaf of $X$. 
We interpret this form of factorization homology as the observables in a topological quantum field theory: a sigma-model with algebraic target $X$.  
\end{remark}

There is a modification of the above definition for a {\it formal} moduli problem, i.e., a space-valued functor on an $\oo$-category of Artin $\cE_n$-algebras.

\begin{definition}[${\sf Artin}_{\cE_n}$]
The full $\oo$-subcategory ${\sf Triv}_{\cE_n}\subset \Alg_{\cE_n}^{\sf aug,\geq 0}(\m_\Bbbk)$ is the essential image of connective perfect $\Bbbk$-modules ${\sf Perf}_\Bbbk^{\geq 0}$ under the functor that assigns a complex $V$ the associated square-zero extension $\Bbbk\oplus V$.
The $\oo$-category ${\sf Artin}_{\cE_n}$ of local Artin $\cE_n$-algebras is the smallest full $\oo$-subcategory of $\Alg_{\cE_n}^{\sf aug,\geq 0}(\m_\Bbbk)$ that contains ${\sf Triv}_{\cE_n}$ and is closed under small extensions. That is:
\begin{itemize}
\item[~]
If $B$ is in ${\sf Artin}_{\cE_n}$, $\Bbbk\oplus V$ is in ${\sf Triv}_{\cE_n}$, and the following diagram
\[
\xymatrix{
A   \ar[d]     \ar[r]
&
B   \ar[d]
\\
\Bbbk   \ar[r] 
&
\Bbbk\oplus V
}
\] 
forms a pullback square in $\Alg_{\cE_n}^{\sf aug, \geq 0}(\m_\Bbbk)$, then $A$ is in ${\sf Artin}_{\cE_n}$.
\end{itemize}
\end{definition}

\begin{definition}\label{def.facthom.formal}
Fix a field $\Bbbk$, and let $X\colon {\sf Artin}_{\cE_n}\to \spaces$ be a functor of local Artin $\cE_n$-algebras over $\Bbbk$.
The factorization homology of $X$ over $M$ is the $\Bbbk$-module
\[
\int_MX~ :=~ \limit_{R\in{\sf Artin}_{\cE_n/X}^{\op}}\int_M R~.
\] 
\end{definition}

Such formal moduli problems $X$ are defined by the generalization of the Maurer--Cartan functor from classical Lie theory.  In the following definition, we use the Koszul duality functor $\DD^n$ given as the composite of linear duality with factorization homology of $\DD^n/\partial\DD^n$.
\[
\xymatrix{
\DD^n: \Alg_{\cE_n}^{\sf aug}(\m_\Bbbk)\ar[r]&\cAlg_{\cE_n}^{\sf aug}(\m_\Bbbk)\ar[r]&\Alg_{\cE_n}^{\sf aug}(\m_\Bbbk)^{\op}}~.
\]
That is, the Koszul-dual $\cE_n$-algebra is
\[
\DD^n A = \Bigl(\int_{\DD^n/\partial\DD^n}A\Bigr)^\vee~.
\]
\begin{definition}\label{def.MC}
For an Artin $\cE_n$-algebra $R$ and an augmented $\cE_n$-algebra $A$ over a field $\Bbbk$, the Maurer--Cartan space \[\MC_{A}(R) :=\Alg_{\cE_n}^{\sf aug}(\DD^nR, A)\] is the space of maps from the Koszul dual of $R$ to $A$. The Maurer--Cartan functor $\MC \colon \Alg_{\cE_n}^{\sf aug} \to \Fun({\sf Artin}_{\cE_n},\Spaces)$ is the adjoint of the pairing
\[
{\sf Artin}_{\cE_n} \times\Alg_{\cE_n}^{\sf aug}(\m_\Bbbk) \xra{\DD^n \times {\sf id}} \Alg_{\cE_n}^{\sf aug}(\m_\Bbbk)^{\op} \times \Alg_{\cE_n}^{\sf aug}(\m_\Bbbk) \xra{\Map(-,-)} \Spaces~.
\]
$\MC$ sends $A$ to the functor $\MC_A:{\sf Artin}_{\cE_n}\ra \spaces$.
\end{definition}

The following is the framed version of the main theorem of \cite{pkd}.

\begin{theorem}[Poincar\'e/Koszul duality \cite{pkd}]\label{1main} 
Let ${M}$ be a closed framed $n$-manifold; let $A$ be an augmented $\cE_n$-algebra over a field $\Bbbk$ with $\MC_A$ the associated formal moduli functor of $\cE_n$-algebras. There is a natural equivalence 
\[
\Bigl(\int_{M}A\Bigl)^\vee~ \simeq~ \int_{M}\MC_A
\] 
between the $\Bbbk$-linear dual of the factorization homology of ${M}$ with coefficients in $A$ and the factorization homology of ${M}$ with coefficients in the moduli functor $\MC_A$.
\end{theorem}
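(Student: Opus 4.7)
The plan is to reduce to the Artin case, where Theorem~\ref{thm.firstpkd} is sharp, and then assemble the general result via a limit argument using the formal moduli problem $\MC_A$. First, by Definition~\ref{def.facthom.formal} combined with Definition~\ref{def.MC}, the right-hand side unpacks as
\[
\int_M \MC_A \simeq \limit_{(R,\phi)} \int_M R,
\]
where $(R,\phi)$ ranges over the opposite of the $\infty$-category of pairs consisting of a local Artin $\cE_n$-algebra $R$ and an augmented $\cE_n$-algebra map $\phi\colon \DD^n R\to A$ (equivalent to a point of $\MC_A(R)$).

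For each such Artin $R$, the iterated square-zero presentation of $R$, together with the connectivity and perfectness of its cotangent complex $LR$, makes both the Goodwillie tower $P_\bullet\int_M R$ (with layers $\conf_k(M)\otimes_{\Sigma_k}LR^{\otimes k}$ from Proposition~\ref{pkd}) and the Koszul-dual cofiltration $\tau^{\leq\bullet}\int^M C_R$ convergent, so the pointwise PKD equivalence of Theorem~\ref{thm.firstpkd} promotes to an equivalence $\int_M R \xra{\simeq} \int^M C_R$, where $C_R := \int_{\DD^n/\partial\DD^n}R$. Since $R$ is Artin, $C_R$ is perfect over $\Bbbk$, so dualizing the limit defining $\int^M C_R$ converts it to a colimit of tensor powers of $\DD^n R = C_R^\vee$, identifying
\[
\bigl(\int_M\DD^n R\bigr)^\vee \simeq \int^M C_R \simeq \int_M R.
\]
For each $(R,\phi)$, the map $\phi$ induces $\int_M \DD^n R \to \int_M A$, which on linear duals gives $(\int_M A)^\vee \to \int_M R$, natural in $(R,\phi)$; these assemble to the comparison morphism
\[
\Phi\colon \bigl(\int_M A\bigr)^\vee \longrightarrow \limit_R\int_M R = \int_M \MC_A.
\]

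To prove $\Phi$ is an equivalence, I would test against an arbitrary $W\in\cV$: the tensor-hom adjunction and the standard limit/colimit duality for perfect objects reduce the claim to $\colim_{(R,\phi)}\int_M\DD^n R \xra{\simeq} \int_M A$. Because factorization homology is a left adjoint in the algebra variable (Proposition~\ref{as-LKE}), this reduces further to the equivalence $\colim_{(R,\phi)}\DD^n R \xra{\simeq} A$ in augmented $\cE_n$-algebras. The hard part is this last step: it is essentially the $\cE_n$-Koszul duality theorem identifying augmented $\cE_n$-algebras with formal $\cE_n$-moduli problems, via which every augmented $\cE_n$-algebra is recovered as a colimit of Koszul duals of Artin algebras indexed by its formal moduli problem. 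An additional subtlety is that the Goodwillie tower of $\int_M A$ need not converge for general $A$, so there is no direct identification $\bigl(\int_M A\bigr)^\vee \simeq \limit_k\bigl(P_k\int_M A\bigr)^\vee$; the formulation through $\MC_A$ is precisely what packages the derived/completed information faithfully, which is why the right-hand side succeeds where the naive Goodwillie-completed dual would fail.
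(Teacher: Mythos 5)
Your setup is reasonable: unwinding $\int_M\MC_A$ as $\lim_{(R,\phi)}\int_M R$, and using the pointwise Poincar\'e/Koszul equivalence for Artin $R$ (where the Goodwillie and cardinality (co)filtrations converge) to get maps $(\int_M A)^\vee\to\int_M R$ that assemble to a comparison morphism $\Phi$. That much parallels the actual construction. But the argument for $\Phi$ being an equivalence has two genuine gaps, and the second is fatal.

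First, the proposed reduction --- ``test against $W$'' and invoke ``limit/colimit duality for perfect objects'' --- does not apply. The object $\int_M A$ is typically \emph{not} perfect over $\Bbbk$ (already $\int_{S^1}A$ for $A$ a polynomial algebra is infinite-dimensional), and the limit defining $\int_M\MC_A$ is indexed by the full, infinite category of elements ${\sf Artin}_{\cE_n/\MC_A}$, so there is no finite-limit dualization available. Mapping \emph{out of} a limit does not produce a colimit, and mapping \emph{into} $W$ turns colimits into limits but not the other way. So there is no formal conversion of the target limit $\lim_{(R,\phi)}\int_M R$ into a colimit $\colim_{(R,\phi)}\int_M\DD^n R$, and the asserted reduction to $\colim_{(R,\phi)}\DD^n R\xra{\simeq} A$ is not justified.

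Second, and more fundamentally, that final reduced statement --- density of $\DD^n\colon {\sf Artin}_{\cE_n}^{\op}\to\Alg^{\sf aug}_{\cE_n}(\m_\Bbbk)$, i.e., that every augmented $\cE_n$-algebra is canonically the colimit of Koszul duals of Artin algebras over its formal moduli problem --- is exactly the kind of reconstruction theorem that Koszul duality does \emph{not} provide without completeness/pro-nilpotence hypotheses. You flag it as ``the hard part,'' but it is not simply hard; it amounts to proving a double-Koszul-duality statement $A\simeq\colim\DD^n R$ that is stronger than what the theorem itself asserts and is not the lemma the proof in \cite{pkd} rests on. The actual proof in \cite{pkd} proceeds by a completely different mechanism: it compares the cardinality filtration of $\int_M A$ (Proposition~\ref{prop.trunc}, Lemma~\ref{lemma.layers}) with the Goodwillie cofiltration on the dual side (Proposition~\ref{pkd}), identifies the layers via Poincar\'e duality for the $nk$-manifolds $\conf_k(M)$, and then proves convergence --- where the convergence on the right-hand side is precisely what the reformulation through $\MC_A$ is engineered to supply. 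Your closing remark about why $\MC_A$ fixes the failure of Goodwillie convergence shows the right intuition, but the proposed route does not turn that intuition into a proof.
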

\begin{remark}
One can regard the failure of the duality map in Theorem~\ref{thm.firstpkd} to be an equivalence to be related to the non-affineness of the formal moduli problem ${\sf MC}_A$. Were ${\sf MC}_A$ to be affine, it would be equivalent to the formal spectrum of the Koszul dual, ${\sf Spf}(\DD^nA)$, in which the case the duality map of Theorem~\ref{thm.firstpkd} would be an equivalence. In terms of topological quantum field theory, we regard ${\sf MC}_A$ as determining a sigma-model whose target $X={\sf MC}_A$ is formal, but not necessarily affine. Theorem~\ref{1main} then describes the observables $\int_{M}X$ in this sigma-model in terms of the factorization homology with coefficients in a shift of the cotangent space of the point ${\sf Spec}(\Bbbk)\ra X$.
\end{remark}

\section{Factorization homology for singular manifolds}\label{sec.singular}
We briefly outline an extension of factorization homology to singular manifolds, such as manifolds equipped with \emph{defects}.  
The notion of singular manifolds, and structured versions thereof, is developed in~\cite{aft1}.
The development of factorization homology of such can be found in~\cite{aft2}.

\subsection{Singular manifolds}

Heuristically, a singular manifold is a stratified space in which each stratum is endowed with the structure of a smooth manifold, and in which each stratum has a canonically associated link which is itself a singular manifold.   
Informally, the $\infty$-category $\snglr$ of singular manifolds and open (stratified) embeddings among such is minimal with respect to the following features.
\begin{enumerate}
\item
The empty set $\emptyset$ is a singular manifold.

\item
For $L$ a compact singular manifold, the open cone 
\[
\sC(L)~:=~ \ast \underset{L\times \{0\}} \coprod L\times [0,1)
\]
is a singular manifold, with the cone-point $\ast$ a stratum.  

\item
For $X$ and $Y$ singular manifolds, their product $X\times Y$ is a singular manifold.

\item
For $U\subset X$ an open subset of a singular manifold, then $U$ is canonically a singular manifold.

\item
For $\cU$ an open cover by singular manifolds of a paracompact Hausdorff space $X$, then $X$ is a singular manifold.

\item
For $L$ a compact singular manifold, the continuous homomorphism
\[
\Aut(L)
\xra{~\simeq~}
\Aut\bigl(\sC(L)\bigr)
~,\qquad
f\mapsto \sC(f)~,
\]
is equipped with a deformation retraction whose retraction $D_\ast \colon \Aut\bigl(\sC(L)\bigr) \to \Aut(L)$ is a continuous homomorphism.  

\end{enumerate}
A rigorous definition of $\snglr$ is given in~\S1 of~\cite{aft1}.
That definition is distractingly inductive, and so we elect to not spell it out here.

\begin{terminology}\label{size.singularity}
In this article, by ``singular manifold'' we will mean ``finitary singular manifold with boundary," in the sense of~\cite{aft1}.

\end{terminology}

\begin{remark}
The first two axioms give that $\ast$ is a singular manifold, and thereafter that $\sC(\ast) = [0,1)$ is a singular manifold with strata $\{0\}$ and $(0,1)$.  
The third and fifth axiom gives that $(0,1)^{\times n}\cong \RR^n$ is a singular manifold.  
The fifth axiom thereafter gives that each manifold is a singular manifold.
The second and third axiom then give that $\RR^k\times \sC(L)$ is a singular manifold for each compact manifold $L$.
In general, an arbitrary singular manifold is a paracompact Hausdorff topological space that admits a basis for its topology consisting of open embeddings from singular manifolds of the form $\RR^k\times \sC(L)$, where $k$ is an integer and $L$ is a compact singular manifold.

\end{remark}

A \emph{basic} is a singular manifold of the form $\RR^k\times \sC(L)$ for $k$ an integer and $L$ a compact singular manifold.

\begin{remark}
We follow up on the previous remark.
Namely, the definition of a singular manifold detailed in~\cite{aft1} is in analogy with that of a smooth manifold: it is a paracompact Hausdorff topological space equipped with a maximal atlas by \emph{basics}. 

\end{remark}

\begin{remark}\label{blow.up}
The sixth axiom above reflects the regularity of singular manifolds.  
The retraction map can be interpreted as taking the `derivative at the cone-point'.
A consequence of this structure is that one can `resolve singularities' of a singular manifold.
Specifically, for $X$ a singular manifold, and for $X_d\subset X$ a stratum that is closed as a subspace, there is a well-defined \emph{blow-up} of $X$ along $X_d$, as well as \emph{link} of $X_d$ in $X$, which assemble as the pullback and pushout diagram consisting of singular manifolds and continuous stratified maps among them:
\[
\xymatrix{
{\sf Link}_{X_d}(X)  \ar[rr]  \ar[d]
&&
{\sf Bl}_{X_d}(X)  \ar[d]
\\
X_d \ar[rr]
&&
X .
}
\]
This diagram has the feature that the left vertical map is a bundle of singular manifolds, while the right vertical map restricts as a bundle of singular manifolds over each stratum of $X$, and over the complement $X\smallsetminus X_d$ the right vertical map is an isomorphism.  

\end{remark}

\begin{example}\label{ex.boundary}
Let $\overline{M}$ be an $n$-manifold with boundary.
Regard $\overline{M}$ as a stratified space, whose strata are the connected components of the boundary $\partial \overline{M}$ and the connected components of of the interior $M:= \overline{M}\smallsetminus \partial \overline{M}$.  
As a stratified space, $\overline{M}$ is a singular manifold.
Indeed, it admits an atlas by basics of the form $\RR^n$ and $\RR^{n-1}\times \sC(\ast)$.

\end{example}

\begin{example}\label{ex.defects}
Let $M$ be an $n$-manifold.
Let $W\subset M$ be a properly embedded $d$-submanifold.
Consider the stratified space $(W\subset M)$ whose strata are the connected components of $W$ and the connected components of $M\smallsetminus W$.  
This stratified space $(W\subset M)$ is a singular manifold.
Indeed, it admits an atlas by basics of the form $\RR^n$ and $\RR^d\times \sC(S^{n-d-1})$.  
A singular manifold of this form is a \emph{defect}, or \emph{refinement}, of the manifold $M$.  
In particular, a knot in a 3-manifold determines a stratified space that refines onto the given 3-manifold.  

\end{example}

\begin{example}
Following up on Example~\ref{ex.defects}, consider a sequence $W_0\subset W_1 \subset M$ of properly embedded $d_0$-submanifold in a properly embedded $d_1$-submanifold in an $n$-manifold.  
Consider the stratified space $(W_0\subset W_1 \subset M)$ whose strata are the connected components of $W_0$, the connected components of $W_1\smallsetminus W_0$, and the connected components of $M\smallsetminus W_1$.  
This stratified space $(W_0 \subset W_1 \subset M)$ is a singular manifold.
Indeed, it admits an atlas by basics of the form $\RR^n$, $\RR^{d_1}\times \sC(S^{n-{d_1}-1})$, and $\RR^{d_0}\times \sC(\SS^{n-d_0-1})$ where $\SS^{n-d_0-1} = (S^{d_1-d_0-1} \subset S^{n-d_0-1})$ is the singular manifold of Example~\ref{ex.defects} applied to an equatorially embedded sphere. 

\end{example}

\begin{example}\label{ex.simplicial}
Let $p\geq 0$, and consider the topological simplex $\Delta^p$.
Consider the stratification of $\Delta^p$ in which the $i$-dimensional strata comprise the connected components of the complement of skeleta ${\sf sk}_i(\Delta^n) \smallsetminus {\sf sk}_{i-1}(\Delta^n)$.
This stratified space is a singular manifold.
Indeed, it admits an atlas by basics of the form $\RR^{n-p}\times \sC(\Delta^{p-1})$ for $0\leq p\leq n$. 
As a consequence, the geometric realization $|Z|$ of a finite simplicial complex is canonically endowed with the structure of a singular manifold.  

\end{example}

\begin{example}
The Grassmannian ${\sf Gr}_k(n)$ of $k$-planes in $\RR^n$ admits the \emph{Schubert} stratification, for which, for each cardinality $k$-subset $S\subset\{1,\dots,n\}$, the $S$-stratum consists of those $V$ for which $S$ is minimal (in the Schubert poset) for which the linear map $V \hookrightarrow \RR^n \xra{\sf proj} \RR^S$ is an isomorphism.  
This Schubert stratification of ${\sf Gr}_k(n)$ is a singular manifold.  

\end{example}

\begin{example}
The orthogonal group $\sO(n)$ admits the \emph{Bruhat} stratification, for which, for each element $\varphi \in \Sigma_n\wr (\ZZ/2\ZZ) = \Aut(n{\sf Cube})\subset \sO(n)$ of the group of symmetries of the $n$-cube, the $\varphi$-stratum consists of those orthogonal matrices that, via row elimination and scaling by positive scalars, have $\varphi$ as its matrix of pivots.  
This Bruhat stratification of $\sO(n)$ is a singular manifold. 

\end{example}

Recall from~\S\ref{sec.sheaf.classify} that we classified sheaves on the $\infty$-category $\mfld_n$ as space over $\BO(n)$; for $B\to \BO(n)$ such a thing, the space of sections of the associated sheaf on an $n$-manifold $M$ is the space of lifts of the tangent classifier:
\[
\xymatrix{
&&
B \ar[d]
\\
M \ar[rr]^-{\tau_M}  \ar@{-->}[urr]^-{\varphi}
&&
\BO(n)  .
}
\]
This was the context supporting the constant consideration of spaces over $\BO(n)$ in the developments above.

In \cite{aft1} there appears a similar classification.
Namely, consider the full $\infty$-subcategory $\bsc\subset \snglr$ consisting of the basic singular manifolds.  
\begin{theorem}[\cite{aft1}]\label{singular.sheaves}
Restriction along the inclusion $\bsc\hookrightarrow \snglr$ defines an equivalence between $\infty$-categories
\[
\shv(\snglr)
\xra{~\simeq~}
\Psh(\bsc)~.
\]

\end{theorem}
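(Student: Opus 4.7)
The plan is to adapt the proof of Proposition~\ref{weiss.classify}, replacing the Weiss topology on $\mfld_n$ by the standard topology on $\snglr$ and the inclusion $\disk_n\hookrightarrow\mfld_n$ by $\iota\colon\bsc\hookrightarrow\snglr$. This inclusion determines the restriction/right-Kan-extension adjunction
\[
\iota^\ast\colon \Psh(\snglr)~\rightleftarrows~\Psh(\bsc)\colon\iota_\ast~,
\]
and it is enough to show that it restricts to an equivalence $\iota^\ast\colon\shv(\snglr)\rightleftarrows\Psh(\bsc)\colon\iota_\ast$. As in Proposition~\ref{weiss.classify}, this amounts to two assertions:
\begin{enumerate}
\item for each $\cF\in\shv(\snglr)$ and each $X\in\snglr$, the unit
\[
\cF(X)\longrightarrow\limit\bigl((\bsc_{/X})^{\op}\to\bsc^{\op}\xra{\iota}\snglr^{\op}\xra{\cF}\spaces\bigr)
\]
is an equivalence;
\item for each $\cG\in\Psh(\bsc)$ and each standard covering sieve $\cU\subset\snglr_{/X}$, the canonical map $\iota_\ast\cG(X)\to\limit\bigl(\cU^{\op}\to\snglr^{\op}\xra{\iota_\ast\cG}\spaces\bigr)$ is an equivalence.
\end{enumerate}

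Assertion~(1) follows because every singular manifold is by definition equipped with a maximal atlas by basics, so $\bsc_{/X}$ is a basis for the standard Grothendieck topology on $\snglr_{/X}$, and sheaves are determined on bases. For~(2) I would follow the template of part~(2) in the proof of Proposition~\ref{weiss.classify}: the standard formula for right Kan extension identifies the map in question with the canonical comparison, and it is enough to show that the natural functor
\[
\colim\bigl(\cU\to\snglr\xra{\bsc_{/-}}\Cat\bigr)\longrightarrow\bsc_{/X}
\]
is final. Since each $\bsc_{/W}$ is a right fibration over $\bsc$, this colimit can be computed in ${\sf RFib}_{\bsc}$ and tested after base change to each connected component of $\bsc^\sim$. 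Each such component is of the form $\sB\Aut(U_0)$ for some basic $U_0$, and the base change recasts the map as
\[
\colim_{V\in\cU}\Emb(U_0,V)_{\Aut(U_0)}\longrightarrow \Emb(U_0,X)_{\Aut(U_0)}~.
\]

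The main obstacle is showing that this last map of spaces is an equivalence for each basic $U_0$. My approach is to transport $\cU$ into a standard covering sieve $\cU_{U_0}$ on the underlying topological space of $\Emb(U_0,X)_{\Aut(U_0)}$, via the rule $V\in\cU\mapsto \Emb(U_0,V)_{\Aut(U_0)}\subset \Emb(U_0,X)_{\Aut(U_0)}$, and then to invoke Theorem~A.3.1 of~\cite{dag}. That the $\cU_{U_0}$-subsets form an open cover uses the regularity structure of singular manifolds, specifically that $\Aut(L)\to\Aut\bigl(\sC(L)\bigr)$ is a deformation retraction: any basic embedding $U_0\hookrightarrow X$ can be canonically isotoped, by shrinking the cone coordinate, into an arbitrarily small neighborhood of the image of the cone-point stratum of $U_0$, and such a neighborhood is contained in a single member of $\cU$ by definition of standard cover. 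The paracompactness and Hausdorffness of $\Emb(U_0,X)_{\Aut(U_0)}$ required to apply Theorem~A.3.1~\cite{dag} then follow from an iterated application of the blow-up and link constructions of Remark~\ref{blow.up}, which exhibit this embedding space as an iterated bundle over paracompact Hausdorff strata of $X$.
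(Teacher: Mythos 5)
Your overall strategy---running the argument of Proposition~\ref{weiss.classify} with $\bsc\hookrightarrow\snglr$ in place of $\disk_n\hookrightarrow\mfld_n$ and the standard topology in place of the Weiss topology---is the right shape, and is indeed what~\cite{aft1} does. But as written the proposal has two concrete gaps, each the omission of a singular-manifold analogue of a technical input that the Weiss proof invokes explicitly. For assertion~(1), you appeal to ``$\bsc_{/X}$ is a basis, and sheaves are determined on bases.'' The sheaf condition on $\cF$ controls only the limit over the \emph{discrete} poset of basic open subsets of $X$, whereas the unit map involves the limit over the topologically-enriched $\infty$-category $\bsc_{/X}$. Bridging these two limits requires the singular analogue of Proposition~\ref{EEd-vs-EE}, namely that the discrete overcategory localizes along isotopy equivalences to the topological one, so that the induced functor on opposites is initial. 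The proof of Proposition~\ref{weiss.classify} uses exactly this step (citing Proposition~\ref{EEd-vs-EE}), and your proof needs the corresponding result from~\cite{aft1}; a single ``basis'' sentence does not cover it.

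For assertion~(2), you attempt to apply Theorem~A.3.1 of~\cite{dag} directly to the homotopy quotient $\Emb(U_0,X)_{\Aut(U_0)}$. This object is a priori only a homotopy type, and the crucial obstruction is that $\Emb(U_0,V)\subset\Emb(U_0,X)$ is \emph{not} an open inclusion in the compact-open $C^\infty$ topology when $U_0$ is noncompact---and basics are always noncompact. Your shrinking heuristic also fails as stated: shrinking the cone coordinate of $U_0=\RR^d\times\sC(L)$ pushes the image toward the \emph{entire} cone-locus $e(\RR^d\times\{c\})$, which is unbounded and in general contained in no single element of $\cU$. In the proof of Proposition~\ref{weiss.classify}, Lemma~\ref{EE-equivs} is used \emph{first} to identify the relevant homotopy quotient with the honest manifold $\conf_i(M)_{\Sigma_i}$, and only then is Theorem~A.3.1 applied. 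You need the singular analogue of Lemma~\ref{EE-equivs}: a Kister-type identification of $\Emb(U_0,X)_{\Aut(U_0)}$ with the underlying space of the stratum of $X$ locally modeled on $U_0$. This is precisely where the deformation-retraction axiom for $\Aut(L)\to\Aut(\sC(L))$ is used, and once the identification is in hand the covering claim reduces to the immediate fact that a standard cover of $X$ restricts to a standard cover of each of its strata. The citation of Remark~\ref{blow.up} does not supply this: the blow-up and link constructions are about resolving singularities of a singular manifold, not about the point-set topology of embedding spaces or their homotopy quotients.
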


Through the straightening-unstraightening equivalence of~\S2 of~\cite{topos}, Theorem~\ref{singular.sheaves} gives that a sheaf on $\snglr$ is equivalent data to a right fibration among $\infty$-categories:
\[
\cB
\longrightarrow
\bsc~;
\]
a section of the space of sections of the associated sheaf on a singular manifold $X$ is the space of lifts
\[
\xymatrix{
&&
\cB \ar[d]
\\
\exit(X)^{\op} \ar[rr]^-{\tau_X}  \ar@{-->}[urr]^-{\varphi}
&&
\bsc  ;
}
\]
we call such a lift a \emph{$\cB$-structure}.  
Here, $\exit(X)$ is the exit-path $\infty$-category of $X$, and the functor $\tau_X$ carries a point in $X$ to a basic neighborhood of it.  
In~\cite{aft1} it is shown that the functor $\exit(X)^{\op} \xra{\tau_X} \bsc$ is the unstraightening of the presheaf $\bsc^{\op} \xra{U\mapsto \Emb(U,X)} \spaces$.  
In particular, $\exit(X)^{\op}\to \bsc$ is a right fibration.

\begin{example}
Let $B\to \BO(n)$ be a map from a space.
The composite map $B\to \BO(n)\to \bsc$ is a right fibration.
A singular manifold $X$ admits a $B$-structure if and only if $X$ is an $n$-manifold.
Furthermore, should $X$ be an $n$-manifold, the space $B$-structures is the space of $B$-framings on that $n$-manifold.  

\end{example}

\begin{example}
Let $\cD_{d\subset n} \subset \bsc$ be the full $\infty$-subcategory consisting of the two objects $\RR^d\times \sC(S^{n-d-1})$ and $\RR^n$.
The inclusion $\cD_{d\subset n} \hookrightarrow \bsc$ is a right fibration.
A singular manifold $X$ admits a $\cD_{d\subset n}$-structure if and only if the singular manifold is of the form $(W^d\subset M^n)$, a properly embedded $d$-submanifold in an $n$-manifold (see Example~\ref{ex.defects}).  
Furthermore, if $X$ admits a $\cD_{d\subset n}$-structure, then it is unique.  

\end{example}

\begin{example}
Choose, once and for all, an orientation-preserving diffeomorphism $\RR\cong (0,1)$.
Consider the right fibration $\cD_{n-1,n}^{\fr} := \{n-1<n\} \to \bsc$ that selects out the morphism $\RR^n = \RR^{n-1}\times \RR \cong \RR^{n-1}\times (0,1)\hookrightarrow \RR^{n-1}\times \sC(\ast)$, which is the standard open embedding of the complement of the cone-locus.  
A $\cD_{n-1,n}^{\fr}$-manifold is an $n$-manifold with boundary, equipped with a framing of its interior and a splitting of its framing along the boundary for which the last vector field points inward.

\end{example}

\begin{example}\label{ex.fr.defects}
Let $\cD_{d\subset n}^{\fr} \subset \bsc$ be the right fibration $\exit\bigl(\RR^d\times \sC(S^{n-d-1})\bigr)^{\op} \xra{\tau} \bsc$.  
A singular manifold $X$ admits a $\cD_{d\subset n}^{\fr}$-structure if and only if the singular manifold is of the form $(W^d\subset M^n)$, a properly embedded $d$-submanifold in an $n$-manifold (see Example~\ref{ex.defects}).  
Furthermore, for $X = (W \subset M)$ such a singular manifold, a $\cD_{d\subset n}^{\fr}$-structure on it is a framing of $M$ and a splitting of the framing along $W$ via the first $d$-coordinates.  

\end{example}

\begin{example}\label{ex.fr.general}
The representable right fibrations over $\bsc$ are particularly tractable.  
Let $U =\RR^d\times \sC(L)$ be a basic.
Notate the right fibration $\exit(U)^{\op} \xra{\tau_U} \bsc$ as $\cD_U \to \bsc$.
A singular manifold $X$ admits a $\cD_{U}$-structure if and only if $X$ admits an atlas by open subsets of $U$.  
Furthermore, for $X$ such a singular manifold, a $\cD_{U}$-structure on $X$ determines the structure of a framed $d$-manifold on the lowest-dimensional strata $X_d$ of $X$, as well as a trivialization of the link, 
\[
{\sf Link}_{X_d}(X)\underset{\rm over ~X_d}{~\cong~} X_d\times L~,
\]
and thereafter, upon the choice of a collar-neighborhood of ${\sf Link}_{X_d}(X)$ in the blow-up ${\sf Bl}_{X_d}(X)$, an open stratified embedding 
\[
X_d\times L \times \RR\hookrightarrow X\smallsetminus X_d~.
\]

\end{example}

\begin{observation}\label{DU.restrict}
Let $U = \RR^d\times \sC(L)$ be a basic.
Consider the right fibration $\cD_U\to \bsc$ of Example~\ref{ex.fr.general}.
Consider the full $\infty$-subcategory $\exit(U\smallsetminus \RR^d)^{\op} := \cD_{>U} \to \cD_U$ consisting of those objects, which are points in $U$, that do not belong to the cone-locus $\RR^d$.  
The composite functor $\cD_{>U}\to \cD_U \to \bsc$ is again a right fibration.
For $X$ a singular manifold equipped with a $\cD_U$-structure, the complement $X\smallsetminus X_d$ of its lowest-dimensional strata is a singular manifold, and it canonically inherits a $\cD_{>U}$-structure from the given $\cD_U$-structure on $X$.
Through the conclusion of Example~\ref{ex.fr.general}, the product $\RR^d\times L\times \RR\cong \RR^{d+1}\times L$ is canonically endowed with a $\cD_{>U}$-structure.
Thereafter, taking products with Euclidean-$(d+1)$-spaces defines a symmetric monoidal functor 
\begin{equation}\label{50}
\disk_{d+1}^{\fr} \xra{~-\times L~} \mfld(\cD_{>U})
~,\qquad
(\RR^{d+1})^{\sqcup I}
~ \mapsto  ~
(\RR^{d+1})^{\sqcup I} \times L \cong (\RR^d \times L \times \RR)^{\sqcup I}~.
\end{equation}

\end{observation}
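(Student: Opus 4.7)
My plan is to verify the four constituent claims of the observation in turn, each following from the explicit model $\cD_U\simeq \exit(U)^{\op}$ and standard functoriality of the exit-path $\infty$-category.

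For the right-fibration assertion, since $\cD_U\to\bsc$ is already a right fibration, it suffices to check that the full subcategory $\cD_{>U}\subset \cD_U$ is closed under sources of morphisms: a morphism $y\to x$ in $\cD_U=\exit(U)^{\op}$ corresponds to an exit-path from $x$ to $y$ in $U$, so $y$ lies in a stratum at least as open as that of $x$. Since $\RR^d$ is the unique deepest stratum of $U = \RR^d\times \sC(L)$, if $x\in U\smallsetminus \RR^d$ then $y\in U\smallsetminus \RR^d$ as well. For the restriction-of-structure claim, the open inclusion $X\smallsetminus X_d\hookrightarrow X$ induces a fully faithful functor $\exit(X\smallsetminus X_d)^{\op}\hookrightarrow \exit(X)^{\op}$ through which the tangent classifier of $X\smallsetminus X_d$ factors; precomposing a given $\cD_U$-structure $\varphi\colon \exit(X)^{\op}\to\cD_U$ with this inclusion yields a lift of $\tau_{X\smallsetminus X_d}$ whose essential image avoids the cone-locus stratum, hence factors through $\cD_{>U}$.

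For the canonical $\cD_{>U}$-structure on $\RR^{d+1}\times L$, the diffeomorphism $U\smallsetminus \RR^d = \RR^d\times(\sC(L)\smallsetminus \ast)\cong \RR^d\times L\times \RR$, combined with the chosen identification $\RR\cong (0,1)$, identifies $\cD_{>U} = \exit(U\smallsetminus \RR^d)^{\op}$ with $\exit(\RR^{d+1}\times L)^{\op}$; the identity of this $\infty$-category is then the tautological lift of $\tau_{\RR^{d+1}\times L}$. Finally, the functor of~(\ref{50}) sends $(\RR^{d+1})^{\sqcup I}$ to the $\cD_{>U}$-manifold $(\RR^{d+1})^{\sqcup I}\times L$, equipped with the structure obtained by transporting the canonical $\cD_{>U}$-structure on each $\RR^{d+1}\times L$-summand via the framing of the corresponding $\RR^{d+1}$-factor, and sends a framed embedding $\varphi$ to $\varphi\times {\sf id}_L$; symmetric monoidality follows because $(-)\times L$ preserves disjoint unions.

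The main technical subtlety I anticipate is articulating the last construction fully $\infty$-categorically on the continuous families of framed embeddings --- namely, verifying that $(-)\times L$ is continuous for the compact-open $C^\infty$-topology on $\Emb^{\fr}$, and that the resulting stratified embeddings $\varphi\times {\sf id}_L$ assemble into a continuous family of $\cD_{>U}$-structured embeddings. These coherences are essentially formal and follow from the compactness of $L$ together with the naturality of the tangent-classifier construction.
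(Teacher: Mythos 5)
Your proposal is correct, and it unpacks the observation faithfully. The paper itself gives no proof here---this is stated as an Observation, meaning the authors regard all four assertions as following directly from the definitions of~\S\ref{sec.singular}---so your write-up can be read as supplying the routine verification the paper omits. Your reduction of the right-fibration claim to closure under sources of morphisms (equivalently, in the straightened picture, that a sub-presheaf must be stable under restriction maps), your appeal to the explicit identification $\cD_U\simeq\exit(U)^{\op}$, and your use of the identification $U\smallsetminus\RR^d\cong\RR^{d+1}\times L$ are exactly the right moves.

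One remark where your argument is a bit compressed: in the restriction-of-structure step you assert that the essential image of $\exit(X\smallsetminus X_d)^{\op}\to\cD_U$ avoids the cone-locus "hence factors through $\cD_{>U}$." The reason this holds is that $\varphi$ is a morphism over $\bsc$, so for $x\in X\smallsetminus X_d$ the image $\varphi(x)\in U$ must have the same basic as $x$; if $\varphi(x)$ lay on the cone-locus $\RR^d\subset U$ its basic would be $\RR^d\times\sC(L)$ itself, which would force $x\in X_d$, a contradiction. You might make this one-line fiberwise argument explicit since it is the only place in the verification where the "over $\bsc$" condition is actually used. You also silently use that $X\smallsetminus X_d$ is a singular manifold, which is the open-subset axiom (item (4) in the paper's informal description of $\snglr$) together with closedness of $X_d$; worth a half-sentence.
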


\begin{definition}
Let $\cB\to \bsc$ be a right fibration.
The $\infty$-category $\mfld(\cB)$ of \emph{$\cB$-manifolds} is the pullback among $\infty$-categories:
\[
\xymatrix{
\mfld(\cB)  \ar[rrrr]  \ar[d]
&&
&&
\Psh(\bsc)_{/\cB}  \ar[d]
\\
\snglr  \ar[rrrr]^-{\tau\colon X\mapsto \bigl(\exit(X)^{\op}\xra{\tau_X}\bsc\bigr)}
&&
&&
\Psh(\bsc) .
}
\]
The $\infty$-category of \emph{$\cB$-disks} is the full $\infty$-subcategory
\[
\disk(\cB)
~\subset~
\mfld(\cB)
\]
consisting of those $\cB$-structured singular manifolds $X$ for which $X$ is a finite disjoint union of basics.  
Disjoint union of underlying singular manifolds makes both $\disk(\cB)$ and $\mfld(\cB)$ into symmetric monoidal $\infty$-categories.

\end{definition}

So a $\cB$-manifold is a pair $(X,\varphi)$ consisting of a singular manifold together with a $\cB$-structure on it.
The space of morphisms between two such are open (stratified) embeddings that respect $\cB$-structures.

\begin{example}\label{ex.U.boundary}
Consider the basic $U = \sC(\Delta^{n-1})$, where $\Delta^{n-1}$ is the compact singular manifold of Example~\ref{ex.simplicial}.  
A $\cD_U$-manifold is an $n$-manifold $\overline{M}$ with corners equipped with a framing as well as compatible splittings of the framing along each face.  
We call an $n$-manifold with corners equipped with such framing data a \emph{framed $\langle n\rangle$-manifold}.
(This notion of $\langle n\rangle$-manifolds, which are $n$-manifolds with corners with certain corner structure, is thoroughly developed in~\cite{laures}.)
For this case of $U$, we use the simplified notation:
\[
\cD_{\langle n\rangle}^{\fr} := \cD_U
\qquad\text{ and }\qquad
\disk_{\langle n\rangle}^{\fr}~:=~\disk(\cD_{\langle n\rangle}^{\fr})~.
\]

\end{example}

\subsection{Homology theories for structured singular manifolds}\label{sec.snglr.hmlgy}
In analogy with~\S\ref{sec.hmlgy}, one can define factorization homology of a $\disk(\cB)$-algebra over a $\cB$-manifold, and factorization homology can be characterized as homology theories for $\cB$-manifolds.

In this subsection, we fix a right fibration $\cB\to \bsc$, as well as a symmetric monoidal $\infty$-category $\cV$ that is $\ot$-presentable.

\begin{definition}
The $\infty$-category of \emph{$\disk(\cB)$-algebras} in $\cV$ is 
\[
\Alg_{\disk(\cB)}(\cV)~:=~\Fun^{\ot}\bigl(\disk(\cB) , \cV\bigr)~.
\]

\end{definition}

\begin{remark}
Recall the identification with the symmetric monoidal envelope ${\sf Env}(\cE_n) \simeq \disk_n^{\fr}$.
This identification can be read as $\disk_n^{\fr}$ being initial among symmetric monoidal $\infty$-categories equipped with an $\cE_n$-algebra therein.  
There is a likewise identification ${\sf Env}\bigl(\cE(\cB)\bigr)\simeq \disk(\cB)$ where $\cE(\cB)$ is an $\infty$-operad whose underlying $\infty$-category of 1-ary operations is $\cB$, and whose space $I$-ary morphisms from $(U_i)_{i\in I}\in \cB^I$ to $V\in \cB$ is the space of $\cB$-structured embeddings from the $I$-fold disjoint union: $\Map_{\disk(\cB)}\bigl(\underset{i\in I} \bigsqcup U_i  , V\bigr)$.  

\end{remark}

The Definition~\ref{coend} of factorization homology for $n$-manifolds is adequately formal, defined through a universal property, to imitate for singular manifolds.
\begin{definition}\label{def.snglr.fact}
For $A\in \Alg_{\disk(\cB)}(\cV)$, and $X$ a $\cB$-manifold, the \emph{factorization homology} of $A$ over $X$ is the colimit in $\cV$,
\[
\int_X A~:=~\colim\bigl( \disk(\cB)_{/X} \to \disk(\cB) \xra{~A~} \cV\bigr)~,
\]
should it exist.  
\end{definition}

Restriction along the fully-faithful symmetric monoidal functor $\iota\colon \disk(\cB) \hookrightarrow \mfld(\cB)$ defines a functor
\begin{equation}\label{51}
\Alg_{\disk(\cB)}(\cV)
\longleftarrow
\Fun^{\ot}\bigl(\mfld(\cB) , \cV\bigr)  \colon \iota^\ast~.
\end{equation}
\begin{prop}\label{def.singular.fact}
Factorization homology exists, and defines a left adjoint to~(\ref{51}):
\[
\int \colon \Alg_{\disk(\cB)}(\cV)
~\rightleftarrows~
\Fun^{\ot}\bigl(\mfld(\cB) , \cV\bigr) \colon \iota^\ast~.
\] 
Furthermore, the a priori lax-commutative diagram among $\infty$-categories involving a left adjoint $\iota_!$ of the restriction functor $\iota^\ast$,
\[
\xymatrix{
\Alg_{\disk(\cB)}(\cV)  \ar[rr]^-{\int}  \ar[d]_-{\rm forget}
&&
\Fun\bigl(\disk(\cB) , \cV \bigr)  \ar[d]^-{\rm forget}
\\
\Fun\bigl(\disk(\cB) , \cV\bigr)  \ar[rr]^-{\iota_!}
&&
\Fun\bigl(\mfld(\cB) , \cV\bigr) ,
}
\]
is in fact a commutative diagram. 

\end{prop}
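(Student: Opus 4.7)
The strategy is to mimic the proof of Proposition~\ref{as-LKE} in the smooth setting: realize factorization homology as a symmetric monoidal (operadic) left Kan extension along the fully-faithful symmetric monoidal functor $\iota \colon \disk(\cB)\hookrightarrow \mfld(\cB)$, and then verify that, under the $\ot$-presentability hypothesis on $\cV$, this operadic Kan extension is computed pointwise by an ordinary colimit in the underlying $\infty$-category of $\cV$.

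First, I would construct the left adjoint $\iota_!$ at the level of symmetric monoidal functors by invoking the operadic left Kan extension of \S3.1.2 of~\cite{dag}. Because $\cV$ is $\ot$-presentable, in particular its tensor product distributes over all small colimits, the operadic Kan extension along a symmetric monoidal functor between (essentially small) symmetric monoidal $\infty$-categories exists; this gives the adjunction $\iota_! \dashv \iota^\ast$ in the upper row of the diagram. The unit $A \to \iota^\ast \iota_! A$ is an equivalence because $\iota$ is fully-faithful.

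Next, I would prove the commutativity of the square, which amounts to the statement that the underlying functor of the symmetric monoidal left Kan extension agrees with the ordinary (non-symmetric-monoidal) left Kan extension along $\iota$. By the standard pointwise formula, the ordinary left Kan extension evaluates on a $\cB$-manifold $X$ as $\colim\bigl(\disk(\cB)_{/X} \to \disk(\cB) \xra{A} \cV\bigr)$. The comparison between the operadic and ordinary Kan extensions reduces, via the cofinality machinery of~\S\ref{sec.push}, to a siftedness property of the slice $\infty$-category $\disk(\cB)_{/X}$. Specifically, I would establish the analogue of Corollary~\ref{disk-sifted} for singular manifolds by applying the $\cB$-structured analogue of Lemma~\ref{final} to the fold map $X\sqcup X \to X$ in $\mfld(\cB)$. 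The argument is a partition-of-unity argument on configuration spaces inside $X$, combined with Theorem~A.3.1 of~\cite{dag} applied to the cardinality-stratified Ran space of $X$; a $\cB$-structured version of the intermediate category $\disk_f$ of Definition~\ref{disk-f} provides the bridge between $\disk(\cB)_{/X\sqcup X}$ and $\disk(\cB)_{/X}\times \disk(\cB)_{/X}$. Once siftedness is in hand, the distributivity of $\ot$ over sifted colimits upgrades the ordinary colimit to a symmetric monoidal one, and the two Kan extensions agree.

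The main obstacle is the siftedness step, that is, generalizing Lemma~\ref{final} and Corollary~\ref{disk-sifted} from smooth $n$-manifolds to the stratified setting. The subtlety is that basics in $\bsc$ come in varying local types $U=\RR^d\times \sC(L)$, so the relevant configuration-space covers must keep track of the exit-path $\infty$-category $\exit(X)^{\op}$ and not just underlying points. The regularity axiom on singular manifolds (cf.\ Remark~\ref{blow.up}) and the fact that $\exit(X)^{\op}\to \bsc$ is a right fibration (so slices behave well) are precisely the inputs needed to rerun the hypercover argument. Once this is carried out, the coend expression of Definition~\ref{def.snglr.fact} follows, and Proposition~\ref{def.singular.fact} is established in parallel with Proposition~\ref{as-LKE}.
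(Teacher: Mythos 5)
The paper does not supply a proof of this proposition in the present article (it is established in~\cite{aft2}), so you are reconstructing the intended argument, and your reconstruction is correct. Your route---constructing $\int$ as the operadic left Kan extension along $\iota$ (existence granted by $\ot$-presentability of $\cV$), observing that the unit is an equivalence since $\iota$ is fully faithful, and reducing the commutativity of the square to siftedness of the slice $\disk(\cB)_{/X}$ so that distributivity of $\ot$ over sifted colimits makes the operadic and ordinary Kan extensions agree pointwise---is precisely the strategy used in the cited reference, in parallel with Propositions~\ref{as-LKE} and~\ref{disk-sifted} in the smooth case. You are also right to single out the siftedness step as where the genuine work lies: the stratified analogue of Lemma~\ref{final} (a finality claim proved by a partition-of-unity/hypercover argument applied to configuration spaces of $\cB$-basics, now over the exit-path $\infty$-category rather than the underlying space) is exactly the input that must be established in~\cite{aft2}, and your sketch of that step, invoking the regularity axiom on singular manifolds and the right-fibration property of $\exit(X)^{\op}\to\bsc$, names the correct ingredients.
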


The Definition~\ref{def.collar-gluing} of a collar-gluing for $n$-manifolds can be imitated for singular manifolds: for $X$ a singular manifold, a \emph{collar-gluing} of $X$ is a continuous map $f\colon X\to [-1,1]$ for which the restriction $f_{|}\colon f^{-1}(-1,1) \to (-1,1)$ is a fiber bundle among singular manifolds.  
We denote such a collar-gluing as $X_- \underset{X_0\times \RR}\bigcup X_+ \cong X$ where we understand that $X_- := f^{-1}[-1,1)$, $X_+ := f^{-1}(-1,1]$, and $X_0 := f^{-1}(0)$, while $X_0\times \RR \cong X_0 \times (-1,1) \cong f^{-1}(-1,1)$.
Such a collar-gluing of a $\cB$-manifold $X$ determines an algebra in the symmetric monoidal $\infty$-category $\mfld(\cB)$ over the multi-category ${\sf Assoc}^{\sf RL}$:
\[
\bigl( X_0 ; X_+ , X_-\bigr)
~\in~\Alg_{{\sf Assoc}^{\sf RL}}\bigl(\mfld(\cB)\bigr)~.
\]
Thereafter, each symmetric monoidal functor $\cF\colon \mfld(\cB) \to \cV$ determines, for each such collar-gluing, a canonical morphism in $\cV$ from the 2-sided bar construction:
\begin{equation}\label{52}
\cF(X_-) \underset{\cF(X_0)}\ot \cF(X_+)
\longrightarrow
\cF(X)~.
\end{equation}

\begin{definition}\label{def.homology.singular}
The $\infty$-category of \emph{homology theories} for $\cB$-manifolds is the full $\infty$-subcategory 
\[
{\mathcal{\bf H}}\bigl(\mfld(\cB) , \cV\bigr)
~\subset~
\Fun^{\ot}\bigl(\mfld(\cB) , \cV \bigr)
\]
consisting of those symmetric monoidal functors $\cF$ that are \emph{$\ot$-excisive}:
\begin{itemize}
\item[~] 
For which, for each collar-gluing $X_- \underset{X_0\times \RR}\bigcup X_+ \cong X$ of a $\cB$-manifold, the canonical morphism in $\cV$,
\[
\cF(X_-) \underset{\cF(X_0)}\ot \cF(X_+)
\xra{~(\ref{52})~}
\cF(X)
\]
is an equivalence.
\end{itemize}

\end{definition}

The following foundational characterizing factorization homology is result is proved in~\cite{aft2}.
\begin{theorem}\label{singular.alpha.thm}
Factorization homology defines an equivalence between $\infty$-categories,
\[
\int\colon \Alg_{\disk(\cB)}(\cV)
~\simeq~
{\mathcal{\bf H}}\bigl(\mfld(\cB) , \cV\bigr) ~,
\]
with inverse given by restriction.

\end{theorem}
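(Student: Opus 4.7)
The strategy runs parallel to the proof of Theorem~\ref{homology}, with the Morse-theoretic induction on dimension replaced by a double induction on the depth of the stratification and the dimension of the top stratum.

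First, Proposition~\ref{def.singular.fact} identifies factorization homology as a left adjoint
\[
\int \colon \Alg_{\disk(\cB)}(\cV) \rightleftarrows \Fun^\ot\bigl(\mfld(\cB),\cV\bigr) \colon \iota^\ast
\]
to restriction along the fully-faithful inclusion $\iota\colon \disk(\cB)\hookrightarrow \mfld(\cB)$. Because $\iota$ is fully-faithful, the unit of this adjunction is an equivalence, so $\int$ is fully-faithful; in particular, the composite $\iota^\ast \circ \int$ is the identity. It suffices, therefore, to show that (a) factorization homology lands in $\bH(\mfld(\cB),\cV)$, and (b) the counit $\int(\iota^\ast\cF) \to \cF$ is an equivalence whenever $\cF$ is $\ot$-excisive.

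For~(a), one establishes a singular analog of Lemma~\ref{excision}: given a collar-gluing $X_-\cup_{X_0\times\RR}X_+ \cong X$ encoded as a continuous map $f\colon X\to [-1,1]$, one produces a pushforward multi-functor $f_\ast A \colon \disk^{\partial,\sf or}_{1/[-1,1]} \to \cV$ by the evident generalization of Construction~\ref{def.f-inverse}, and argues by a finality statement in the spirit of Lemma~\ref{final} that $\int_{[-1,1]} f_\ast A \simeq \int_X A$; Corollary~\ref{tensor-prod} then rewrites the left-hand side as the two-sided bar construction appearing in Definition~\ref{def.homology.singular}.

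For~(b), fix $\cF\in \bH(\mfld(\cB),\cV)$ and set $A:=\iota^\ast\cF$; let $\cM\subset \mfld(\cB)$ be the full $\infty$-subcategory of those $\cB$-manifolds on which the counit $\int_{(-)}A\to \cF(-)$ is an equivalence. Since both $\cF$ and $\int A$ are symmetric monoidal and $\ot$-excisive (by hypothesis and part~(a), respectively), $\cM$ is symmetric monoidal, closed under collar-gluings, and contains every basic. We then argue by induction on the \emph{depth} of a $\cB$-manifold $X$ that $X\in\cM$. The base case of depth $0$ reduces to Theorem~\ref{homology}, applied after observing that the restriction of $\cB\to \bsc$ to the locus of top-dimensional connected basics yields a tangential structure $B\to \BO(n)$ for an appropriate $n$. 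For the inductive step, one chooses a proper stratified Morse function on a finitary compactification of $X$; by the finitary hypothesis (Terminology~\ref{size.singularity}), this exhibits $X$ as a finite iteration of collar-gluings whose gluing data is modeled by basics $U=\RR^k\times \sC(L)$ with $\dim(L)<\dim(X)$ and $\text{depth}(L)<\text{depth}(X)$. Observation~\ref{DU.restrict} packages each such basic as a product $\RR^{d+1}\times L$ of a Euclidean factor with a link of strictly smaller depth, to which the inductive hypothesis applies through the symmetric monoidal functor~(\ref{50}). Closing the induction forces $\cM = \mfld(\cB)$.

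The main obstacle is the existence, and compatibility with the $\cB$-structure, of handle decompositions of finitary $\cB$-manifolds with handles modeled on basics of strictly smaller depth on the link; this requires the stratified Morse theory and blow-up/link functoriality outlined in Remark~\ref{blow.up}, together with the conical regularity axiom~(6) ensuring that the retraction $D_\ast\colon \Aut(\sC(L))\to \Aut(L)$ transports the $\cB$-structure from a collar neighborhood of a stratum onto the link. Once such decompositions are available, the inductive step is formal.
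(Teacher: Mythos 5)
The paper does not actually prove Theorem~\ref{singular.alpha.thm} in-text; it only cites~\cite{aft2}, so there is no internal proof to compare against. That said, your sketch does track the architecture of the argument in~\cite{aft2}: use Proposition~\ref{def.singular.fact} to obtain the adjunction $\int \dashv \iota^\ast$, get fully-faithfulness of $\int$ from fully-faithfulness of $\iota$, establish $\ot$-excision for $\int A$ via a singular pushforward/finality statement generalizing Construction~\ref{def.f-inverse}, Lemma~\ref{final}, and Lemma~\ref{excision}, and then prove the counit is invertible by showing the full subcategory $\cM$ on which it is invertible is symmetric monoidal, closed under collar-gluings, contains $\disk(\cB)$, and exhausts $\mfld(\cB)$ by an induction. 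The pushforward generalization you posit in step~(a) is indeed the laborious technical input of~\cite{aft2}; you correctly flag it rather than supply it.

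The genuine gap is in how you propose to generate the collar-gluings for step~(b). Applying ``a proper stratified Morse function on a finitary compactification of $X$'' directly does \emph{not} produce gluing data of strictly smaller depth: a generic level set of such a function meets every stratum transversally, so whenever the deepest stratum $X_d$ has positive dimension the level set $X_0$ still intersects $X_d$ in a codimension-one submanifold and therefore has the \emph{same} depth as $X$, and your depth induction stalls. The decomposition in~\cite{aft2} is two-staged. Stage one uses the conical regularity of Remark~\ref{blow.up} to present $X$ as a collar-gluing of a regular neighborhood $N(X_d)$ (a $\sC(L)$-bundle over the smooth manifold $X_d$) against the open complement $X\smallsetminus X_d$, with overlap a bi-collar of ${\sf Link}_{X_d}(X)$; here both $X\smallsetminus X_d$ and ${\sf Link}_{X_d}(X)\times\RR$ have strictly smaller depth, and the outer induction applies to them. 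Stage two handles $N(X_d)$, which still has full depth, by applying \emph{smooth} Morse theory to $X_d$ itself, equivalently by pushing forward along the proper bundle map $N(X_d)\to X_d$ via Proposition~\ref{pushforward} and then invoking Theorem~\ref{homology} over the smooth manifold $X_d$. Your phrase ``stratified Morse theory on $X$'' conflates these two stages and, as written, does not decrease depth; so the inductive step as stated would fail without this correction.
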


\subsection{Characterizing some $\disk(\cD_U)$-algebras}
Recall from~\S\ref{sec.structures} that, for $B=\ast$ so that a $B$-framing on an $n$-manifold is a framing thereof, then a $\disk_n^{\fr}$-algebra was precisely an $\cE_n$-algebra, which is a rather algebraic entity.  
In the same spirit, we characterize some $\disk(\cB)$-algebras in algebraic terms.  
Through Theorem~\ref{singular.alpha.thm}, this gives algebraic input for invariants of $\cB$-manifolds.

In this subsection, we fix a commutative ring spectrum $\Bbbk$, and consider its symmetric monoidal $\infty$-category $(\m_{\Bbbk},\underset{\Bbbk}\ot)$ of $\Bbbk$-modules and tensor product over $\Bbbk$ among them.

The next result is an immediate consequence of the main result of~\cite{thomas} after the observation that $\disk(\cD_{n-1,n}^{\fr})$ is the symmetric monoidal envelope of the swiss cheese operad.  
To state the result, recall that, for $B$ a $\cE_k$-algebra in $(\m_{\Bbbk}, \underset{\Bbbk}\ot)$, its (derived) center is
\[
\sZ(B)~:=~\Hom_{\int_{S^{k-1}}B}(B,B)~,
\]
the endomorphisms of $B$ as a module over the $\cE_1$-algebra $\int_{S^{k-1}} B$.
Deligne's conjecture, as stated and proved in~\S5 of~\cite{dag}, endows $\sZ(B)$ with a canonical structure of a $\cE_{k+1}$-algebra structure in $(\m_{\Bbbk}, \underset{\Bbbk}\ot)$.
\begin{prop}\label{identify.boundary}
Recall from Example~\ref{ex.boundary} the right fibration $\cD_{n-1,n}^{\fr}\to \bsc$.
A $\disk(\cD_{n-1,n}^{\fr})$-algebra in $(\m_{\Bbbk}, \underset{\Bbbk}\ot)$ is equivalent to the following data.
\begin{enumerate}
\item
An $\cE_n$-algebra $A$ in $(\m_{\Bbbk},\underset{\Bbbk}\ot)$.

\item
An $\cE_{n-1}$-algebra $B$ in $(\m_{\Bbbk},\underset{\Bbbk}\ot)$.

\item
An action of $A$ on $B$, instantiated as a morphism between $\cE_n$-algebras
\[
A 
\longrightarrow
\sZ(B)
\]
to the (derived) center of $B$.

\end{enumerate}

\end{prop}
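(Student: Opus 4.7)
The plan is a two-step reduction. First, identify $\disk(\cD_{n-1,n}^{\fr})$, as a symmetric monoidal $\infty$-category, with the symmetric monoidal envelope of the (framed variant of) Voronov's Swiss cheese operad $\sSC_n$. Second, invoke the main result of \cite{thomas}, a boundary analogue of Deligne's conjecture, which characterizes $\sSC_n$-algebras in a stable symmetric monoidal $\infty$-category as precisely the claimed triples.

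For the first step, by the envelope identification ${\sf Env}(\cE(\cB)) \simeq \disk(\cB)$ recalled in the paragraph preceding Definition~\ref{def.snglr.fact}, it suffices to identify the two-colored $\infty$-operad $\cE(\cD_{n-1,n}^{\fr})$ with $\sSC_n$. Its two colors are the bulk basic $\RR^n$ and the boundary basic $\RR^{n-1}\times \sC(\ast)$, and its multi-mapping spaces are spaces of $\cD_{n-1,n}^{\fr}$-structured embeddings from finite disjoint unions of these. One analyzes these embedding spaces in three cases: embeddings of only bulk disks into a bulk disk recover $\cE_n$ via Proposition~\ref{kister}; embeddings of only boundary disks into a boundary disk recover $\cE_{n-1}$ via Proposition~\ref{product}, which identifies such embeddings with those of $\RR_{\geq 0}\times (\RR^{n-1})^{\sqcup k} \hookrightarrow \RR_{\geq 0}\times \RR^{n-1}$ and thereby with $\Emb^{\fr}((\RR^{n-1})^{\sqcup k}, \RR^{n-1})$; and mixed embeddings of bulk and boundary disks into a boundary disk recover the full Swiss cheese mixing operations. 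The framing hypothesis (last vector field pointing inward) is precisely what pins down a canonical transverse direction on the boundary, ensuring the operadic composition is well-defined and matches the standard Swiss cheese conventions.

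For the second step, apply the theorem of \cite{thomas}: $\Alg_{\sSC_n}(\m_\Bbbk, \underset{\Bbbk}\ot)$ is equivalent to the $\infty$-category of tuples $(A,B,\alpha)$ consisting of an $\cE_n$-algebra $A$, an $\cE_{n-1}$-algebra $B$, and an action of $A$ on $B$. By the form of Deligne's conjecture proved in \S5 of \cite{dag} (and recalled just above the proposition), such an action of an $\cE_n$-algebra $A$ on an $\cE_{n-1}$-algebra $B$ is equivalent data to a morphism of $\cE_n$-algebras $A \to \sZ(B)$ to the derived center. Concatenating with the envelope identification $\Fun^\ot(\disk(\cD_{n-1,n}^{\fr}),\m_\Bbbk) \simeq \Alg_{\cE(\cD_{n-1,n}^{\fr})}(\m_\Bbbk) \simeq \Alg_{\sSC_n}(\m_\Bbbk)$ completes the identification.

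The main obstacle is the operadic identification in step one: one must carefully verify that the two sources of $\cE_{n-1}$-structure on $B$ agree—namely, that the structure extracted from embeddings of boundary disks into a boundary disk (which a priori involves both the parallel $\RR^{n-1}$-direction and the transverse $\sC(\ast)$-direction) coincides with the standard $\cE_{n-1}$-structure obtained by restricting to embeddings parallel to the boundary, with the transverse direction instead absorbed into the mixing action. This is where Proposition~\ref{product} plays its essential role, contracting the $\RR_{\geq 0}$-factor away; once this is verified, the remainder of the argument is a formal citation.
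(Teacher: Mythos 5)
Your approach matches the paper's: identify $\disk(\cD_{n-1,n}^{\fr})$ with the symmetric monoidal envelope of the Swiss cheese operad (via the general equivalence ${\sf Env}(\cE(\cB))\simeq\disk(\cB)$ together with Propositions~\ref{kister} and~\ref{product}), then cite the main theorem of~\cite{thomas}. The paper dispatches all of this in a single sentence treating the envelope identification as an "observation"; you simply unwind that observation into its constituent verifications, including correctly flagging that Proposition~\ref{product} is what collapses the transverse $\RR_{\geq 0}$-factor so that boundary-to-boundary operations yield $\cE_{n-1}$ rather than something larger.
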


The above result is typical of its kind.  
The next result appears in~\cite{aft2}.
\begin{prop}
Consider the right fibration $\cD_{d\subset n}^{\fr}\to \bsc$ of Example~\ref{ex.fr.defects}.
A $\disk(\cD_{d\subset n}^{\fr})$-algebra in $(\m_{\Bbbk}, \underset{\Bbbk}\ot)$ is equivalent to the following data.
\begin{enumerate}
\item
A $\cE_n$-algebra $A$ in $(\m_{\Bbbk},\underset{\Bbbk}\ot)$.

\item
A $\cE_d$-algebra $B$ in $(\m_{\Bbbk},\underset{\Bbbk}\ot)$.

\item
An action of $\int_{S^{n-d-1}} A$ on $B$, instantiated as a morphism between $\cE_{d+1}$-algebras
\[
\int_{S^{n-d-1}} A
\longrightarrow
\sZ(B)~.
\]

\end{enumerate}

\end{prop}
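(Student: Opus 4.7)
\medskip

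\noindent
\textbf{Proof plan.} Write $U_d := \RR^d \times \sC(S^{n-d-1})$ for the second generating basic of $\cD_{d\subset n}^{\fr}$, so that objects of $\disk(\cD_{d\subset n}^{\fr})$ are finite disjoint unions of $\RR^n$'s and $U_d$'s and morphisms are $\cD_{d\subset n}^{\fr}$-structured embeddings. First, I would extract $(A,B)$ by restriction. The full symmetric monoidal subcategory on disjoint unions of $\RR^n$'s is canonically $\disk_n^{\fr}$, so restriction produces the $\cE_n$-algebra $A$. To obtain $B$, observe that any $\cE_d$-embedding $(\RR^d)^{\sqcup I}\hookrightarrow \RR^d$ extends, by acting as the identity on $\sC(S^{n-d-1})$ after localization on a neighborhood of the cone-locus, to a $\cD_{d\subset n}^{\fr}$-embedding $U_d^{\sqcup I}\hookrightarrow U_d$. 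This assembles into a symmetric monoidal functor $\disk_d^{\fr}\to \disk(\cD_{d\subset n}^{\fr})$ sending $(\RR^d)^{\sqcup I}\mapsto U_d^{\sqcup I}$; restriction along it yields the $\cE_d$-algebra $B := \cF(U_d)$.

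The next step is to produce the $\cE_{d+1}$-algebra $\int_{S^{n-d-1}} A$ from the data on $\RR^n$'s situated near the cone-locus. By Observation~\ref{DU.restrict} applied to $U=U_d$ and $L=S^{n-d-1}$, the symmetric monoidal functor~(\ref{50}) $\disk_{d+1}^{\fr}\xra{-\times S^{n-d-1}}\mfld(\cD_{>U_d})$ together with the canonical $\cD_{>U_d}$-structure on $U_d\smallsetminus \RR^d$ yields, by restriction of the underlying $\cE_n$-algebra $A$ along it, a $\disk_{d+1}^{\fr}$-algebra. Via the pushforward formula (Proposition~\ref{pushforward}) applied to the projection $\RR^d\times S^{n-d-1}\times\RR\to\RR^{d+1}$, whose fibers are $S^{n-d-1}$, this $\cE_{d+1}$-algebra is canonically identified as $\int_{S^{n-d-1}} A$. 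The remaining $(1,1)$- and higher-ary operations in $\disk(\cD_{d\subset n}^{\fr})$ of the form $(\RR^n)^{\sqcup k}\sqcup U_d\hookrightarrow U_d$ place collections of Euclidean $n$-spaces into the punctured collar neighborhood $U_d\smallsetminus \RR^d$ of the cone-locus and, together with the $\cE_d$-operations on $U_d$, assemble into a Swiss-cheese-type interaction between this $\cE_{d+1}$-algebra $\int_{S^{n-d-1}} A$ and the $\cE_d$-algebra $B$.

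Finally, I would identify this Swiss-cheese interaction with a morphism of $\cE_{d+1}$-algebras $\int_{S^{n-d-1}} A\to \sZ(B)$. The machinery here is exactly the one underlying Proposition~\ref{identify.boundary} for $d=n-1$: Lurie's treatment of Deligne's conjecture (\S5 of~\cite{dag}) establishes that $\sZ(B) := \Hom_{\int_{S^{d-1}}B}(B,B)$ corepresents the functor sending an $\cE_{d+1}$-algebra $R$ to the space of $\cE_{d+1}$-algebra actions of $R$ on $B$ compatibly with its $\cE_d$-structure, which is precisely the data that the Swiss-cheese operations record. The converse direction, constructing a $\disk(\cD_{d\subset n}^{\fr})$-algebra from $(A,B,\int_{S^{n-d-1}}A\to\sZ(B))$, then follows formally by reversing the construction on operadic generators.

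The main obstacle will be a precise operadic identification of $\disk(\cD_{d\subset n}^{\fr})$, as the symmetric monoidal envelope of a two-coloured $\infty$-operad on the colors $\{\RR^n,U_d\}$, with an operadic assembly of $\cE_n$ (on $\RR^n$), $\cE_d$ (on $U_d$), and the Swiss-cheese-style interaction parametrized by embeddings into the punctured collar of the cone-locus. The local geometry of $U_d$, and in particular the contractibility of the space of $\cD_{d\subset n}^{\fr}$-embeddings $U_d\hookrightarrow U_d$ inducing the identity on the $S^{n-d-1}$-link up to orthogonal rotations, should reduce this identification to a homotopy computation analogous to Proposition~\ref{kister}; this analysis, and the compatible identification of the link factorization homology as an $\cE_{d+1}$-algebra, is carried out in~\cite{aft2}.
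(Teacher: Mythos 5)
The paper does not prove this proposition: it is explicitly cited to~\cite{aft2} ("The above result is typical of its kind. The next result appears in~\cite{aft2}."), and is in fact a special case of the subsequent Proposition~\ref{classify.algebras}, for $U=\RR^d\times\sC(S^{n-d-1})$. Your outline mirrors the structure of that general statement and of Observation~\ref{DU.restrict}, and it correctly identifies the essential ingredients: restriction to the two colors $\RR^n$ and $U_d$; the pushforward formula along $\RR^d\times S^{n-d-1}\times\RR\to\RR^{d+1}$ to identify the link algebra $\int_{S^{n-d-1}}A$ as an $\cE_{d+1}$-algebra; and the Deligne/Swiss-cheese mechanism to read off the morphism to $\sZ(B)$. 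As you say, the genuine content---the operadic identification of $\disk(\cD_{d\subset n}^{\fr})$ as the envelope of a two-colored operad built from $\cE_n$, $\cE_d$, and the link action---is the work carried out in~\cite{aft2}, so as a proof plan this is sound and consistent with the paper's attribution.

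One point deserves care, and you (like the paper's informal statement) gloss over it. The full subcategory of $\disk(\cD_{d\subset n}^{\fr})$ on disjoint unions of $\RR^n$'s is \emph{not} literally $\disk_n^{\fr}$: the fiber of $\cD_{d\subset n}^{\fr}=\exit(U_d)^{\op}\to\bsc$ over the object $\RR^n$ is the $\infty$-groupoid of smooth points of $U_d$, which has the homotopy type $S^{n-d-1}$. So restriction on the smooth locus a priori yields a $\disk_n^{S^{n-d-1}}$-algebra, i.e.\ a local system of $\cE_n$-algebras on $S^{n-d-1}$. This is precisely why the general Proposition~\ref{classify.algebras} phrases (1) as ``a $\disk(\cD_{>U})$-algebra $A$,'' and why $\int_L A$ there is factorization homology of that structured algebra over the link. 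The statement you are proving paraphrases this as ``an $\cE_n$-algebra'' and writes $\int_{S^{n-d-1}}A$; if you want your argument to be airtight you should either work directly with $\disk(\cD_{>U_d})$-algebras (as the more general proposition does) or explain the mechanism---particular to the conically-smooth framing of $U_d$---by which the $S^{n-d-1}$-worth of colors collapses to the data of a single $\cE_n$-algebra in the way the abbreviated statement suggests.
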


\begin{example}
Consider the case $(d,n)=(1,3)$.
A compact $\cD_{1\subset 3}^{\fr}$-manifold is a link in a 3-manifold, equipped with a framing of the 3-manifold and a splitting of this framing along the link via the first coordinate.
A $\disk(\cD_{1\subset 3}^{\fr})$-algebra is an $\cE_3$-algebra $A$, a $\cE_1$-algebra $B$, and a morphism between $\cE_2$-algebras
\[
\alpha\colon {\sf HH}_\bullet(A)
\longrightarrow
{\sf HH}^\bullet(B)
\]
from the Hochschild chains to the Hochschild cochains.  
Through Theorem~\ref{singular.alpha.thm}, such algebraic data determines, via factorization homology, a $\Bbbk$-module associated to each framed link in a framed 3-manifold, $L\subset M$:
\[
\int_{(L\subset M)} (A,B,\alpha)~ \in ~\m_{\Bbbk}~.
\]

\end{example}

The above two results are specific instances of a more general paradigm.
\begin{prop}\label{classify.algebras}
Let $U =\RR^d \times \sC(L)$ be a basic.
Consider the right fibration $\cD_U\to \bsc$ from Example~\ref{ex.fr.general}.
Consider the right fibration $\cD_{>U} \to \bsc$ from Observation~\ref{DU.restrict}.
A $\disk(\cD_U)$-algebra in $(\m_{\Bbbk},\underset{\Bbbk}\ot)$ is equivalent to the following data.
\begin{enumerate}
\item
A $\disk(\cD_{>U})$-algebra $A$ in $(\m_{\Bbbk},\underset{\Bbbk}\ot)$.
\begin{itemize}
\item[~]
Note how restriction along~(\ref{50}) in Observation~\ref{DU.restrict}, followed by factorization homology, defines a $\cE_{d+1}$-algebra in $(\m_{\Bbbk},\underset{\Bbbk}\ot)$:
\[
\int_L A\colon \disk_{d+1}^{\fr}
\underset{(\ref{50})}{\xra{~\times L~}}
\mfld(\cD_{>U})
\xra{~\int A~}
\m_{\Bbbk}
.
\]
\end{itemize}

\item
A $\cE_d$-algebra $B$ in $(\m_{\Bbbk},\underset{\Bbbk}\ot)$.

\item
An action of $\int_L A$ on $B$, instantiated as a morphism between $\cE_{d+1}$-algebras
\[
\int_L A
\longrightarrow
\sZ(B)~.
\]

\end{enumerate}

\end{prop}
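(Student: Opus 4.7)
The strategy generalizes the proof of Proposition~\ref{identify.boundary} (the Swiss cheese case, obtained by taking $d = n-1$ and $L=\ast$), by decomposing the operad $\disk(\cD_U)$ according to its distinguished basic $U = \RR^d \times \sC(L)$. The key operadic observation is that any embedding $W_1\sqcup\cdots\sqcup W_i\sqcup U^{\sqcup j} \hookrightarrow U^{\sqcup k}$ in $\disk(\cD_U)$ must route cone-loci to cone-loci, so at most one source copy of $U$ maps into each target copy of $U$, and all remaining source factors must embed into the cone-locus complement of the target---which inherits a canonical $\cD_{>U}$-structure.

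From a $\disk(\cD_U)$-algebra $F$, I will extract the three data as follows. Restriction along the inclusion $\disk(\cD_{>U}) \hookrightarrow \disk(\cD_U)$ yields $A := F|_{\disk(\cD_{>U})}$. Restriction of $F$ along the symmetric monoidal functor $\disk_d^{\fr} \to \disk(\cD_U)$, $(\RR^d)^{\sqcup J}\mapsto U^{\sqcup J}$, built from the product decomposition $U=\RR^d\times \sC(L)$ and translations in the $\RR^d$-factor, yields the $\cE_d$-algebra $B := F(U)$. For the third datum, I will invoke the pushforward formula (Proposition~\ref{pushforward}) applied to the smooth fiber bundle $\pi\colon \RR^d \times L \times \RR \to \RR^{d+1}$; this together with the symmetric monoidal functor~(\ref{50}) identifies the $\cE_{d+1}$-algebra $\int_L A$ defined in the statement with $\int_{\RR^{d+1}}\pi_\ast A$. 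Embeddings of the form $(\RR^d\times L\times\RR)^{\sqcup J}\sqcup U \hookrightarrow U$ that place the first $J$ factors in a tubular collar of the cone-locus (using $L\times \RR_{>0}\subset \sC(L)$) exhibit $B$ as an $\cE_d$-module over the $\cE_{d+1}$-algebra $\int_L A$. By the universal property of the Deligne center (\S5 of~\cite{dag}), such a module structure is equivalent data to an $\cE_{d+1}$-morphism $\alpha\colon \int_L A \to \sZ(B)$.

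For the inverse direction, given $(A,B,\alpha)$ I will reconstruct a symmetric monoidal functor $F\colon \disk(\cD_U)\to \m_\Bbbk$ by declaring $F(U^{\sqcup k} \sqcup W) := B^{\ot k}\ot A(W)$ on objects and decomposing each multi-embedding into its collar portion near the cone-loci (assembled using $\alpha$) and its complementary portion (assembled using $A$). A consistency check then promotes this to a symmetric monoidal functor, and the two constructions will be seen to be mutually inverse up to natural equivalence.

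The main obstacle will be verifying this correspondence at the full $\infty$-categorical level, i.e., matching the entire spaces of multi-morphisms in $\disk(\cD_U)$ with the operadic data assembled from $A$, the $\cE_d$-operations on $B$, and the $\cE_{d+1}$-pairing $\alpha$. The critical input is a contractibility statement: the space of $\cD_U$-embeddings $W\sqcup U^{\sqcup k} \hookrightarrow U^{\sqcup k}$ must deformation retract onto the subspace in which $W$ lies inside $k$ disjoint collars of the cone-loci. This will reduce, via the pushforward formula along $\pi$ and an isotopy-extension argument in the spirit of Lemma~\ref{final}, to a contractibility claim for the space of collar-embeddings of a $\cD_{>U}$-basic into $\RR^d \times \sC(L)$ surjecting onto the cone-locus complement. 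Given this identification, the operadic equivalence propagates to the desired equivalence of $\infty$-categories of algebras, exactly as in the Swiss cheese case with $L=\ast$.
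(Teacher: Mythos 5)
The paper does not give a proof of this proposition (Proposition~\ref{identify.boundary} is deferred to \cite{thomas}, and the intermediate $\cD_{d\subset n}^{\fr}$ case to \cite{aft2}), so there is no proof of record to compare against; the shape of your plan is nonetheless the natural generalization of the Swiss cheese recognition theorem. However, two concrete problems need attention. Your opening "key operadic observation" is false as stated: it is not true that at most one source copy of $U$ maps into each target copy of $U$. One can embed disjoint sub-cones of $U$ around disjoint $d$-disks in the cone-locus $\RR^d$, and such multi-embeddings $U^{\sqcup j}\hookrightarrow U$ with $j\geq 2$ are precisely what furnish the $\cE_d$-algebra structure on $B$ that you extract correctly two sentences later via $\disk_d^{\fr}\to \disk(\cD_U)$. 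The genuine constraint from stratification-preservation is only that source cone-loci route to target cone-loci; a multi-embedding into a single target $U$ therefore consists of several sub-cone copies of $U$ organized by an $\cE_d$-configuration in $\RR^d$, together with $\cD_{>U}$-basics embedded in the cone-locus complement. You should state the decomposition this way, or the "one-per-target" picture will corrupt the inverse construction.

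Second, and more importantly, the deformation-retraction lemma that you flag as the "main obstacle" is exactly where the content of the proposition lies, and it is asserted rather than argued: namely that the space of $\cD_U$-embeddings $W\sqcup U\hookrightarrow U$ retracts onto those placing $W$ in a collar $\RR^d\times L\times(0,1)$ of the cone-locus, compatibly with the $\cE_{d+1}$-operations that realize a central action of $\int_L A$ on $B$ in the sense of the higher Deligne conjecture. Note also that $L$ is a compact singular manifold, so the smooth pushforward and finality results you cite (Proposition~\ref{pushforward}, Lemma~\ref{final}) do not apply verbatim; you would need their singular analogues from \cite{aft2}. Establishing the retraction—presumably via the blow-up/link construction of Remark~\ref{blow.up} and an isotopy-extension argument—is what separates a proof from a plan, and it is currently missing.
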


The next result references Example~\ref{ex.U.boundary}, notably the right fibration $\cD_{\langle n\rangle}^{\fr}\to \bsc$ for which a $\cD_{\langle n\rangle}^{\fr}$-manifold is a framed $\langle n\rangle$-manifold.  
\begin{cor}\label{corner.data}
The following data canonically determines a $\disk_{\langle n\rangle}^{\fr}$-algebra in $(\m_{\Bbbk},\underset{\Bbbk}\ot)$.
\begin{enumerate}
\item
For each $0\leq i\leq n$, a $\cE_i$-algebra $A_i$ in $(\m_{\Bbbk},\underset{\Bbbk}\ot)$.

\item
For each $0<i\leq n$, a morphism between $\cE_i$-algebras
\[
\alpha_i\colon A_i
\longrightarrow
\sZ(A_{i-1})~.  
\]

\end{enumerate}

\end{cor}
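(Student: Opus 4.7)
The plan is to proceed by induction on $n$, using Proposition~\ref{classify.algebras} iteratively to peel off one corner-stratum datum at each step. The base case $n=0$ is immediate from Example~\ref{ex.U.boundary}: with $\sC(\emptyset)=\ast$, a $\disk_{\langle 0\rangle}^{\fr}$-algebra in $\cV$ is exactly a pointed object, i.e., an $\cE_0$-algebra, so the datum $A_0$ alone determines such an algebra.

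For the inductive step, apply Proposition~\ref{classify.algebras} to $U=\sC(\Delta^{n-1})$, for which $d=0$ and $L=\Delta^{n-1}$. Constructing a $\disk_{\langle n\rangle}^{\fr}$-algebra then reduces to producing: (i) a $\disk(\cD_{>U})$-algebra $\tilde A$; (ii) an $\cE_0$-algebra, taken to be $A_0$; and (iii) a morphism of $\cE_1$-algebras $\int_{\Delta^{n-1}}\tilde A\to \sZ(A_0)$. For (i), note that $\sC(\Delta^{n-1})\smallsetminus\{\ast\}\cong\Delta^{n-1}\times\RR$ has basics of the form $\RR\times\RR^{n-1-k}\times\sC(\Delta^{k-1})$ indexed by the codimension-$k$ faces of $\Delta^{n-1}$; after transposing the distinguished $\RR$-factor, these are precisely the basics of a framed $\langle n-1\rangle$-manifold crossed with $\RR$. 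Together with the $\cE_1$-structure arising from translations along the distinguished $\RR$, this yields an equivalence
\[
\Alg_{\disk(\cD_{>U})}(\cV)\;\simeq\;\Alg_{\disk_{\langle n-1\rangle}^{\fr}}\bigl(\Alg_{\cE_1}(\cV)\bigr).
\]
Applying the inductive hypothesis within $\Alg_{\cE_1}(\cV)$, the right side is determined by sequences $(B_0,\dots,B_{n-1})$ of $\cE_i$-algebras internal to $\Alg_{\cE_1}(\cV)$ — equivalently, $\cE_{i+1}$-algebras in $\cV$, since unital $\cE_1$-algebras are canonically pointed — together with centralizer maps $\beta_i\colon B_i\to\sZ(B_{i-1})$. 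Reindexing $A_{i+1}:=B_i$ and $\alpha_{i+1}:=\beta_i$ matches the given data $(A_1,\dots,A_n)$ with $(\alpha_2,\dots,\alpha_n)$, producing $\tilde A$. For (iii), the decomposition $\Delta^{n-1}\cong\sC(\Delta^{n-2})$ realizes $\RR\times\Delta^{n-1}\cong\RR\times\sC(\Delta^{n-2})$ as a single basic of $\cD_{>U}$; since factorization homology over a basic collapses to the algebra's value there,
\[
\int_{\Delta^{n-1}}\tilde A \;\simeq\; \tilde A(\RR\times\Delta^{n-1}) \;\simeq\; B_0 \;=\; A_1
\]
as $\cE_1$-algebras, and the required morphism is then $\alpha_1\colon A_1\to \sZ(A_0)$.

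The main obstacle is the equivalence in step (i) — rigorously matching $\disk(\cD_{>U})$-algebras with $\disk_{\langle n-1\rangle}^{\fr}$-algebras internal to $\cE_1$-algebras. This is essentially Dunn-type additivity: the distinguished $\RR$-factor carries an independent $\cE_1$-structure that commutes, in the Boardman-Vogt sense, with the $\cD_{\langle n-1\rangle}^{\fr}$-structure on the transverse factor. The verification amounts to inspecting multimorphism spaces in $\cD_{>U}$ and appealing to $\infty$-operadic additivity. No feature particular to $(\m_\Bbbk,\underset{\Bbbk}\ot)$ enters the argument, so the induction carries through for any $\ot$-presentable stable symmetric monoidal $\infty$-category.
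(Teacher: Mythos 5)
Your template --- apply Proposition~\ref{classify.algebras} at $U=\sC(\Delta^{n-1})$ and iterate --- is natural, but both specific claims in the inductive step fail, already at $n=2$. The asserted equivalence $\Alg_{\disk(\cD_{>U})}(\cV)\simeq\Alg_{\disk_{\langle n-1\rangle}^{\fr}}(\Alg_{\cE_1}(\cV))$ is false, not merely unproved: $\cD_{>U}=\exit(\Delta^{n-1}\times\RR)^{\op}$ and $\cD_{\langle n-1\rangle}^{\fr}=\exit\bigl(\sC(\Delta^{n-2})\bigr)^{\op}$ have different moduli of basics, since $\Delta^{n-1}$ has many faces of each positive codimension while $\sC(\Delta^{n-2})$ has a unique deepest stratum (the cone point). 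Concretely, for $n=2$ the $\infty$-category $\cD_{>U}=\exit([0,1]\times\RR)^{\op}$ has two inequivalent boundary basics, one over each edge of $\Delta^1$, while $\cD_{\langle 1\rangle}^{\fr}=\exit([0,1))^{\op}$ has one; a $\disk(\cD_{>U})$-algebra therefore carries strictly more data, and no Dunn-type additivity can produce an equivalence. Since the statement only claims a canonical construction, you need only a functor from the displayed data to $\disk(\cD_{>U})$-algebras, but that too remains to be produced --- it requires a canonical collapse of the face poset of $\Delta^{n-1}$ onto the strata poset of $\sC(\Delta^{n-2})$, which is not what the inductive hypothesis hands you.

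The second claim --- that $\int_{\Delta^{n-1}}\tilde A\simeq A_1$ because ``$\Delta^{n-1}\cong\sC(\Delta^{n-2})$'' --- is also wrong. These are not isomorphic stratified spaces: $\Delta^{n-1}$ is compact with $n$ vertex-strata, $\sC(\Delta^{n-2})$ is noncompact with a single cone point; already $\Delta^1=[0,1]$ has three strata while $\sC(\Delta^0)=[0,1)$ has two. In particular $\RR\times\Delta^{n-1}$ is not a basic of $\cD_{>U}$, so $\int_{\Delta^{n-1}}\tilde A$ does not collapse to a single value of $\tilde A$. For $n=2$, $\ot$-excision gives
\[
\int_{\RR\times[0,1]}\tilde A\;\simeq\;\tilde A\bigl(\RR\times[0,1)\bigr)\underset{\tilde A(\RR^2)}{\bigotimes}\tilde A\bigl(\RR\times(0,1]\bigr)\;\simeq\;A_1\underset{A_2}\otimes A_1\,,
\]
not $A_1$. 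The $\cE_1$-map into $\sZ(A_0)$ required by Proposition~\ref{classify.algebras}(3) must then issue from this balanced tensor product, and $\alpha_1$ alone does not supply it; producing it requires a compatibility between $\alpha_1$, $\alpha_2$, and the induced $A_2$-bimodule structure on $\sZ(A_0)$ that your argument never addresses.
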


\begin{remark}
Suppose $\Bbbk$ is an algebraically closed field whose characteristic is 0.
Recall from~\cite{PTVV} the notions of a shifted symplectic algebraic stack over $\Bbbk$, and of a Lagrangian map to one.
We also use the result therein which states that the derived intersection of two Lagrangian maps to an $n$-shifted symplectic stack canonically inherits the structure of a $(n-1)$-shifted symplectic stack.  
We expect that the data of Corollary~\ref{corner.data} can be supplied through a parametrized version of deformation quantization from the following input.
\begin{itemize}
\item 
A sequence
\[
L_0
\longrightarrow
L_1
\longrightarrow
\dots
\longrightarrow
L_n
\]
of morphisms between algebraic stacks over $\Bbbk$.
\begin{itemize}
\item
Now, denote $X_n := L_n$, and for each $0<i<n$, inductively denote the derived intersection $X_i:= L_i \underset{X_{i+1}}\times L_i$.

\end{itemize}

\item
$L_n$ is equipped with an $n$-shifted structure.

\item
For each $0\leq i<n$, the diagonal map
\[
L_{i-1}
\longrightarrow
L_{i}\underset{X_{i+1}} \times L_{i}
\]
is equipped with a Lagrangian structure.  

\end{itemize}

\end{remark}

\end{document}